\newtheorem{lem}{Lemma}[section]
\newtheorem{prop}{Proposition}[section]
\newtheorem{cor}{Corollary}[section]
\newtheorem{thm}{Theorem}[section]
\theoremstyle{definition}
\theoremstyle{remark}
\theoremstyle{remark}
\newtheorem{remark}{Remark}[section]
\newtheorem*{remarks*}{Remarks}
\newtheorem*{remark*}{Remark}
\numberwithin{equation}{section}
\newcommand{\Hfr}{\mathfrak{H}}
\newcommand{\C}{\mathbb{C}}
\newcommand{\N}{\mathbb{N}}
\newcommand{\Z}{\mathbb{Z}}
\newcommand{\R}{\mathbb{R}}
\newcommand{\T}{\mathbb{T}}
\newcommand{\UU}{\mathcal{U}}
\newcommand{\Ss}{\mathbb{S}}
\newcommand{\VV}{\mathcal{V}}
\newcommand{\EE}{\mathcal{E}}
\newcommand{\ov}{\overline}
\newcommand{\ran}{\mathrm{ran}}
\renewcommand{\ker}{\mathrm{ker}}
\newcommand{\dom}{\mathrm{dom}}
\newcommand{\rank}{\mathrm{rank}}
\newcommand{\sv}{\mathbf{s}}
\newcommand{\nv}{\mathbf{n}}
\newcommand{\Qv}{\mathbf{Q}}
\newcommand{\qv}{\mathbf{q}}
\newcommand{\pt}{\partial}
\newcommand{\be}{\begin{equation}}
\newcommand{\ee}{\end{equation}}
\newcommand{\eps}{\varepsilon}
\newcommand{\Ms}{\mathsf{M}}
\newcommand{\weakto}{\rightharpoonup}
\newcommand{\ii}{\mathrm{i}}
\newcommand{\eu}{\mathrm{e}}
\renewcommand{\rho}{\varrho}
\newcommand{\mv}{\mathbf{m}}
\renewcommand{\Im}{\mathrm{Im}}
\newcommand{\Hil}{\mathsf{H}}
\newcommand{\uu}{\mathbf{u}}
\newcommand{\vv}{\mathbf{v}}
\newcommand{\Ab}{\mathbf{A}}
\newcommand{\Bb}{\mathbf{B}}
\newcommand{\Vb}{\mathbf{V}}
\newcommand{\Wb}{\mathbf{W}}
\newcommand{\Ub}{\mathbf{U}}
\newcommand{\Fb}{\mathbf{F}}
\newcommand{\Pb}{\mathbf{P}}
\newcommand{\Gb}{\mathbf{G}}
\newcommand{\id}{\mathrm{Id}}
\newcommand{\Tr}{\mathrm{Tr}}
\newcommand{\Ds}{|D|}
\newcommand{\Dsr}{\langle D \rangle}
\newcommand{\Gr}{\mathsf{Gr}}
\newcommand{\Ts}{\mathsf{T}}
\newcommand{\Tst}{\widetilde{\mathsf{T}}}
\newcommand{\Gs}{\mathsf{G}}
\newcommand{\Gst}{\widetilde{\Gs}}
\newcommand{\Mst}{\widetilde{\Ms}}
\renewcommand{\phi}{\varphi}
\newcommand{\comment}[1]{}
\def\section{\@startsection{section}{1}%
  \z@{1.5\linespacing\@plus\linespacing}{.5\linespacing}%
  {\normalfont\bfseries\large\centering}}
\def\@cite#1#2{[\textbf{#1\if@tempswa , #2\fi}]}
\def\@biblabel#1{[\textbf{#1}]}
\begin{document}
\title[GWP and Soliton Resolution for HWM with Rational Data]{Global Well-Posedness and Soliton Resolution for the Half-Wave Maps Equation with Rational Data}

\author{Patrick G\'erard}
\address{P. G\'erard,  Laboratoire de Math\'ematiques d'Orsay, CNRS, Universit\'e Paris-Saclay, 91405 Orsay, France.}%
\email{patrick.gerard@universite-paris-saclay.fr}

\author{Enno Lenzmann}
\address{E. Lenzmann, University of Basel, Department of Mathematics and Computer Science, Spiegelgasse 1, CH-4051 Basel, Switzerland.}%
\email{enno.lenzmann@unibas.ch}

\maketitle

\begin{abstract}
We study the energy-critical half-wave maps equation:
\[
\partial_t \uu = \uu \times |D| \uu 
\]
for $\uu : [0, T) \times \mathbb{R} \to \Ss^2$. Our main result establishes the global existence and uniqueness of solutions for all rational initial data $\uu_0 : \mathbb{R} \to \Ss^2$. This demonstrates global well-posedness for a dense subset within the scaling-critical energy space \( \dot{H}^{1/2}(\mathbb{R}; \mathbb{S}^2) \). Furthermore, we prove soliton resolution for a dense subset of initial data in the energy space with uniform bounds for all higher Sobolev norms $\dot{H}^s$ for $s > 0$. 

Our analysis utilizes the Lax pair structure of the half-wave maps equation on Hardy spaces in combination with an explicit flow formula. Extending these results, we establish global well-posedness for rational initial data (along with a soliton resolution result) for a generalized class of matrix-valued half-wave maps equations with target spaces in the complex Grassmannians \( \Gr_k(\mathbb{C}^d) \). Notably, this includes the complex projective spaces \( \mathbb{CP}^{d-1} \cong \Gr_1(\mathbb{C}^d) \) thereby extending the classical case of the target \( \Ss^2 \cong \mathbb{CP}^1 \).
\end{abstract}

%

\tableofcontents

\section{Introduction and Main Results} \label{sec:intro}

This paper is devoted to the half-wave maps equation posed on the real line, which reads
\be \label{eq:HWM} \tag{HWM}
\pt_t \uu = \uu \times |D| \uu
\ee
with $\uu : [0,T) \times \R \to \Ss^2$. Here $\Ss^2$ denotes the standard unit two-sphere embedded in $\R^3$ and $\times$ stands for the vector/cross product in $\R^3$. Formally, the operator $|D|$ is given by its Fourier multiplier $|\xi|$ corresponding to the half-Laplacian $|D|=\sqrt{-\Delta}$ on $\R$. Equivalently in our setting, we can write $|D| = \Hil \pt_x$ where 
\be
(\Hil f)(x) = \frac{1}{\pi} \mathrm{p.v.} \int_\R \frac{f(y)}{x-y} \, dy
\ee
denotes the Hilbert transform on the real line. The main physical motivation for studying (HWM) stems from the fact that it can be seen as a continuum version of discrete completely integrable so-called {\em spin Calogero--Moser models}; see \cite{ZhSt-15, LeSo-24}. See also \cite{LeSc-18} for a complete classification of traveling solitary waves of (HWM) in relation to (non-free) minimal surfaces of disk type, as well as the studies \cite{BeKlLa-20, Ma-22} of the dynamics of rational solutions of (HWM) in the applied math literature.

\medskip
As shown in \cite{GeLe-18}, the half-wave maps equation is a completely integrable Hamiltonian PDE in the sense of having a Lax pair structure that yields an infinite set of conserved quantities and that also shows that rationality is preserved by the flow of (HWM). We remark that its Hamiltonian energy functional is easily found to be
\be
E(\uu) = \frac{1}{2} \int_{\R} \uu \cdot  |D| \uu  \, dx = \frac{1}{4 \pi} \int_{\R} \! \int_{\R} \frac{|\uu(x)-\uu(y)|^2}{|x-y|^2} \, dx \, dy \, .
\ee
Note that the scaling $\uu(t,x) \mapsto \uu(\lambda t, \lambda x)$ with some constant $\lambda > 0$ preserves solutions of \eqref{eq:HWM} as well as the energy $E(\uu)$. Thus we see that \eqref{eq:HWM} is {\em energy-critical}.

However, the question of existence (or non-existence) of global-in-time solutions for (HWM) -- even for smooth and sufficiently small data -- has been left completely open so far. Here one of the major obstacles lies in the non-dispersive nature of the half-wave operator $|D|$ in one space dimension occurring in the quasi-linear evolution problem \eqref{eq:HWM}. In fact, this situation prevents us from adapting known tools developed to prove global well-posedness results for other dispersive geometric PDEs such as the Schr\"odinger maps and wave maps equations; see e.\,g.~\cite{Ta-06,KotaVi-14} and references therein. We refer also to \cite{KrSi-18, KiKr-21, Li-23} for small data global existence for \eqref{eq:HWM} in the non-integrable case with space dimensions at least $N \geq 3$, where dispersive estimates can be used which are not available for our setting here. 

\medskip 
In the present paper, we shall develop an entirely different approach that will lead to global well-posedness for all rational initial data, which are shown to form a dense subset in the scaling-critical energy space $\dot{H}^{\frac 1 2}(\R; \Ss^2)$ for (HWM). Our proof will be based on the Lax pair structure on suitable Hardy spaces together with an explicit flow formula for (HWM) akin to the explicit formulae recently found by the first author for the Benjamin--Ono equation. Furthermore, as a byproduct of our analysis, we will also study the long-time behavior of rational solutions, leading to a general result on {\em soliton resolution} in this setting. In particular, this result  yields a rigorous proof of the numerical findings for (HWM) that have been recently presented in \cite{BeKlLa-20}.

\subsection*{Global Well-Posedness for Rational Data}
We consider (HWM) with initial data that are given by rational functions. To this end, we define the set
$$
\mathcal{R}at(\R; \Ss^2) := \left \{ \uu : \R \to \Ss^2 \mid \mbox{$\uu(x)$ is rational in $x \in \R$}  \right \} \, .
$$
Explicit examples of rational maps $\uu : \R \to \Ss^2$ are easily constructed by means of the (inverse) stereographic projection from the extended complex plane $\C \cup \{ \infty \}$ to $\Ss^2$; see Section \ref{sec:target_S2} below for details.

In fact, rational maps play a distinguished role in the analysis of (HWM), as they occur in the complete classification of traveling solitary waves. Furthermore, due to its Lax pair structure (detailed below) and a Kronecker-type theorem for Hankel operators (see Section \ref{sec:spectral} below), another essential feature of (HWM) is that rationality is preserved by the flow, see \cite{GeLe-18}. For any $\uu \in \mathcal{R}at(\R; \Ss^2)$, we readily verify the following properties:
\begin{itemize}
\item Limit: $\displaystyle \lim_{x \to \pm \infty} \uu(x) = \mathbf{p}$ for some unit vector $\mathbf{p} \in \Ss^2$.
\item Smoothness: $\uu \in \dot{H}^\infty = \bigcap_{s >0} \dot{H}^s$. 
\end{itemize}
In addition, we can derive with the following nontrivial fact.

\begin{thm}\label{thm:dense_rational}
$\mathcal{R}at(\R; \Ss^2)$ is a dense subset in $\dot{H}^{\frac 1 2}(\R; \Ss^2)$.
\end{thm}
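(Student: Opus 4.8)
The plan is to prove density of $\mathcal{R}at(\R;\Ss^2)$ in $\dot H^{1/2}(\R;\Ss^2)$ by a truncation-and-mollification argument carried out at the level of the stereographic chart, then corrected back onto the sphere. First I would fix $\uu \in \dot H^{1/2}(\R;\Ss^2)$. Since finite-energy maps $\R\to\Ss^2$ have a well-defined limit $\pp\in\Ss^2$ as $|x|\to\infty$ (this is essentially the first bullet above, or follows from the trace/Sobolev embedding $\dot H^{1/2}(\R)\hookrightarrow \mathrm{BMO}$ together with the finiteness of the Gagliardo seminorm), I may rotate coordinates on $\Ss^2$ so that $\pp$ equals the north pole $\mathbf{e}_3$, and then $\uu$ composed with stereographic projection $\sigma:\Ss^2\setminus\{\mathbf{e}_3\}\to\C$ gives a function $f=\sigma\circ\uu$ which lies in $\dot H^{1/2}(\R;\C)$ (away from the set where $\uu$ is near the pole, where we must be slightly careful, but since $\uu\to\pp$ at infinity one can arrange $|f|$ bounded outside a compact set, and the only genuine issue is that $f$ need not be globally $L^2$ — it is however in $\dot H^{1/2}+L^\infty$, which suffices). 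The point is that $\sigma$ and $\sigma^{-1}$ are smooth with bounded derivatives on the relevant regions, so composition with them is (locally) Lipschitz on $\dot H^{1/2}\cap L^\infty$, and rational functions $f:\R\to\C\cup\{\infty\}$ pull back under $\sigma^{-1}$ precisely to rational maps $\R\to\Ss^2$.

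The core approximation step is then: given $f\in\dot H^{1/2}(\R;\C)$ (with the mild caveat above), approximate it in $\dot H^{1/2}$ by rational functions of $x$. Here I would use the Fourier/Hardy description: $\dot H^{1/2}(\R)=\dot H^{1/2}_+\oplus\dot H^{1/2}_-$ splitting into positive and negative frequency parts, and on the positive-frequency Hardy space $\dot H^{1/2}_+$ the functions $\frac{1}{(x-\bar a)^k}$ with $\Im a>0$ are dense — this is a classical fact (it is the $\dot H^{1/2}$ analogue of the density of rational functions with poles off $\R$ in Hardy–Sobolev spaces, provable by a partial-fraction/Cauchy-integral argument or by noting these span a dense subspace of the weighted sequence space after conformal transplantation to the disk). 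Summing a positive-frequency rational approximant and a negative-frequency one produces a genuinely rational $f_\eps$ with $\|f-f_\eps\|_{\dot H^{1/2}}<\eps$; by a further small perturbation I can also ensure $f_\eps\to f(\pm\infty)$-values that are finite so that $\sigma^{-1}\circ f_\eps$ is well-defined and rational into $\Ss^2$. Truncating to ensure $f_\eps\in L^\infty$ (so that the composition estimate applies) is harmless because one can first replace $f$ by a bounded approximant $\min(|f|,M)\frac{f}{|f|}$ — or, cleaner, simply observe that since $\sigma^{-1}$ extends continuously to $\infty\mapsto\mathbf{e}_3$ and is smooth there in the chart centered at $\mathbf{e}_3$, no boundedness of $f$ is actually needed: the two-chart cover of $\Ss^2$ lets us compose without ever requiring $f$ to avoid $\infty$.

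Finally I would push the approximation back to the sphere: $\uu_\eps:=\sigma^{-1}\circ f_\eps \in \mathcal{R}at(\R;\Ss^2)$, and a chain-rule / composition estimate for $\dot H^{1/2}$ with a Lipschitz map — here the relevant statement is that if $\Phi:\Ss^2\to\Ss^2\subset\R^3$ (really the transition map, or the map $f\mapsto\sigma^{-1}(f)$ read in an atlas) is smooth with bounded derivatives, then $\|\Phi\circ g-\Phi\circ h\|_{\dot H^{1/2}}\lesssim \|g-h\|_{\dot H^{1/2}}$ on a fixed bounded set of $g,h$ — gives $\|\uu-\uu_\eps\|_{\dot H^{1/2}}\to 0$. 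The main obstacle, and the step I would be most careful about, is precisely this composition estimate in the fractional endpoint space $\dot H^{1/2}(\R)$: the algebra property fails at this regularity ($\dot H^{1/2}(\R)\not\hookrightarrow L^\infty$), so one cannot naively multiply, and a genuine argument is needed — typically via the Gagliardo seminorm $\iint |g(x)-g(y)|^2/|x-y|^2\,dx\,dy$, the pointwise bound $|\Phi(g(x))-\Phi(g(y))-\Phi(h(x))+\Phi(h(y))| \lesssim \|\nabla\Phi\|_\infty|g(x)-g(y)-h(x)+h(y)| + \|\nabla^2\Phi\|_\infty(|g(x)-g(y)|+|h(x)-h(y)|)\,|g(x)-h(x)|$, combined with the $L^\infty$ control of $g-h$ that comes from working inside a single chart — or else by interpolation between $\dot H^{1/2\mp\delta}$. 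Making this quantitative, uniformly as $\eps\to0$, is the technical heart of the proof; everything else is bookkeeping with the stereographic atlas and the rotation that normalizes the value at infinity.
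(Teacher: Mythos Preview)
Your approach has a genuine gap at the very first step. You claim that any $\uu\in\dot H^{1/2}(\R;\Ss^2)$ has a well-defined limit $\pp\in\Ss^2$ as $|x|\to\infty$, citing the BMO embedding. This is false: $\dot H^{1/2}(\R)\hookrightarrow\mathrm{BMO}$ does not force the existence of a limit at infinity, and the ``first bullet'' in the paper to which you appeal is a statement about \emph{rational} maps only. Concretely, under the conformal equivalence $\R\to\Ss^1$ the seminorm $\|\cdot\|_{\dot H^{1/2}(\R)}$ becomes $\|\cdot\|_{\dot H^{1/2}(\T)}$, so $\dot H^{1/2}(\R;\Ss^2)$ is identified with $H^{1/2}(\T;\Ss^2)$; but $H^{1/2}(\T)\not\hookrightarrow C^0(\T)$, so the transplanted map need not be continuous at the point corresponding to $\pm\infty$, and hence $\uu$ need not have a limit there. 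Your normalization by rotation and the subsequent stereographic lift $f=\sigma\circ\uu$ therefore make no sense for generic $\uu$. Even granting a limit, your composition step is also not under control: you need continuity of $g\mapsto\sigma^{-1}\circ g$ on $\dot H^{1/2}$, and the pointwise inequality you write requires $\|g-h\|_{L^\infty}$, which you do not have since $\dot H^{1/2}(\R)\not\hookrightarrow L^\infty$.

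The paper avoids both difficulties by a different route. It first moves to the torus via the conformal invariance of the $\dot H^{1/2}$ seminorm, thereby replacing the issue of ``value at infinity'' by the (nonexistent) issue of a single point on $\T$. On $\T$ it invokes Brezis--Nirenberg to reduce to approximating \emph{smooth} maps $\Ub\in C^\infty(\T;\Gr_k(\C^d))$, then writes the associated projector as $\Pb=\Gb(\Gb^*\Gb)^{-1}\Gb^*$ for a smooth periodic frame $\Gb$ (via a result of Sibuya), truncates the Fourier series of $\Gb$ to get trigonometric polynomials $\Gb_N$, and sets $\Pb_N=\Gb_N(\Gb_N^*\Gb_N)^{-1}\Gb_N^*$. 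The point is that this last expression is manifestly rational in $e^{it}$ and takes values in the Grassmannian, and the convergence $\Pb_N\to\Pb$ can be checked in $H^1(\T)$, where the algebra property holds, before descending to $H^{1/2}$. Pulling back to $\R$ by the conformal map preserves rationality and the $\dot H^{1/2}$ seminorm. In short: the paper never needs a limit at infinity, never composes with a chart at the critical regularity, and performs all nonlinear manipulations in $H^1$.
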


\begin{remark*}
Due to the nonlinear constraint of taking values in the unit sphere $\Ss^2$, this density result is far from obvious. For the proof of Theorem \ref{thm:dense_rational}, we refer to Appendix \ref{sec:app_density} below. 
\end{remark*}

Our first main result shows that rational data always lead to unique global-in-time solutions of (HWM).

\begin{thm}[GWP for Rational Data] \label{thm:gwp_S2}
For every $\uu_0 \in \mathcal{R}at(\R; \Ss^2)$, there exists a unique global-in-time solution $\uu \in C(\R; H^\infty_\bullet(\R; \Ss^2))$ of \eqref{eq:HWM} with initial datum $\uu(0) = \uu_0$.
\end{thm}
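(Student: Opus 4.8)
The plan is to exploit the Lax pair structure of (HWM) on Hardy spaces and derive an explicit formula for the flow, in the spirit of the first author's recent work on the Benjamin--Ono equation. The first step is to lift the $\Ss^2$-valued map $\uu$ to an appropriate Hardy-space object: writing $\uu = (u_1, u_2, u_3)$ and forming suitable combinations (e.g. a projection $\Pb(x) = \frac{1}{2}(\id + \uu(x) \cdot \bm{\sigma})$ onto a line in $\C^2$, or the associated Hankel operator with symbol built from $u_1 + \ii u_2$ and $u_3$), one identifies the evolution \eqref{eq:HWM} with a Lax equation $\frac{d}{dt} \Ms = [\Bb, \Ms]$ on a space of operators, where the rationality of $\uu_0$ translates into $\Ms(0)$ being finite rank (a Kronecker-type theorem for Hankel operators, already used in \cite{GeLe-18}). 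Since the Lax flow is a conjugation $\Ms(t) = \Ub(t) \Ms(0) \Ub(t)^*$ by a unitary $\Ub(t)$ solving $\frac{d}{dt}\Ub = \Bb \Ub$, the rank is preserved and, crucially, the spectral data evolve linearly.

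The second step is to turn this into a genuine construction of the solution. On the finite-dimensional manifold of rational maps of a fixed ``degree'' (fixed rank of the associated Hankel/Lax operator), the Lax equations reduce to a finite system of ODEs for the poles and residues — essentially a spin Calogero--Moser system — for which local existence and uniqueness are standard. The key point is to produce an \emph{explicit flow formula} expressing $\uu(t,x)$ in terms of $\uu_0$ through the resolvent of an operator of the form $(\Xs + t \,\Gs - x)^{-1}$ acting on a finite-dimensional space, analogous to the Benjamin--Ono formula. This formula has two payoffs at once: it exhibits the solution for all $t \in \R$ (the resolvent is defined as long as $x$ avoids the finitely many eigenvalues of $\Xs + t\Gs$, and one checks those eigenvalues stay in the appropriate half-plane so that the formula produces a genuine map $\R \to \Ss^2$ with no real-line singularities), and it manifestly lands in $H^\infty_\bullet(\R; \Ss^2)$ since rational functions with no real poles are smooth with all derivatives decaying. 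Uniqueness follows because any $H^\infty_\bullet$ solution has an associated Lax operator obeying the same conjugation dynamics, hence must coincide with the explicit one; alternatively one uses the local well-posedness theory in high Sobolev spaces together with the conservation laws to rule out branching.

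The main obstacle, I expect, is the global-in-time control: showing that the explicit formula never breaks down, i.e. that no pole of the rational solution $\uu(t, \cdot)$ ever collides with the real axis. In the Benjamin--Ono case this is guaranteed by a spectral argument showing the relevant operator has spectrum strictly off $\R$, uniformly in $t$; here the analogous statement must be extracted from the structure of $\Bb$ (self-adjointness of $\Ms$, the precise form of the ``advection'' term $\Gs$ coming from $|D| = \Hil \pt_x$) together with the conservation of the $\dot H^{1/2}$ energy, which bounds the configuration. A secondary technical point is correctly setting up the Hardy-space framework so that the constant limit $\mathbf{p}$ at infinity is handled — one works relative to a fixed reference point on $\Ss^2$, reducing to the genuinely $L^2$-based Hardy space after a rotation and a Cayley-type transform. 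Once the no-collision property and the Lax conjugation are in hand, globality, smoothness, and uniqueness all follow from the explicit formula.
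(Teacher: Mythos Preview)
Your broad strategy matches the paper's: lift to the matrix-valued formulation $\Ub = \uu \cdot \bm{\sigma}$, use the Toeplitz Lax operator $T_{\Ub}$ on the Hardy space $L^2_+(\R; M_2(\C))$, derive an explicit flow formula
\[
\Pi_+ \Vb(t,z) = \frac{1}{2\pi\ii}\, I_+\bigl[(X^* + t\, T_{\Ub_0} - z)^{-1} \Pi_+ \Vb_0\bigr],
\]
and exploit that for rational data the resolvent lives on a finite-dimensional invariant subspace (Kronecker). You also correctly identify the crux: showing $X^* + t\, T_{\Ub_0}$ has no real eigenvalues, so that the poles of the rational solution never reach the real axis.

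The genuine gap is in the mechanism you propose for this no-real-eigenvalue step. You suggest it should come from ``the structure of $\Bb$'' together with ``conservation of the $\dot H^{1/2}$ energy, which bounds the configuration.'' The paper explicitly notes (Remark~3 after Theorem~\ref{thm:gwp_S2}, and again in the Strategy of Proofs) that the conserved Besov-type quantities do \emph{not} give enough control for global existence, and $\Bb$ plays no role at this point. The actual argument (Lemma~\ref{lem:injective}) is a short operator-theoretic one valid for \emph{any} Hermitian symbol $\Wb = \Wb_\infty + \Vb_0$ with $\Vb_0 \in L^2$: if $(X^* + T_\Wb - x)f = 0$ with $x\in\R$, take imaginary parts and use $\Im\langle X^* f, f\rangle = -\frac{1}{4\pi}|I_+(f)|^2$ to get $I_+(f)=0$; then the commutator identity $[X^*, T_\Wb]f = \frac{\ii}{2\pi}\,\Pi_+\Vb_0\cdot I_+(f)$ vanishes, so the kernel is $X^*$-invariant, hence invariant under the adjoint shift semigroup, forcing $\widehat f(\eta) = I_+(S(\eta)^* f) = 0$ for all $\eta\ge 0$. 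No conservation law and no use of the constraint $\Ub^2=\mathds{1}_2$ enter here. That constraint is used \emph{earlier}, to produce the key identity $T_\Ub^2 = \id - H_\Ub^* H_\Ub$, which yields the finite-dimensional subspace $\Hfr_1 = \overline{\ran(H_{\Ub_0}^* H_{\Ub_0})}$ invariant under both $T_{\Ub_0}$ and $X^*$ --- a structural ingredient your sketch does not isolate, and without which one cannot pass from injectivity of $X^*+tT_{\Ub_0}-x$ to invertibility.

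A secondary remark: you mention the spin Calogero--Moser ODE for poles and residues as a parallel route to local existence. The paper explicitly flags (Remark~4 after Theorem~\ref{thm:gwp_S2}) that this approach, pursued in \cite{BeKlLa-20}, does not obviously yield global existence, since the simple-pole ansatz can break down in finite time; the explicit flow formula on $\Hfr_1$ sidesteps this issue entirely.
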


\begin{remarks*} 
1) The global solutions $\uu : \R \times \R \to \Ss^2$ constructed above are of the form
$$
\uu(t) = \uu_\infty + \mathbf{v}(t) \in \Ss^2 + C(\R; H^\infty(\R; \R^3))
$$
with the point $\uu_\infty = \lim_{|x| \to \infty} \uu_0(x) \in \Ss^2$ given by the initial datum $\uu_0 \in \mathcal{R}at(\R; \Ss^2)$. See also below, for the definition of the space $H^\infty_\bullet(\R; \Ss^2)$. 

2) Our result establishes global well-posedness of \eqref{eq:HWM} for initial data belonging to a {\em dense} subset in the scaling-critical energy space $\dot{H}^{\frac 1 2}(\R; \Ss^2)$. Hence any finite-time blowup solution for \eqref{eq:HWM} in the energy space -- provided such solutions exist at all -- must be highly unstable.

3) The  solutions of Theorem \ref{thm:gwp_S2} exhibit an infinite set of conserved quantities 
$$
I_{p}(\uu(t)) = I_{p}(\uu_0) \quad \mbox{for $p \geq 1$}
$$ 
due to the Lax structure for \eqref{eq:HWM}. In particular, we obtain conservation of energy $E(\uu(t)) = E(\uu_0) \sim I_2(u_0)$. As a consequence by Peller's theorem, we obtain the infinite family of a-priori bounds
$$
\| \uu(t) \|_{\dot{B}^{1/p}_p} \lesssim_p \| \uu_0 \|_{\dot{B}^{1/p}_p} \quad \mbox{for $p \geq 1$}
$$
with the homogeneous Besov semi-norms $\| \cdot \|_{\dot{B}^{1/p}_p}$; see Section \ref{sec:lax} for details. However, these bounds do not seem to provide strong enough control to deduce global existence. In this paper, we thus use an entirely different approach based on an explicit flow formula for \eqref{eq:HWM}. 

4) In \cite{BeKlLa-20}, the authors study the dynamics for rational initial data $\uu_0 : \R \to \Ss^2$ with simple poles and derive an self-consistent system of ordinary differential equations of spin Calogero--Moser type. However, by following this approach, it still remains unclear whether such rational solutions can be extended globally in time, since a possible loss of simplicity of poles could arise at finite time, rendering the simple pole ansatz invalid in finite time.   
\end{remarks*}

\subsection*{Soliton Resolution and Non-Turbulence}

As our next main result, we discuss the long-time behavior of rational solutions provided by Theorem \ref{thm:gwp_S2} above. Here a suitable spectral condition will enter the scene as follows. For $\uu \in \mathcal{R}at(\R; \Ss^2)$, we define the {\em Toeplitz operator}  by
$$
T_{\Ub} f = \Pi_+ ( \Ub f) \quad \mbox{for $f \in L^2_+(\R; \C^2)$} \, .
$$
Here $\Pi_+ : L^2(\R; \C^2) \to L^2_+(\R; \C^2)$ is the Cauchy--Szeg\H{o} projection onto the vector-valued Hardy space defined as
$$
L^2_+(\R; \C^2) := \left \{ f \in L^2(\R; \C^2) \mid \mbox{$\mathrm{supp} \, \widehat{f}_k \subset [0, \infty)$ for $k=1,2$} \right \}\, . 
$$
The symbol in $T_\Ub$ is given by the matrix-valued function $\Ub : \R \to \C^{2 \times 2}$ with
\be
\Ub = \uu  \cdot \bm{\sigma} = \sum_{k=1}^3 u_k \sigma_k = \left ( \begin{array}{cc} u_3 & u_1 - \ii u_2 \\ u_1 + \ii u_2 & -u_3 \end{array} \right ) ,
\ee
where $\bm{\sigma}=(\sigma_1, \sigma_2, \sigma_3)$ contains the standard Pauli matrices. For later use, we also remark that, by introducing the matrix-valued function $\Ub = \uu \cdot \bm{\sigma}$, we can equivalently rewrite \eqref{eq:HWM} as 
\be
\pt_t \Ub = -\frac{\ii}{2} [\Ub, |D| \Ub] \, ,
\ee
where $[X,Y] \equiv XY-YX$ is the commutator of matrices; see also \cite{GeLe-18} for a more details on this.

In fact, by recasting \eqref{eq:HWM} in terms of the matrix-valued function $\Ub$, we will be able to fully exploit the Lax structure initially found in \cite{GeLe-18}. Also, we note that $\Ub(x) =\Ub(x)^* \in \C^{2 \times 2}$ takes values in the Hermitian matrices subject to the algebraic constraint that $\Ub(x)^2 = \mathds{1}_2$. As a consequence, the Toeplitz operator $T_\Ub=T_\Ub^*$ is self-adjoint with operator norm $\| T_\Ub \| \leq 1$. Moreover, it turns out that  $T_\Ub$ will be a Lax operator along the flow. Hence its spectrum $\sigma(T_{\Ub(t)})$ will be preserved in time for solutions $\uu(t)$ of \eqref{eq:HWM}. As another key fact, we mention  that the discrete spectrum
$$
\sigma_{\mathrm{d}}(T_\Ub) = \{ \lambda \in \sigma(T_{\Ub}) \mid \mbox{$\lambda$ is isolated and has finite multiplicity} \}
$$
is {\em finite} if and only if the function $\uu : \R \to \Ss^2$ is rational; see Section \ref{sec:spectral} for a detailed discussion of the spectral properties of $T_{\Ub}$ for general $\uu \in \dot{H}^{\frac 1 2}(\R; \Ss^2)$.

Our next main result will prove that simplicity of the discrete spectrum $\sigma_{\mathrm{d}}(T_{\Ub})$ implies scattering of the corresponding global rational solution $\uu \in C(\R; H^\infty_\bullet(\R; \Ss^2))$ into a sum of traveling ground state solitary waves receding from each other, i.\,e., we obtain {\em soliton resolution} in this case. From \cite{LeSc-18} we recall that {\em traveling solitary waves} for \eqref{eq:HWM} are, by definition, finite-energy solutions of the form
\be
\uu_{v}(t,x) = \qv_v(x- vt)
\ee 
with some profile $\qv \in \dot{H}^{\frac{1}{2}}(\R; \Ss^2)$ and where $v \in \R$ corresponds to the traveling velocity. From the complete classification result in \cite{LeSc-18} we recall that the any such profile $\qv_v$ can be expressed in terms of a finite Blaschke product, whence it follows that $\qv_v \in \mathcal{R}at(\R; \Ss^2)$ holds. Moreover, the energy is quantizied according to  
\be \label{eq:E_qv}
E(\qv_v) = (1-v^2) \cdot m \pi \quad \mbox{with some $m=0,1,2, \ldots$}
\ee
where the integer $m$ corresponds to the degree of the Blaschke product. The case $m=0$ corresponds to the trivial case of constant $\qv_v$, whereas for non-constant profiles $\qv_v$ we must have that 
\be
|v| < 1 \, .
\ee
Note also that the special case $v=0$ yields static solutions of \eqref{eq:HWM} and the profiles $\qv_{v=0}$ are then so-called {\em half-harmonic maps,} see also \cite{DaRi-11,MiSi-15}.

 In view of \eqref{eq:E_qv}, we refer to the case $m=1$ as {\em ground state} solitary waves, since these are non-trivial with the least possible energy for a given velocity. From the explicit classification in \cite{LeSc-18} we can deduce that profiles $\qv_v \in \mathcal{R}at(\R; \Ss^2)$ for ground state solitary waves are exactly rational functions of the form
\be
\qv_v(x) = \qv_\infty + \frac{\sv}{x-z} + \frac{\ov{\sv}}{x-\ov{z}}
\ee
with some $\qv_\infty \in \Ss^2$, $z \in \C_-$, and  $\sv \in \C^3 \setminus \{ 0 \}$ satisfying the nonlinear constraints
\be \label{eq:qv_constraints}
\sv \cdot \sv = 0 \quad \mbox{and} \quad \sv \cdot \left ( \qv_\infty + \frac{\ov{\sv}}{z-\ov{z}} \right ) = 0 \, .
\ee
Here $\mathbf{a} \cdot \mathbf{b} = \sum_{k=1}^3 a_k b_k$ denotes the non-Hermitian dot product of $\mathbf{a}, \mathbf{b} \in \C^3$. We remark that  \eqref{eq:qv_constraints} is easily seen to be equivalent (by partial fraction expansion) to the geometric constraint that $\qv_v(x) \in \Ss^2$ for all $x \in \R$. Moreover, the real part $\mathrm{Re} \, z$ corresponds to the spatial center of the solitary wave profile $\qv_v$, whereas $E(\qv_v)= (\sv \cdot \ov{\sv}) \cdot \pi = (1-v^2) \cdot \pi$ yields its energy. For more details on $\qv_v$, we refer to the discussion in Appendix \ref{app:perturbation} below.

We are now ready to state our second main result.

\begin{thm}[Soliton Resolution and Non-Turbulence] \label{thm:soliton_S2}
Let $\uu_0 \in \mathcal{R}at(\R; \Ss^2)$ and suppose the corresponding Toeplitz operator $T_{\Ub_0}  : L^2_+(\R; \C^2) \to L_+^2(\R; \C^2)$ has \textbf{simple} discrete spectrum $\sigma_{\mathrm{d}}(T_{\Ub_0})=\{ v_1, \ldots, v_N \}$. 

Then the corresponding solution $\uu \in C(\R; H^\infty_\bullet(\R; \Ss^2))$ of \eqref{eq:HWM} with initial datum $\uu(0) = \uu_0$ satisfies
$$
\lim_{t \to \pm \infty} \| \uu(t) - \uu^\pm(t) \|_{\dot{H}^s} = 0 \quad \mbox{for all $s > 0$} \, ,
$$
where
$$
\uu^{\pm}(t,x) = \sum_{j=1}^N \qv_{v_j}(x-v_jt) - (N-1) \uu_\infty \, .
$$
Here each $\qv_{v_j} \in \mathcal{R}at(\R; \Ss^2)$ is a profile of a ground state solitary wave for \eqref{eq:HWM} with traveling velocity $v_j$ and it is given by
$$
\qv_{v_j}(x) = \uu_\infty + \frac{\sv_j}{x-y_j+\ii \delta_j} + \frac{\ov{\sv}_j}{x-y_j-\ii \delta_j}
$$
with some complex vectors $\sv_{1}, \ldots, \sv_N \in \C^3 \setminus \{ 0 \}$, some real numbers $y_1, \ldots, y_N \in \R$, some positive real numbers $\delta_1, \ldots, \delta_N  > 0$, and the point $\uu_\infty = \lim_{|x| \to \infty} \uu_0(x) \in \Ss^2$. 

Moreover, we have the a-priori bounds
$$
\sup_{t \in \R} \| \uu(t) \|_{\dot{H}^s} \leq C(\uu_0,s) \quad \mbox{for all $s >0$} \, .
$$ 
\end{thm}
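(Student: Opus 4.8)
The proof would combine the two structural inputs already available: the Lax structure for \eqref{eq:HWM} on the Hardy space $L^2_+(\R;\C^2)$, which keeps the spectrum of $T_{\Ub(t)}$ — in particular the finite, and here simple, discrete set $\{v_1,\dots,v_N\}$ — frozen in time; and the explicit flow formula expressing $\Ub(t)$, hence $\uu(t)$, through the \emph{initial} Lax operator $T_{\Ub_0}$ together with a conjugate generator $\Gs$ (morally the compression to $L^2_+$ of multiplication by $x$, satisfying a finite-rank commutation relation with $T_{\Ub_0}$ whenever $\uu_0$ is rational). Schematically the formula has the shape $\uu(t,x) - \uu_\infty = \mathcal{B}\big((\Gs + t\,T_{\Ub_0} - x)^{-1}\big)$ for a fixed bilinear functional $\mathcal{B}$ of finitely many explicit vectors attached to $\uu_0$ — the half-wave-maps analogue of the first author's explicit formula for Benjamin--Ono, derived in Section~\ref{sec:lax}. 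Thus the whole question reduces to the behavior as $|t|\to\infty$ of the resolvent $(\Gs + t\,T_{\Ub_0} - x)^{-1}$.

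The first step I would carry out is a spectral reduction. Since $\uu_0 \in \mathcal{R}at(\R;\Ss^2)$, the solution stays rational in $x$ with a fixed number of poles confined to a fixed half-plane; writing $T_{\Ub_0} = \sum_{j=1}^N v_j P_j + R$ with each $P_j$ the spectral projection at $v_j$ — \emph{rank one}, precisely because $v_j$ is simple — and $R$ carrying the remaining spectrum, and inserting this into $(\Gs + t\,T_{\Ub_0} - x)^{-1}$, the finite-rank commutation of $\Gs$ and $T_{\Ub_0}$ should show that in the moving window $x = v_j t + y$ with $y$ bounded the resolvent is governed by its $j$-th block, essentially $(\Gs_j - y + \cdots)^{-1}$ with $\Gs_j$ the scalar compression of $\Gs$ to $\ran P_j$, while every block $k\neq j$ contributes poles displaced to distance $|v_k - v_j|\,|t|\to\infty$ whose limiting effect is just the constant $\uu_\infty$. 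Feeding this back into $\mathcal{B}$ should give, near $x = v_j t$, convergence to a single rational profile $\qv_{v_j}(\cdot - v_j t)$ with exactly one conjugate pair of poles — a degree-one Blaschke profile — which by the classification of \cite{LeSc-18} recalled above is exactly a ground-state solitary wave of velocity $v_j$; the parameters $\sv_j, y_j, \delta_j$ would be read off from $P_j$ and the action of $\Gs$ on $\ran P_j$, and the nonlinear constraints on $\sv_j$ forced by the symmetries $\Ub_0^* = \Ub_0$, $\Ub_0^2 = \mathds{1}_2$ of the symbol.

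The second step assembles the $N$ local pictures: outside all of the windows every block of the resolvent is already small, so $\uu(t) - \big(\sum_{j=1}^N \qv_{v_j}(\cdot - v_j t) - (N-1)\uu_\infty\big) \to 0$ as $t \to \pm\infty$, first pointwise and then, after quantifying the pole displacements, in $\dot{H}^s$ for all $s>0$ — with no dispersive tail to control, since for rational data there is no genuine radiation (the ``continuous spectrum'' part of the explicit formula is inert in the reduced finite-dimensional dynamics), so the resolution is into solitons only. The uniform bounds $\sup_{t\in\R}\|\uu(t)\|_{\dot{H}^s}\le C(\uu_0,s)$ would follow from the same formula, the only $t$-dependence sitting in $(\Gs + t\,T_{\Ub_0} - x)^{-1}$, whose relevant matrix elements stay bounded in $t$; consistently, $\|\uu^\pm(t)\|_{\dot{H}^s}\to\sum_{j=1}^N\|\qv_{v_j} - \uu_\infty\|_{\dot{H}^s}$ as $|t|\to\infty$ by translation invariance and the decay of the soliton--soliton cross terms at separation $|v_j - v_k|\,|t|$.

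The hard part will be the quantitative version of the second step: promoting ``the resolvent block-diagonalizes'' to convergence in $\dot{H}^s$ \emph{uniformly in $s>0$}. This needs sharp control of where all the poles of $\uu(t,\cdot)$ go — that they organize into $N$ clusters, each of which (for a simple eigenvalue $v_j$) collapses to a single pole at distance $O(1)$ from $v_j t$ moving at speed $v_j + o(1)$ — together with matching control of the residues, so that the cross-cluster and continuous-spectrum errors decay fast enough to survive multiplication by $|\xi|^{2s}$ for arbitrarily large $s$. A second delicate point is to confirm that the operator-theoretic data produced by $P_j$ and $\Gs$ lands \emph{exactly} on the constrained ground-state manifold $\{\sv\cdot\sv = 0,\ \sv\cdot(\qv_\infty + \ov{\sv}/(z-\ov{z})) = 0\}$, rather than merely close to it.
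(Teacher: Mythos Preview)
Your overall strategy matches the paper's: reduce via the explicit flow formula to the resolvent $(\Gs + tT_{\Ub_0} - x)^{-1}$ restricted to the finite-dimensional invariant subspace $\Hfr_1$, and analyze it as $|t|\to\infty$ to see $N$ receding simple poles. But the paper executes this differently and more precisely than your ``block diagonalization in moving frames'' picture, and the difference matters for exactly the step you flag as hard.

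Rather than restricting to windows $x = v_j t + y$, the paper rescales by $\eps = t^{-1}$ and studies the pencil $\Ms(\eps) = \eps\Gs + \Ts$ on $\Hfr_1$ by \emph{analytic perturbation theory} around $\eps = 0$. Since $\Ts = \Ms(0)$ has simple spectrum $\{v_1,\dots,v_N\}$ (here the paper works with $\C^d$-valued functions, then tensors up to $M_d(\C)$ trivially), one gets analytic eigenvalue branches $v_n(\eps) = v_n + \eps w_n + O(\eps^2)$ with $w_n = \langle \Gs\phi_n,\phi_n\rangle$. The poles of $\Pi_+\Vb(t,\cdot)$ are then exactly $z_n(t) = t\,v_n(t^{-1}) = v_n t + w_n + O(t^{-1})$, so the entire pole geometry is read off from first-order perturbation theory --- no separate clustering argument is needed.

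The genuine gap in your outline is the step you state without proof: that the poles stay at bounded distance from the real axis (equivalently, that the resolvent matrix elements stay bounded in $t$). This is precisely the statement $\mathrm{Im}\,w_n < 0$, i.e.\ $\delta_n = -\mathrm{Im}\,w_n > 0$, and it is \emph{not} automatic. The paper proves it (Lemma~\ref{lem:M_pert}(ii)) by a short contradiction: from $\mathrm{Im}\,\langle \Gs\phi_n,\phi_n\rangle = -\tfrac{1}{4\pi}|I_+(\phi_n)|^2$, vanishing would force $I_+(\phi_n)=0$; the commutator identity $[\Gs,\Ts]\phi_n = \tfrac{\ii}{2\pi}\Pi_+\Vb_0\cdot I_+(\phi_n)$ then gives $[\Gs,\Ts]\phi_n=0$, so $\Gs\phi_n$ is again a $v_n$-eigenvector of $\Ts$; \emph{simplicity} forces $\Gs\phi_n = \alpha\phi_n$, whence $\widehat{\phi}_n(\xi) = A e^{-\ii\alpha\xi}$ with $A\neq 0$, contradicting $I_+(\phi_n)=0$. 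This is the only place simplicity is used essentially, and it is what delivers both the well-defined ground-state profiles ($\delta_j>0$) and the uniform $\dot{H}^s$ bounds. Your proposal does not contain this argument or an alternative to it.

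Two smaller points. First, the profiles are verified to be traveling solitary waves not via the abstract classification alone, but by extracting from $T_{\Ub_0}\phi_n = v_n\phi_n$ (after applying $I_+$) a pair of algebraic identities for $\Ub_\infty$, $I_+(\phi_n)$ and the coefficient vector $\eta_n$, from which the constraints $A_n^2=0$, $\rank A_n=1$ and the profile equation $-2\ii v_n \Qv_{v_n}' = [\Qv_{v_n},|D|\Qv_{v_n}]$ follow by direct computation; the energy then equals $(1-v_n^2)\pi$, which by \cite{LeSc-18} identifies each as a ground state. Second, that the number of nontrivial profiles equals $N$ (and not fewer) is not immediate from the formula --- the paper closes this by a rank count using the Kronecker-type theorem and the Lax conservation of $\rank K_{\Ub}$.
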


\begin{remarks*}
1) Obtaining a-priori bounds on all higher Sobolev norms $\| \uu(t) \|_{\dot{H}^s}$ is a remarkable fact, since the infinite hierarchy of conservation laws given by the Lax structure for \eqref{eq:HWM} only provides a-priori control over the weaker homogeneous Besov norms $\| \uu(t) \|_{\dot{B}^{1/p}_{p}}$ for $1 \leq p < \infty$. The latter fact follows from Peller's theorem applied to the Hankel operator $H_{\Ub}$ and the conserved quantities given by the operator traces $\Tr(|H_{\Ub}|^p)$; see  \cite{GeLe-18} for more details.

2) Note that the scattering profile $\uu^{\pm}(t)$ is the same for both $t \to -\infty$ and $t \to +\infty$, which can be seen as {\em triviality of the scattering map} in this setting.

3) It would be interesting to prove or disprove the existence of rational initial data $\uu_0$ leading to turbulent behaviour in the sense of growth of higher Sobolev norms such that $\| \uu(t) \|_{\dot{H}^s} \to +\infty$ as $t \to \infty$ for some $s > \frac 1 2$. Of course, the discrete spectrum $\sigma_{\mathrm{d}}(T_{\Ub_0})$ for such data must have degenerate eigenvalues. 

4) It is interesting to compare our result to other completely integrable equations with a Lax pair structure on Hardy spaces: In \cite{GeLe-24}, turbulent rational global-in-time solutions have been constructed for the {\em Calogero--Moser derivative NLS} on the real line. For the {\em cubic Szeg\H{o} equation} on the real line, turbulent rational solutions have been proven to exist in \cite{GePu-24b} along with their genericity. 
\end{remarks*}

We conclude this subsection by establishing that the spectral assumption in Theorem \ref{thm:soliton} for the Toeplitz operator $T_{\Ub_0}$ holds on a dense subset in  $\dot{H}^{\frac 1 2}(\R; \Ss^2)$, thereby showing that the soliton resolution above holds on a dense subset in the energy space.

\begin{thm} \label{thm:generic}
The subset
$$
\mathcal{R}at_{\mathrm{s}}(\R;\Ss^2) := \left \{ \uu \in \mathcal{R}at(\R; \Ss^2) \mid \mbox{$\sigma_{\mathrm{d}}(T_{\Ub})$ {\em is simple}} \right \}
$$
is dense in $\dot{H}^{\frac 1 2}(\R; \Ss^2)$.
\end{thm}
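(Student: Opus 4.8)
The plan is to reduce the statement to a perturbation argument inside the already-established dense subset $\mathcal{R}at(\R;\Ss^2)$. By Theorem \ref{thm:dense_rational} it suffices to show that $\mathcal{R}at_{\mathrm{s}}(\R;\Ss^2)$ is dense in $\mathcal{R}at(\R;\Ss^2)$ with respect to the $\dot{H}^{1/2}$-topology; in fact I would prove the stronger statement that every $\uu \in \mathcal{R}at(\R;\Ss^2)$ can be approximated in $\dot{H}^s$ for all $s>0$ by elements of $\mathcal{R}at_{\mathrm{s}}(\R;\Ss^2)$. The first step is to fix $\uu \in \mathcal{R}at(\R;\Ss^2)$, set $\Ub = \uu \cdot \bm\sigma$, and recall from Section \ref{sec:spectral} (which the excerpt refers to) that $T_{\Ub}$ has finite discrete spectrum and that the discrete eigenvalues lie in $(-1,1)$ away from the essential spectrum $\{\pm 1\}$ (or a suitable analogue thereof). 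The task is then to perturb $\uu$ slightly, staying inside rational $\Ss^2$-valued maps, so that the finitely many degenerate eigenvalues of $T_{\Ub}$ split into simple ones.

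The key technical ingredient is an effective finite-dimensional model for the discrete spectrum. Since $\uu$ is rational, $\Ub - \uu_\infty\cdot\bm\sigma$ has entries that are rational functions vanishing at infinity, and the Hankel/Toeplitz machinery of \cite{GeLe-18} together with a Kronecker-type theorem expresses the nonzero/discrete part of the spectral problem for $T_{\Ub}$ in terms of a finite-rank operator acting on a finite-dimensional model space $\KK \subset L^2_+(\R;\C^2)$ determined by the poles of $\uu$. Concretely, I would write $T_{\Ub} = T_{\uu_\infty\cdot\bm\sigma} + R$ where $R$ is finite rank, diagonalize the problem on $\KK$, and realize the discrete eigenvalues of $T_{\Ub}$ as eigenvalues of an explicit Hermitian matrix $M(\uu)$ depending real-analytically (indeed rationally) on the finitely many parameters (pole locations, residue vectors, constrained to encode an $\Ss^2$-valued map) describing $\uu$. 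Simplicity of $\sigma_{\mathrm d}(T_{\Ub})$ becomes the condition that the discriminant of $M(\uu)$ — a real-analytic, not identically zero function of the parameters — is nonzero. Non-identical vanishing is checked by exhibiting one rational map with a simple-spectrum Toeplitz operator, e.g.\ a single ground-state solitary wave profile $\qv_v$, whose Toeplitz operator by the Lax correspondence with traveling waves has the single simple eigenvalue $v$, or more generally a well-separated superposition of $N$ such profiles with distinct velocities.

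Having set this up, the argument concludes as follows. Parametrize a neighborhood of $\uu$ inside $\mathcal{R}at(\R;\Ss^2)$ by a finite-dimensional chart (e.g.\ via inverse stereographic projection, writing $\uu$ through a pair of polynomials, as in Section \ref{sec:target_S2}), so that the map from parameters to $\dot{H}^s$ is continuous for every $s$. The set of parameters for which the discriminant of $M(\uu)$ vanishes is a proper real-analytic subvariety, hence has empty interior and its complement is dense; picking parameters in this complement arbitrarily close to those of $\uu$ yields $\uu_\eps \in \mathcal{R}at_{\mathrm s}(\R;\Ss^2)$ with $\uu_\eps \to \uu$ in $\dot{H}^s$ for all $s>0$, and in particular in $\dot{H}^{1/2}$. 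Combined with Theorem \ref{thm:dense_rational}, this gives density of $\mathcal{R}at_{\mathrm s}(\R;\Ss^2)$ in $\dot{H}^{1/2}(\R;\Ss^2)$. The main obstacle I anticipate is bookkeeping the $\Ss^2$-constraint: the perturbation must remain $\Ss^2$-valued and rational, so one cannot perturb the entries of $\Ub$ freely; one must either perturb in the stereographic chart (where the constraint is automatic but the dependence of $M(\uu)$ on the chart parameters must be tracked) or perturb within the residue/pole data subject to the quadratic constraints $\sv\cdot\sv=0$ etc., and check that the admissible perturbations are still rich enough to move the discriminant off zero. Verifying that this restricted family of perturbations suffices — i.e.\ that the discriminant does not vanish identically on the constraint manifold — is the crux, and it is handled precisely by the explicit simple-spectrum example combined with irreducibility/analyticity of the constraint manifold.
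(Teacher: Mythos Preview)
Your proposal is correct and follows essentially the same route as the paper: stereographic parametrization of rational maps with $\rank(K_\Ub)=N$ by a connected open set $\mathcal{A}_N\subset\C^{2N}$, real-analyticity of the discriminant of the finite-dimensional model for $\sigma_{\mathrm d}(T_\Ub)$, and an explicit well-separated multi-soliton construction (the paper's Lemma~\ref{lem:T_simple_exist}) to verify the discriminant is not identically zero. One small correction: the operator $T_\Ub - T_{\uu_\infty\cdot\bm\sigma}=T_\Vb$ is \emph{not} finite rank---what is finite rank for rational $\uu$ is $K_\Ub=\id-T_\Ub^2$, and the paper accordingly works with the characteristic polynomial of $K_\Ub|_{\Hfr_1}$ (via the Plemelj--Smithies formula), which is equivalent for detecting simplicity since $T_\Ub^2$ simple implies $T_\Ub$ simple.
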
 

We remark that the nonlinear constraint of taking values in $\Ss^2$ poses serious challenges when proving this density result. Also, the reader should avoid the fallacy of claiming that rational functions $\uu \in \mathcal{R}at(\R; \Ss^2)$ with simple poles will always lead to simple discrete spectrum $\sigma_{\mathrm{d}}(T_\Ub)$. We refer to Section \ref{sec:spectral} for more details.

\bigskip
\begin{center}
***
\end{center}
\medskip

\subsection*{Generalized Half-Wave Maps Equation}
We now discuss a natural geometric generalization of (HWM) beyond the target $\Ss^2$. The reader who is mainly interested in the $\Ss^2$-valued case may skip this subsection at first reading.
 
 For a given integer $d \geq 2$, we let $M_d(\C) \equiv \C^{d \times d}$ denote the vector space of complex $d \times d$-matrices. For matrix-valued maps $\Ub : [0,T) \times \R \to M_d(\C)$, we introduce the \textbf{generalized half-wave maps equation} given by
\be \label{eq:HWMd} \tag{HWM$_d$}
\pt_t \Ub = -\frac{\ii}{2} [\Ub, |D| \Ub] \, ,
\ee
subject to the initial condition $\Ub(0) = \Ub_0 : \R \to M_d(\C)$ satisfying the algebraic constraints such that
\be \label{eq:consU}
\Ub_0(x) = \Ub_0(x)^* \quad \mbox{and} \quad \Ub_0(x)^2 = \mathds{1}_d \quad \mbox{for a.\,e.~$x \in \R$} \, .
\ee 
We readily check that these properties of $\Ub_0$ are formally preserved along the flow of \eqref{eq:HWMd}. At this point, we also mention that \eqref{eq:HWMd} can be formally seen the zero-dispersion limit of the so-called {\em spin Benjamin--Ono equation} recently introduced in \cite{BeLaLe-22}; see also below for further remarks on this. 

The matrix-valued generalization of (HWM) above also has a straightforward geometric meaning as follows. Let $\Gr_k(\C^d)$ denote the {\em complex Grassmannian} consisting of the $k$-dimensional subspaces of the complex vector space $\C^d$. We recall that $\Gr_k(\C^d)$ can be canonically identified with the space of self-adjoint projections $P =P^* \in M_d(\C)$ with $\mathrm{rank}(P)=k$. Since $\mathrm{Tr}(P)=\mathrm{rank}(P)$ for such projections $P$, we find
\be
\Gr_k(\C^d) = \left \{ P \in M_d(\C) \mid \mbox{$P^*=P = P^2$ and $\Tr(P)=k$} \right \}.
\ee
We remark that $\Gr_k(\C^d)$ is a compact submanifold of real dimension $2k(n-k)$ embedded in $M_d(\C)$. In fact, we have that $\Gr_k(\C^d)$ is a compact complex K\"ahler manifold, see also below.

Thanks to the elementary affine relation 
\be \label{eq:AP}
U= \mathds{1}_d - 2 P \, ,
\ee
 we obtain the natural identification of the complex Grassmannians such that
\be \label{eq:Gr_k}
\Gr_k(\C^d) \cong \left \{ U \in M_d(\C) \mid \mbox{$U=U^*, \, U^2 = \mathds{1}_d$ and $\Tr(U) = d- 2k$} \right \}
\ee
for all $k=0, \ldots, d$.  With the simple relation \eqref{eq:AP} in mind, we will use the slight abuse of notation and identify elements in the right-hand side in \eqref{eq:Gr_k} as elements in $\Gr_k(\C^d)$ in what follows.  Moreover, we will throughout our discussion also include the trivial cases when $k=0$ or $k= d$ corresponding to $\{ \mathds{1}_d \}$ or $\{-\mathds{1}_d \}$, respectively.

In addition to the constraints \eqref{eq:consU}, it is easy to see that the matrix trace $\mathrm{Tr}(\Ub(t,x))$ is formally preserved in time along the flow of \eqref{eq:HWMd}. Hence we can view solutions of \eqref{eq:HWMd} as maps 
$$
\Ub : [0,T) \times \R \to \Gr_k(\C^d) \, ,
$$ 
provided that the initial condition $\Ub_0 : \R \to M_d(\C)$ satisfies the pointwise condition
\be \label{eq:trace}
\Tr(\Ub_0(x))= d-2k \quad \mbox{for a.\,e.~$x \in \R$}
\ee
in addition to the constraints \eqref{eq:consU} above. 

\begin{remarks*}
1) For $d=2$ and $k=1$, we see that \eqref{eq:HWMd} reduces to \eqref{eq:HWM} in accordance with the classical fact that $\Gr_1(\C^2) \cong \mathbb{CP}^1 \cong \Ss^2$. 

2) For general $d \geq 2$ and $k=1$, we recall that $\Gr_1(\C^d) \cong \mathbb{CP}^{d-1}$. In particular, our global well-posedness result below will apply to the generalized half-wave maps equation with target being the complex projective spaces $\mathbb{CP}^{d-1}$ for any $d \geq 2$.  
\end{remarks*}

We will prove that \eqref{eq:HWMd} also possess a Lax structure on suitable $L^2$-based Hardy spaces, which will be discussed in Section \ref{sec:lax} below. For $d\geq 2$ and $0 \leq k \leq d$ given,  we observe that the natural energy space for \eqref{eq:HWMd} reads
$$
\dot{H}^{\frac 1 2}(\R; \mathbf{Gr}_k(\C^d))  := \left \{ \Ub \in \dot{H}^{\frac 1 2}(\R; M_d(\C)) \mid \mbox{$\Ub(x) \in \mathbf{Gr}_k(\C^d)$ for a.\,e.~$x \in \R$} \right \} \, 
$$
equipped with the natural Gagliardo semi-norm $\| \cdot \|_{\dot{H}^{\frac 1 2}}$ whose square is  (up to a multiplicative constant)  the energy functional for \eqref{eq:HWMd} given by
\be
E(\Ub) = \frac{1}{2} \| \Ub \|_{\dot{H}^{\frac 1 2}}^2 = \frac{1}{4 \pi} \int_{\R} \int_{\R} \frac{ | \Ub(x)- \Ub(y)|_F^2}{|x-y|^2} \, dx \, dy \, .
\ee
Here $| A |_F = \sqrt{\Tr(A^* A)}$ denotes the natural Frobenius norm for matrices $A \in M_d(\C)$.

In analogy to our analysis of \eqref{eq:HWM}, we define the set 
$$
\mathcal{R}at(\R; \mathbf{Gr}_k(\C^d)) := \left \{ \Ub : \R \to \mathbf{Gr}_k(\C^d) \mid \mbox{$\Ub(x)$ is rational} \right \}  .
$$
We have the following global well-posedeness result about \eqref{eq:HWMd} for rational initial data, which includes Theorem \ref{thm:gwp_S2} as a special case.

\begin{thm}[GWP of \eqref{eq:HWMd} for Rational Data] \label{thm:gwp}
Let $d \geq 2$ and $0 \leq k\leq d$ be integers. Then, for every initial datum $\Ub_0 \in \mathcal{R}at(\R; \Gr_k(\C^d))$, there exists a unique global-in-time solution $\Ub \in C(\R; H^\infty_\bullet(\R; \Gr_k(\C^d)))$ of \eqref{eq:HWMd} with  $\Ub(0) = \Ub_0$.
\end{thm}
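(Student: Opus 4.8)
The plan is to reduce Theorem~\ref{thm:gwp} for the matrix-valued equation \eqref{eq:HWMd} to an abstract well-posedness statement for an explicit flow formula on a finite-dimensional manifold of rational functions, exactly as in the proof of Theorem~\ref{thm:gwp_S2}, and then to check that the three constraints \eqref{eq:consU} and \eqref{eq:trace} are preserved. The first step is to set up the Lax structure for \eqref{eq:HWMd} on the vector-valued Hardy space $L^2_+(\R;\C^d)$: rewriting $\Ub = \mathds{1}_d - 2\Pb$ with $\Pb$ a projection-valued map, one shows that the Toeplitz operator $T_\Ub$ (resp.\ the Hankel operator $H_\Ub$) is a Lax operator, so that a suitable unitary conjugation $W(t)$ solves $\partial_t W = \Bb_\Ub W$ with $\Bb_\Ub$ the appropriate antisymmetric generator built from the symbol $\Ub$. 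This is the matrix analogue of the $d=2$ construction; the algebraic identities $\Ub^*=\Ub$ and $\Ub^2=\mathds{1}_d$ are used exactly as in the scalar case to get self-adjointness and the bound $\|T_\Ub\|\le 1$.

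Next I would establish the explicit flow formula: for rational data $\Ub_0$, the solution is given by a closed-form expression of the type
\be
\Ub(t) = \Ub_\infty + \text{(rational correction built from the resolvent of a finite matrix evolving linearly in }t\text{)},
\ee
the analogue of G\'erard's Benjamin--Ono formula and of the $\Ss^2$ formula used to prove Theorem~\ref{thm:gwp_S2}. Concretely, one encodes $\Ub_0 \in \mathcal{R}at(\R;\Gr_k(\C^d))$ through the finitely many poles (in $\C_-$) and the associated residue data, or equivalently through the finite-rank Hankel operator $H_{\Ub_0}$ and its Kronecker-type model; the flow acts by an explicit conjugation/shift on this finite data, so $\Ub(t)$ is rational for all $t$, stays in $\dot H^\infty$, and depends smoothly on $t$. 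Local existence and uniqueness in $C(I;H^\infty_\bullet(\R;M_d(\C)))$ follow by a standard fixed-point/ODE argument on the finite-dimensional parameter space (the poles and residues satisfy a locally Lipschitz ODE), and globality follows because the explicit formula never degenerates: the poles stay in $\C_-$ for all $t\in\R$ since the imaginary parts are controlled by the conserved spectrum of the Lax operator (no finite-time pole collision with the real axis, in contrast to the naive simple-pole ansatz of \cite{BeKlLa-20}).

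The remaining point is that the three pointwise constraints are propagated. Since \eqref{eq:HWMd} is of the commutator form $\partial_t\Ub = -\tfrac{\ii}{2}[\Ub,|D|\Ub]$, a direct computation gives $\partial_t(\Ub-\Ub^*) = -\tfrac{\ii}{2}\big([\Ub,|D|\Ub]+[\Ub^*,|D|\Ub^*]\big)$ and $\partial_t(\Ub^2-\mathds{1}_d)=-\tfrac{\ii}{2}\big(\Ub[\Ub,|D|\Ub]+[\Ub,|D|\Ub]\Ub\big)$, and $\partial_t\Tr\Ub = -\tfrac{\ii}{2}\Tr[\Ub,|D|\Ub]=0$; combined with the smoothness and spatial decay of the explicit solution ($\Ub(t)-\Ub_\infty\in H^\infty$), a Gr\"onwall argument in $H^\infty$ shows that if $\Ub_0^*=\Ub_0$, $\Ub_0^2=\mathds{1}_d$, $\Tr\Ub_0=d-2k$ then these hold for all $t$. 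Uniqueness in the stated class follows from uniqueness of the associated Lax/Hankel data together with the fact that any $H^\infty_\bullet$ solution has finite-rank Hankel operator obeying the same linear evolution.

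The main obstacle I expect is \emph{globality}, i.e.\ ruling out finite-time breakdown of the explicit rational formula: one must prove that the finite matrix governing the poles stays invertible (equivalently, the poles stay off $\R$) for all real $t$, and that the rank/multiplicity structure of $H_{\Ub(t)}$ does not jump. This is handled by identifying the relevant quantities with spectral invariants of the Lax operator $T_{\Ub(t)}$ (whose spectrum is $t$-independent), so that the imaginary parts of the poles are bounded below uniformly in $t$; the passage from the scalar $d=2$ proof to general $d\ge2$ and $0\le k\le d$ is essentially notational once the Lax structure and the Kronecker model are in place, the only genuine care being the non-commutativity of the matrix residues, which is absorbed by working with operator (rather than scalar) resolvents throughout.
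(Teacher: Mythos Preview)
Your outline captures the overall architecture (Lax structure, explicit flow formula, finite-dimensional reduction via Kronecker), but the crucial step---why the explicit formula never degenerates---is misidentified, and this is precisely the heart of the proof.

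You claim globality holds because ``the imaginary parts of the poles are bounded below uniformly in $t$'' via ``spectral invariants of the Lax operator $T_{\Ub(t)}$.'' This is not how it works, and in fact no such uniform-in-$t$ bound is proved (or needed) for global existence. The poles of $\Pi_+\Vb(t,\cdot)$ are the eigenvalues of the \emph{non-self-adjoint} operator $M(t)=X^*+tT_{\Ub_0}$ restricted to the finite-dimensional invariant subspace $\Hfr_1$; these are \emph{not} spectral invariants of the Lax flow. What one actually needs is the separate lemma (Lemma~\ref{lem:injective}) that $X^*+T_{\Wb}$ has no real eigenvalue for any Hermitian symbol $\Wb=\Wb_\infty+\Vb_0$ with $\Vb_0\in L^2$. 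The proof is a short but essential computation: if $(X^*+T_{\Wb}-x)f=0$ with $x\in\R$, then taking imaginary parts and using $\Im\langle X^*f,f\rangle=-\tfrac{1}{4\pi}|I_+(f)|^2$ forces $I_+(f)=0$, whence $[X^*,T_{\Wb}]f=0$ and an invariant-subspace argument gives $f=0$. This has nothing to do with the conserved spectrum of $T_{\Ub}$. Once $\sigma(M(t))\cap\R=\emptyset$ for each $t$, continuity gives $\mathrm{dist}(\sigma(M(t)),\R)\ge\delta(T)>0$ on each compact interval $[0,T]$, and that (not a global-in-$t$ bound) is what feeds into the $H^s$ a-priori estimate.

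A second gap: you say the constraint $\Ub^2=\mathds{1}_d$ is used ``to get self-adjointness and the bound $\|T_\Ub\|\le1$,'' and that the passage to general $d$ is ``essentially notational.'' This undersells the role of $\Ub^2=\mathds{1}_d$: it is what produces the key identity $T_{\Ub}^2=\id-H_\Ub^*H_\Ub$ (Lemma~\ref{lem:key}), which is exactly why $\Hfr_1=\overline{\ran(K_{\Ub_0})}$ is simultaneously invariant under $T_{\Ub_0}$ and under the adjoint Lax--Beurling semigroup (hence under $X^*$). Without this identity the resolvent $(X^*+tT_{\Ub_0}-z)^{-1}$ does not restrict to a finite-dimensional block, and the paper explicitly contrasts this with the dispersionless scalar Benjamin--Ono case, where the analogous explicit formula \emph{does} break down in finite time. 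So the constraint is not cosmetic; it is the mechanism that makes the finite-dimensional reduction and hence the globality argument work. (Note also that the explicit formula lives on $L^2_+(\R;M_d(\C))$, not $L^2_+(\R;\C^d)$, because one needs $\Pi_+\Vb_0\in\Hfr_1$, which is Proposition~\ref{prop:king}.)
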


Generalizing the density result in Theorem \ref{thm:dense_rational_Gr}, we have the following result proven in Appendix \ref{sec:app_density}.

\begin{thm} \label{thm:dense_rational_Gr}
For every $d\geq 2$ and $0 \leq k \leq d$, the subset $\mathcal{R}at(\R; \Gr_k(\C^d))$ is dense in $\dot{H}^{\frac 1 2}(\R; \Gr_k(\C^d))$.
\end{thm}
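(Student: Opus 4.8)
The plan is to reduce Theorem \ref{thm:dense_rational_Gr} to the scalar-like density statement of Theorem \ref{thm:dense_rational} by exploiting the local geometry of the Grassmannian $\Gr_k(\C^d)$ as an embedded submanifold of $M_d(\C)$. First I would fix $\Ub \in \dot H^{1/2}(\R; \Gr_k(\C^d))$ that we wish to approximate. Since $\Ub$ takes values in a compact manifold of bounded diameter, and since rational (in particular smooth with limits at $\pm\infty$) maps into $\Gr_k(\C^d)$ are dense in $C^\infty_c$-perturbations of constants, the first reduction is to assume $\Ub$ is already smooth, constant outside a compact set, and equal to some fixed base point $P_\infty \in \Gr_k(\C^d)$ near $x = \pm\infty$; this uses a standard mollification together with the fact that $\dot H^{1/2}$-norm is controlled by the Gagliardo double integral and that the nonlinear projection onto $\Gr_k(\C^d)$ (nearest-point retraction in a tubular neighborhood) is smooth and hence locally Lipschitz on $M_d(\C)$, so composing a good linear approximant with this retraction keeps it on the manifold while only mildly distorting the $\dot H^{1/2}$-norm. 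The point is that $\Gr_k(\C^d)$, like $\Ss^2$, is a compact homogeneous space, so the machinery used for the $\Ss^2$ case in Appendix \ref{sec:app_density} transfers.

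Next, with $\Ub$ smooth and equal to $P_\infty$ outside a compact interval, I would cover the compact image $\Ub(\R) \subset \Gr_k(\C^d)$ by finitely many coordinate charts in which $\Gr_k(\C^d)$ is locally parametrized by a graph: near any $P_0$, writing $\C^d = V_0 \oplus V_0^\perp$ with $V_0 = \ran P_0$, every nearby $k$-plane is the graph of a linear map $Z : V_0 \to V_0^\perp$, and the associated orthogonal projector $P(Z) = \binom{I}{Z}(I + Z^*Z)^{-1}(I \ Z^*)$ (in block form) is a rational function of the matrix entries of $Z$. Using a partition of unity subordinate to a finite subcover of a Lebesgue-number-sized decomposition of $\R$, I would approximate each coordinate piece $Z(x)$ by a rational $\C^{(d-k)\times k}$-valued function — here the scalar Kronecker-type density of rational functions in $\dot H^{1/2}(\R;\C)$ (the linear analogue underlying Theorem \ref{thm:dense_rational}) applies componentwise — and then push forward through the rational map $Z \mapsto P(Z)$. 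Since $P(\cdot)$ is smooth (rational with non-vanishing denominator on a neighborhood) it is locally Lipschitz as a map between the relevant Sobolev spaces on a bounded domain, so the rational approximant $Z_\eps$ yields a rational $\Gr_k(\C^d)$-valued approximant $P(Z_\eps)$ close in $\dot H^{1/2}$; matching the value $P_\infty$ at infinity is arranged by choosing the chart containing $P_\infty$ last and using a rational cutoff that is exactly $P_\infty$ outside a large interval.

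Finally I would glue the local rational approximants. The subtlety is that a convex combination (partition of unity) of projectors is not a projector, so the gluing must again be done through the retraction map: one forms the linear combination in $M_d(\C)$, which lies in a small $\dot H^{1/2}$-neighborhood of the manifold-valued map $\Ub$ hence has image in the tubular neighborhood, then applies the smooth nearest-point projection $\Pi : \mathcal{N}(\Gr_k(\C^d)) \to \Gr_k(\C^d)$, which is rational in the entries (it is algebraic, given by spectral truncation of a symmetric matrix) — or, if preferred, uses the explicit Riccati-type formula to correct to an exact projector. The composition of rational functions is rational, so the output stays in $\mathcal{R}at(\R;\Gr_k(\C^d))$, and the Lipschitz bound on $\Pi$ together with the triangle inequality gives the desired $\dot H^{1/2}$-closeness.

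I expect the main obstacle to be the same one flagged in the remark after Theorem \ref{thm:dense_rational}: the \emph{nonlinear constraint}. Concretely, the delicate point is ensuring that every step — mollification, coordinate truncation, partition-of-unity gluing — produces a map that literally takes values in $\Gr_k(\C^d)$ (not just near it) while keeping precise quantitative control of the Gagliardo semi-norm; this forces all approximations to be composed with a genuinely manifold-valued retraction, and one must verify that this retraction is (a) rational, so that rationality is preserved, and (b) Lipschitz on the relevant scale-invariant function space over bounded intervals, so that errors do not blow up. Handling the behavior at $x = \pm\infty$ — where $\dot H^{1/2}$ does not see constants but the rational structure must still match a single base point — is the other technical point, resolved by fixing the base point early and working relative to it throughout. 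Since this is largely parallel to the $\Ss^2$ argument in Appendix \ref{sec:app_density}, with $\Ss^2$ replaced by the homogeneous space $\Gr_k(\C^d)$ and the stereographic chart replaced by the graph/Riccati chart, I would organize the proof as a direct generalization, isolating the two genuinely new inputs (rationality and Lipschitz bounds for the Grassmannian retraction, and the block-matrix graph parametrization) as separate lemmas.
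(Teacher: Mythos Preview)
Your local ingredient is correct and is exactly what the paper uses: the formula $\Gb(\Gb^*\Gb)^{-1}\Gb^*$ (your $P(Z)$ in graph coordinates) is rational and lands in $\Gr_k(\C^d)$. The gap is in the gluing. The nearest-point retraction $\Pi$ onto $\Gr_k(\C^d)$ is spectral truncation of a Hermitian matrix (send eigenvalues above $1/2$ to $1$, those below to $0$), which is algebraic but \emph{not} rational in the matrix entries---already for $2\times 2$ matrices it involves a square root of the discriminant. So forming a partition-of-unity combination and then applying $\Pi$ destroys rationality, and the argument fails at precisely the step you flagged as delicate. Your fallback ``Riccati-type formula to correct to an exact projector'' is not specified; you would need to exhibit a genuinely rational retraction from the tubular neighborhood onto $\Gr_k(\C^d)$, and spectral truncation is not one. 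Separately, a ``rational cutoff that is exactly $P_\infty$ outside a large interval'' cannot exist, since a rational function constant on an interval is globally constant.

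The paper avoids gluing altogether. It first transfers to the torus $\T$ via conformal invariance of the $\dot H^{1/2}$ seminorm under stereographic projection, reducing to density of $\mathcal{R}at(\T;\Gr_k(\C^d))$ in $H^{1/2}(\T;\Gr_k(\C^d))$. On $\T$ it reduces to smooth $\Ub$ by the Brezis--Nirenberg density result, then invokes a theorem of Sibuya to obtain a \emph{global} smooth frame $\Gb:\T\to\C^{d\times k}$ for the range bundle of $\Pb=\tfrac12(\mathds{1}_d-\Ub)$, so that $\Pb=\Gb(\Gb^*\Gb)^{-1}\Gb^*$ holds everywhere with no charts. Fourier-truncating $\Gb$ gives a trigonometric polynomial $\Gb_N$ (rational in $e^{\ii t}$), and the same formula yields rational $\Pb_N\to\Pb$ in $H^1\subset H^{1/2}$. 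The global frame over the one-dimensional base is the missing idea: it replaces your local charts and removes the need for any retraction.
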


\begin{remark*}
The reader may wonder about finding explicit elements in $\mathcal{R}at(\R; \Gr_k(\C^d))$. Indeed, in the case $\Gr_1(\C^d) \cong \mathbb{CP}^{d-1}$, we can easily construct rational maps as follows. Let $P_1, \ldots, P_d \in \C[X]$ be polynomials such that $f(x) := (P_1(x), \ldots, P_d(x)) \in \C^d \setminus \{ 0 \}$ for all $x \in \R$. Evidently, the map $P : \R \to M_d(\C)$ with
$$
P(x) := \frac{f(x) \ov{f}(x)^t}{\langle f(x),  f(x) \rangle_{\C^d}} 
$$ 
satisfies $P(x)=P(x)^*=P(x)^2$ with $\Tr(P(x)) \equiv 1$. Thus $\Ub(x) = \mathds{1}_d - 2 P(x)$ belongs to $\mathcal{R}at(\R; \Gr_1(\C^d))$. 
\end{remark*}

Next, we will extend Theorem \ref{thm:soliton_S2} to the setting of half-wave maps with target $\Gr_k(\C^d)$. Here, for a given initial datum $\Ub_0 \in \mathcal{R}at(\R; \Gr_k(\C^d))$, the corresponding Toeplitz operator $T_{\Ub_0} : L^2_+(\R; \C^d) \to L^2_+(\R;\C^d)$ is analogously defined via $T_{\Ub_0} f = \Pi_+(\Ub_0 f)$. Furthermore, the notion of traveling solitary waves for \eqref{eq:HWMd} is defined in the obvious manner: We say that a finite-energy solution to \eqref{eq:HWMd} of the form
$$
\Ub_v(t,x) = \Qv_v(x-vt)
$$
is a {\em traveling solitary wave} with profile $\Qv_v \in \dot{H}^{\frac 1 2}(\R; \Gr_k(\C^d))$ and velocity $v \in \R$. We have the following result.

\begin{thm}[Soliton Resolution and Non-Turbulence for \eqref{eq:HWMd}] \label{thm:soliton}
Let $d \geq 2$ and $0 \leq k \leq d$ be given. Suppose that $\Ub_0 \in \mathcal{R}at(\R; \Gr_k(\C^d))$ and that its Toeplitz operator $T_{\Ub_0} : L^2_+(\R; \C^d) \to L^2_+(\R; \C^d)$ has \textbf{simple} discrete spectrum $\sigma_{\mathrm{d}}(T_{\Ub_0})= \{v_1, \ldots, v_N \}$. 

Then the corresponding solution $\Ub \in C(\R; H^\infty_\bullet(\R; \Gr_k(\C^d)))$ of  \eqref{eq:HWMd} with initial datum $\Ub(0) = \Ub_0$ satisfies
$$
\lim_{t \to \pm \infty} \| \Ub(t) - \Ub^\pm(t) \|_{\dot{H}^s} = 0 \quad \mbox{for all $s > 0$} \, ,
$$
where
$$
\Ub^{\pm}(t,x) =  \sum_{j=1}^N \Qv_{v_j}(x-v_j t) - (N-1) \Ub_\infty \, .
$$
Here each $\Qv_{v_j} \in \mathcal{R}at(\R; \Gr_k(\C^d))$ is a profile of a traveling solitary wave for \eqref{eq:HWMd} with velocity $v_j$ and it is given by
$$
\Qv_{v_j}(x) =  \Ub_\infty + \frac{A_j}{x- y_j  + \ii \delta_j} + \frac{A_j^*}{x-y_j- \ii \delta_j} \, ,
$$
with some matrices $A_j \in M_d(\C)$ with $\rank(A_j)=1$ and $A_j^2 = 0$ for $j =1, \ldots, N$, some real numbers $y_1, \ldots, y_N \in \R$, some positive real numbers $\delta_1, \ldots, \delta_N > 0$, and the constant matrix $\Ub_\infty = \lim_{|x| \to \infty} \Ub_0(x) \in \Gr_k(\C^d)$.  

Moreover, we have the a-priori bounds
$$
\sup_{t \in \R} \| \Ub(t) \|_{\dot{H}^s} \leq C(\uu_0,s) \quad \mbox{for all $s >0$} \, .
$$ 
\end{thm}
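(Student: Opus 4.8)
The plan is to reduce the matrix-valued problem to the Hardy-space Lax-pair framework and the explicit flow formula established in the preceding sections, and then to analyze the large-time asymptotics spectral component by spectral component. First I would set $\Vb(t) = \Ub(t) - \Ub_\infty$ and work in the Hardy space $L^2_+(\R;\C^d)$, exploiting that rationality of $\Ub_0$ forces the Hankel operator $H_{\Ub_0}$ (equivalently the defect $I - T_{\Ub_0}^2$) to have finite rank, so that $T_{\Ub_0}$ has finite discrete spectrum $\sigma_{\mathrm d}(T_{\Ub_0}) = \{v_1,\dots,v_N\}$ (by the Kronecker-type theorem referenced after Theorem 1.3). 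By the Lax structure, $T_{\Ub(t)}$ is unitarily equivalent to $T_{\Ub_0}$ for all $t$, so these $N$ eigenvalues are conserved; the simplicity hypothesis is therefore preserved in time. The explicit flow formula — the analogue of the Benjamin–Ono formula invoked in the introduction — should express $\Vb(t)$, or rather its generating function / resolvent $(\Gs(z) - T_{\Ub(t)})^{-1}$ applied to suitable vectors, in terms of $e^{-itX}$-type evolutions on the finite-dimensional spectral subspaces, where $X$ is the (self-adjoint) operator conjugate to $T_\Ub$ in the Lax pair. The key point is that the translation parameter attached to the $j$-th eigenvalue $v_j$ evolves linearly as $y_j + v_j t$, which is precisely what produces traveling waves with velocities $v_j$.

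Next I would carry out the asymptotic separation. Because $\sigma_{\mathrm d}$ is simple with $N$ distinct real eigenvalues, each eigenvalue $v_j$ carries a one-dimensional eigenspace, and the explicit formula writes $\Vb(t,x)$ as a sum of $N$ rank-one-in-a-suitable-sense contributions, the $j$-th of which is, after the substitution $x \mapsto x + v_j t$, a fixed rational profile plus an error that decays because the poles of the other $N-1$ contributions recede (their poles sit at $x \approx (v_\ell - v_j) t \to \pm\infty$ for $\ell \neq j$). Summing and correcting for the repeated additive constant $\Ub_\infty$ gives the ansatz $\Ub^\pm(t,x) = \sum_j \Qv_{v_j}(x - v_j t) - (N-1)\Ub_\infty$; the fact that the profiles and hence $\Ub^\pm$ are the same for $t \to +\infty$ and $t \to -\infty$ (triviality of scattering) should fall out because the eigenvalues and the associated spectral data are rigidly conserved — there is no phase shift to accumulate, unlike in generic integrable systems. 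I would then identify the limiting profiles $\Qv_{v_j}$: each must be a finite-energy traveling wave of \eqref{eq:HWMd} with velocity $v_j$, and since its Toeplitz operator has a single (simple) discrete eigenvalue, the rank of the associated Hankel operator is one; solving the resulting pole expansion with the Hermiticity and idempotency constraints \eqref{eq:consU} forces the stated form $\Qv_{v_j}(x) = \Ub_\infty + A_j/(x - y_j + \ii\delta_j) + A_j^*/(x - y_j - \ii\delta_j)$ with $\rank A_j = 1$, $A_j^2 = 0$, $\delta_j > 0$. Convergence in every $\dot H^s$, $s > 0$, follows because all objects are rational with uniformly (in $t$) bounded degree and with poles staying a distance $\gtrsim \min_{\ell\neq j}|v_\ell - v_j|\,|t|$ apart, so all Sobolev norms of the difference are controlled by negative powers of $t$.

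Finally, for the a-priori bounds $\sup_t \|\Ub(t)\|_{\dot H^s} \le C(\uu_0,s)$: the explicit formula represents $\Ub(t)$ as a rational function whose degree (number of poles counted with multiplicity) is bounded uniformly in $t$ by the rank of $H_{\Ub_0}$, and whose poles remain uniformly bounded away from the real axis — this last point is the crucial quantitative input and should come from the spectral gap between $\sigma_{\mathrm d}(T_{\Ub})$ and the rest of $\sigma(T_{\Ub})$ together with the conservation of the discrete spectral data. A rational function of bounded degree with poles at distance $\ge c > 0$ from $\R$ has all homogeneous Sobolev norms bounded in terms of $c$, the degree, and the $L^\infty$-size (here $\le 2$ since $\Ub^2 = \mathds 1_d$), giving the claim. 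The main obstacle I anticipate is precisely establishing the uniform lower bound $\delta(t) := \mathrm{dist}(\{\text{poles of }\Ub(t)\},\R) \ge c > 0$ for all $t \in \R$: a priori the poles produced by the flow formula could approach the real axis, and ruling this out requires combining the conservation of the full spectrum of $T_{\Ub(t)}$ (not just the eigenvalues but the spectral measure/multiplicities, which pins down the local behavior of the resolvent near $\R$) with the explicit structure of the flow; this is the analogue of the "no collision / no loss of simplicity of poles" difficulty flagged in Remark 4 after Theorem 1.2, and resolving it cleanly via the Lax operator rather than via the spin Calogero–Moser ODE system is where the real work lies.
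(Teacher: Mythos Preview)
Your outline captures the broad architecture correctly --- reduce to the finite-dimensional invariant subspace $\Hfr_1$, use the explicit flow formula, and exploit simplicity to separate the contributions of the $N$ eigenvalues. But there is a genuine gap at precisely the point you flag as ``the main obstacle'': you have not identified the actual mechanism that keeps the poles uniformly away from $\R$, and the mechanism you propose (a spectral gap between $\sigma_{\mathrm d}(T_\Ub)$ and the rest of the spectrum, or conservation of the full spectral measure) is not what does the work.

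The paper's argument proceeds by writing the explicit flow formula as
\[
\Pi_+\Vb(t,x) \;=\; \frac{\eps}{2\pi\ii}\, I_+\bigl[(\Ms(\eps) - \eps x)^{-1}\Pi_+\Vb_0\bigr], \qquad \Ms(\eps) := \eps\, X^*|_{\Hfr_1} + T_{\Ub_0}|_{\Hfr_1}, \quad \eps = t^{-1},
\]
and then performing \emph{analytic perturbation theory} of the finite-dimensional family $\Ms(\eps)$ around $\eps=0$. Simplicity of $T_{\Ub_0}|_{\Hfr_1(\C^d)}$ is exactly what makes this perturbation theory standard: the eigenvalues branch analytically as $v_n(\eps) = v_n + \eps w_n + O(\eps^2)$ with first-order correction $w_n = \langle X^* \phi_n, \phi_n\rangle$. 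The poles of $\Pi_+\Vb(t,\cdot)$ are then $z_n(t) = t\,v_n(t^{-1}) = v_n t + w_n + O(t^{-1})$, so the uniform distance from $\R$ as $|t|\to\infty$ is governed by $\mathrm{Im}\,w_n$. The decisive computation is the identity $\mathrm{Im}\,\langle X^* f,f\rangle = -\tfrac{1}{4\pi}|I_+(f)|^2$, which gives $\mathrm{Im}\,w_n \le 0$; one then shows $I_+(\phi_n)\neq 0$ by a short contradiction argument using the commutator $[X^*,T_{\Ub_0}]\phi_n = \tfrac{\ii}{2\pi}\Pi_+\Vb_0\cdot I_+(\phi_n)$ together with simplicity of $v_n$ (if $I_+(\phi_n)=0$ then $\phi_n$ would be a joint eigenvector of $T_{\Ub_0}$ and $X^*$, forcing $\widehat\phi_n(\xi) = A e^{-\ii\alpha\xi}$ and hence $I_+(\phi_n)=A\neq 0$). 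This strict inequality $\delta_n := -\mathrm{Im}\,w_n > 0$ is the entire content of the non-turbulence claim; it is not a consequence of any spectral gap for $T_\Ub$ but rather of the interplay between $X^*$ and the simple eigenprojections. Your description of the flow formula (``$e^{-itX}$-type evolutions'', ``$X$ self-adjoint'') is also off: $X^*$ is not self-adjoint on $L^2_+$, and it is precisely the dissipative part of $X^*$ on eigenvectors of $T_{\Ub_0}$ that produces the nonzero $\delta_n$.
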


\begin{remarks*}
1) In the particular case $\Gr_1(\C^2) \cong  \mathbb{CP}^1 \cong \Ss^2$, we obtain Theorem \ref{thm:soliton_S2} above, except that  we also find in Theorem \ref{thm:soliton_S2} that the traveling solitary profiles  are  known to be of ground state type in this case. For general targets $\Gr_k(\C^d)$ with $(k,d) \neq (1,2)$, the complete classification of traveling solitary waves is open and hence we can only conclude that the profiles $\Qv_{v_j}$ above give rise to traveling solitary wave for \eqref{eq:HWMd} with velocity $v_j$.

2) It would be desirable to extend the density result for the simplicity condition on the discrete spectrum $\sigma_{\mathrm{d}}(T_{\Ub_0})$ stated in Theorem \ref{thm:generic} to general targets $\Gr_k(\C^d)$.
\end{remarks*}

\bigskip
\begin{center}
***
\end{center}
\medskip

\subsection*{Strategy of Proofs}
   
Let us briefly outline the main ideas used in this paper.
 
The starting point of our analysis is a detailed study of the Lax pair structure of \eqref{eq:HWMd}. In particular, this will largely extend the previous results found in \cite{GeLe-18} for \eqref{eq:HWM} with target $\Ss^2$. More precisely, we will show that, given a sufficiently smooth solution $\Ub : [0,T] \times \R \to \Gr_k(\C^d)$ of the matrix-valued \eqref{eq:HWMd}, we obtain the following Lax equation 
\be \label{eq:lax_intro}
\frac{d}{dt} T_{\Ub(t)} =  \big [B^+_{\Ub(t)}, T_{\Ub(t)} \big ] \, .
\ee     
Here $T_\Ub : L^2_+(\R; \VV) \to L^2_+(\R;\VV)$ denotes the {\em Toeplitz operator} given by
$$
T_{\Ub} f = \Pi_+( \Ub f) \quad \mbox{for $f \in L^2_+(\R; \VV)$}\, ,
$$ 
where $\VV$ either stands for 
$$
\VV=\C^d \quad \mbox{or} \quad \VV=M_d(\C) \, ,
$$ 
equipped with their canonical scalar products, see below. In fact, we shall use both choices of $\VV$ in the course of our analysis below. Moreover, we remark that $T_{\Ub} = T_{\Ub}^*$ is self-adjoint and bounded with operator norm $\| T_\Ub \| = \| \Ub \|_{L^\infty}=1$ thanks to the algebraic constraints imposed on the matrix-valued function $\Ub$. The second operator appearing in \eqref{eq:lax_intro} reads
\be
B^+_\Ub = \frac{\ii}{2} \left ( D \circ T_\Ub + T_{\Ub} \circ D \right ) - \frac{\ii}{2}  T_{|D| \Ub} \, ,
\ee
which is an unbounded skew-adjoint operator with  $\dom(B_\Ub) = H^1_+(\R; \VV)$ as its operator domain. 

Now, another decisive feature of the Lax structure for \eqref{eq:HWMd} enters, which again is due to the algebraic constraints satisfied by the matrix-valued function $\Ub$. Notably, we can derive the following key identity
\be \label{eq:key}
\boxed{ T_{\Ub}^2 = \id - H_{\Ub}^* H_{\Ub}}
\ee 
where $H_\Ub : L^2_+(\R; \VV) \to L^2_-(\R; \VV)$ denotes the {\em Hankel operator} given by
$$
H_{\Ub} f = \Pi_- (\Ub f) \quad \mbox{for $f \in L^2_+(\R; \VV)$}
$$
where $\Pi_- := \id-\Pi_+$ denotes the projection in $L^2(\R;\VV)$ onto orthogonal complement of the Hardy space $L^2_+(\R; \VV)$. By the Lax evolution \eqref{eq:lax_intro} combined with \eqref{eq:key}, we obtain the infinite set of conserved quantities for \eqref{eq:HWMd} of the form
\be
I_p(\Ub) = \Tr(|K_{\Ub}|^{p/2}) \quad \mbox{for any $p >0$} \, ,
\ee  
with the nonnegative operator $K_\Ub = H_{\Ub}^* H_{\Ub}$. In particular, for $p=2$, we obtain the trace-class norm of $K_\Ub$ which is easily seen to be equivalent to the scaling-critical energy (semi-)norm $\| \Ub \|_{\dot{H}^{\frac{1}{2}}}$; see Section \ref{sec:lax} for more details. Furthermore, we see from \eqref{eq:key} that $T_{\Ub}$ is Fredholm with index 0. We will make use of this fact further below in our analysis.

\medskip
However, as we have already mentioned above, the infinite family of conserved quantities $\{ I_p(\Ub) \}_{p \geq 1}$ does not seem to yield sufficient control to obtain global solutions for \eqref{eq:HWMd}, even for smooth and sufficiently small initial data (i.\,e., small perturbations of a constant). To overcome this obstruction, we shall  derive an {\em explicit flow formula} for sufficiently smooth solutions of \eqref{eq:HWMd}, which is akin to the result  discovered in \cite{Ge-23} for the Benjamin--Ono equation. More precisely, for solutions of \eqref{eq:HWMd} of the form
$$ 
\Ub(t) = \Ub_\infty + \Vb(t) \in M_d(\C) \oplus C([0,T]; H^s(\R; M_d(\C)) \quad \mbox{with $s > \frac{3}{2}$} \, ,
$$  
we derive that 
\be \label{eq:explicit_intro}
\boxed{\Pi_+ \Vb(t,z) = \frac{1}{2\pi \ii} I_+ \left ( (X^* + t T_{\Ub_0} -z \id)^{-1} \Pi_+ \Vb_0  \right ) \quad \mbox{for $t \in [0,T]$ and $z \in \C_+$}} 
\ee
where $\Ub_0 = \Ub_\infty + \Vb_0 \in M_d(\C) \oplus H^s(\R; M_d(\C))$ denotes the initial datum for \eqref{eq:HWMd}. In this formula, we emphasize the fact that $T_{\Ub_0}$ is now regarded as a Toeplitz operator acting on the Hardy space $L^2_+(\R; M_d(\C))$ with functions taking values in the space of complex $d \times d$-matrices $M_d(\C)$. Furthermore, in analogous fashion to \cite{Ge-23}, the operators $I_+$ and $X^*$ are given by
$$
I_+(f) = \lim_{\xi \to 0^+} \widehat{f}(\xi) \quad \mbox{and} \quad \widehat{(X^* f)}(\xi) = \ii \frac{d \widehat{f}}{d \xi}(\xi)
$$
defined on their suitable domains $\dom(I_+)$ and $\dom(X^*)$ in $L^2_+(\R;\VV)$; see Section \ref{sec:prelim} below for details. Now, the main challenge is to decide whether we can exploit this explicit representation above to deduce that these strong solutions can be extended to all (forward) times, i.\,e., whether it is true that $\Ub \in C([0,\infty); H^s_\bullet(\R; \Gr_k(\C^d)))$ holds? Surprisingly, this turns out to be a rather delicate question whose affirmative answer must  necessarily exploit the algebraic constraints satisfied by the matrix-valued function $\Ub$ solving \eqref{eq:HWMd}. By contrast, we remark that the explicit formula (up to an inessential rescaling of $t$) for the {\em dispersionless limit of the scalar-valued Benjamin--Ono} on the line reads the same as \eqref{eq:explicit_intro} with the simple replacement of $T_{\Ub_0}$ with the Toeplitz operator $T_{u_0} : L^2_+(\R;\C) \to L^2_+(\R; \C)$ with the bounded scalar-valued function $u_0 \in L^2(\R) \cap L^\infty(\R)$. However, for the dispersionless limit of (BO), it is known \cite{Ge-24c} that strong continuity of the flow breaks down in finite-time (corresponding to development of shocks). Thus we cannot expect to derive global-in-time existence for \eqref{eq:HWMd} by a naive use of \eqref{eq:explicit_intro} neglecting the algebraic constraints for $\Ub$.

\medskip
In order to further exploit the fact that the initial data $\Ub_0$ for \eqref{eq:HWMd} are valued in $\Gr_k(\C^d)$, we appeal again to the key identity \eqref{eq:key}. As a direct consequence, we obtain the natural orthogonal decomposition of the underlying Hardy space of the form
$$
L^2_+(\R; \VV) = \Hfr_0 \oplus \Hfr_1 
$$
with the closed subspace
$$
\Hfr_0 := \ker(K_{\Ub_0}) \quad \mbox{and} \quad \Hfr_1 := \Hfr_0^\perp = \overline{\ran(K_{\Ub_0})} \, ,
$$
where we recall the definition of the trace-class operator $K_{\Ub_0}=H_{\Ub_0}^* H_{\Ub_0}$. Now, it turns out that $\Pi_+ \Vb_0 \in \Hfr_1$ and, in addition to this, we see that $\Hfr_1$ is an {\em invariant subspace} of both $T_{\Ub_0}$ as well as the semigroup generated by $X^*$. As a consequence, we see that the resolvent appearing on right-hand side in \eqref{eq:explicit_intro} satisfies the mapping property $(X^* + t T_{\Ub_0} - z \id)^{-1} : \Hfr_1 \to \Hfr_1$ for any $t \in \R$. Hence the explicit flow formula found for \eqref{eq:HWMd} effectively takes place only the invariant subspace $\Hfr_1$. This is a great deal of information which can be used to deduce global existence of strong solutions! In particular, an adaptation of the classical Kronecker theorem for Hankel operators shows that 
$$
\dim(\Hfr_1) < +\infty \quad \mbox{if and only if} \quad \mbox{$\Ub_0$ is a rational map} \, . 
$$
Thanks to this fact, the proof of global existence of strong solutions via \eqref{eq:explicit_intro} for rational initial data $\Ub_0$ amounts to showing that $M(t)=X^* + t T_{\Ub_0}$ has no real eigenvalues for any $t \in \R$, proving its injectivity on $\Hfr_1$ and hence the surjectivity of $M(t)$ because $\Hfr_1$ is finite-dimensional in this setting.

\begin{remark*}
The case of non-rational initial data $\Ub_0$, which implies that $\dim \Hfr_1 = +\infty$, and the question to global well-posedness for \eqref{eq:HWMd} will be studied in our companion work \cite{GeLe-24b} posed on the torus.
\end{remark*}

Finally, let us briefly comment on the strategy behind the proofs of our further main results stated as Theorems \ref{thm:soliton_S2} and \ref{thm:soliton} concerning the long-time behaviour of rational solutions. Inspired by our recent study of $N$-solitons for the Calogero--Moser derivative NLS in \cite{GeLe-24},  the main idea rests on using the explicit flow formula combined with a perturbation analysis of the family of (bounded) operators
$$
\eps X^* + T_{\Ub_0} : \Hfr_1 \to \Hfr_1
$$
with $\eps = \frac{1}{t}$ in the regime where $\eps \to 0$ under the assumption that $T_{\Ub_0} : \Hfr_1 \to \Hfr_1$ has simple spectrum. However, as a striking difference to the analysis in \cite{GeLe-24}, we will encounter that turbulence (i.\,e.~growth of higher Sobolev norms) can be ruled out for rational solutions of \eqref{eq:HWMd} provided that the Lax operator $T_{\Ub_0} : L^2_+(\R; \C^d) \to L^2_+(\R; \C^d)$ has simple discrete spectrum.

\subsection*{Links to Schr\"odinger Maps and Spin Benjamin--Ono Equation} 
In order to put \eqref{eq:HWMd} in a broader geometric perspective, we recall the well-known fact that $\mathbf{Gr}_k(\C^d)$ is a K\"ahler manifold of complex dimension $k(d-k)$. Its complex structure $J_A$ on the tangent space $T_A \Gr_k(\C^d)$ at a point $A \in \mathbf{Gr}_k(\C^d)$ can be expressed as the matrix commutator
$$
J_A(X) = -\frac{\ii}{2}[A, X] \, .
$$
Thus we see that \eqref{eq:HWMd} can be written as a {\em Schr\"odinger maps-type equation} of the form\footnote{A-priori this geometric rewriting of \eqref{eq:HWMd} would involve using the projection $P_\Ub$ onto the tangent space $T_\Ub \Gr_k(\C^d)$, i.\,e., we have $\pt_t \Ub = J_{\Ub} P_\Ub |D| \Ub$. However, it can readily checked that $[\Ub, (\id-P_\Ub) B] = 0$ for Hermitian matrices $B  \in M_d(\C)$.}
\be \tag{SM}
\pt_t \Ub = J_{\Ub} |D| \Ub
\ee
with the first-order pseudo-differential operator $|D|$. However, we will not further exploit this geometric point of view in our analysis here. 

On the other hand, we also mention the remarkable fact that \eqref{eq:HWMd} can be formally seen as the {\em zero-dispersion limit} of the {\em spin Benjamin--Ono equation (sBO)}, which was recently introduced by Berntson--Langmann--Lenells in \cite{BeLaLe-22}. In our choice of units, this equation can be written as
\be \label{eq:sBO} \tag{sBO}
\pt_t \Vb = \frac{1}{2} \pt_x ( |D| \Vb - \Vb^2) - \frac{\ii}{2} [\Vb, |D| \Vb] \, ,
\ee
for the matrix-valued map $\Vb : [0,T) \times \R \to M_d(\C)$; see also \cite{Ge-23b} where a Lax pair structure for (sBO) was found. We notice that, in the special case of real-valued maps $\Vb(t,x) \in \R$, we obtain the classical Benjamin--Ono equation (apart from trivial rescaling of $t$ compared to the standard form of this equation). 

At least on a formal level, we see that replacing $|D|$ by $\eps |D|$ with $\eps > 0$ in \eqref{eq:sBO} and forcing the condition that $\Vb^2 = \mathds{1}_d$, we are led to \eqref{eq:HWMd} when formally taking the zero-dispersion limit as $\eps \to 0$. For a rigorous analysis of the zero-dispersion of the scalar Benjamin--Ono equation, we refer to the recent work in \cite{Ge-24c}. However, as already mentioned above, we will encounter a striking difference in our analysis here due to the algebraic constraint $\Ub^2 = \mathds{1}_d$ that is absent in the scalar case. From an operator theoretic point of view, this remarkable difference stems from the fact that Toeplitz operators $T_\Ub$ with matrix-valued symbols $\Ub : \R \to \C^{d \times d}$ for $d \geq 2$ can have entirely different spectral properties compared to Toeplitz operators $T_f$ with scalar-valued symbols $f : \R \to \C$. The interested reader will find more details on this difference further below.

\subsection*{Acknowledgments} P.~G.~was partially supported by the French Agence Nationale de la Recherche under the ANR project ISAAC–ANR-23–CE40-0015-01. E.\,L.~acknowledges financial support from the Swiss National Science Foundation (SNSF) under Grant No.~204121. In addition, E.\,L.~thanks Herbert Koch for valuable discussions and he thanks Yi Zhang for the kind hospitality and the opportunity to present this  work in a series of talks at the Chinese Academy of Sciences, Beijing, in September 2024.  Finally, we are grateful to the anonymous referee for valuable comments and suggestions that have helped improve this paper.

\section{Preliminaries and Notation} \label{sec:prelim}

In this section, we setup some definitions and notation used throughout this paper. 

\subsection*{Sobolev-type  Spaces} For the study of the generalized half-wave maps equations (HWM$_d$), we introduce the following Sobolev-type spaces for matrix-valued functions. For an integer $d \geq 2$, we use $M_d(\C) \equiv \C^{d \times d}$ to denote the Hilbert space of complex $d \times d$-matrices equipped with the inner product 
$$
\langle A, B \rangle_F := \Tr( A B^*) \quad \mbox{for $A,B \in M_d(\C)$} 
 $$
and the corresponding Frobenius norm of a matrix $A \in M_d(\C)$ will be denoted by $| A |_F = \sqrt{\langle A, A \rangle_F}$. 

The Lebesgue spaces $L^p(\R; M_d(\C))$ and $L^p_{\mathrm{loc}}(\R; M_d(\C))$ are defined in an obvious manner. For $s > 0$, we use the Sobolev spaces
$$
\dot{H}^s(\R; M_d(\C)) := \left \{ \Ub \in L^1_{\mathrm{loc}}(\R; M_d(\C)) \mid \| \Ub \|_{\dot{H}^s} := \| |D|^s \Ub \|_{L^2} < +\infty \right \} \, , 
$$
$$
H^s(\R; M_d(\C)) := \left \{ \Vb \in L^1_{\mathrm{loc}}(\R; M_d(\C)) \mid \| \Vb \|_{H^s} := \| \langle D \rangle^s \Vb \|_{L^2} < +\infty \right \} \, .
$$
We set $\dot{H}^\infty(\R; M_d(\C)) := \cap_{s > 0} \dot{H}^s(\R; M_d(\C))$ and $H^\infty(\R; M_d(\C)) := \cap_{s >0} H^s(\R; M_d(\C))$. Note that $\| \cdot \|_{\dot{H}^s}$ is a semi-norm, since non-trivial constant maps also belong to $\dot{H}^s(\R; M_d(\C))$ for $s>0$. Furthermore, for $0 \leq k \leq d$ given, we define the spaces
$$
\dot{H}^s(\R; \Gr_k(\C^d)) := \left \{ \Ub \in \dot{H}^s(\R; M_d(\C)) \mid \mbox{$\Ub(x) \in \Gr_k(\C^d)$ for a.\,e.~$x \in \R$} \right \} \, .
$$
Note that the scaling-critical energy space associated to (HWM$_d$) with target $\Gr_k(\C^d)$ is $\dot{H}^{\frac 1 2}(\R; \Gr_k(\C^d))$ equipped with the Gagliardo semi-norm $\| \cdot \|_{\dot{H}^{\frac 1 2}}$ such that
\be \label{eq:H_half}
\| \Ub \|_{\dot{H}^{\frac 1 2}}^2 = \| |D|^{\frac 1 2} \Ub \|_{L^2}^2 = \frac{1}{2 \pi} \int_{\R} \! \int_{\R} \frac{|\Ub(x)- \Ub(y)|_F^2}{|x-y|^2} \, dx \, dy \, .
\ee
Note that $E(\Ub) = \frac{1}{2} \| \Ub \|_{\dot{H}^{\frac 1 2}}^2$ is the Hamiltonian energy functional for (HWM$_d$) with the natural symplectic form for maps defined on $\R$ with values in the complex Grassmannian $\Gr_k(\C^d)$.

\medskip
In addition to the space $\dot{H}^s$-spaces, it turns out to be convenient to introduce the following family of affine inhomogeneous Sobolev-type spaces given by
$$
H^s_\bullet(\R; M_d(\C)) := M_d(\C) \oplus H^s(\R; M_d(\C)) 
$$
and we define $H^\infty_\bullet(\R; M_d(\C)) := \cap_{s > 0} H^s_\bullet(\R;M_d(\C))$. Furthermore, we set
$$
H^s_\bullet(\R; \Gr_k(\C^d)) := \left \{ \Ub\in H^s_\bullet(\R; M_d(\C)) \mid \mbox{$\Ub(x) \in \Gr_k(\C^d)$ for a.\,e.~$x\in \R$} \right \}  \, .
$$
It is easy to see that the following strict inclusions hold true:
$$
\mathcal{R}at(\R; \Gr_k(\C^d)) \subsetneq H^s_\bullet(\R; \Gr_k(\C^d)) \subsetneq  \dot{H}^s(\R; \Gr_k(\C^d)) \subsetneq L^\infty(\R; M_d(\C)) \, ,
$$
where we recall that $\mathcal{R}at(\R; \Gr_k(\C^d))$ denotes the space of rational maps from $\R$ to $\Gr_k(\C^d)$.

\medskip
For (HWM) with maps valued in the unit sphere $\Ss^2 \subset \R^3$, we make use of the corresponding Sobolev spaces $H^s(\R; \R^3)$ and $\dot{H}^s(\R; \R^3)$, where the energy space is 
$$
\dot{H}^{\frac 1 2}(\R; \Ss^2) := \left \{ \uu \in \dot{H}^{\frac 1 2}(\R; \R^3) \mid \mbox{$\uu(x) \in \Ss^2$ for a.\,e.~$x \in \R$} \right \} 
$$
endowed with the Gagliardo semi-norm $\| \cdot \|_{\dot{H}^{\frac 1 2}}$ such that
$$
\| \uu \|_{\dot{H}^{\frac 1 2}}^2 = \| |D|^{\frac 1 2} \uu \|_{L^2}^2 = \frac{1}{2 \pi} \int_{\R} \! \int_{\R} \frac{|\uu(x)- \uu(y)|^2}{|x-y|^2} \, dx \, dy \, .
$$
From the introduction above, we recall that unit vectors $\uu \in \Ss^2$ can be equivalently encoded by using the standard Pauli matrices $(\sigma_1, \sigma_2, \sigma_3)$ via the relation
$$
\Ub = \uu \cdot \bm{\sigma} = u_1 \sigma_1 +u_2 \sigma_2 + u_3 \sigma_3 = \left ( \begin{array}{cc} u_3 & u_1 - \ii u_2 \\ u_1 + \ii u_2 & -u_3 \end{array} \right ) \, ,
$$
where we easily check that $\Ub = \Ub^*$ with $\Ub^2 = \mathds{1}_2$ and $\Tr(\Ub) = 0$. Also, we find that $u_k= \frac{1}{2} \Tr(\Ub \sigma_k) = \frac{1}{2} \langle \Ub, \sigma_k \rangle_F$ for $k=1,2,3$. Thus, by means of the relation $\Ub = \uu \cdot \bm{\sigma}$, we find the equivalence of (semi)-norms $\| \uu \|_{\dot{H}^{s}} \sim \| \Ub \|_{\dot{H}^{s}}$ for all $s>0$.

\subsection*{Hardy Spaces, Toeplitz and Hankel Operators}
We consider the Hilbert space $L^2(\R; \VV)$ for maps on $\R$ with values in the finite-dimensional Hilbert spaces 
$$
\VV = \C^d \quad \mbox{or} \quad \VV = M_d(\C) \, ,
$$
which we endow with their natural inner products and norms, i.\,e.,
$$
\langle u,v \rangle_{\VV} = \sum_{k=1}^d u_k \ov{v}_k \quad \mbox{if $\VV= \C^d$}, \qquad \langle A,B \rangle_\VV = \Tr(A B^*) \quad \mbox{if $\VV = M_d(\C)$} \, .
$$
The {\em Cauchy--Szeg\H{o} projection} $\Pi_+ : L^2(\R; \VV) \to L^2_+(\R;\VV)$ onto the Hardy space
$$
L^2_+(\R; \VV) := \{ f \in L^2(\R; \VV) \mid \mathrm{supp} \, \widehat{f} \subset [0, \infty) \}
$$
is given by
$$
(\Pi_+ f)(x) := \frac{1}{2 \pi} \int_0^{+\infty} \eu^{\ii x \xi} \widehat{f}(\xi) \, d\xi \quad \mbox{with} \quad \widehat{f}(\xi) = \int_{-\infty}^{+\infty} f(x) \eu^{-\ii \xi x} \, dx \, ,
$$ 
or, equivalently, we have $\widehat{(\Pi_+ f)}(\xi) = \mathds{1}_{\xi \geq 0} \widehat{f}(\xi)$  on the Fourier side. We use 
$$
\Pi_- := \id - \Pi_+
$$
 to denote projection onto the orthogonal complement 
$$
L^2_-(\R; \VV) := (L^2_+(\R;\VV))^\perp = \{ f \in L^2(\R;\VV) \mid \mathrm{supp} \, \widehat{f}(\xi) \subset (-\infty,0] \}\, .
$$

From standard Paley--Wiener theory we recall that elements $f \in L^2_+(\R; \VV)$ can be naturally identified with holomorphic functions defined on the complex upper half-plane $\C_+$ such that
$$
L^2_+(\R; \VV) \cong\left \{ f \in \mathrm{Hol}(\C_+; \VV) \mid \sup_{y >0} \int_{\R} |f(x+ \ii y)|_\VV ^2 \, dx < +\infty \, \right \} \,  ,
$$ 
where $|\cdot|_\VV$ denotes the norm on $\VV$. Throughout this paper, we will freely make use of this fact and we thus regard elements $f \in L^2_+(\R; \VV)$ as holomorphic functions $f=f(z)$ with $z \in \C_+$. We use $\Hil = -\ii \Pi_+ + \ii \Pi_-$ to denote the {\em Hilbert transform} on $L^2(\R;\VV)$, which can also be written as the singular integral operator
$$
(\Hil f)(x) = \frac{1}{\pi} \mathrm{p.v.} \int_\R \frac{f(y)}{x-y} \, dy.
$$

\medskip
For a bounded matrix-valued function $\Ub \in L^\infty(\R; M_d(\C))$, we define the corresponding {\em Toeplitz operator} as
$$
T_\Ub : L^2_+(\R; \VV) \to L^2_+(\R;\VV), \quad f \mapsto T_\Ub f := \Pi_+(\Ub f) \, .
$$
Likewise, the corresponding {\em Hankel operator} is given by
$$
H_\Ub : L^2_+(\R; \VV) \to L^2_-(\R; \VV), \quad f \mapsto H_{\Ub} f := \Pi_-(\Ub f ) \, .
$$
We remark that we adapt the definition of $H_\Ub$ from Peller's book \cite{Pe-03}; another (equivalent) definition of Hankel operators can be achieved by anti-linear operators (see e.\,g.~\cite{GePu-24}). However, for studying the Lax pair structure for (HWM$_d$), we have found it more convenient to use the present convention for $H_\Ub$.

A central fact about Hankel operators used in this paper is  {\em Kronecker's theorem}, which relates the rationality of the symbol $\Ub$ with the property that $H_{\Ub}$ has finite rank. We refer the reader to Section \ref{sec:spectral} for details.  Furthermore, we remark that $H_\Ub$ is {\em Hilbert-Schmidt} if and only if $\Ub \in \dot{H}^{\frac 1 2}$; see again Section \ref{sec:spectral} for a detailed discussion.

\subsection*{The Operators $X^*$, $X$, and $I_+$}

On the Hardy space $L^2_+(\R; \VV)$, we recall that the {\em adjoint Lax--Beurling semigroup} $\{ S(\eta)^* \}_{\eta \geq 0}$ is given by
$$
(S(\eta)^* f)(x) = \Pi_+( e^{-\ii \eta x} f(x)) \quad \mbox{for $f \in L^2_+(\R; \VV)$ and $\eta \geq 0$} \, ,
$$
which corresponds to the contraction semigroup of left shifts on $L^2_+(\R; \VV)$. We remark that $S(\eta)^* = e^{-\ii \eta X^*}$, where its generator $X^*$ given by the unbounded operator 
$$
\widehat{(X^* f)}(\xi) = \ii \frac{d}{d \xi} \widehat{f}(\xi) \mathds{1}_{\xi \geq 0} 
$$
with the operator domain
$$
\dom(X^*) = \big \{ f \in L^2_+(\R; \VV) \mid \frac{d \widehat{f}}{d\xi} \in L^2(\R_+; \VV) \big \} \, . 
$$
It is straightforward to check that all {\em rational} functions $f\in L^2_+(\R; \VV)$ belong to $\dom(X^*)$. For $z_0 \in \C_+$, the action of the resolvent $(X^*-z_0)^{-1}$ is easily found to be
$$
((X^*-z_0)^{-1} f)(z) = \frac{f(z)-f(z_0)}{z-z_0} \, ,
$$
for all $f \in L^2_+(\R; \VV)$. We remark that $X^*$ is the adjoint of the unbounded operator 
$$
(X f)(x)= x f
$$
corresponding to multiplication with $x \in \R$ and its operator domain is given by
\begin{align*}
\dom(X) & =  \big \{  f \in L^2_+(\R; \VV) \mid x f \in L^2(\R; \VV) \big \} \\
& = \big \{  f \in L^2_+(\R; \VV) \mid \mbox{$\frac{d \widehat{f}}{d\xi} \in L^2(\R_+; \VV)$ and $\widehat{f}(0) = 0$} \big \} \, . 
\end{align*}
Note that $X$ is the generator of the {\em Lax--Beurling semigroup} $\{ S(\eta) \}_{\eta \geq 0}$ corresponding to right shifts on $L^2_+(\R; \VV)$, i.\,e., we have
$$
(S(\eta) f)(x) = e^{\ii \eta x} f(x) \quad \mbox{for $f \in L^2_+(\R; \VV)$ and $\eta \geq 0$} \, .
$$
We will sometimes use the notation $S(\eta) = e^{\ii \eta X}$. Note that the strict inclusion $\dom(X) \subsetneq \dom(X^*)$ holds, e.\,g., the rational function $\frac{1}{x+\ii} \in \dom(X^*)$ does not belong to $\dom(X)$. Further details on the generators $X^*$ and $X$ can be found in \cite{GePu-24} in the scalar-valued case when $\VV$ is replaced by $\C$, but  the necessary adaptations to our setting are elementary.

\medskip
In addition to the generator $X^*$, another important operator in our analysis is given by the (unbounded) linear operator
$$
I_+ : \dom(X^*) \subset L^2_+(\R; \VV) \to \VV, \quad f \mapsto I_+(f):= \widehat{f}(0^+) = \lim_{\xi \to 0^+} \widehat{f}(\xi) \, .
$$
Note that the definition of $I_+$ as the one-sided limit of $\widehat{f}(\xi)$ as $\xi \to 0^+$ makes sense for any $f \in \dom(X^*)$ by the standard trace theorem for Sobolev functions in $H^1(\R_+)$. An alternative and useful expression for the action of $I_+$ is found by using the {\em approximate identity} $\chi_\eps$ with
$$
\chi_\eps(x) := \frac{1}{1- \ii \eps x} \in L^2_+(\R; \C) \quad \mbox{for $\eps > 0$} \, .
$$
Let $\mathbf{v} \in \VV$ be a fixed vector. By Plancherel's identity, we have
$$
\lim_{\eps \to 0} \langle f, \mathbf{v} \chi_\eps \rangle = \lim_{\eps \to 0} \frac{1}{\eps} \int_0^\infty \langle \widehat{f}(\xi), \mathbf{v}    \rangle_\VV \,  \eu^{-\xi/\eps} \, d \xi = \langle \widehat{f}(0^+), \mathbf{v} \rangle_\VV = \langle I_+(f), \vv \rangle_\VV \, .
$$
Thus, for any orthonormal basis $(\vv_1, \ldots, \vv_N)$ in $\VV$ with $N = \dim \VV$, we  obtain
\be \label{eq:I_ident}
I_+(f) = \lim_{\eps \to 0} \sum_{k=1}^{N}   \langle f, \vv_k \chi_\eps \rangle \vv_k \, .
\ee

For later use, we also record the following formula
\be \label{eq:G_I}
\Im  \langle X^* f, f \rangle = -\frac{1}{4 \pi} | I_+(f)|^2_\VV \quad \mbox{for $f \in \dom(X^*)$} \, ,
\ee
which is a simple consequence from Plancherel's identity and integration by parts. 

Finally, we record another elementary fact involving the operators $I_+$ and $X^*$ as follows. Let $f \in L^2_+(\R, \VV)$ be given. As before, we suppose that $(\vv_1, \ldots, \vv_N)$ is an orthonormal basis of $\VV$ with $N = \dim \VV$. Thus we can write 
$$
f(x) = \sum_{k=1}^N f_k(x) \vv_k
$$ 
with $f_k(x) = \langle f(x), \vv_k \rangle_{\VV} \in L^2_+(\R; \C)$ for $k=1, \ldots, N$. Since $\widehat{f}(\xi)= \sum_{k=1}^N \widehat{f}_k(\xi) \vv_k$, we find
\begin{align*}
\widehat{f}(\xi)  = \sum_{k=1}^N \lim_{\eps \to 0} \left ( \int_\R \eu^{-\ii  x \xi} \frac{ f_k(x) }{1+\ii \eps x} \, dx \right ) \vv_k =  \sum_{k=1}^N \lim_{\eps \to 0} \langle S(\xi)^* f,  \vv_k\chi_\eps \rangle \, \vv_k \, .
\end{align*}
By taking the inverse Fourier transform, we obtain, for any $z \in \C_+$, that
\begin{align*}
f(z) & = \frac{1}{2 \pi} \int_0^{\infty} \eu^{\ii z \xi}   \left ( \sum_{k=1}^N \lim_{\eps \to 0} \langle S(\xi)^* f,  \vv_k\chi_\eps \rangle \, \vv_k \right ) d \xi \\
& = \frac{1}{2 \pi}  \lim_{\eps \to 0} \sum_{k=1}^N \left ( \int_0^{\infty}    \langle e^{\ii z \xi -\ii \xi X^*}  f,  \vv_k\chi_\eps \rangle \, d \xi \right ) \vv_k  \\
& = \frac{1}{2 \pi \ii} \lim_{\eps \to 0}  \sum_{k=1}^N \left \langle (X^*- z \id)^{-1} f, \vv_k \chi_\eps \right \rangle \vv_k \, .
\end{align*}
In view of \eqref{eq:I_ident}, we therefore deduce the identity
\be \label{eq:reproduce}
\boxed{f(z) = \frac{1}{2 \pi \ii} I_+[(X^*- z \id)^{-1} f] }
\ee
which is valid for any $f \in L^2_+(\R; \VV)$ and $z \in \C_+$.


\section{Lax Pair Structure}

\label{sec:lax}

In this section, we will largely extend the results from \cite{GeLe-18}, where a Lax pair structure for (HWM) was discovered. In fact, we will consider the generalized matrix-valued equation (HWM$_d$) in this section.

 Let $d \geq 2$ and $0 \leq k \leq d$ be fixed integers. We consider solutions $\Ub : [0,T] \times \R \to M_d(\C)$ to the initial-value problem for the generalized matrix-valued half-wave maps equation which is given by 
\be \tag{HWM$_d$} \label{eq:hwm_d}
\pt_t \Ub = -\frac{\ii}{2} [ \Ub, |D| \Ub ], \quad \Ub(0) = \Ub_0 \in H^s(\R; \Gr_k(\C^d)) \, .
\ee
For local well-posedness of \eqref{eq:hwm_d} with initial data in the inhomogeneous Sobolev-type spaces $H^s_\bullet(\R; \Gr_k(\C^d))$ with $s > \frac{3}{2}$, we refer the reader to Section \ref{sec:gwp} below. Note that in \eqref{eq:hwm_d} we use $[X,Y] \equiv XY-YX$ to denote the commutator of $d \times d$-matrices and the operator $|D|$ is supposed to act on each component of the matrix-valued function $\Ub$.

We introduce some notation as follows. We recall that $\VV$ either denotes $\C^d$ or $M_d(\C)$, equipped with their natural inner products and norms.  For a bounded matrix-valued function $\mathbf{F} \in L^\infty(\R; M_d(\C))$, we let $\mu_{\mathbf{F}}$ denote the corresponding multiplication operator acting on $L^2(\R; \VV)$, i.\,e., we set
$$
(\mu_\mathbf{F} f)(x) = \mathbf{F}(x) f(x) \, .
$$ 
This distinction between $\mathbf{F}$ and its multiplication operator $\mu_{\mathbf{F}}$ will be needed for better clarity in this section.\footnote{Further below, we shall omit this distinction between $\mathbf{F} \in L^\infty(\R; M_d(\C))$ and its corresponding multiplication operator $\mu_{\mathbf{F}}$ acting on $L^2(\R; \VV)$.}  

Given a map $\Ub : [0,T] \times \R \to \Gr_k(\C^d)$ and some time $t \in [0,T]$, we denote the corresponding (bounded) multiplication operator by
$$
\mu_{\Ub(t)} : L^2(\R, \VV) \to L^2(\R, \VV), \quad f \mapsto \mu_{\Ub(t)} f \, .
$$ 
Since $\Ub(t,x)^* = \Ub(t,x)$ for a.\,e.~$x \in \R$, we readily see that $\mu_{\Ub(t)}= \mu_{\Ub(t)}^*$ is self-adjoint.

We have the following general result about \eqref{eq:hwm_d} that establishes a general Lax pair structure.

\begin{lem}[Lax equation] \label{lem:lax}
Let $s > \frac{3}{2}$ and suppose $\Ub \in C([0,T], H^s_\bullet(\R; \Gr_k(\C^d))$ is a solution of \eqref{eq:hwm_d}. Then for any operator $L_{\Ub(t)} \in \{ \mu_{\Ub(t)}, \Pi_+, \Pi_- \}$, it holds
$$
\frac{d}{dt} L_{\Ub(t)} = [B_{\Ub(t)}, L_{\Ub(t)}] \, ,
$$
with the operator 
$$
B_\Ub = -\frac{\ii}{2} ( \mu_{\Ub} \circ \Ds + \Ds \circ \mu_{\Ub}) + \frac{\ii}{2} \mu_{\Ds \Ub} \, .
$$
\end{lem}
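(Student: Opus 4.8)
The plan is to verify the three cases $L_{\Ub} \in \{\mu_{\Ub}, \Pi_+, \Pi_-\}$ separately, exploiting that $\Pi_\pm$ are time-independent operators so that $\frac{d}{dt}\Pi_\pm = 0$, which forces the identities $[B_\Ub, \Pi_+] = [B_\Ub, \Pi_-] = 0$ to be proved directly as operator identities, not via differentiation. First I would treat the case $L_\Ub = \mu_\Ub$. Since $\Ub \in C([0,T], H^s_\bullet)$ with $s > \tfrac32$, Sobolev embedding gives that $\Ub$ and $|D|\Ub$ are bounded (indeed $C^1$ in $x$), so all multiplication operators in play are bounded on $L^2(\R;\VV)$ and the time derivative $\frac{d}{dt}\mu_{\Ub(t)} = \mu_{\pt_t \Ub(t)}$ makes sense as a bounded-operator-valued derivative. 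Using the equation $\pt_t \Ub = -\tfrac{\ii}{2}[\Ub, |D|\Ub]$ we get $\frac{d}{dt}\mu_{\Ub} = -\tfrac{\ii}{2}\mu_{[\Ub,|D|\Ub]} = -\tfrac{\ii}{2}[\mu_\Ub, \mu_{|D|\Ub}]$, where the last step uses that multiplication operators with matrix symbols compose as $\mu_A \mu_B = \mu_{AB}$. The task is then to check that $[B_\Ub, \mu_\Ub]$ equals this. Expanding $B_\Ub = -\tfrac{\ii}{2}(\mu_\Ub |D| + |D|\mu_\Ub) + \tfrac{\ii}{2}\mu_{|D|\Ub}$, we compute
\[
[B_\Ub, \mu_\Ub] = -\frac{\ii}{2}\big[\mu_\Ub|D| + |D|\mu_\Ub, \mu_\Ub\big] + \frac{\ii}{2}\big[\mu_{|D|\Ub}, \mu_\Ub\big].
\]
The second bracket is $\tfrac{\ii}{2}\mu_{[|D|\Ub,\Ub]} = -\tfrac{\ii}{2}\mu_{[\Ub,|D|\Ub]}$, which is already the answer; so the goal reduces to showing the first bracket vanishes. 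Writing it out, $[\mu_\Ub|D| + |D|\mu_\Ub, \mu_\Ub] = \mu_\Ub|D|\mu_\Ub - \mu_\Ub\mu_\Ub|D| + |D|\mu_\Ub\mu_\Ub - \mu_\Ub|D|\mu_\Ub = |D|\mu_{\Ub^2} - \mu_{\Ub^2}|D| = [|D|, \mu_{\Ub^2}]$, and here the algebraic constraint $\Ub(x)^2 = \mathds{1}_d$ enters decisively: $\mu_{\Ub^2} = \mu_{\mathds{1}_d} = \id$ commutes with $|D|$, so the bracket is indeed zero. This gives the $\mu_\Ub$ case.

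Next I would handle $L_\Ub = \Pi_\pm$. Since $\Pi_\pm$ do not depend on $t$, we must show $[B_\Ub, \Pi_+] = 0$ (then $[B_\Ub,\Pi_-] = -[B_\Ub,\Pi_+] = 0$ follows from $\Pi_+ + \Pi_- = \id$). Here I would use $|D| = \Hil\pt_x$ with $\Hil = -\ii\Pi_+ + \ii\Pi_-$, so $|D|$ commutes with $\Pi_\pm$; hence $[|D|, \Pi_+] = 0$. The only potentially problematic terms in $[B_\Ub, \Pi_+]$ are those involving $\mu_\Ub$ and $\mu_{|D|\Ub}$, e.g. $[\mu_\Ub|D|, \Pi_+] = \mu_\Ub[|D|,\Pi_+] + [\mu_\Ub,\Pi_+]|D| = [\mu_\Ub,\Pi_+]|D|$, and similarly $[|D|\mu_\Ub,\Pi_+] = |D|[\mu_\Ub,\Pi_+]$. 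Collecting everything,
\[
[B_\Ub,\Pi_+] = -\frac{\ii}{2}\big([\mu_\Ub,\Pi_+]|D| + |D|[\mu_\Ub,\Pi_+]\big) + \frac{\ii}{2}[\mu_{|D|\Ub},\Pi_+].
\]
Now I would use the Hankel/commutator identity $[\mu_\Ub,\Pi_+] = \Pi_-\mu_\Ub\Pi_+ - \Pi_+\mu_\Ub\Pi_-$ (which is $H_\Ub$ minus its "adjoint-side" counterpart in the present conventions), and the crucial fact that $|D| = \Hil\pt_x$ acts as multiplication by $\pm\xi$ on the Fourier transform supported in $\pm[0,\infty)$: combined with the Leibniz rule $|D|(\Ub f) = (|D|\Ub)f + \Ub(|D|f)$ failing by an exactly computable Hankel-type defect, the symbol relation $\widehat{|D|(\Ub f)}(\xi) = |\xi|\,\widehat{\Ub f}(\xi)$ lets one check that the frequency-space effect of the first two terms cancels against $\tfrac{\ii}{2}[\mu_{|D|\Ub},\Pi_+]$. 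Concretely, I expect this to reduce to the identity $|D|\Pi_\pm = \pm\ii\,\pt_x\Pi_\pm$ together with $[\pt_x, \mu_\Ub] = \mu_{\pt_x\Ub}$ and the observation that $|D|\Ub = \Hil\pt_x\Ub$, so that the combination $-\tfrac{\ii}{2}(\mu_\Ub|D| + |D|\mu_\Ub) + \tfrac{\ii}{2}\mu_{|D|\Ub}$ is, up to commuting factors, a genuine first-order differential operator whose commutator with the \emph{projection} $\Pi_+$ is zero because it preserves the Hardy space splitting. This step is the main obstacle: it requires carefully sorting the Fourier-multiplier bookkeeping to see that $B_\Ub$, despite containing $\mu_\Ub$-factors that do not commute with $\Pi_+$ individually, has a commutator with $\Pi_+$ that telescopes to zero.

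Finally, I would note the regularity justification threaded through: for $s > \tfrac32$, $\Ub, \pt_x\Ub, |D|\Ub \in C([0,T], L^\infty)$ (even $C([0,T], C(\R))$) by the algebra property of $H^s$ and Sobolev embedding, so $B_\Ub$ is well-defined and closed on $\dom(B_\Ub) = H^1_+(\R;\VV)$, the products $\mu_\Ub|D|$, $|D|\mu_\Ub$ map $H^1_+ \to L^2_+$ boundedly, and all commutator manipulations above are legitimate on $H^1_+$; the skew-adjointness of $B_\Ub$ follows from $\mu_\Ub = \mu_\Ub^*$, $|D| = |D|^*$, and $\Pi_+|D| = |D|\Pi_+$. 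With the $\mu_\Ub$ case and the $\Pi_\pm$ case in hand, the three asserted Lax identities $\frac{d}{dt}L_{\Ub(t)} = [B_{\Ub(t)}, L_{\Ub(t)}]$ are all established, completing the proof.
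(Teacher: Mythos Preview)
Your treatment of the case $L_\Ub = \mu_\Ub$ is correct and essentially the same as the paper's; your rewriting of the anticommutator-type bracket as $[|D|,\mu_{\Ub^2}]$ is in fact a slightly cleaner way to see the vanishing via $\Ub^2 = \mathds{1}_d$.

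The gap is in the $\Pi_\pm$ case. You correctly reduce to showing
\[
[\mu_\Ub,\Pi_+]\,|D| + |D|\,[\mu_\Ub,\Pi_+] \;=\; [\mu_{|D|\Ub},\Pi_+],
\]
but then stop at ``I expect this to reduce to\ldots'' and admit this is ``the main obstacle.'' The paper does not attack $\Pi_+$ directly; it passes to the Hilbert transform $\Hil = -\ii\Pi_+ + \ii\Pi_-$ and shows $[\Hil,B_\Ub]=0$, from which $[\Pi_\pm,B_\Ub]=0$ follows immediately. The key algebraic input you are missing is the product identity
\[
\Hil(fg) = (\Hil f)g + f(\Hil g) + \Hil\big((\Hil f)(\Hil g)\big),
\]
applied with $f = |D|\Ub$ (componentwise) to obtain $[\Hil,\mu_{|D|\Ub}] = -\mu_{\pt_x\Ub} - \Hil\,\mu_{\pt_x\Ub}\,\Hil$ (using $\Hil|D| = -\pt_x$). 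A parallel computation using $|D|\Hil = \Hil|D| = -\pt_x$ and $|D| = \Hil\pt_x$ gives
\[
[\Hil,\,|D|\mu_\Ub + \mu_\Ub|D|] = [\mu_\Ub,\pt_x] + \Hil[\mu_\Ub,\pt_x]\Hil = -\mu_{\pt_x\Ub} - \Hil\,\mu_{\pt_x\Ub}\,\Hil,
\]
so the two pieces of $[\Hil,B_\Ub]$ cancel exactly. Your heuristic about $|D|\Pi_\pm = \mp\ii\pt_x\Pi_\pm$ (note the sign: $|D|\Pi_+ = D\Pi_+ = -\ii\pt_x\Pi_+$, not $+\ii$) is in the right spirit, but without the product identity the ``Fourier-multiplier bookkeeping'' you allude to is not obviously going to telescope; the $\Hil$-product formula is precisely the mechanism that makes it do so.
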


\begin{remarks*}
1) From the assumed regularity of $\Ub=\Ub(t,x)$ above, we readily infer that the pseudo-differential operator $B_\Ub$ is of order one with operator domain $\dom(B_\Ub) = H^1(\R; \VV)$, which is found to be essentially skew-adjoint, i.\,e., there exists a unique skew-adjoint extension with $B_\Ub^* = -B_\Ub$. See Appendix \ref{app:LWP} for details.

2) The fact $\mu_{\Ub(t)}$ together with orthogonal projections $\Pi_{\pm}$ are Lax operators for the same $B_\Ub$ will allow us to restrict the Lax structure to the Hardy space $L^2_+(\R; \VV)$ involving Toeplitz and Hankel operators; see below for more details.  
\end{remarks*}

\begin{proof}
We divide the proof into the following cases.

\medskip
\textbf{Case: $L=\mu_\Ub$.} Using \eqref{eq:hwm_d}, we directly find
\be
\pt_t \mu_\Ub = -\frac{\ii}{2} [\mu_\Ub, \mu_ {\Ds \Ub}] = \frac{\ii}{2} [\mu_{\Ds \Ub}, \mu_\Ub] \, .
\ee 
In view of the expression for $B_\Ub$, it remains to show that
\be
[\mu_\Ub \circ \Ds + \Ds \circ \mu_\Ub, \mu_\Ub]  = 0 \, .
\ee
Indeed, by using that $(\mu_\Ub)^2 = \mu_{\Ub^2} = \id$ since $\Ub^2= \mathds{1}_d$, we readily check that
\begin{align*}
[\mu_\Ub \circ  \Ds + \Ds \circ \mu_\Ub, \mu_\Ub] & = (\mu_\Ub \circ \Ds + \Ds \circ \mu_\Ub) \circ \mu_\Ub - \mu_\Ub \circ (\mu_\Ub \circ \Ds + \Ds \circ \mu_\Ub) \\
& =\mu_\Ub \circ \Ds \circ \mu_\Ub + \Ds - \Ds - \mu_\Ub \circ  \Ds \circ \mu_\Ub = 0 \, .
\end{align*}

\medskip
\textbf{Case: $L=\Pi_{\pm}$}. Here it is convenient to show that the Hilbert transform $\Hil$ is a Lax operator for $B_\Ub$. The claim then readily follows for $\Pi_{\pm}=\frac 1 2 (\id  \mp \ii \Hil)$, since $\id$ commutes with any operator. Since $\frac{d}{dt} \Hil \equiv 0$, we need to show that
$$
[B_\Ub,\Hil] = 0.
$$
From the well-known product identity 
$$
\Hil(fg) = (\Hil f) g + f (\Hil g) + \Hil(\Hil f \Hil g) 
$$ 
and using that $\Hil \Ds=-\pt_x$, we readily find that
$$
[\Hil,\mu_{\Ds \Ub}] = -\mu_{\pt_x \Ub} - \Hil \mu_{\pt_x \Ub} \Hil.
$$
Hence we get
\begin{align*}
[\Hil, \Ds \circ \mu_\Ub + \mu_\Ub \circ \Ds ] & = \Hil \circ ( \Ds \circ \mu_\Ub + \mu_\Ub \circ  \Ds ) - (\Ds \circ \mu_\Ub + \mu_\Ub \circ \Ds) \circ \Hil \\
& = -\pt_x \circ \mu_\Ub + \Hil \circ \mu_\Ub \circ \Hil \pt_x - \Hil \pt_x \circ  \mu_\Ub \circ \Hil + \mu_\Ub \circ \pt_x \\
& = \mu_\Ub \circ \pt_x - \pt_x \circ \mu_\Ub + \Hil  \circ \mu_\Ub \circ \pt_x \Hil - \Hil \pt_x \circ  \mu_\Ub \circ \Hil \\
& = [\mu_\Ub,\pt_x] + \Hil [\mu_\Ub,\pt_x] \Hil \\
& = -\mu_{\pt_x \Ub} - \Hil \circ  \mu_{\pt_x \Ub} \circ \Hil.
\end{align*}
Therefore, we find
\begin{align*}
[\Hil,B_\Ub] & = \frac{\ii}{2} [\Hil, \Ds \circ \mu_\Ub + \mu_\Ub \circ \Ds ] - \frac{\ii}{2} [\Hil, \mu_{\Ds \Ub}] \\
& = \frac{\ii}{2} (-\mu_{\pt_x \Ub} - \Hil  \circ \mu_{\pt_x \Ub} \circ \Hil ) - \frac{\ii}{2} ( -\mu_{\pt_x \Ub} - \Hil \circ \mu_{\pt_x \Ub} \circ \Hil ) = 0 \, .
\end{align*}
This completes the proof of Lemma \ref{lem:lax}.
\end{proof}

From the Leibniz rule for commutators $[X,YZ]=Y[X,Z] + [X,Y]Z$ and the corresponding rule for derivatives $\frac{d}{dt} (XY) = \dot{X}Y+ X \dot{Y}$,  we immediately observe from Lemma \ref{lem:lax} that {\em all finite linear combinations of products} involving the operators $\{ \mu_\Ub, \Pi_+, \Pi_- \}$ are Lax operators too. For instance, in view of $\Hil = -\ii \Pi_+ + \ii \Pi_-$,  we recover the following Lax operator of commutator-type with
$$
L_{\Ub} = [\Hil, \mu_\Ub]  = \Hil \mu_\Ub - \mu_\Ub \Hil  \, ,
$$
which was already found in \cite{GeLe-18}. By taking traces of powers of $L_\Ub$, we obtain the  conserved quantities
$$
\Tr(|L_{\Ub(t)}|^p) = \mbox{const.} \quad \mbox{for $0 < p < \infty$}.
$$
Thus, by adapting Peller's theorem, we obtain the a-priori bounds
$$
\Tr(|L_{\Ub(t)}|^p) \sim_p \| \uu(t) \|_{\dot{B}^{1/p}_{p}}^p \sim \| \uu(0) \|_{\dot{B}^{1/p}_p}^p 
$$
for the homogeneous Besov-type norms $\| \cdot \|_{\dot{B}^{1/p}_p}$ for solutions of (HWM$_d$).\footnote{For $0 < p < 1$, we only have that $\| \cdot \|_{\dot{B}^{1/p}_p}$ is a quasi-semi-norm, since the triangle inequality fails in this case. In our analysis, we only need the case $p=2$.} However, these a-priori bounds are  {\em not known} to provide sufficient control to deduce global-in-time existence of solutions.

In order to further exploit the Lax pair structure attached to (HWM$_d$), we make the following observation involving operator analysis on Hardy spaces. Notice that, for a bounded matrix-valued function $\mathbf{F} \in L^\infty(\R; M_d(\C))$, that the corresponding Toeplitz and Hankel operators with symbol $\Fb$ can be written as $T_{\mathbf{F}} f = \Pi_+ (\mu_\mathbf{F} f)$ and $H_{\mathbf{F}} f = \Pi_- (\mu_{\mathbf{F}} f)$, using $\mu_{\mathbf{F}}$ for the corresponding multiplication with symbol $\mathbf{F}$. Now, by using Lemma \ref{lem:lax} together with $T_{\Ub} = \Pi_+ \mu_{\Ub} \Pi_+$ and $[B_\Ub, \Pi_{\pm}] \equiv 0$ (by Lemma \ref{lem:lax} too), we can easily deduce the following fact.

\begin{cor}[Toeplitz Lax Structure] \label{cor:toep_hank_lax}
For $\Ub=\Ub(t,x)$ as in Lemma \ref{lem:lax}, we have the Lax equation
$$
\frac{d}{dt} T_{\Ub(t)} = \left [ B^+_{\Ub(t)}, T_{\Ub(t)} \right ] \, .
$$
Here $B^+_{\Ub} = \Pi_+ B_{\Ub} \Pi_+$ is the compression of $B_\Ub$ onto $L^2_+(\R;\VV)$ which is given by
$$
B^+_\Ub = -\frac{\ii}{2} ( T_{\Ub}  \circ D + D \circ T_{\Ub} ) + \frac{\ii}{2} T_{|D| \Ub}  
$$
with $D=-\ii \pt_x$.
\end{cor}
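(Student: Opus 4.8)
The plan is to compress the Lax equation for the multiplication operator $\mu_\Ub$, furnished by Lemma \ref{lem:lax}, down to the Hardy space $L^2_+(\R;\VV)$, the crucial point being that Lemma \ref{lem:lax} also asserts that $\Pi_+$ is a Lax operator for the \emph{same} generator $B_\Ub$, which (since $\Pi_\pm$ are time-independent) is precisely the statement $[B_{\Ub(t)},\Pi_\pm]=0$. Since $\Pi_+$ does not depend on $t$, differentiating $T_{\Ub(t)}=\Pi_+\mu_{\Ub(t)}\Pi_+$ gives
\[
\frac{d}{dt}T_{\Ub(t)} \;=\; \Pi_+\Big(\frac{d}{dt}\mu_{\Ub(t)}\Big)\Pi_+ \;=\; \Pi_+\,[B_{\Ub(t)},\mu_{\Ub(t)}]\,\Pi_+\,,
\]
where the hypothesis $s>\tfrac32$ ensures $\pt_t\Ub\in C([0,T],H^{s-1})$ is bounded, so that all multiplication operators in sight are bounded on $L^2(\R;\VV)$ and the $t$-derivative may be taken in operator norm (on the operator domain $H^1_+(\R;\VV)$ of $B^+_\Ub$ once $D$ enters).

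Next I would turn the right-hand side into a commutator of compressions. Writing $\id=\Pi_++\Pi_-$ between the factors and invoking $[B_\Ub,\Pi_\pm]=0$, the cross terms vanish because $\Pi_+B_\Ub\Pi_-=B_\Ub\Pi_+\Pi_-=0$ and $\Pi_-B_\Ub\Pi_+=B_\Ub\Pi_-\Pi_+=0$, so that
\[
\Pi_+B_\Ub\mu_\Ub\Pi_+ = \Pi_+B_\Ub\Pi_+\mu_\Ub\Pi_+ = B^+_\Ub T_\Ub\,, \qquad \Pi_+\mu_\Ub B_\Ub\Pi_+ = \Pi_+\mu_\Ub\Pi_+B_\Ub\Pi_+ = T_\Ub B^+_\Ub\,,
\]
with $B^+_\Ub:=\Pi_+B_\Ub\Pi_+$ regarded as an operator on $L^2_+(\R;\VV)$. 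Subtracting yields $\frac{d}{dt}T_{\Ub(t)}=[B^+_{\Ub(t)},T_{\Ub(t)}]$, the asserted Lax equation.

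Finally I would identify $B^+_\Ub$ explicitly. The two elementary facts needed are that $|D|$ commutes with $\Pi_\pm$ and that $|D|$ acts as $D=-\ii\pt_x$ on $L^2_+(\R;\VV)$ (because $\Hil=-\ii\,\id$ there and $\pt_x$ preserves the Hardy space, so $|D|$ maps $L^2_+$ into $L^2_+$ and agrees with $D$ on it). Compressing the three summands of $B_\Ub=-\tfrac{\ii}{2}(\mu_\Ub\circ|D|+|D|\circ\mu_\Ub)+\tfrac{\ii}{2}\mu_{|D|\Ub}$ one at a time then gives $\Pi_+(\mu_\Ub\circ|D|)\Pi_+=T_\Ub\circ D$ (first apply $|D|=D$ inside $L^2_+$, then $T_\Ub$), $\Pi_+(|D|\circ\mu_\Ub)\Pi_+=D\circ T_\Ub$ (commute $|D|$ past $\Pi_+$, then $|D|=D$ on the Hardy-space output $T_\Ub f$), and $\Pi_+\mu_{|D|\Ub}\Pi_+=T_{|D|\Ub}$ by definition, so $B^+_\Ub=-\tfrac{\ii}{2}(T_\Ub\circ D+D\circ T_\Ub)+\tfrac{\ii}{2}T_{|D|\Ub}$, as claimed. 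I do not expect a genuine obstacle here: the only points requiring minor care are the unboundedness of $B_\Ub$ and $D$ (handled by working on the common domain $H^1_+(\R;\VV)$, which the compositions preserve thanks to $\Ub(t)\in H^s$ with $s>\tfrac32$) and keeping track of precisely where $|D|$ may be replaced by $D$.
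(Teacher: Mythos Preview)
Your proof is correct and follows exactly the approach the paper indicates: compress $T_\Ub=\Pi_+\mu_\Ub\Pi_+$, use Lemma~\ref{lem:lax} for $\mu_\Ub$, and exploit $[B_\Ub,\Pi_\pm]=0$ to eliminate the cross terms. The paper merely states this in one line without spelling out the computation, so your write-up simply fills in the details the authors left implicit.
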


\begin{remarks*}
1) Note that the compressed operator $B^+_\Ub$ is a differential operator as $|D| f = Df  = -\ii \pt_x f$ for $f \in (H^1 \cap L^2_+)(\R; \VV)$. 

2) For $t \in [0,T]$, let $\UU(t) : L^2_+(\R; \VV) \to L^2_+(\R; \VV)$ denote the unitary operator generated by the skew-adjoint operator $B^+_{\Ub(t)}$ so that
\begin{equation} \label{eq:ode_B}
\frac{d}{dt} \UU(t) = B^+_{\Ub(t)} \UU(t) \quad \mbox{for $t \in [0,T]$}, \quad \UU(0) = \id \, .
\end{equation}
For existence and uniqueness of this operator-valued initial-value problem, we refer to Appendix \ref{app:LWP}. As a direct consequence of Corollary \ref{cor:toep_hank_lax}, we find that $T_{\Ub(t)}$ and $T_{\Ub(0)}$ are given by unitary conjugation:
$$
T_{\Ub(t)} = \UU(t) T_{\Ub(0)} \UU(t)^* \quad \mbox{for $t \in [0,T]$} \, .
$$
In particular, we obtain the invariance of the spectrum $\sigma(T_{\Ub(t)}) = \sigma(T_{\Ub(0)})$ for $t \in [0,T]$.

3) Of course, the Hankel operator $H_{\Ub(t)}$ also satisfies a corresponding Lax equation with $B_{\Ub(t)}^+$ replaced by the ``twisted'' compressed operator $\Pi_- B_{\Ub(t)} \Pi_+$. But in what follows we shall only work with the Lax equation for $T_{\Ub(t)}$, which allow us to conclude all the necessary facts for our arguments developed below.
\end{remarks*}

For later use, we record the following commutator relations, where we remind the reader that we occasionally use $A.B$ to denote matrix product $AB$ on $M_d(\C)$ for better readability. 

\begin{lem} \label{lem:commutator}
Let $\Ub = \Ub_\infty + \Vb \in M_d(\C) \oplus (L^\infty \cap L^2)(\R; M_d(\C))$. Then, for every $f \in \dom(X^*)$, we have $T_{\Ub} f \in \dom(X^*)$ and
$$
[X^*, T_{\Ub}] f = \frac{\ii}{2 \pi} \Pi_+ \Vb.I_+(f) \, . 
$$ 
Moreover, it holds that
$$
[X^*, T_{\Ub}^2] f = \frac{\ii}{2 \pi} T_{\Ub}( \Pi_+ \Vb.I_+(f)) + \frac{\ii}{2 \pi} \Pi_+ \Vb.I_+(T_{\Ub} f) \, . 
$$
\end{lem}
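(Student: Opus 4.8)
The strategy is to work on the Fourier side, where the operator $X^*$ acts as $\ii \frac{d}{d\xi}$ and the Toeplitz operator $T_\Ub$ becomes convolution composed with restriction to $\xi \geq 0$. First I would split $\Ub = \Ub_\infty + \Vb$ and treat the two summands separately. For the constant part $\Ub_\infty$, the multiplication operator $\mu_{\Ub_\infty}$ commutes with $\Pi_+$ (it acts fiberwise on $\VV$), so $T_{\Ub_\infty} = \mu_{\Ub_\infty}$ as an operator on $L^2_+(\R;\VV)$; since $X^*$ acts only in the $\xi$-variable while $\mu_{\Ub_\infty}$ acts only on the target $\VV$, the commutator $[X^*, T_{\Ub_\infty}]$ vanishes and also $T_{\Ub_\infty}$ preserves $\dom(X^*)$. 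Hence it suffices to establish the formula with $\Ub$ replaced by $\Vb \in (L^\infty \cap L^2)(\R; M_d(\C))$, in which case the claimed right-hand side $\frac{\ii}{2\pi}\Pi_+ \Vb . I_+(f)$ is unchanged.

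For the $\Vb$-part, the cleanest route is the resolvent identity for $X^*$ recorded in the preliminaries. Fix $z_0 \in \C_+$ and write $g = (X^* - z_0)^{-1} f$, so that $(X^* - z_0) T_\Vb f = (X^*-z_0) T_\Vb (X^* - z_0) g$. Using $((X^*-z_0)^{-1}h)(z) = \frac{h(z)-h(z_0)}{z-z_0}$ together with the reproducing-type identity and the explicit pointwise action of $T_\Vb$ on functions in $L^2_+$ regarded as holomorphic functions on $\C_+$, one computes $(X^* T_\Vb - T_\Vb X^*) f$ directly. Concretely: for $h \in \dom(X^*)$ one has the algebraic identity $X^* h = $ "$(x h(x) - $ boundary term$)$" — more precisely, since $\widehat{(X^*h)}(\xi) = \ii \frac{d}{d\xi}\widehat h(\xi)$, integration by parts against $\widehat{(T_\Vb \varphi)}$ produces exactly a boundary contribution at $\xi = 0^+$, which is $\widehat{\Vb * \varphi}$ evaluated there; and $\widehat{\Vb * \varphi}(0^+) = \widehat{\Vb}(0^+) \cdot \widehat{\varphi}(0^+)$ only in the sense that the convolution at frequency $0$ picks out $\int \Vb \cdot (\text{mass of }\varphi)$ — this is precisely where $I_+(f)$ enters. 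I would carry this out by testing against the approximate identity $\vv_k \chi_\eps$ as in \eqref{eq:I_ident}, which converts all the limits $\xi \to 0^+$ into honest $L^2$ pairings and makes the boundary term rigorous. That $T_\Vb f \in \dom(X^*)$ follows because $\Vb$ has enough regularity (being in $L^2 \cap L^\infty$, convolution with $\widehat \Vb \in L^2$ on the Fourier side combined with $\widehat f$ decaying preserves the $H^1_{\mathrm{loc}}$ structure near $\xi = 0$).

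For the second formula, I would simply iterate the first via the Leibniz rule $[X^*, T_\Ub^2] = T_\Ub [X^*, T_\Ub] + [X^*, T_\Ub] T_\Ub$. Applying this to $f$: the first term gives $T_\Ub\big(\frac{\ii}{2\pi}\Pi_+\Vb . I_+(f)\big)$, and the second gives $\frac{\ii}{2\pi}\Pi_+ \Vb . I_+(T_\Ub f)$, which is exactly the asserted identity; one only needs that $T_\Ub$ maps $\dom(X^*)$ into itself, which was established in the first part. The Leibniz step also needs $[X^*, T_\Ub] f \in \dom(X^*)$ so that $T_\Ub$ can be applied — but $\Pi_+ \Vb . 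I_+(f)$ is a fixed element of $L^2_+$ ($\Pi_+ \Vb$ times a constant vector/matrix) times nothing problematic, and $\Pi_+\Vb \in \dom(X^*)$ precisely when $\Vb$ is rational or otherwise regular enough; for general $\Vb \in L^2 \cap L^\infty$ one should instead observe directly that $T_\Ub(\Pi_+\Vb . I_+(f))$ and $\Pi_+\Vb.I_+(T_\Ub f)$ are well-defined elements of $L^2_+$ and that the Leibniz identity holds as an identity in $L^2_+$ without needing the intermediate term to lie in $\dom(X^*)$ — i.e. compute $[X^*, T_\Ub^2]f = X^*(T_\Ub^2 f) - T_\Ub^2(X^* f)$ directly and insert $T_\Ub(X^* T_\Ub f) - T_\Ub(X^* T_\Ub f) = 0$.

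**Main obstacle.** The delicate point is the rigorous justification of the boundary term at $\xi = 0^+$: making precise that "differentiating the truncated convolution $\mathds{1}_{\xi\geq 0}(\widehat\Vb * \widehat f)$ in $\xi$ produces a Dirac-type contribution proportional to $\widehat f(0^+)$." The function $\Vb \in L^2 \cap L^\infty$ need not have $\widehat\Vb$ continuous at $0$, so the naive factorization $\widehat{\Vb * f}(0^+) = \widehat\Vb(0^+)\widehat f(0^+)$ is false; the correct statement is subtler and the $\chi_\eps$ regularization is essential to handle it cleanly. I expect this Fourier-side boundary analysis — showing that only the mass $I_+(f) = \widehat f(0^+)$ of $f$ at zero frequency survives, multiplied against $\Pi_+\Vb$ — to be the technical heart of the argument; everything else is bookkeeping with the resolvent formula and the Leibniz rule.
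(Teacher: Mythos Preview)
Your proposal is correct and follows essentially the same route as the paper: split off the constant part $\Ub_\infty$ (which commutes with $X^*$ trivially), reduce to $[X^*,T_\Vb]$ with $\Vb\in L^2\cap L^\infty$, and obtain the second identity via the Leibniz rule $[X^*,T_\Ub^2]=T_\Ub[X^*,T_\Ub]+[X^*,T_\Ub]T_\Ub$. The paper simply cites the scalar case in \cite{GePu-24}[Lemma 2.3] and leaves the matrix-valued adaptation to the reader, so your Fourier-side/$\chi_\eps$-regularization outline is precisely the content of that deferred computation; your identification of the $\xi\to 0^+$ boundary term as the delicate point is accurate, and your fix for the Leibniz step (telescoping via $T_\Ub f\in\dom(X^*)$ rather than requiring $\Pi_+\Vb\in\dom(X^*)$) is the right way to handle general $\Vb$.
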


\begin{proof}
First, we note that $[X^*, T_{\Ub_\infty}] = 0$, since $\Ub_\infty \in M_d(\C)$ is a constant matrix. Also, we evidently have that $T_{\Ub_\infty} f \in \dom(X^*)$ whenever $f \in \dom(X^*)$.

Thus it remains to discuss the commutator $[X^*, T_{\Ub}] = [X^*, T_{\Vb}]$ with $\Vb \in (L^\infty \cap L^2)(\R; M_d(\C))$. Indeed, by adapting the proof in \cite{GePu-24}[Lemma 2.3] to the matrix-valued symbol $\Vb$, we find that
$$
[X^*,T_{\Vb}] f = \frac{\ii}{2 \pi} \Pi_+ \Vb.I_+(f)
$$
noticing that $T_{\Vb} f \in \dom(X^*)$ for any $f \in \dom(X^*)$.  We leave the details to the reader.

The  commutator identity  for $[X^*, T_{\Ub}^2]$ simply follows from the first identity and the fact that $[A,BC] = B[A,C] + [A,B]C$.
\end{proof}


\section{Spectral Analysis of $T_{\Ub}$}

\label{sec:spectral}

As in the previous sections, we let $\VV$ either stand for the Hilbert spaces $\C^d$ or $M_d(\C)$ for some given integer $d \geq 2$. The aim of this section is to derive some fundamental spectral properties of the Toeplitz operator 
$$
T_\Ub : L^2_+(\R; \VV) \to L^2_+(\R; \VV), \quad f \mapsto T_\Ub f= \Pi_+(\Ub f) \, .
$$ 
Throughout the following we will always assume that 
$$
\Ub(x) = \Ub_\infty + \Vb(x) \in H^{\frac 1 2}_\bullet(\R; M_d(\C))  \equiv M_d(\C) \oplus H^{\frac{1}{2}}(\R; M_d(\C))
$$ 
together with the pointwise algebraic constraints
$$
\Ub(x)^* = \Ub(x) \quad \mbox{and} \quad \Ub(x)^2 = \mathds{1}_d \quad \mbox{for a.\,e.~$x \in \R$} \, .
$$ 
As a consequence,  we see that the corresponding Toeplitz operator $T_{\Ub}=T_{\Ub}^*$ is self-adjoint and bounded with operator norm $\| T_{\Ub} \| \leq 1$. Moreover, we readily check that the following properties hold.
\begin{enumerate}
\item[(i)] $\Ub_\infty^* = \Ub_\infty$ and $\Ub_\infty^2 = \mathds{1}_d$.
\item[(ii)] $\Ub, \Vb \in L^\infty(\R; M_d(\C))$ with $\|\Ub \|_{L^\infty} = 1$ and $\|\Vb \|_{L^\infty} \leq \|\Ub_\infty\|_{L^\infty} + \| \Ub \|_{L^\infty} = 2$.
\item[(iii)] $\Vb = \Vb_{+} + \Vb_{+}^*$ with $\Vb_{+} = \Pi_+ \Vb$ and $\Vb_{+}^* = \Pi_- \Vb$.
\end{enumerate}

\subsection*{Fredholm Property and Invariant Subspaces}

Recall that 
$$
H_\Ub: L^2_+(\R; \VV) \to L^2_-(\R; \VV), \quad H_{\Ub} f= \Pi_-(\Ub f)
$$ 
denotes the corresponding (block) Hankel operator with matrix-valued symbol $\Ub$. For later use, we remark that the adjoint Hankel operator is given by
$$
H_\Ub^* : L^2_-(\R; \VV) \to L^2_+(\R;\VV), \quad H_\Ub^* f = \Pi_+(\Ub f) \, .
$$
\begin{remark*}
Here we used the fact that $\Ub(x)^* =  \Ub(x)$ almost everywhere. For general matrix-valued symbols $\mathbf{F}\in L^\infty(\R; M_d(\C))$, the adjoint Hankel operator is $H_{\mathbf{F}}^* f = \Pi_+(\mathbf{F}^* f)$ for $f\in L^2_-(\R;\VV)$.
\end{remark*}

We have the following general fact, for matrix-valued symbols $\Ub$ satisfying the assumptions stated above.

\begin{lem}[Key Identity and Fredholmness] \label{lem:key} We have the identity
$$
T_{\Ub}^2 = \id - K_{\Ub} \quad \mbox{on $L^2_+(\R; \VV)$} \, ,
$$
where the self-adjoint operator 
$$
K_\Ub := H^*_{\Ub} H_{\Ub} : L^2_+(\R; \VV) \to L^2_+(\R; \VV)
$$
satisfies $0 \leq K_{\Ub} \leq \id$ and it is trace-class with 
$$
\Tr(K_\Ub) = \Tr(H^*_\Ub H_\Ub) = \mathrm{const.} \cdot \| \Ub \|_{\dot{H}^{\frac 1 2}}^2  \, .
$$
Moreover, the Toeplitz operator $T_{\Ub}$ is Fredholm with index 0.
\end{lem}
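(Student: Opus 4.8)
The plan is to deduce every assertion from the pointwise algebraic constraint $\Ub(x)^2 = \mathds{1}_d$, which is equivalent to the operator identity $\mu_\Ub^2 = \id$ on $L^2(\R;\VV)$. First I would prove the key identity. For $f \in L^2_+(\R;\VV)$ write $\mu_\Ub f = T_\Ub f + H_\Ub f$ using $\id = \Pi_+ + \Pi_-$. Applying $\mu_\Ub$ once more and using $\mu_\Ub^2 f = f$ gives $f = \mu_\Ub T_\Ub f + \mu_\Ub H_\Ub f$. Since $T_\Ub f \in L^2_+(\R;\VV)$ and $H_\Ub f \in L^2_-(\R;\VV)$, applying $\Pi_+$ to this identity and using $\Ub^* = \Ub$ — so that $\Pi_+ \mu_\Ub$ restricted to $L^2_-(\R;\VV)$ equals $H_\Ub^*$ as recorded above — yields $f = T_\Ub^2 f + H_\Ub^* H_\Ub f$, that is $T_\Ub^2 = \id - K_\Ub$ with $K_\Ub := H_\Ub^* H_\Ub$. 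The bounds $0 \leq K_\Ub \leq \id$ are then immediate: $K_\Ub = H_\Ub^* H_\Ub \geq 0$, while $K_\Ub = \id - T_\Ub^2 \leq \id$ because $T_\Ub = T_\Ub^*$ forces $T_\Ub^2 \geq 0$.

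For the trace-class property and the quantitative bound, I would first observe that $H_\Ub$ only sees the non-constant, anti-analytic part of the symbol: since $\Ub_\infty$ is a constant matrix, $\Pi_-(\Ub_\infty f) = \Ub_\infty \Pi_- f = 0$ for $f \in L^2_+(\R;\VV)$, whence $H_\Ub = H_{\Vb_0} = H_{\Vb_+^*}$ with $\Vb_+^* = \Pi_- \Vb_0$. Then $\Tr K_\Ub = \Tr(H_\Ub^* H_\Ub) = \|H_\Ub\|_{\mathrm{HS}}^2$, and the classical Hilbert--Schmidt characterisation of Hankel operators (the $p=2$ instance of Peller's theorem, or a direct computation on the Fourier side, with the Hermitian structure $\widehat{\Vb_0}(-\xi) = \widehat{\Vb_0}(\xi)^*$ ensuring the lower bound) gives $\|H_{\Vb_0}\|_{\mathrm{HS}}^2 \sim_d \|\Vb_0\|_{\dot{H}^{1/2}}^2 = \|\Ub\|_{\dot{H}^{1/2}}^2$; the $d$-dependent constant simply records whether $\VV = \C^d$ or $\VV = M_d(\C)$, treating $H_\Ub$ as a $d\times d$ block of scalar Hankel operators and summing Hilbert--Schmidt norms over entries, respectively over columns in the $M_d(\C)$ case. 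In particular $K_\Ub$ is trace-class, hence compact.

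For Fredholmness I would argue through the essential spectrum. Since $K_\Ub$ is compact, $T_\Ub^2 = \id - K_\Ub$ has $\sigma_{\mathrm{ess}}(T_\Ub^2) = \{1\}$. As $T_\Ub$ is self-adjoint, the spectral mapping property $\sigma_{\mathrm{ess}}(T_\Ub^2) = \{\lambda^2 : \lambda \in \sigma_{\mathrm{ess}}(T_\Ub)\}$ forces $\sigma_{\mathrm{ess}}(T_\Ub) \subseteq \{-1, 1\}$, so $0 \notin \sigma_{\mathrm{ess}}(T_\Ub)$ and $T_\Ub$ is Fredholm. Self-adjointness then gives $\ker T_\Ub = \ker T_\Ub^*$, so the index of $T_\Ub$ is $0$. (One can also argue directly: $\ker T_\Ub \subseteq \ker T_\Ub^2$ is finite-dimensional and $\ran T_\Ub \supseteq \ran T_\Ub^2$ is closed of finite codimension, so $T_\Ub$ is Fredholm, and then self-adjointness gives index $0$.)

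I expect the only step requiring genuine care to be the quantitative Hilbert--Schmidt bound for the matrix-valued Hankel operator, namely the passage $\|H_{\Vb_0}\|_{\mathrm{HS}}^2 \sim_d \|\Ub\|_{\dot{H}^{1/2}}^2$ and the tracking of the dimensional constant; everything else — the key identity and the Fredholm deduction — is a short algebraic manipulation once $\mu_\Ub^2 = \id$ is exploited, respectively a soft spectral-theory argument.
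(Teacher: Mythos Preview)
Your proposal is correct and follows essentially the same approach as the paper: the key identity is derived from $\mu_\Ub^2 = \id$ by splitting $\Pi_+ + \Pi_-$ and recognising $\Pi_+ \mu_\Ub|_{L^2_-} = H_\Ub^*$, the trace bound is Peller's theorem (Hilbert--Schmidt case) applied to the matrix-valued Hankel operator, and Fredholmness with index $0$ comes from compactness of $K_\Ub$ together with self-adjointness of $T_\Ub$. The only cosmetic differences are that the paper bounds $K_\Ub \leq \id$ via $\|K_\Ub\| \leq \|H_\Ub\|^2 \leq \|\Ub\|_{L^\infty}^2 = 1$ rather than via $T_\Ub^2 \geq 0$, and deduces Fredholmness directly from ``invertible modulo compacts'' rather than through the essential spectrum; both variants are standard and you even mention the direct route parenthetically.
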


\begin{proof}
Let us consider the case $\VV= \C^d$, where we remark that the proof for $\VV = M_d(\C)$ is analogous. Suppose that $f \in L^2_+(\R; \C^d)$ is given. Using that $\mu_\Ub^2 = \mu_{\Ub^2} = \mu_{\mathds{1}_d} = \id$ holds on $L^2_+(\R; \C^d)$ and $\Ub^*(x)=\Ub(x)$ for a.\,e.~$x \in \R$, we observe that
\begin{align*}
T_\Ub ( T_\Ub f) & = \Pi_+(\Ub \Pi_+(\Ub f)) = \Pi_+ ({\Ub} (\id - \Pi_-) \Ub f) \\ 
& = \Pi_+ f - \Pi_+ ( {\Ub} (\Pi_- \Ub f) ) = f - H_{\Ub}^* H_\Ub f,
\end{align*}
since we trivially have that $\Pi_+ f = f$ for $f \in L^2_+(\R; \C^d)$. This proves the claimed identity.

Consider now the bounded and  self-adjoint operator 
$$
K_{\Ub} := H_{\Ub}^* H_{\Ub} : L^2_+(\R; \C^d) \to L^2_+(\R; \C^d) \, .
$$ 
Clearly, we have that $K_{\Ub} \geq 0$ is non-negative. Also, we notice that $\| K_{\Ub} \| \leq \| H_{\Ub} \|^2 \leq \| \Ub \|_{L^\infty} = 1$, which shows that $K_{\Ub} \leq \id$ holds in the sense of operators. Next, we observe that $K_{\Ub}$ is trace-class with
\be \label{eq:K_trace}
\Tr(K_{\Ub}) = \Tr(H_\Ub^* H_\Ub) = \| H_{\Ub} \|_{HS}^2 = c  \cdot \| \Ub \|_{\dot{H}^{\frac 1 2}}^2 \, ,
\ee 
where $c>0$ is some numerical constant. Here $\| A \|_{HS}$ denotes the Hilbert--Schmidt norm of a bounded operator $A : H_1 \to H_2$ with separable Hilbert spaces $H_1, H_2$, i.\,e., we have 
$$
\| A \|_{HS}^2  = \sum_{n=1}^\infty \langle  A e_n, A e_n \rangle_{H_2} \, ,
$$  
where $(e_n)_{n \in \N}$ is an arbitrary ONB of $H_1$. For the last equation in \eqref{eq:K_trace}, we give an elementary proof taken from \cite{GeLe-18}. Using the orthogonal decomposition $L^2(\R;\C^d) = L^2_+(\R; \C^d) \oplus L^2_-(\R; \C^d)$, we consider the commutator of $\Ub$ viewed as multiplication operator on $L^2(\T; \C^d)$ with the Hilbert transform $\Hil$. This can be written as a $2 \times 2$-matrix of operators such that
$$
 [\Hil, \Ub] = \left ( \begin{array}{cc} 0 & -\ii \Pi_+ \Ub \Pi_- \\ \ii \Pi_- \Ub \Pi_+ & 0 \end{array} \right ) : L^2_+(\R; \C^d) \oplus L^2_-(\R; \C^d) \to L^2_+(\R; \C^d) \oplus L^2_-(\R;\C^d).
$$
On the other hand, from the singular integral formula for $\Hil$, we easily see that $[\Hil, \Ub]$ has the integral kernel $h_\Ub(x,y) = \frac{1}{\pi} \frac{\Ub(x)-\Ub(y)}{x-y} \in L^2(\R \times \R; M_d(\C))$. Hence its Hilbert-Schmidt norm as an operator acting on $L^2(\R; \VV)$ can be directly computed as
$$
\| [\Hil, \Ub] \|_{HS}^2 = \| h_\Ub \|_{L^2(\R \times \R; M_d(\C))}^2 = \frac{1}{\pi^2} \int_{\R} \! \int_{\R} \frac{|\Ub(x)- \Ub(y)|_F^2}{|x-y|^2} \, dx \,dy \, ,
$$
where $| \cdot |_F$ denotes the Frobenius norm of matrices in $M_d(\C)$. Next, by using that $\Ub= \Ub^*$ holds, we see that $\| h_\Ub \|_{HS}^2 = \| \Pi_+ \Ub \Pi_- \|_{HS}^2 + \| \Pi_- \Ub \Pi_+ \|_{HS}^2 =2 \| H_\Ub \|_{HS}^2$. Recalling the formula \eqref{eq:H_half}, we deduce that the last equation in \eqref{eq:K_trace} holds.

It remains to prove that $T_{\Ub}$ is Fredholm with index 0. Indeed, we readily see that $T_\Ub$ is Fredholm since $T_\Ub$ is invertible modulo compact operators, which directly follows from the identity $T_{\Ub} T_\Ub = T_\Ub^2 = \id - K_\Ub$. Since $T_{\Ub}$ is self-adjoint, its Fredholm index must be 0.
\end{proof}

In view of the general identity established in Lemma \ref{lem:key}, it is natural to introduce the following closed subspaces
\be
\boxed {\Hfr_0 := \ker(K_{\Ub}) \quad \mbox{and} \quad \Hfr_1 := \ov{\ran(K_{\Ub})}  }
\ee
which yields the orthogonal decomposition
\be
\boxed{L^2_+(\R; \VV) = \Hfr_0 \oplus \Hfr_1 }
\ee
As a direct consequence,  we obtain a decomposition of the Toeplitz operator $T_{\Ub}$ into invariant subspaces in the spirit of the celebrated {\em Sz.--Nagy--Foia\textcommabelow{s} decomposition} for contractions on Hilbert spaces  \cite{SzFoBeKe-10} (also referred to as {\em Langer's lemma} in \cite{La-61}), which in turn is a generalization of the well-known {\em Wold decomposition} for isometries on Hilbert spaces. Note that $T_{\Ub}$ is a contraction, because its operator norm satisfies $\|T_{\Ub}\| \leq \| \Ub \|_{L^\infty} = 1$, where in fact we have equality in view of the identity in Lemma \ref{lem:key} above.

\begin{prop} \label{prop:nagy}
The subspaces $\Hfr_0$ and $\Hfr_1$ are invariant under $T_{\Ub}$. Moreover, the restriction $T_{\Ub} |_{\Hfr_0}$ is unitary, whereas the restriction $T_{\Ub} |_{\Hfr_1}$ is completely non-unitary (c.n.u.), i.\,e., there is no non-trivial invariant subspace in $\Hfr_1$ on which $T_{\Ub}$ is unitary.
\end{prop}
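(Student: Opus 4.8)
The plan is to verify the three claims of Proposition~\ref{prop:nagy} — invariance of $\Hfr_0$ and $\Hfr_1$, unitarity of $T_{\Ub}|_{\Hfr_0}$, and complete non-unitarity of $T_{\Ub}|_{\Hfr_1}$ — by exploiting the key identity $T_{\Ub}^2 = \id - K_{\Ub}$ from Lemma~\ref{lem:key} and the self-adjointness of $T_{\Ub}$.

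\emph{Step 1: Invariance of $\Hfr_0$.} Suppose $f \in \Hfr_0 = \ker(K_{\Ub})$. Since $T_{\Ub}$ is self-adjoint and commutes with $T_{\Ub}^2 = \id - K_{\Ub}$, it commutes with $K_{\Ub}$. Hence $K_{\Ub}(T_{\Ub} f) = T_{\Ub}(K_{\Ub} f) = 0$, so $T_{\Ub} f \in \Hfr_0$. This also shows $\Hfr_1 = \Hfr_0^\perp = \ov{\ran(K_\Ub)}$ is invariant, since the orthogonal complement of an invariant subspace of a self-adjoint operator is again invariant (equivalently, $K_\Ub$ commutes with $T_\Ub$ and $\ov{\ran(K_\Ub)}$ is a reducing subspace for $K_\Ub$, hence for $T_\Ub$).

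\emph{Step 2: $T_{\Ub}|_{\Hfr_0}$ is unitary.} On $\Hfr_0$ we have $K_{\Ub} = 0$, so $T_{\Ub}^2|_{\Hfr_0} = \id_{\Hfr_0}$. Since $T_{\Ub}|_{\Hfr_0}$ is a bounded self-adjoint operator on the Hilbert space $\Hfr_0$ whose square is the identity, it is a self-adjoint involution, hence unitary (indeed $(T_{\Ub}|_{\Hfr_0})^* T_{\Ub}|_{\Hfr_0} = T_{\Ub}^2|_{\Hfr_0} = \id$).

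\emph{Step 3: $T_{\Ub}|_{\Hfr_1}$ is completely non-unitary.} Here is the one step requiring real care, and I expect it to be the main obstacle. Let $\Ms \subseteq \Hfr_1$ be a closed subspace invariant under $T_{\Ub}$ on which the restriction $T_{\Ub}|_\Ms$ is unitary; we must show $\Ms = \{0\}$. The strategy: for any $f \in \Ms$ we have $\|T_{\Ub} f\| = \|f\|$ (unitarity gives norm preservation), and therefore
\[
0 = \|f\|^2 - \|T_{\Ub} f\|^2 = \langle f, f\rangle - \langle T_{\Ub} f, T_{\Ub} f\rangle = \langle (\id - T_{\Ub}^2) f, f\rangle = \langle K_{\Ub} f, f\rangle = \|H_{\Ub} f\|^2 .
\]
Thus $H_{\Ub} f = 0$, i.e. $K_{\Ub} f = 0$, so $f \in \Hfr_0 \cap \Hfr_1 = \{0\}$. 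Hence $\Ms = \{0\}$, proving $T_{\Ub}|_{\Hfr_1}$ is c.n.u. The subtlety to get right is that the definition of complete non-unitarity quantifies over \emph{all} invariant subspaces on which $T_{\Ub}$ acts unitarily (not merely reducing ones), but the computation above only uses invariance of $\Ms$ together with $\|T_{\Ub} f\| = \|f\|$ for $f\in\Ms$, which is part of what ``unitary on $\Ms$'' entails, so this is enough; one should state this carefully. (Alternatively, one may recall that this is precisely the content of the Sz.-Nagy--Foia\textcommabelow{s}/Langer decomposition applied to the contraction $T_{\Ub}$, with the defect operator $\id - T_{\Ub}^* T_{\Ub} = \id - T_{\Ub}^2 = K_{\Ub}$; the unitary part lives on $\bigcap_{n\geq 0} \ker(K_{\Ub} T_{\Ub}^n) \cap \ker(K_{\Ub} (T_{\Ub}^*)^n)$, which here collapses to $\ker K_\Ub = \Hfr_0$ because $K_\Ub$ already commutes with $T_\Ub$.) I would present the self-contained computation as the primary argument and mention the abstract decomposition as context.
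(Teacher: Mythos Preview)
Your proof is correct and takes a somewhat different route from the paper's. The paper establishes c.n.u.\ on $\Hfr_1$ by first writing down the compact spectral decomposition $K_\Ub = \sum_j \lambda_j \langle \cdot, \phi_j\rangle \phi_j$ with $\lambda_j \in (0,1]$, then using $T_\Ub^2 = \id - K_\Ub$ together with elementary spectral calculus on each finite-dimensional eigenspace $E_{\lambda_j}$ to obtain the explicit diagonal form $T_\Ub|_{\Hfr_1} = \sum_j \eps_j\sqrt{1-\lambda_j}\,\langle \cdot, \phi_j\rangle \phi_j$; since $\lambda_j > 0$ forces $|\eps_j\sqrt{1-\lambda_j}| < 1$, no invariant subspace can carry a unitary restriction. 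Your argument instead bypasses the spectral decomposition entirely: from $\|T_\Ub f\| = \|f\|$ you extract $\langle K_\Ub f, f\rangle = \|H_\Ub f\|^2 = 0$, hence $f \in \Hfr_0 \cap \Hfr_1 = \{0\}$. This is cleaner and more general---it uses only $K_\Ub \geq 0$ and the key identity, not compactness. The paper's approach, on the other hand, produces the explicit eigenvalue formula $\sigma_{\mathrm{d}}(T_\Ub) = \{\eps_j\sqrt{1-\lambda_j}\}$ as a byproduct, which is then used immediately afterward to describe the discrete spectrum; so each route has its merits.
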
 

\begin{proof}
Since the operator $K_\Ub : L^2_+(\R; \VV) \to L^2_+(\R; \VV)$ with $0 \leq K_\Ub \leq 1$ is compact and self-adjoint, we can write
$$
K_\Ub = \sum_{j=1}^N \lambda_j \langle \cdot, \phi_j\rangle  \phi_j
$$
with $N = \rank(K_\Ub) \in \N \cup \{\infty\}$, $\lambda_j \in (0, 1]$ for $j=1, \ldots, N$, and the corresponding eigenvectors $(\phi_j)_{j=1}^N$ form an ONB of $\Hfr_1 = \ov{\ran(K_\Ub)}$. By the identity $T_\Ub^2 = \id - K_\Ub$ and from elementary spectral calculus for the self-adjoint restrictions $T_\Ub |_{E_{\lambda_j}}$ on the finite-dimensional subspaces $E_{\lambda_j}=\ker(K_\Ub-\lambda_j \id) \subset \Hfr_1$, we deduce that
\be \label{eq:T_U_spectral}
T_\Ub = \sum_{j=1}^N \eps_j \sqrt{1- \lambda_j} \langle \cdot, \phi_j \rangle \phi_j \quad \mbox{on $\Hfr_1$} 
\ee
with some $\eps_j \in \{ \pm 1 \}$ for $j=1, \ldots, N$. Evidently, we have that $\Hfr_1$ is invariant under $T_\Ub$ and we see that $T_\Ub |_{\Hfr_1}$ is c.\,n.\,u. Because otherwise $T_\Ub |_{\Hfr_1}$ would have an eigenvalue $\mu \in \{ \pm 1\}$ on some finite-dimensional subspace $E_{\lambda_j}$, contradicting the above explicit formula since $\lambda_j > 0$ holds.

Since $\Hfr_0 = \ker(K_\Ub) = \Hfr_1^\perp$ and by self-adjointness of $T_\Ub$, we see that $T_\Ub(\Hfr_0) \subset \Hfr_0$. Furthermore from $T_\Ub^2 = \id - K_\Ub$, we readily find
$T^2_{\Ub} |_{\Hfr_0} = \id |_{\Hfr_0}$, which implies that the self-adjoint operator $T_\Ub |_{\Hfr_0}$ is also unitary.
\end{proof}

Thanks to the formula $T_\Ub^2 = \id - K_\Ub$ and the decomposition obtained in Proposition \ref{prop:nagy}, we deduce that the spectrum of $T_\Ub$ decomposes as
$$
\sigma(T_\Ub) = \sigma_{\mathrm{e}}(T_\Ub) \sqcup \sigma_{\mathrm{d}}(T_\Ub) \, ,
$$
where the essential and discrete spectra of $T_\Ub$ are given by
$$
\sigma_{\mathrm{e}}(T_{\Ub}) = \sigma(T_{\Ub} |_{\Hfr_0})  \subset \{ \pm 1 \} 
$$
$$
\sigma_{\mathrm{d}}(T_{\Ub}) = \sigma_{\mathrm{d}}( T_{\Ub} |_{\Hfr_1}) = \{ \eps_j \sqrt{1-\lambda_j} \mid j=1,\ldots, \rank(K_\Ub) \}  \, ,
$$
with the sequences $(\eps_j)\subset \{ \pm 1 \}$ and $(\lambda_j) \subset (0, 1]$  taken from  \eqref{eq:T_U_spectral} above.

\begin{remark*}
As an aside, we remark that the property of the Toeplitz operator $T_{\Ub}$ having non-empty discrete spectrum is due to the fact its symbol $\Ub(x)$ is matrix-valued. By contrast, a classical result due to Widom \cite{Wi-64} states that any Toeplitz operator $T_\phi : L^2_+(\R; \C) \to L^2_+(\R; \C)$ with a scalar-valued symbol $\phi \in L^\infty(\R; \C)$ has a spectrum $\sigma(T_\phi)$ which must be a connected subset in $\C$, which shows that $\sigma_{\mathrm{d}}(T_\phi) = \emptyset$ in this case.
\end{remark*}

As a next step, we find some explicit elements in the invariant subspace $\Hfr_1$. The use of this fact will become clear later on when proving our global well-posedness result for (HWM$_d$). Recall our assumption that
\be \label{eq:U_ass}
\Ub = \Ub_\infty + \Vb \in M_d(\C) \oplus H^{\frac 1 2}(\R; M_d(\C)) \, .
\ee
For later use, we make the following observation. For better readability, we use $B.A$ to denote the product $BA$ of two matrices $B, A\in M_d(\C)$.

\begin{prop} \label{prop:king}
Let $\VV = M_d(\C)$. For any constant matrix $A \in M_d(\C)$, it holds that $\Pi_+ \Vb.A \in \Hfr_1$.
\end{prop}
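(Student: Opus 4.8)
The plan is to show that $\Pi_+(\Vb \cdot A)$ lies in $\Hfr_1 = \overline{\ran(K_\Ub)} = \ker(K_\Ub)^\perp$ by showing it is orthogonal to $\Hfr_0 = \ker(K_\Ub)$. Since $K_\Ub = H_\Ub^* H_\Ub$, one has $\ker(K_\Ub) = \ker(H_\Ub)$; so it suffices to prove that
\[
\langle \Pi_+(\Vb \cdot A), g \rangle = 0 \qquad \text{for every } g \in \ker(H_\Ub).
\]
First I would unwind what $g \in \ker(H_\Ub)$ means: $H_\Ub g = \Pi_-(\Ub g) = 0$, i.e.\ $\Ub g \in L^2_+(\R;\VV)$, and therefore $T_\Ub g = \Pi_+(\Ub g) = \Ub g$. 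Using the key identity $T_\Ub^2 = \id - K_\Ub$ from Lemma \ref{lem:key}, this gives $T_\Ub^2 g = g$, hence $\Ub(\Ub g) = \Ub g$ needs care since $\Ub g$ need not be in the Hardy space a priori — but in fact $\Ub g \in L^2_+$ already, and $\mu_\Ub^2 = \id$, so $\Ub(\Ub g) = g$; applying $\Pi_+$ and using $\Ub g \in L^2_+$ we get $T_\Ub(\Ub g) = g$, consistent with the above. The upshot I want to extract is the pointwise-style relation $\Ub(x) g(x)$ defines an element of the Hardy space whose further multiplication by $\Ub$ returns $g$.

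The heart of the computation is then a duality/pairing manipulation. I would write, for $g \in \ker(H_\Ub)$,
\[
\langle \Pi_+(\Vb \cdot A), g \rangle = \langle \Vb \cdot A, g \rangle = \langle (\Ub - \Ub_\infty) A, g \rangle = \langle \Ub A, g \rangle - \langle \Ub_\infty A, g \rangle,
\]
where the first equality uses $g \in L^2_+$ and self-adjointness of $\Pi_+$, and the inner product is the $L^2(\R;M_d(\C))$ one with $\langle X, Y\rangle = \int_\R \Tr(X(x) Y(x)^*)\,dx$. For the term $\langle \Ub A, g\rangle$, I would move $\Ub$ onto $g$ using $\Ub^* = \Ub$ (pointwise, as a Hermitian matrix acting by left multiplication — here one must be careful about the precise inner product on $\VV = M_d(\C)$ and how $\mu_\Ub$ acts, presumably by left multiplication so that $\mu_\Ub^* = \mu_{\Ub^*} = \mu_\Ub$): $\langle \Ub A, g \rangle = \langle A, \Ub g \rangle$. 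Now $\Ub g \in L^2_+$, and $A$ is a \emph{constant} matrix, which as a function of $x$ does not belong to $L^2$; the pairing $\langle A, \Ub g\rangle$ must therefore be interpreted via the approximate identity $\chi_\eps$ (cf.\ the discussion of $I_+$ in Section \ref{sec:prelim}): $\langle A, \Ub g\rangle = \lim_{\eps \to 0}\langle A\chi_\eps, \Ub g\rangle$. Since $\Ub g \in L^2_+$ and $A\chi_\eps \in L^2_+$ as well (it is the Hardy-space function $x \mapsto A/(1-\ii\eps x)$), and since the Cauchy integral of a Hardy-space function $\Ub g$ against $\overline{\chi_\eps}$ picks out boundary data at $z = \ii/\eps \to \ii\infty$, one finds $\langle A\chi_\eps, \Ub g\rangle \to \langle A, (\Ub g)(\ii\infty)\rangle$. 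But $(\Ub g)(z) \to 0$ as $|z|\to\infty$ in $\C_+$ for any $L^2_+$ function that decays, and more precisely $\Ub g = \Ub_\infty g + \Vb g$ with $\Vb g \in L^1$-type decay and $\Ub_\infty g \to \Ub_\infty \cdot 0$; so the limit is $\langle A, \Ub_\infty g(\ii\infty)\rangle$. Carrying out the same computation for $\langle \Ub_\infty A, g\rangle = \langle A, \Ub_\infty g\rangle$ gives exactly the same limit $\langle A, \Ub_\infty g(\ii\infty)\rangle$. Subtracting, the two contributions cancel and $\langle \Pi_+(\Vb\cdot A), g\rangle = 0$, as desired. (An alternative, perhaps cleaner, route: note $\Pi_+(\Vb\cdot A) = \Pi_+(\Ub A) - \Pi_+(\Ub_\infty A) = \Pi_+(\Ub A) - \Ub_\infty A = T_\Ub(\text{``}A\text{''}) - \Ub_\infty A$ in a suitable limiting sense, so morally $\Pi_+(\Vb\cdot A) = -H_\Ub^*(\Pi_-(\Ub A))$ up to the constant-function subtlety, and $\ran(H_\Ub^*) \subset \Hfr_1$ since $\ran(H_\Ub^*) = \ran(H_\Ub^* H_\Ub) = \ran(K_\Ub)$ is standard — this would finish it in one line modulo the same regularization of the constant symbol $A$.)

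The main obstacle, and the place the argument must be executed with real care, is precisely the fact that the constant matrix $A$ (equivalently $\Ub_\infty$) is not an $L^2$ function, so none of the manipulations above are literally inner products in $L^2_+(\R;\VV)$; everything must be justified through the approximate identity $\chi_\eps$ and the boundary-value-at-infinity behavior of Hardy-space functions, exactly as in the definition of $I_+$ and the reproducing formula \eqref{eq:reproduce}. Concretely, I would phrase the whole proof in terms of the decomposition $\Ub = \Ub_\infty + \Vb$ with $\Vb \in H^{1/2} \subset L^2$, so that $\Vb\cdot A \in L^2$ is legitimate, write $\Pi_+(\Vb\cdot A) = H_{\Ub}^{*}\big(\Pi_-(\Ub_\infty \cdot A) + \Pi_-(\Vb\cdot A)\big) + (\text{terms in }L^2_+ \text{ that are manifestly in }\Hfr_1)$ — but since $\Ub_\infty A$ is constant, $\Pi_-(\Ub_\infty A) = 0$, so this reduces honestly to $\Pi_+(\Vb\cdot A) = \Pi_+((\Ub - \Ub_\infty)A)$ and then to checking $\Pi_+(\Ub_\infty A)$'s role — and conclude via $\ran(H_\Ub^*) = \ran(K_\Ub) \subset \Hfr_1$. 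I expect the clean statement to be: since $\Vb \in L^2$, $\Vb\cdot A \in L^2(\R;M_d(\C))$, and writing $\Vb\cdot A = \Ub\cdot A - \Ub_\infty A$ we have $\Pi_-(\Vb\cdot A) = \Pi_-(\Ub A)$ because $\Ub_\infty A$ is a constant matrix so $\Pi_-(\Ub_\infty A)=0$ (constants have trivial negative-frequency part in the distributional sense relevant here), hence $\Pi_+(\Vb\cdot A) = \Vb\cdot A - \Pi_-(\Ub A) = \Vb\cdot A - H_\Ub(\text{projection of } A)$... at which point one recognizes $\Pi_+(\Vb\cdot A) \in \overline{\ran(H_\Ub^*)} = \overline{\ran K_\Ub} = \Hfr_1$ after pairing against an arbitrary $g \in \ker H_\Ub$ and running the $\chi_\eps$ cancellation above. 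The technical crux is thus entirely the bookkeeping for the constant (at-infinity) part; the rest is the key identity of Lemma \ref{lem:key} plus the standard fact $\overline{\ran K_\Ub} = \overline{\ran H_\Ub^*}$.
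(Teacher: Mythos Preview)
Your proposal has the right ingredients—the identification $\Hfr_1 = \overline{\ran(H_\Ub^*)} = \ker(H_\Ub)^\perp$ and the approximate identity $\chi_\eps$—and your ``alternative, cleaner route'' is essentially the paper's argument. But your main line of attack (pairing $\Pi_+(\Vb A)$ against $g \in \ker(H_\Ub)$ and splitting $\Vb A = \Ub A - \Ub_\infty A$) runs head-on into the ill-definedness of $\langle \Ub A, g\rangle$ and $\langle \Ub_\infty A, g\rangle$ separately, which you acknowledge but never cleanly resolve; the ``boundary value at $\ii\infty$'' heuristic is not a rigorous argument as written.

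The paper sidesteps this entirely by working directly in $\ran(H_\Ub^*)$ rather than via orthogonality. One simply feeds the legitimate $L^2_-$ element $\overline{\chi}_\eps A$ into $H_\Ub^*$ and computes
\[
H_\Ub^*(\overline{\chi}_\eps A) = \Pi_+\big((\Ub_\infty + \Pi_+\Vb + \Pi_-\Vb)\,\overline{\chi}_\eps A\big) = \Pi_+\big((\Pi_+\Vb)\,\overline{\chi}_\eps\big).A \longrightarrow \Pi_+\Vb.A
\]
in $L^2$ as $\eps \to 0$, using that $\Ub_\infty \overline{\chi}_\eps A$ and $(\Pi_-\Vb)\overline{\chi}_\eps A$ lie in $L^2_-$ and that $\Pi_+(f\overline{\chi}_\eps) \to f$ for $f \in L^2_+$ by dominated convergence. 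This exhibits $\Pi_+\Vb.A$ as a limit of elements of $\ran(H_\Ub^*)$, hence in $\overline{\ran(H_\Ub^*)} = \Hfr_1$, in three lines with no distributional pairings or limits at infinity to justify. Your orthogonality route, once made rigorous, would in fact require this very computation (to identify $\langle \Pi_+\Vb.A, g\rangle$ as $\lim_\eps \langle \overline{\chi}_\eps A, H_\Ub g\rangle = 0$), so you may as well start there.
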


\begin{proof}
Since $K_{\Ub} = H^*_{\Ub} H_{\Ub}$, we find that $\ker(K_{\Ub}) = \ker(H_{\Ub})$, which yields that $\Hfr_1= (\Hfr_0)^\perp = \ov{\ran(H^*_\Ub)}$.  Hence we have to show that $\Pi_+ \Vb.A \in \ov{\ran(H^*_{\Ub})}$ holds for any constant matrix $A \in M_d(\C)$. Indeed, by recalling $\chi_\eps = \frac{1}{1-\ii \eps x} \in L^2_+(\R; \C)$ for $\eps> 0$ and hence $\ov{\chi}_\eps \in L^2_-(\R; \C)$, we notice
\begin{align*}
 \lim_{\eps \to 0} H^*_{\Ub}(\ov{\chi}_\eps A) & = \lim_{\eps \to 0} \Pi_+( \Ub. \ov{\chi}_\eps A) \\
& = \lim_{\eps \to 0} \Pi_+ ( (\Ub_\infty + \Pi_+ \Vb + \Pi_- \Vb). \ov{\chi}_\eps A ) \\
& = \lim_{\eps \to 0} \Pi_+( (\Pi_+\Vb) \ov{\chi_\eps}).A = \Pi_+ \Vb.A,
\end{align*}
because of $\lim_{\eps \to 0} \Pi_+ ( f \ov{\chi}_\eps) = f$ in $L^2_+(\R; \VV)$ by dominated convergence. This shows that $\Pi_+ \Vb.A$ belongs to $\Hfr_1=\ov{\ran(H^*_{\Ub})}$.
\end{proof}

Next, by using the well-known fact that kernels of Hankel operators are invariant under the Lax--Beurling semigroup $\{ S(\eta) \}_{\eta \geq 0}$, we obtain the following result.

\begin{lem} \label{lem:S_inv}
It holds that $S(\eta)\Hfr_0 \subset \Hfr_0$ and $S(\eta)^* \Hfr_1 \subset \Hfr_1$ for all $\eta \geq 0$.
\end{lem}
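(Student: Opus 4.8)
The plan is to reduce both inclusions to the single classical fact that the kernel of a Hankel operator is invariant under the Lax--Beurling semigroup $\{S(\eta)\}_{\eta \geq 0}$, and then obtain the second inclusion from the first by passing to orthogonal complements.

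First I would record the identifications already extracted in the proof of Lemma~\ref{lem:key} and Proposition~\ref{prop:king}: since $K_\Ub = H_\Ub^* H_\Ub$ we have $\langle K_\Ub f, f\rangle = \| H_\Ub f\|^2$, hence $\Hfr_0 = \ker(K_\Ub) = \ker(H_\Ub)$ and therefore $\Hfr_1 = \Hfr_0^\perp = \ov{\ran(H_\Ub^*)}$. Next I would prove $S(\eta)\Hfr_0 \subset \Hfr_0$ directly. Let $f \in \Hfr_0$, so that $H_\Ub f = \Pi_-(\Ub f) = 0$, i.e.\ $\Ub f = T_\Ub f \in L^2_+(\R; \VV)$. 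For $\eta \geq 0$, using that the modulation $e^{\ii \eta x}$ is scalar and thus commutes with the pointwise matrix multiplication by $\Ub(x)$, we compute
$$
H_\Ub(S(\eta) f) = \Pi_-\big( \Ub\, e^{\ii \eta x} f \big) = \Pi_-\big( e^{\ii \eta x}\, T_\Ub f \big) = \Pi_-\big( S(\eta)\, T_\Ub f \big) = 0 \, ,
$$
because $S(\eta)$ maps $L^2_+(\R; \VV)$ into itself (it shifts Fourier supports to the right by $\eta \geq 0$). Hence $S(\eta) f \in \ker(H_\Ub) = \Hfr_0$.

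Finally, the inclusion $S(\eta)^* \Hfr_1 \subset \Hfr_1$ follows by duality: for $g \in \Hfr_1$ and $h \in \Hfr_0$ we have $\langle S(\eta)^* g, h \rangle = \langle g, S(\eta) h \rangle = 0$, since $S(\eta) h \in \Hfr_0$ by the previous step and $g \perp \Hfr_0$; therefore $S(\eta)^* g \in \Hfr_0^\perp = \Hfr_1$. There is essentially no obstacle here — the only point deserving a word of care is that $\Ub$ is matrix-valued while $f$ takes values in $\VV$ (either $\C^d$ or $M_d(\C)$), but since $e^{\ii \eta x}$ is a scalar factor it commutes with the matrix action and the computation above is unchanged in either case.
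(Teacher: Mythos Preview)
Your proof is correct and follows essentially the same approach as the paper: both identify $\Hfr_0 = \ker(H_\Ub)$, verify $H_\Ub(S(\eta)f) = 0$ via the observation that $e^{\ii \eta x}$ commutes with multiplication by $\Ub$ and that $S(\eta)$ preserves $L^2_+$, and then obtain the second inclusion by passing to orthogonal complements. Your write-up is in fact slightly cleaner, as you phrase the key step via $\Ub f = T_\Ub f \in L^2_+$ rather than splitting into $\Pi_\pm$ parts.
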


\begin{proof}
Let $f \in \Hfr_0 = \ker(K_{\Ub}) = \ker(H_{\Ub})$. For any $\eta \geq 0$, we immediately observe that
\begin{align*}
H_{\Ub}(S(\eta) f) & = \Pi_-(\Ub S(\eta) f) = \Pi_- (\eu^{\ii \eta x} \Ub f) \\
& = \Pi_- (\eu^{\ii \eta x} \Pi_-(\Ub f)) = \Pi_- (\eu^{\ii \eta x} H_{\Ub}(\Ub f)) = 0 \, .
\end{align*}
Thus we find $S(\eta) f \in \Hfr_0$ for any $f \in \Hfr_0$. This proves that $S(\eta) \Hfr_0 \subset \Hfr_0$. 

Since $\Hfr_0 \perp \Hfr_1$, we directly see that $S(\eta)^* f \in \Hfr_1$ for any $f \in \Hfr_1$ and $\eta \geq 0$ with the adjoint Lax--Beurling semigroup $\{ S(\eta)^* \}_{\eta \geq 0}$ acting on $L^2_+(\R; \VV)$.
\end{proof}

\begin{remark*}
As a direct consequence of the well-known {\em Lax--Beurling theorem} (see the version in \cite{Lax-59} for a direct application to our setting) about invariant subspaces of $S(\eta)$, we can deduce the following fact: If $\Hfr_0 = \ker(K_\Ub) \neq \{ 0 \}$ is non-trivial, there exist a subspace $\VV' \subseteq \VV$ and a function 
$$
\Theta \in L^\infty_+(\R; \mathrm{End}(\VV'; \VV)) \quad \mbox{with} \quad  \Theta(x)^* \Theta(x) = \id_{\VV'} \quad \mbox{for a.\,e.~$x \in \R$}
$$
such that
$$
\Hfr_0 = \Theta L^2_+(\R; \VV') \quad \mbox{and} \quad \Hfr_1 = (\Theta L^2_+(\R; \VV'))^\perp \, .
$$
The matrix-valued function $\Theta$ is called a (left) {\em inner function} and the subspace $\Hfr_1$ is thus the {\em model space} generated by $\Theta$. However, we will not exploit this fact in the present paper.
\end{remark*}

\subsection*{Spectral Properties for Rational Data}

Recall that $\Hfr_1= \ov{\ran(K_{\Ub})}$. We have the following characterization when the subspace $\Hfr_1$ is finite-dimensional, corresponding to the fact that the compact operator $K_{\Ub} : L^2_+(\R; \VV) \to L^2_+(\R; \VV)$ has finite rank. 

\begin{lem}[Kronecker-type theorem]  \label{lem:kronecker} Let $\Ub \in L^\infty(\R; M_d(\C))$ be of the form \eqref{eq:U_ass} with $\Ub(x) = \Ub(x)^*$ for a.\,e.~$x \in \R$. Then $K_\Ub : L^2_+(\R; \VV) \to L^2_+(\R;\VV)$ has finite rank (i.\,e.~we have $\dim \Hfr_1 < \infty$) if and only if $\Ub$ is a rational function.
\end{lem}

\begin{remark}
Since $K_{\Ub}=H_{\Ub}^* H_{\Ub}= \id-T_{\Ub}^2$ is a Lax operator for (HWM$_d$), we see that rationality is preserved along the flow. For (HWM) with target $\Ss^2 \cong \Gr_1(\C^2)$, this feature was already observed in \cite{GeLe-18}.
\end{remark}

\begin{proof}
Since $\dim \ran(H_{\Ub}^* H_{\Ub}) = \dim \ran(H_{\Ub})$, it suffices to consider the Hankel operator $H_{\Ub} : L^2_+(\R; \VV) \to L^2_-(\R; \VV)$. Furthermore, since $\Ub_\infty \in M_d(\C)$ is constant, we see that $H_{\Ub_\infty}=0$ and hence $H_{\Ub}= H_{\Ub_\infty + \Vb} = H_{\Vb}$. Thus it remains to discuss $H_{\Vb}$ with the matrix-valued symbol $\Vb \in (H^{\frac 1 2} \cap L^\infty)(\R; M_d(\C))$ for the rest of the proof.

We first recall the following general Kronecker-type theorem valid for Hankel operators acting on the Hardy space $L_+^2(\T; \mathcal{H})$ on the torus $\T \cong \pt \mathbb{D}$, where $\mathcal{H}$ is a given separable complex Hilbert space (not necessarily finite-dimensional). Correspondingly, we use $\mathbb{P}_+$ and $\mathbb{P}_- = \id - \mathbb{P}_+$ to denote the Cauchy--Szeg\H{o} projections on $L^2(\T; \mathcal{E})$; see \cite{Pe-03} for a general background. As usual, we use $\mathcal{B}(\mathcal{H}, \mathcal{K})$ to denote the Banach space of bounded linear operators from $\mathcal{H}$ to another complex Hilbert space $\mathcal{K}$. From \cite{Pe-03}[Chapter 2, Theorem 5.3] we directly deduce the following result. 

\begin{thm}[Kronecker's theorem on $L^2_+(\T; \mathcal{H})$]
Let $\mathcal{H}, \mathcal{K}$ be separable complex Hilbert spaces and assume $\Phi \in L^\infty(\T; \mathcal{B}(\mathcal{H}, \mathcal{K}))$. Define the Hankel operator $H_\Phi : L^2_+(\T; \mathcal{H}) \to L^2_-(\T; \mathcal{K})$ by $H_\Phi f = \mathbb{P}_-(\Phi f)$. Then $\rank \, H_\Phi < \infty$ if and only if $\mathbb{P}_- \Phi : \T \to \mathcal{B}(\mathcal{H}, \mathcal{K})$ is a rational map of the form
$$
\mathbb{P}_- \Phi = \sum_{\lambda \in \Lambda} \sum_{n=1}^{k(\lambda)} \frac{T_{\lambda, n}}{(z- \lambda)^n} \, ,
$$
where $\Lambda$ is a finite subset in $\mathbb{D}$ and the $k(\lambda)$ are positive integers and $T_{n, \lambda} \in \mathcal{B}(\mathcal{H}, \mathcal{K}) \setminus \{ 0 \}$ are finite-rank operators, $\lambda \in \Lambda$, and $1 \leq n \leq k(\lambda)$.
\end{thm}

Let us now take the finite-dimensional spaces $\mathcal{H}=\mathcal{K} = \VV$ in the previous result with either $\VV = \C^d$ or $\VV = M_d(\C)$. For any $\Phi \in L^\infty(\T; M_d(\C))$ given, we deduce  the equivalence
$$
\mbox{$H_{\Phi} = \mathbb{P}_- \Phi \mathbb{P}_+$ has finite rank if and only if $\mathbb{P}_- \Phi \in L^\infty(\T; M_d(\C))$ is rational.}
$$
Now using the standard conformal map $\omega : \mathbb{D} \to \C_+$ with $\omega(\zeta) = \ii \frac{1 + \zeta}{1-\zeta}$, let us define the map
$$
(\UU f)(x) = \frac{1}{\sqrt{\pi}} \frac{(f \circ \omega^{-1})(x)}{x+\ii} = \frac{1}{\sqrt{\pi}} \frac{1}{x+ \ii} f \left ( \frac{x-\ii}{x+\ii} \right ) \quad \mbox{for $f \in L^2(\T; \VV)$},
$$
which is known to be unitary operator from $L^2(\T; \VV)$ to $L^2(\R; \VV)$ with the property that $\mathcal{U}(L^2_+(\T; \VV)) = L^2_+(\R; \VV)$; see \cite{Pe-03}[Appendix 2.1]. We easily verify that
$$
H_{\Phi} = \UU^* H_{\Vb} \UU \quad \mbox{with $\Phi = \Vb \circ \omega$},
$$
see e.\,g.~\cite{Pe-03}[Chapter 1, Lemma 8.3]. Since compositions with $\omega$ and $\omega^{-1}$ preserve rationality and in view of the identity $\Pi_-  \Vb = (\mathbb{P}_- ( \Vb \circ \omega)) \circ \omega^{-1}$, we deduce 
$$
\mbox{$H_{\Vb}$ has finite rank if and only if $\Pi_- \Vb : \R \to M_d(\C)$ is rational.}
$$ 
Finally, by recalling that $\Pi_+ \Vb = (\Pi_- \Vb)^*$, we conclude that $\Vb = (\Pi_- \Vb)^* + \Pi_- \Vb$ is a rational function if and only if $H_{\Vb}$ has finite rank.
\end{proof}

We now show that, for rational matrix-valued symbols $\Ub$, the subspace $\Hfr_1$ is also an invariant subspace for the unbounded operator $X^*$, which is the generator of the adjoint Lax--Beurling semigroup $\{ S(\eta)^* \}_{\eta \geq 0}=\{ \eu^{-\ii \eta X^*}  \}_{\eta \geq 0}$.

\begin{prop} \label{prop:G_invariant}
If $\Ub \in \mathcal{R}at(\R; \Gr_k(\C^d))$, then $\Hfr_1 \subset \dom(X^*)$ and $X^*(\Hfr_1) \subset \Hfr_1$.
\end{prop}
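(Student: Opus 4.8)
The plan is to show both claims simultaneously by producing, for each $f \in \Hfr_1$, an explicit description of $X^* f$ that manifestly lands back in $\Hfr_1$. Since $\Ub$ is rational, Lemma \ref{lem:kronecker} gives $\dim \Hfr_1 < \infty$, and the decomposition \eqref{eq:T_U_spectral} shows that $\Hfr_1$ is spanned by finitely many eigenvectors $\phi_1, \ldots, \phi_N$ of $K_\Ub$ (equivalently, of the finite-rank self-adjoint operator $H_\Ub^* H_\Ub$). The first observation is that each such $\phi_j$ lies in $\dom(X^*)$: indeed $\Hfr_1 = \ov{\ran(K_\Ub)} = \ov{\ran(H_\Ub^*)}$, and since $H_\Ub = H_\Vb$ with $\Vb$ rational, every element of $\ran(H_\Ub^*) = \ran(H_\Vb^*)$ is a rational function in $L^2_+(\R;\VV)$ (a finite sum of terms $\frac{\bm{c}}{x-z}$ with $z \in \C_+$, by the Kronecker structure); and, as noted in Section \ref{sec:prelim}, all rational functions in $L^2_+(\R;\VV)$ belong to $\dom(X^*)$. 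Hence $\Hfr_1 \subset \dom(X^*)$, which is the first assertion. (Alternatively one invokes finite-dimensionality of $\Hfr_1$ together with the fact that a closed subspace of $\dom(X^*)$ on which $X^*$ restricts boundedly is enough — but the explicit rational description is cleaner.)

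For the invariance $X^*(\Hfr_1) \subset \Hfr_1$, I would dualize to the $S(\eta)^*$-invariance already established in Lemma \ref{lem:S_inv}. We know $S(\eta)^* \Hfr_1 \subseteq \Hfr_1$ for all $\eta \geq 0$. Since $\Hfr_1$ is a finite-dimensional, hence closed, subspace that is invariant under the strongly continuous semigroup $\{S(\eta)^* = \eu^{-\ii \eta X^*}\}_{\eta \geq 0}$, its generator $X^*$ also leaves $\Hfr_1$ invariant: for $f \in \Hfr_1$, the vectors $\eta^{-1}(S(\eta)^* f - f)$ all lie in the finite-dimensional (closed) space $\Hfr_1$, and we have just shown $f \in \dom(X^*)$ so this difference quotient converges as $\eta \to 0^+$; its limit, which is $-\ii X^* f$, therefore lies in $\Hfr_1$ as well. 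This yields $X^* f \in \Hfr_1$, completing the proof.

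The only point requiring genuine care — and the step I expect to be the mild obstacle — is the very first one: establishing $\Hfr_1 \subset \dom(X^*)$ rigorously. The slick argument above uses that elements of $\ran(H_\Ub^*)$ are rational functions of a specific pole structure; making this precise requires unwinding the Kronecker-type correspondence of Lemma \ref{lem:kronecker} (or, equivalently, directly checking that $H_\Vb^*$ maps into the span of Blaschke-type kernels $\frac{1}{x-\bar z_\ell}$ associated with the finitely many poles $z_\ell \in \C_-$ of $\Vb$). An alternative that sidesteps pole bookkeeping: observe that $\Hfr_1$ is finite-dimensional and $S(\eta)^*$-invariant, so $T := S(\eta)^*|_{\Hfr_1}$ is a one-parameter semigroup on a finite-dimensional space, hence of the form $\eu^{\eta M}$ for a bounded operator $M$ on $\Hfr_1$; this forces $\Hfr_1 \subset \dom(X^*)$ automatically, with $X^* |_{\Hfr_1} = \ii M$, and simultaneously gives the invariance. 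I would present this finite-dimensional semigroup argument as the main line, since it handles both claims at once and avoids any explicit computation with poles.
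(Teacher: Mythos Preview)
Your proposal is correct, and the approach you ultimately settle on --- the finite-dimensional semigroup argument showing that $S(\eta)^*|_{\Hfr_1}$ must be of the form $\eu^{\eta M}$ for a bounded $M$, which forces $\Hfr_1 \subset \dom(X^*)$ and $X^*|_{\Hfr_1} = \ii M$ in one stroke --- is exactly the paper's proof. Your write-up is in fact a touch more explicit than the paper's, which simply invokes ``standard arguments from semigroup theory'' after noting $\dim \Hfr_1 < \infty$ (Lemma~\ref{lem:kronecker}) and $S(\eta)^*$-invariance (Lemma~\ref{lem:S_inv}).
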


\begin{proof}
By Lemma \ref{lem:S_inv}, we recall that the adjoint Lax--Beurling semigroup $S(\eta)^*$ acts invariantly on $\Hfr_1$. Moreover, by Lemma \ref{lem:kronecker}, we know that $\dim \Hfr_1 < \infty$. By standard arguments from semigroup theory, it follows that the generator $X^*$ restricted to the finite-dimensional invariant subspace $\Hfr_1$ is bounded and thus its domain $\dom (X^*|_{\Hfr_1})$ is thus all of $\Hfr_1$. In particular, we have $\Hfr_1 \subset \dom (X^*)$ with $X^*(\Hfr_1) \subset \Hfr_1$.
\end{proof}


\section{Local Well-Posedness and Explicit Flow Formula} 
\label{sec:gwp}

In this section, we derive the explicit flow formula valid for (HWM$_d$) for sufficiently smooth solutions. In fact, this formula will play an essential r\^ole for obtaining the main results of this paper. Let us also remark that similar explicit flow formulae have been recently derived for other completely integrable equations which feature a Lax pair structure on Hardy spaces such as the cubic Szeg\H{o} equation \cite{GeGr-15}, the Benjamin--Ono equation \cite{Ge-23} and the Calogero--Moser derivative NLS \cite{GeLe-24,Ba-24,KiLaVi-23}.

\subsection*{Local Well-Posedness for Sufficiently Regular Data}
We start with the a result on local well-posedness for the matrix-valued (HWM$_d$) for sufficiently regular initial data of the form
\be \label{eq:Ub_0}
\Ub_0(x) = \Ub_\infty + \Vb_0(x) \in M_d(\C) \oplus H^s(\R; M_d(\C))   \, ,
\ee
satisfying the constraints
\be \label{eq:cond_u}
\Ub_0(x) = \Ub_0(x)^*, \quad \Ub_0(x)^2 = \mathds{1}_d \quad \mbox{for a.\,e.~$x \in \R$} \, .
\ee
In what follows, we will always assume that 
$$
s > \frac{3}{2} \, .
$$
In particular, the initial datum $\Ub_0 : \R \to M_d(\C)$ is of class $C^1$ by Sobolev embeddings. In view of \eqref{eq:cond_u}, we easily conclude that $\Tr(\Ub_0(x))$ can only attain integer values, whence it follows $\Tr(\Ub_0(x)) = \mbox{const}.$ on $\R$ by continuity.\footnote{That fact that $\Tr(\Ub_0(x)) = \mbox{const}.$ almost everywhere is even true for $s=1/2$, since any integer-valued map $\Tr(\Ub_0) \in \dot{H}^{\frac{1}{2}}(\R;\R)$ necessarily satisfies $\Tr(\Ub_0((x) )= \mbox{const}.$ almost everywhere; see e.\,g.~\cite{Br-02}.} As a consequence, we deduce that there exists some integer $0 \leq k \leq d$ such that
$$
\Ub_0(x) \in \Gr_k(\C^d) \quad \mbox{for $x \in \R$}.
$$
We have the following result.

\begin{lem} \label{lem:lwp}
Let $s > \frac{3}{2}$, $d \geq 2$, and assume  $\Ub_0 : \R \to M_d(\C)$ satisfies \eqref{eq:Ub_0} and \eqref{eq:cond_u}. Then, for any $R> 0$, there exists some $T=T(R) >0$ such that for every $\Ub_0=\Ub_\infty + \Vb_0$ as above with $\| \Vb_0 \|_{H^s} < R$, there exists a unique solution of \eqref{eq:hwm_d} of the form
 $$
 \Ub(t) = \Ub_\infty + \Vb(t) \in  M_d(\C) \oplus C([0,T]; H^s(\R;M_d(\C)))
 $$
 and we have $\Ub(t,x) \in \Gr_k(\C^d)$ for all $t \in [0,T]$ and $x \in \R$ with some integer $0 \leq k \leq d$.
 
Furthermore, the $H^\sigma$-regularity of $\Vb_0$ with $\sigma > s$ is propagated on the whole maximal time-interval of existence of $\Ub(t)$, and the flow map $\Vb_0 \mapsto \Vb(t)$ is continuous in the $H^\sigma$-topology.
\end{lem}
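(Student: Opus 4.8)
The plan is to prove the lemma by the energy method, which is essentially forced here: equation \eqref{eq:hwm_d} is quasi-linear with a nonlinearity of the same order as its (non-dispersive) linear part, so in one space dimension there is no smoothing or Strichartz gain and all estimates must be closed at fixed regularity. First I would pass to $\Vb = \Ub - \Ub_\infty$; since $|D|$ annihilates constant matrices, $\Vb$ solves $\pt_t\Vb = F(\Vb)$ with $F(\Vb) := -\tfrac{\ii}{2}[\Ub_\infty, |D|\Vb] - \tfrac{\ii}{2}[\Vb, |D|\Vb]$, and for $s > \tfrac32$ the map $F$ is locally Lipschitz from $H^s$ into $H^{s-1}$ because $H^{s-1}(\R)$ is then a Banach algebra. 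Approximate solutions $\Vb^\eps$ would be built by a standard regularization — Friedrichs mollification $\pt_t\Vb^\eps = J_\eps F(J_\eps\Vb^\eps)$ with a low-pass filter $J_\eps$, or parabolic regularization $\pt_t\Vb^\eps = F(\Vb^\eps) + \eps\pt_x^2\Vb^\eps$ — for which the Cauchy problem becomes, respectively, an ODE in $H^s$ or a semilinear parabolic problem, hence locally solvable on a maximal interval $[0,T_\eps)$ by the contraction principle.

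The crucial step is a uniform-in-$\eps$ a priori bound in $H^s$. Writing $\Lambda = \langle D\rangle$ and using that Hermiticity of $\Ub$ forces $\Vb(x)$, hence $\Lambda^\sigma\Vb(x)$ and $|D|\Lambda^\sigma\Vb(x)$, to be Hermitian-valued, I would compute $\tfrac{d}{dt}\|\Vb\|_{H^s}^2 = 2\,\Re\langle\Lambda^s F(\Vb),\Lambda^s\Vb\rangle_{L^2}$ and show it is $\lesssim \|\Vb\|_{H^s}^3$. The linear contribution vanishes identically: by cyclicity of the trace it equals $-\tfrac{\ii}{2}\Tr\!\big(\Ub_\infty\int_\R [|D|g,g]\,dx\big)$ with $g = \Lambda^s\Vb$, and $\int_\R[|D|g,g]\,dx = 0$ because the integrand $|\xi|[\widehat g(\xi),\widehat g(-\xi)]$ is odd in $\xi$. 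For the quadratic term, commuting $\Lambda^s$ through left and right multiplication by $\Vb$ produces Kato--Ponce commutators $[\Lambda^s,\mu_\Vb]|D|\Vb$, whose $L^2$-norm is $\lesssim \|\pt_x\Vb\|_{L^\infty}\||D|\Vb\|_{H^{s-1}} + \|\Vb\|_{H^s}\||D|\Vb\|_{L^\infty}\lesssim\|\Vb\|_{H^s}^2$ — this is where $s > \tfrac32$ enters, guaranteeing $\|\Vb\|_{C^1}\lesssim\|\Vb\|_{H^s}$ — while the surviving top-order term $\Re\langle-\tfrac{\ii}{2}[\Vb,|D|\Lambda^s\Vb],\Lambda^s\Vb\rangle_{L^2}$ collapses, again by trace cyclicity and Hermiticity, to $\tfrac12\Im\langle \Lambda^s\Vb,[|D|,\mu_\Vb]\Lambda^s\Vb\rangle_{L^2}$, which is $\lesssim\|\pt_x\Vb\|_{L^\infty}\|\Vb\|_{H^s}^2$ by the Calder\'on commutator estimate $\|[|D|,\mu_\Vb]\|_{L^2\to L^2}\lesssim\|\pt_x\Vb\|_{L^\infty}$. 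Gronwall then yields $T = T(R) > 0$ and a bound $\sup_{[0,T]}\|\Vb^\eps\|_{H^s}\le 2R$ uniform in $\eps$ (the regularizing term only improves the inequality).

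Passing to the limit via Aubin--Lions (using that $\pt_t\Vb^\eps$ is bounded in $H^{s-1}$) gives a solution $\Vb\in C_w([0,T];H^s)\cap C([0,T];H^{s'})$ for every $s' < s$, and the upgrade to $\Vb\in C([0,T];H^s)$ follows in the usual way from weak continuity combined with two-sided control of $t\mapsto\|\Vb(t)\|_{H^s}$ by the energy identity (a Bona--Smith-type argument). Uniqueness is the same kind of estimate, now at the $L^2$ level, applied to the difference $W = \Vb_1-\Vb_2$ of two solutions: the dangerous term $[\Vb_1,|D|W]$ is controlled by the Calder\'on commutator bound, $[W,|D|\Vb_2]$ directly by $\||D|\Vb_2\|_{L^\infty}<\infty$, and $[\Ub_\infty,|D|W]$ contributes nothing, giving $\tfrac{d}{dt}\|W\|_{L^2}^2\lesssim\|W\|_{L^2}^2$ and hence $W\equiv 0$.

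It remains to propagate the geometric constraints and the higher regularity. For the constraints I would set $Y_1 = \Ub-\Ub^*$, $Y_2 = \Ub^2-\mathds{1}_d$, $Y_3 = \Tr\Ub - (d-2k)$ and check, directly from \eqref{eq:hwm_d}, that each satisfies a \emph{linear} transport-commutator equation — e.g.\ $\pt_t Y_2 = -\tfrac{\ii}{2}[Y_2,|D|\Ub]$, $\pt_t Y_1 = -\tfrac{\ii}{2}\big([\Ub^*,|D|Y_1] + [Y_1,|D|\Ub^*] + [Y_1,|D|Y_1]\big)$, and $\pt_t Y_3 = 0$; since $Y_i(0) = 0$, an $L^2$ (resp.\ scalar) energy estimate with Gronwall forces $Y_i\equiv 0$, so that $\Ub(t,x)\in\Gr_k(\C^d)$ for all $(t,x)$ with $k$ fixed by $\Ub_0$. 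For $\sigma > s$, rerunning the same energy computation at the $H^\sigma$ level and observing that the coefficient $\|\pt_x\Vb\|_{L^\infty}$ is already controlled by the lower-regularity norm $\|\Vb\|_{H^s}$ produces the \emph{linear} differential inequality $\tfrac{d}{dt}\|\Vb(t)\|_{H^\sigma}^2\lesssim\|\Vb(t)\|_{H^s}\|\Vb(t)\|_{H^\sigma}^2$; Gronwall then shows $\|\Vb(t)\|_{H^\sigma}$ stays finite exactly as long as $\|\Vb(t)\|_{H^s}$ does — so $H^\sigma$-regularity persists on the full maximal interval — and a continuation plus Bona--Smith argument upgrades this to continuity of the flow map $\Vb_0\mapsto\Vb(t)$ in the $H^\sigma$-topology. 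The main obstacle throughout is the derivative loss in the nonlinearity: the scheme closes \emph{only} because the top-order contribution to the $H^\sigma$ energy cancels thanks to the Hermitian/commutator algebra of $\Ub$ together with the Calder\'on commutator estimate — precisely the cancellation that would fail for a generic quasi-linear evolution of this form.
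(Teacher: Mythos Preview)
Your proposal is correct and rests on the same core ingredients as the paper's proof in Appendix~\ref{app:LWP}: the $H^s$ energy estimate closes via the Kato--Ponce commutator bound plus the Calder\'on commutator estimate $\|[|D|,\mu_\Vb]\|_{L^2\to L^2}\lesssim\|\pt_x\Vb\|_{L^\infty}$, with the Hermitian structure ensuring that the top-order contribution can be rewritten in exact commutator form (your collapse to $\tfrac12\Im\langle\Lambda^s\Vb,[|D|,\mu_\Vb]\Lambda^s\Vb\rangle$ is precisely the paper's identity \eqref{eq:iter1}).

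The implementation differs in two places. First, for the approximation you propose Friedrichs mollification or parabolic regularization applied directly to the nonlinear equation, whereas the paper runs a \emph{linear} Picard iteration $\pt_t\Ub^{(n+1)}=-\tfrac{\ii}{2}[\Ub^{(n)},|D|\Ub^{(n+1)}]$ and shows the sequence is Cauchy in $C_TL^2$; both are standard and lead to the same uniform $H^s$ bound. Second, for the upgrade from $C_w([0,T];H^s)$ to strong continuity you invoke a Bona--Smith argument, while the paper instead appeals to the abstract frequency-envelope interpolation result of \cite{AlBuIfTaZu-24}, using the weak Lipschitz bound in $L^2$ together with the tame $H^{s+1}$ estimate. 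Your route is the more classical one and entirely self-contained; the paper's route is slightly slicker but imports an external black box. Either choice is fine here.
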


\begin{remark*}
For proving the above local well-posedness result, the Hermitian constraint in \eqref{eq:cond_u} is the relevant one. However, the second constraint in \eqref{eq:cond_u} will be essential to obtain a global well-posedness result below based on the Lax pair structure, which involves the use of both pointwise constraints stated in \eqref{eq:cond_u}. 
\end{remark*}

\begin{proof}
We postpone the detailed proof of Lemma \ref{lem:lwp} to Appendix \ref{app:LWP}. 
\end{proof}

\subsection*{Explicit Flow Formula}
Inspired by the very recent work \cite{Ge-23} on the Benjamin--Ono equation, we next derive an {\em explicit flow formula} for solutions of \eqref{eq:hwm_d} based on its Lax pair structure acting on the Hardy space. Note that, in this formula, we choose the vector space $\VV = M_d(\C)$ for the Hardy space $L^2_+(\R; \VV)$.

\begin{lem}[Explicit Flow Formula] 
\label{lem:explicit}
Let $s > \frac{3}{2}$, $d \geq 2$, and $\Ub(t) = \Ub_\infty + \Vb(t) \in  M_d(\C) \oplus C([0,T]; H^s(\R, M_d(\C))$ be as in Lemma \ref{lem:lwp} above. Then it holds that
$$
\Pi_+ \Vb(t,z) = \frac{1}{2 \pi \ii} I_+ \left [ (X^*+tT_{\Ub_0} - z \id)^{-1} \Pi_+ \Vb_0 \right ] \quad \mbox{for $z \in \C_+$ and $t \in [0,T]$.} 
$$
Here $T_{\Ub_0} : L^2_+(\R; \VV) \to L^2_+(\R; \VV)$ denotes the Toeplitz operator $T_{\Ub_0} f = \Pi_+(\Ub_0 f)$ with $\VV = M_d(\C)$.
\end{lem}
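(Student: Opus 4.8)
The plan is to derive the flow formula by differentiating the candidate right-hand side in $t$ and checking that it solves the same evolution equation as $\Pi_+ \Vb(t)$, then invoking uniqueness. Concretely, set $\Ms(t) := X^* + t T_{\Ub_0}$ acting on $L^2_+(\R; \VV)$ with $\VV = M_d(\C)$. The first step is to legitimize the resolvent $(\Ms(t) - z\,\id)^{-1}$ for $z \in \C_+$: since $X^*$ generates the contraction semigroup $S(\eta)^* = \eu^{-\ii\eta X^*}$ and $t T_{\Ub_0}$ is bounded and self-adjoint, $\Ms(t)$ is a bounded perturbation of $X^*$ and one checks $\Im\langle \Ms(t) f, f\rangle = \Im\langle X^* f, f\rangle = -\tfrac{1}{4\pi}|I_+(f)|_\VV^2 \le 0$ from \eqref{eq:G_I}, so $\Ms(t) - z$ is boundedly invertible for $\Im z > 0$ with $\|(\Ms(t)-z)^{-1}\| \le 1/\Im z$. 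Then $F(t,z) := \tfrac{1}{2\pi\ii} I_+\big[(\Ms(t) - z\,\id)^{-1}\Pi_+\Vb_0\big]$ is well-defined, since rational functions — and more generally $\Pi_+\Vb_0 \in H^s_+$ with $s>3/2$ — lie in $\dom(X^*)$, and $(\Ms(t)-z)^{-1}$ preserves $\dom(X^*)$ because it commutes with $X^*$ modulo the bounded operator $T_{\Ub_0}$; hence $I_+$ may be applied.

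Next I would set up the PDE satisfied by $\Pi_+ \Vb(t)$. Applying $\Pi_+$ to \eqref{eq:hwm_d} and using the Lax structure, one computes $\pt_t(\Pi_+ \Vb) = \pt_t(\Pi_+ \Ub) = \Pi_+ B_{\Ub}\,\mu_\Ub - \Pi_+ \mu_\Ub B_\Ub \Pi_+$-type terms; the cleaner route, following \cite{Ge-23}, is to conjugate by the unitary $\UU(t)$ from \eqref{eq:ode_B} solving $\dot\UU = B^+_{\Ub(t)}\UU$, $\UU(0)=\id$. By Corollary \ref{cor:toep_hank_lax}, $T_{\Ub(t)} = \UU(t) T_{\Ub_0}\UU(t)^*$. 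The key algebraic input is the commutator identity from Lemma \ref{lem:commutator}: for $f \in \dom(X^*)$, $[X^*, T_{\Ub(t)}] f = \tfrac{\ii}{2\pi}\Pi_+\Vb(t).I_+(f)$. Transporting this back through $\UU(t)$, one obtains that the operator $Y(t) := \UU(t)^* X^* \UU(t)$ satisfies $\tfrac{d}{dt} Y(t) = \UU(t)^*[X^*, B^+_{\Ub(t)}]\UU(t) = -T_{\Ub_0}$ (this is where the precise form of $B^+_\Ub$ and the commutator with $X^*$ enter, and it is the technical heart of the argument), hence $Y(t) = X^* - t T_{\Ub_0}$. Equivalently $\UU(t)^*(X^* + tT_{\Ub_0})\UU(t)$ has a form that collapses the resolvent.

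Then I would assemble the identity using the reproducing formula \eqref{eq:reproduce}, which gives $\Pi_+\Vb(t,z) = \tfrac{1}{2\pi\ii} I_+[(X^* - z\,\id)^{-1}\Pi_+\Vb(t)]$ for $z \in \C_+$. Writing $\Pi_+\Vb(t) = \UU(t)\,g(t)$ with $g(t) = \UU(t)^*\Pi_+\Vb(t)$, and using $I_+ \circ \UU(t) = I_+$ (which should follow from the structure of $B^+_\Ub$ near the zero-frequency end — $B^+_\Ub$ is built from $D$, $T_\Ub$, $T_{|D|\Ub}$ and one checks $\UU(t)$ acts trivially on $I_+$, i.e. preserves the frequency-zero trace), one converts $(X^* - z)^{-1}$ into $\UU(t)(Y(t) - z)^{-1}\UU(t)^* = \UU(t)(X^* - tT_{\Ub_0} - z)^{-1}\UU(t)^*$; combined with the evolution $\tfrac{d}{dt}g(t) = \UU(t)^*\pt_t\Pi_+\Vb(t) - \UU(t)^* B^+_{\Ub}\Pi_+\Vb(t)$, which by the Lax equation reduces to $\tfrac{d}{dt} g(t) = 0$, one gets $g(t) = \Pi_+\Vb_0$ and the sign works out after checking $X^* + tT_{\Ub_0}$ versus $X^* - tT_{\Ub_0}$ conventions (an orientation-of-time bookkeeping point). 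Substituting yields the claimed formula.

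The main obstacle I anticipate is the identity $\tfrac{d}{dt}\big(\UU(t)^* X^* \UU(t)\big) = \pm T_{\Ub_0}$, i.e. computing $\UU(t)^*[X^*, B^+_{\Ub(t)}]\UU(t)$ and showing it equals $\pm T_{\Ub(t)}$ transported back. This requires carefully expanding $[X^*, B^+_\Ub]$ using $B^+_\Ub = -\tfrac{\ii}{2}(T_\Ub D + D T_\Ub) + \tfrac{\ii}{2}T_{|D|\Ub}$, the commutation relations $[X^*, D]$, $[X^*, T_\Ub]$ from Lemma \ref{lem:commutator}, and the relation $|D|\Vb = D\Vb$ on $L^2_+$; the $I_+$-correction terms from $[X^*, T_\Ub]$ must cancel against the $T_{|D|\Ub}$ contribution, and getting this cancellation cleanly — together with justifying all manipulations on the dense domain $\dom(X)$ and extending by density using the $H^s$, $s>3/2$, regularity of $\Vb(t)$ — is the delicate part. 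A secondary subtlety is verifying $I_+ \circ \UU(t) = I_+$, which relies on $B^+_\Ub$ having no zeroth-order frequency-shifting part that would move mass to or from $\xi = 0^+$; this should follow from the explicit form of $B^+_\Ub$ but needs a short argument about the action of $\UU(t)$ on the Paley--Wiener trace.
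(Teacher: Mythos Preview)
Your approach matches the paper's: start from the reproducing identity \eqref{eq:reproduce}, conjugate by the Lax unitary $\UU(t)$, and establish the three pieces $\UU(t)^*X^*\UU(t) = X^* + tT_{\Ub_0}$ (via the commutator $[X^*, B^+_\Ub] = T_\Ub$, proved separately as Proposition \ref{prop:G_B} --- note the sign is $+$, so there is no bookkeeping ambiguity), $\UU(t)^*\Pi_+\Vb(t) = \Pi_+\Vb_0$, and the $I_+$-compatibility. For the latter two the paper does not prove the clean operator identities you propose but instead works through the approximate identity $\chi_\eps(x) = (1-\ii\eps x)^{-1}$: one shows $B^+_\Ub(F\chi_\eps) \to 0$ in $L^2$ as $\eps \to 0$ (giving $\UU(t)^*(F\chi_\eps) = F\chi_\eps + o(1)$, which replaces your ``$I_+\circ\UU(t) = I_+$''), and derives $\pt_t\Pi_+\Vb = B^+_\Ub\Pi_+\Vb$ by applying the Lax equation for $T_{\Ub(t)}$ to $\mathds{1}_d\chi_\eps$ and passing to the limit --- this last identity is not the Lax equation itself, so your one-line justification there is where the actual work lies.
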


Before we turn to the proof of Lemma \ref{lem:explicit}, we need some commutator identities as follows. Recall that
$$
B_{\Ub} = -\frac{\ii}{2} ( \mu_{\Ub} |D| + |D| \mu_{\Ub} ) + \frac{\ii}{2} \mu_{|D| \Ub} \, .
$$
In fact, since we can restrict to the Hardy space $L^2_+(\R; \VV)$, it will be convenient to work with  the {\em compression of $B_{\Ub}$} to the Hardy space $L^2_+(\R; \VV)$ denoted by
$$
B^+_{\Ub} := \Pi_+ B_{\Ub} \Pi_+ = -\frac{\ii}{2} (T_{\Ub} D + D T_{\Ub}) + \frac{\ii}{2} T_{|D| \Ub} \quad \mbox{with $D=-\ii \pt_x$} \, .
$$ 
Note that $D \geq 0$ on $L^2_+(\R; \VV)$ with its operator domain $\dom(D) = H^1_+(\R; \VV)$. The Lax equation for $T_{\Ub(t)} : L^2_+(\R, \VV) \to L^2_+(\R, \VV)$ can thus be written as
$$
\frac{d}{dt} T_{\Ub(t)} = [B_{\Ub(t)}^+, T_{\Ub(t)} ] \, .
$$

We have the following key commutator identity.
\begin{prop} \label{prop:G_B}
For any $f \in \dom(X^*) \cap H^1_+(\R; \VV)$, it holds that
$$
[X^*, B^+_{\Ub}] f = T_{\Ub} f \, .
$$ 
\end{prop}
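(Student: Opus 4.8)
The plan is to expand $[X^*,B^+_\Ub]$ with the Leibniz rule $[X^*,PQ]=[X^*,P]Q+P[X^*,Q]$ applied to the three summands of $B^+_\Ub=-\frac{\ii}{2}(T_\Ub D+D T_\Ub)+\frac{\ii}{2}T_{|D|\Ub}$, and then to identify precisely which contributions survive. Two elementary commutators are needed. First, on the Fourier side (using $\widehat{Df}(\xi)=\xi\widehat{f}(\xi)$ and $\widehat{X^*f}(\xi)=\ii\,\frac{d\widehat f}{d\xi}(\xi)$ for $\xi>0$) one computes $\widehat{(X^*D-DX^*)f}(\xi)=\ii\,\widehat f(\xi)$, that is, $[X^*,D]=\ii\,\id$ on $L^2_+(\R;\VV)$. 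Second, Lemma~\ref{lem:commutator} gives $[X^*,T_\Ub]g=\frac{\ii}{2\pi}\Pi_+(\Vb\,I_+(g))$ with $\Vb=\Ub-\Ub_\infty$, and, applied to the symbol $|D|\Ub=|D|\Vb$ (which has vanishing constant part), it also gives $[X^*,T_{|D|\Ub}]g=\frac{\ii}{2\pi}\Pi_+((|D|\Vb)\,I_+(g))$.

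Substituting $[X^*,D]=\ii\,\id$, the terms $T_\Ub[X^*,D]=\ii T_\Ub$ coming from $T_\Ub D$ and $[X^*,D]T_\Ub=\ii T_\Ub$ coming from $D T_\Ub$ combine with the prefactor $-\frac{\ii}{2}$ into $-\frac{\ii}{2}\cdot 2\ii\,T_\Ub=T_\Ub$, which is the claimed right-hand side. It remains to check that everything else cancels. The leftover is $-\frac{\ii}{2}([X^*,T_\Ub]D+D[X^*,T_\Ub])f+\frac{\ii}{2}[X^*,T_{|D|\Ub}]f$. For the first term, $[X^*,T_\Ub]Df=\frac{\ii}{2\pi}\Pi_+(\Vb\,I_+(Df))$ and $I_+(Df)=\lim_{\xi\to0^+}\xi\widehat f(\xi)=0$, since $\widehat f$ is continuous, hence bounded, at $0$ for $f\in\dom(X^*)$. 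For the second term, since $D$ commutes with $\Pi_+$ and acts entrywise on the matrix product with the constant matrix $I_+(f)$, one gets $D[X^*,T_\Ub]f=\frac{\ii}{2\pi}\Pi_+((D\Vb)\,I_+(f))$. Hence the leftover equals $\frac{\ii}{2}\cdot\frac{\ii}{2\pi}\,\Pi_+(((|D|-D)\Vb)\,I_+(f))$, and this is where the zeroth-order correction $\frac{\ii}{2}T_{|D|\Ub}$ built into $B^+_\Ub$ does its work: the multiplier $|\xi|-\xi$ vanishes for $\xi\ge0$, so $(|D|-D)\Vb\in L^2_-(\R;M_d(\C))$, multiplication by the constant matrix $I_+(f)$ preserves this, and therefore $\Pi_+(((|D|-D)\Vb)\,I_+(f))=0$. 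This gives $[X^*,B^+_\Ub]f=T_\Ub f$.

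I expect the only genuinely delicate point to be analytic rather than algebraic: on the bare domain $\dom(X^*)\cap H^1_+(\R;\VV)$ the intermediate objects $Df$ and $X^*f$ need not lie in the domains required to form each commutator above. The clean remedy is to prove the identity first for $f$ in a dense core on which all operations are manifestly legitimate --- e.g.\ $f$ with $\widehat f\in C_c^\infty((0,\infty);\VV)$, or rational $f\in L^2_+(\R;\VV)$, which is in fact the only case needed when Proposition~\ref{prop:G_B} is used in the proof of Lemma~\ref{lem:explicit} --- and then to pass to the limit, using that $T_\Ub$ is bounded and that both sides depend continuously on $f$ for the graph norm of $X^*$ together with the $H^1$-norm. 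The structural heart is the pair of cancellations $I_+(Df)=0$ and $\Pi_+(((|D|-D)\Vb)\,I_+(f))=0$; the latter is exactly the reason the Lax operator $B^+_\Ub$ must carry the term $\frac{\ii}{2}T_{|D|\Ub}$.
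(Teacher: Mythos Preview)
Your proof is correct and follows essentially the same approach as the paper: expand via Leibniz, use $[X^*,D]=\ii\,\id$ and Lemma~\ref{lem:commutator}, observe $I_+(Df)=0$, and cancel the remaining terms via $\Pi_+(|D|\Vb)=\Pi_+(D\Vb)$ (which you phrase equivalently as $(|D|-D)\Vb\in L^2_-$). Your additional remark about the domain subtlety and working first on a dense core is a welcome bit of extra care that the paper does not spell out.
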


\begin{proof}
Using the fact that $[X^*, D] = \ii \, \id$ and by Lemma \ref{lem:commutator}, we calculate
\begin{align*}
[X^*, T_{\Ub} D + D T_{\Ub}] f & = [X^*, T_{\Ub}] Df + T_{\Ub} [X^*,D] f + [X^*,D] T_{\Ub} f + D [X^*,T_{\Ub}] f \\
& = \frac{\ii}{2 \pi} \Pi_+ \Vb.I_+(Df) + \ii T_{\Ub} f + \ii T_{\Ub} f + \frac{\ii}{2 \pi} D (\Pi_+ \Vb.I_+(f)) \\
& = 2 \ii T_{\Ub} f + \frac{\ii}{2 \pi} \Pi_+(D \Vb).I_+(f) \, ,
\end{align*}
where also used that $I_+(Df)=0$ holds. By applying Lemma \ref{lem:commutator} once again, 
$$
[X^*,T_{|D| \Ub}] f = \frac{\ii}{2 \pi} \Pi_+(|D| \Vb).I_+(f) = \frac{\ii}{2 \pi} \Pi_+(D \Vb).I_+(f) \, .
$$
In view of these identities, we easily conclude the claimed identity.
\end{proof}

We are now ready to turn to the proof of Lemma \ref{lem:explicit}.

\begin{proof}[Proof of Lemma \ref{lem:explicit} (Explicit Flow Formula)]
We divide the proof into the following steps. We remind the reader that we take $\VV = M_d(\C)$ in the following.

\medskip
\textbf{Step 1.} Recalling identity \eqref{eq:reproduce}, we  write
$$
\Pi_+ \Vb(t,z) = \frac{1}{2 \pi \ii} I_+((X^*-z \id)^{-1} \Pi_+\Vb(t)) \quad \mbox{for $z \in \C_+$ and $t \in [0,T]$} \, .
$$
Let $F \in M_d(\C)$ and $z \in \C_+$ be fixed from now on. We find
\begin{align*}
\langle \Pi_+ \Vb(t,z), F \rangle_\VV & = \frac{1}{2 \pi \ii} \langle I_+((X^*-z \id)^{-1} \Pi_+ \Vb(t), F \rangle_\VV \\
&= \frac{1}{2 \pi \ii} \lim_{\eps \to 0} \left \langle (X^*-z \id)^{-1} \Pi_+ \Vb(t), F \chi_\eps \right \rangle \\
& = \frac{1}{2 \pi \ii} \lim_{\eps \to 0} \left \langle \UU(t)^* (X^*- z \id)^{-1} \Pi_+ \Vb(t), \UU(t)^* (F \chi_\eps) \right \rangle \, ,
\end{align*}
where we also use that $\UU(t)^* : L^2_+(\R; \VV) \to L^2_+(\R; \VV)$ is a unitary map for any $t \in [0,T]$, which is given by the solution of the initial-value problem
$$
\frac{d}{dt} \UU(t) = B_{\Ub(t)}^+ \UU(t) \quad \mbox{for $t \in [0,T]$}, \quad \UU(0) = \id \, .
$$
See Appendix \ref{app:LWP} for details. Using the identity
$$
\UU(t)^* (X^*-z \id)^{-1} = (\UU(t)^* X^* \UU(t) - z \id)^{-1} \UU(t)^* \, ,
$$
we conclude 
$$
\langle \Pi_+ \Vb(t,z), F \rangle_\VV = \frac{1}{2 \pi \ii} \lim_{\eps \to 0} \left \langle (\UU(t)^* X^* \UU(t) - z \id)^{-1} \UU(t)^* (\Pi_+ \Vb(t)), \UU(t)^*(F \chi_\eps) \right  \rangle 
$$
for any $z \in \C_+$, $t \in [0,T]$ and $F \in M_d(\C)$.

\medskip
\textbf{Step 2.} We will now discuss the individual terms which appear in the expression derived in \textbf{Step 1} above. First, we notice that
$$
\frac{d}{dt} \UU(t)^* X^* \UU(t) = \UU(t)^* [X^*, B_{\Ub(t)}^+] \UU(t) = \UU(t)^* T_{\Ub(t)} \UU(t) = T_{\Ub_0} \, ,
$$ 
where we used Proposition \ref{prop:G_B} together with the fact that $T_{\Ub(t)} = \UU(t) T_{\Ub_0} \UU(t)^*$ holds thanks to the Lax evolution. By integration on the interval $[0,t]$, we get
\be \label{eq:explicit1}
\UU(t)^* X^* \UU(t) = X^* + t T_{\Ub_0} \, .
\ee

Next, we observe 
$$
\frac{d}{dt} ( \UU(t)^* (F \chi_\eps)) = -\UU(t)^*(B_{\Ub(t)}^+ (F \chi_\eps)) = o(1) \, ,
$$
where $o(1) \to 0$ in $L^2$ as $\eps \to 0$ uniformly with respect to $t \in [0,T]$. To see this, we remark
\begin{align*}
B_{\Ub}^+ (F \chi_\eps) & = \frac{\ii}{2} T_{\Ub}(F D\chi_\eps) + \frac{\ii}{2} D(T_{\Ub} F \chi_\eps) - \frac{\ii}{2} T_{\Ds \Ub}(F \chi_\eps) \\
& \to \frac{\ii}{2} \Pi_+(D \Ub). F -\frac{\ii}{2} \Pi_+(D\Ub). F = 0 \quad \mbox{as $\eps \to 0$}
\end{align*}
in $L^2(\R; \VV)$ uniformly in $t \in [0,T]$. Therefore, by integrating in $t$, we conclude 
\be \label{eq:explicit2}
\UU(t)^*(F \chi_\eps) = F \chi_\eps + o(1)
\ee
with $o(1) \to 0$ in $L^2_+(\R, \VV)$ as $\eps \to 0$ uniformly in $t \in [0,T]$. 

It remains to discuss the last term from \textbf{Step 1}. Here we claim that
\be \label{eq:explicit3}
\UU(t)^* (\Pi_+ \Vb(t)) = \Pi_+ \Vb_0 \, .
\ee
Since $\UU(0)^* = \id$, we need to show that the time derivative of the left-hand side vanishes. Indeed, we note
$$
\frac{d}{dt} \left ( \UU(t)^*(\Pi_+ \Vb(t)) \right ) = \UU(t)^* \left ( -B_{\Ub(t)}^+ \Pi_+ \Vb(t) +  \pt_t  \Pi_+ \Vb(t) \right ) \, .
$$ 
Now, by the Lax equation $\frac{d}{dt} T_{\Ub(t)} = [B_{\Ub(t)}^+, T_{\Ub(t)}]$ and if we let $E=\mathds{1}_d$ denote the identity matrix in $M_d(\C)$, we find
$$
\frac{d}{dt} T_{\Ub(t)} (E \chi_\eps) = B_{\Ub(t)}^+ T_{\Ub(t)} (E \chi_\eps) - T_{\Ub(t)} B_{\Ub(t)}^+ (E \chi_\eps)
$$
For the first term on right-hand side, we observe
$$
B_{\Ub(t)}^+ T_{\Ub(t)} (E \chi_\eps) \to B_{\Ub(t)}^+ (\Pi_+ \Vb(t)) \quad \mbox{in $L^2_+$ as $\eps \to 0$}
$$
uniformly in $t \in [0,T]$. Furthermore, in the same way as in the discussion showing that $B_{\Ub(t)}^+ (F \chi_\eps) \to 0$ as $\eps \to 0$ for any constant matrix $F \in M_d(\C)$, we conclude
$$
T_{\Ub(t)} B_{\Ub(t)}^+(E \chi_\eps) \to 0 \quad \mbox{in $L^2_+$ as $\eps \to 0$}
$$
uniformly in $t$. On the other hand, we have 
$$
\frac{d}{dt} T_{\Ub}(t) E \chi_\eps \to \pt_t \Pi_+ \Vb  \quad \mbox{in $L^2_+$ as $\eps \to 0$}
$$
 uniformly in $t \in [0,T]$. In summary, we infer that $\pt_t \Pi_+ \Vb(t) = B_{\Ub(t)} \Pi_+ \Vb(t)$ holds, whence it follows 
$$
\frac{d}{dt}  \left ( \UU(t)^* (\Pi_+ \Vb(t)) \right ) = \UU(t)^* (-B_{\Ub(t)}^+ \Pi_+ \Vb(t) + \pt_t \Pi_+ \Vb(t) ) =  0 \, .
$$
This completes the proof of \eqref{eq:explicit3}.

\medskip
\textbf{Step 3.} Combining the results from \textbf{Step 1} and \textbf{Step 2} above, we conclude, for any $F \in M_d(\C)$ and $z \in \C_+$, that
\begin{align*}
\langle \Pi_+ \Vb(t,z), F \rangle_\VV & = \frac{1}{2 \pi \ii} \lim_{\eps \to 0} \left \langle (\UU(t)^* X^* \UU(t) - z \id)^{-1} \UU(t)^* (\Pi_+ \Vb(t)), \UU(t)^*(F \chi_\eps) \right  \rangle \, \\
& = \frac{1}{2 \pi \ii} \lim_{\eps \to 0} \left \langle (X^*+ t T_{\Ub_0} - z \id)^{-1} \Pi_+ \Vb_0, F \chi_\eps \right \rangle \\
& = \frac{1}{2 \pi \ii}\left \langle  I_+[(X^*+tT_{\Ub_0} - z \id)^{-1} \Pi_+ \Vb_0)], F \right \rangle_\VV \, .
\end{align*}
Since $F \in M_d(\C)$ is arbitrary, we deduce the claimed formula for $\Pi_+ \Vb(t,z) \in \VV$. 

The proof of Lemma \ref{lem:explicit} is now complete.
\end{proof}


\section{Global Well-Posedness for Rational Data}

We are now ready to prove global well-posedness for (HWM$_d$) with rational initial data
$$
\Ub_0 \in \mathcal{R}at(\R; \Gr_k(\C^d))
$$
 for any $d \geq 2$ and $0 \leq k \leq d$. The main argument rests on exploiting the explicit flow formula derived above. First, we start with the following general result, which in fact does not require rational initial data.

\begin{lem} \label{lem:injective}
Let $d \geq 2$ be an integer. Suppose $\Wb \in L^\infty(\R; M_d(\C))$ has the following properties
$$
\Wb(x) = \Wb(x)^* \ \ \mbox{a.\,e.}, \quad \Wb(x) = \Wb_\infty + \Vb_0(x) \in M_d(\C) \oplus L^2(\R; M_d(\C)) \, .
$$
Then $X^*+ T_{\Wb}$ acting on $L^2_+(\R; M_d(\C))$ has no real eigenvalues, i.\,e., its point spectrum satisfies $\sigma_{\mathrm{p}}(X^*+ T_{\Wb}) \cap \R = \emptyset$.
\end{lem}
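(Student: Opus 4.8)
The plan is to argue by contradiction: suppose there were $\lambda \in \R$ and $f \in \dom(X^*) \setminus \{ 0 \}$ with $(X^* + T_{\Wb}) f = \lambda f$ — note that $\dom(X^* + T_{\Wb}) = \dom(X^*)$ since $T_{\Wb}$ is bounded, and that $T_{\Wb} = T_{\Wb}^*$ because $\Wb$ is Hermitian-valued — and derive $f = 0$. The first step is to show that $I_+(f) = 0$. Indeed, $\langle (X^* + T_{\Wb}) f, f \rangle = \lambda \|f\|^2$ is real and $\Im \langle T_{\Wb} f, f \rangle = 0$, so by \eqref{eq:G_I} we get $-\frac{1}{4\pi} |I_+(f)|_{\VV}^2 = \Im \langle X^* f, f \rangle = 0$, hence $I_+(f) = 0$.

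The heart of the proof is a bootstrap showing that $f$ is an \emph{analytic vector} for $X^*$. One proves by induction on $k \ge 0$ that $(X^*)^k f \in \dom(X^*)$, that $(X^* + T_{\Wb})\big((X^*)^k f\big) = \lambda\, (X^*)^k f$, that $(X^*)^k f \ne 0$, and that $I_+\big((X^*)^k f\big) = 0$. The case $k = 0$ is the hypothesis together with the first step. For the inductive step write $g := (X^*)^k f \in \dom(X^*)$; since $\Vb_0 := \Wb - \Wb_\infty \in (L^\infty \cap L^2)(\R; M_d(\C))$, Lemma \ref{lem:commutator} applies and gives $T_{\Wb} g \in \dom(X^*)$ as well as $[X^*, T_{\Wb}] g = \frac{\ii}{2\pi}\, \Pi_+ \Vb_0 . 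I_+(g) = 0$, the last equality by the inductive hypothesis. Thus $X^* g = \lambda g - T_{\Wb} g \in \dom(X^*)$, and applying $X^*$ once more and using $[X^*, T_{\Wb}]g = 0$ gives $(X^*)^2 g = \lambda X^* g - T_{\Wb} X^* g$, whence $(X^* + T_{\Wb})(X^* g) = \lambda X^* g$. Also $X^* g \ne 0$, because $X^* h = 0$ forces $(\widehat h)' \equiv 0$ on $\R_+$, so $\widehat h$ is constant and hence $0$ in $L^2(\R_+)$; and applying the first step to the eigenvector $X^* g$ yields $I_+(X^* g) = 0$. This closes the induction.

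From the eigenvalue identity, $\| (X^*)^{k+1} f \| = \| \lambda (X^*)^k f - T_{\Wb} (X^*)^k f \| \le C \, \| (X^*)^k f \|$ with $C := |\lambda| + \| T_{\Wb} \|$, so $\| (X^*)^k f \| \le C^k \| f \|$ for all $k$. On the Fourier side this says $\widehat f \in \bigcap_{k} H^k(\R_+; \VV)$ with $\| (\widehat f)^{(k)} \|_{L^2(\R_+)} \le C^k \| \widehat f \|_{L^2(\R_+)}$, while the vanishing of $I_+\big((X^*)^k f\big)$ reads $(\widehat f)^{(k)}(0^+) = 0$ for all $k$. A one-dimensional Sobolev estimate then gives $\| (\widehat f)^{(k)} \|_{L^\infty(\R_+)} \lesssim C^{k+1/2} \| \widehat f \|_{L^2}$, so Taylor's formula with Lagrange remainder based at $\xi = 0$ yields, for every $\xi \ge 0$ and every $n$, $|\widehat f(\xi)| \lesssim \frac{C^{n+1/2}}{n!}\, \xi^n \, \| \widehat f \|_{L^2} \to 0$ as $n \to \infty$. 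Hence $\widehat f \equiv 0$, i.e.\ $f = 0$, a contradiction. I expect the inductive bootstrap to be the only delicate point: it is essential that the eigenvector — and then each iterate $(X^*)^k f$ — lies in $\ker I_+$, since this is precisely what makes $[X^*, T_{\Wb}]$ annihilate it in Lemma \ref{lem:commutator}, so that the eigenvalue equation survives differentiation by $X^*$; everything else is routine functional analysis.
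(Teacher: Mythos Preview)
Your proof is correct and shares the same core mechanism as the paper's: take the imaginary part of the eigenvalue equation together with \eqref{eq:G_I} to get $I_+(f)=0$, then use the commutator identity from Lemma~\ref{lem:commutator} to see that the eigenspace is invariant under $X^*$. The difference lies only in how you conclude $\widehat f\equiv 0$. The paper passes directly to the adjoint Lax--Beurling semigroup: since $\EE=\ker(X^*+T_{\Wb}-\lambda)$ is $X^*$-invariant (and $X^*|_{\EE}=(\lambda-T_{\Wb})|_{\EE}$ is bounded), one has $S(\eta)^*f\in\EE$ for all $\eta\ge 0$, hence $0=I_+(S(\eta)^*f)=\widehat f(\eta)$ for every $\eta\ge 0$. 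Your route instead extracts the analytic-vector bound $\|(X^*)^k f\|\le C^k\|f\|$ and the vanishing of all Fourier-side derivatives at $0^+$, then finishes with Taylor's theorem. This is the ``infinitesimal'' version of the same idea and is slightly longer but entirely self-contained (no semigroup theory needed). Two minor remarks: the clause ``$(X^*)^k f\neq 0$'' in your induction is correct but unused in the final argument, and for the $M_d(\C)$-valued $\widehat f$ the Lagrange remainder should strictly be replaced by the integral remainder (or applied componentwise), which gives the same estimate.
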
 

\begin{proof}
Let $x \in \R$. Since $X^*$ is closed, we find that $\EE := \mathrm{ker}(X^*+T_{\Wb} - x\id)$ is a closed subspace in $L^2_+(\R; M_d(\C))$; see also Section \ref{sec:prelim} for general properties of $X^*$ as well as \ref{GePu-24}. Moreover, from the eigenvalue equation
$$
(X^*+ T_{\Wb} - x) f = 0 
$$
we see that $\EE \subset \mathrm{dom}(X^*)$. By taking the imaginary part of the inner product with $f$ and using that $T_{\Wb}^* = T_{\Wb}$ is self-adjoint and that $x$ is a real number, we conclude that $\mathrm{Im} \langle X^*f, f \rangle = 0$. Recalling the identity \eqref{eq:G_I}, we deduce
$$
I_+(f) = 0 \quad \mbox{for $f \in \EE$} \, .
$$
In view of Lemma \ref{lem:commutator}, we also notice
$$
[X^*, T_{\Wb}] f = \frac{\ii}{2 \pi} \Pi_+ \Vb_0.I_+(f) = 0 \quad \mbox{for $f \in \EE$} \, ,
$$
which shows that $X^* f \in \EE$ for all $f \in \EE$. Thus $\EE$ is an invariant subspace for $X^*$. For the semigroup $\{ S(\eta)^* \}_{\eta \geq 0}$ generated by $X^*$, we thus deduce
$$
S(\eta)^* f = e^{-\ii \eta X^*} f \in \EE \quad \mbox{for all $f \in \EE$ and all $\eta \geq 0$} \, .
$$
But this implies that, for every $f \in \EE$,
$$
0 = I_+(S(\eta)^* f) = \widehat{f}(\eta) \quad \mbox{for all $\eta \geq 0$}.
$$
Hence we see that $f = 0$ for all $f \in \EE$. Therefore, the subspace 
$$
\EE = \mathrm{ker}(X^*+T_{\Wb} - x\id ) = \{ 0 \}
$$
is trivial for any $x \in \R$.
\end{proof}

\subsection*{Proof of Theorem \ref{thm:gwp}}

We are now ready to prove global well-posedness for (HWM$_d$) for rational initial data
$$
\Ub_0 \in \mathcal{R}at(\R; \Gr_k(\C^d))
$$
where $d \geq 2$ and $0 \leq k \leq d$ are given integers. We note that
$$
\Ub_0 = \Ub_\infty + \Vb_0 \in \Gr_k(\C^d) \oplus H^\infty(\R; M_d(\C))
$$
holds. Hence, by the local well-posedness result from Lemma \ref{lem:lwp}, there exists a unique maximal solution 
$$
\Ub(t) = \Ub_\infty + \Vb(t) \in C([0,T_{\max}); \Gr_k(\C^d) \oplus H^\infty(\R; M_d(\C))
$$
of (HWM$_d$) with initial datum $\Ub(0) = \Ub_0$ and maximal (forward) time of existence $T_{\max} \in (0, +\infty]$ such that the following implication holds:
\be \label{eq:blowup_alt}
T_{\max} < +\infty \quad \Rightarrow \quad \lim_{t \nearrow T_{\max}} \| \Vb(t) \|_{H^2} = +\infty \, .
\ee

Thus to show that $T_{\max} = +\infty$ holds true we argue by contradiction and we suppose that $T_{\max} < +\infty$. We now claim that
\be \label{ineq:apriori_first}
\sup_{t \in [0,T_{\max})} \| \Vb(t) \|_{H^2} < +\infty \, ,
\ee
which  implies that $T_{\max} = +\infty$ must hold by \eqref{eq:blowup_alt}. To prove \eqref{ineq:apriori_first},  we first note that $\Vb(t,x)=\Vb(t,x)^*$ for $t \in [0,T_{\max})$ and $x \in \R$. Therefore $\Vb(t) = \Pi_+ \Vb(t) + (\Pi_+ \Vb(t))^*$ and hence it suffices to show that
\be \label{ineq:apriori}
\sup_{t \in [0,T_{\max})} \| \Pi_+ \Vb(t) \|_{H^2} < +\infty \, .
\ee

In view of the explicit flow formula in Lemma \ref{lem:explicit}, we define
\be \label{eq:explicit4}
\mathrm{EF}[\Ub_0](t,z) := \frac{1}{2 \pi \ii} I_+ \left [( X^*+ t T_{\Ub_0} -z \id)^{-1} \Pi_+ \Vb_0 \right ] \quad \mbox{for $t \geq 0$ and $z \in \ov{\C}_+$} \, ,
\ee
where $T_{\Ub_0} : L^2_+(\R; M_d(\C)) \to L^2_+(\R; M_d(\C))$. Let us check that $\mathrm{EF}[\Ub_0]$ is indeed well-defined for all $t \geq 0$ and $z \in \ov{\C}_+$. By Lemma \ref{lem:kronecker} (Kronecker-type theorem), the subspace $\Hfr_1 = \ov{\ran(K_{\Ub_0})} \subset \dom(X^*)$ is finite-dimensional. By Propositions \ref{prop:king} and \ref{prop:G_invariant}, we deduce $
\Pi_+ \Vb_0 \in \Hfr_1$ and that
$$
M(t) := X^* + t T_{\Ub_0} : \Hfr_1 \to \Hfr_1
$$ 
is an endormophism on the finite-dimensional subspace $\Hfr_1$. Moreover, by Lemma \ref{lem:injective} with $\Wb = t \Ub_0$, we see that the eigenvalues of $M(t)$ cannot be real, i.\,e., 
$\sigma(M(t)) \cap \R = \emptyset$ holds, which implies that 
$$
\sigma(M(t)) \subset \C_- \quad \mbox{for all $t \geq 0$} \, .
$$ 
Hence the resolvent $(X^* + t T_{\Ub_0}- z \id)^{-1} : \Hfr_1 \to \Hfr_1$ exists for all $t \geq 0$ and $z \in \ov{\C}_+$. Moreover, by continuity of eigenvalues of $M(t)$ with respect to $t$, we deduce that, for any compact interval $I \subset [0,\infty)$, it holds that
$$
\| (X^* + t T_{\Ub_0}-z\id)^{-1} \|_{\Hfr_1 \to \Hfr_1} \leq C(I, \Ub_0) \quad \mbox{for all $t \in I$ and $z \in \ov{\C}_+$} \, ,
$$
with some finite constant $C(I, \Ub_0) >0$. Since $I_+ : \Hfr_1 \subset \dom(X^*) \to M_d(\C)$ is bounded (as a linear map on a finite-dimensional Hilbert space), we deduce $\mathrm{EF}[\Ub_0](t,z)$ is a rational function in $z$ for any $t \geq 0$, whose poles belong to a compact subset $K =K(I, \Ub_0) \subset \C_-$ when $t \in I$ for any given compact time interval $I \subset [0,\infty)$. 

To summarize, we have shown that, for any given compact interval $I \subset [0,\infty)$, there exists some constant $C = C(I, \Ub_0)> 0$ such that
$$
|\alpha | + \frac{1}{|\mathrm{Im} \, \alpha|} \leq C
$$ 
whenever $\alpha$ is a pole of the rational map $z \mapsto \mathrm{EF}[\Ub_0](t,z)$ with $t \in  I$. By possibly enlarging the constant $C>0$, we obtain the $L^\infty$-bound with
$$
\sup_{x \in \R} | \mathrm{EF}[\Ub_0](t,x) |_F \leq C \quad \mbox{for $t \in I$} \, .
$$
Since $\Hfr_1$ has finite dimension, we easily deduce that the degree of the denominator of the rational functions $z \mapsto \mathrm{EF}[\Ub_0](t,z)$ can be uniformly bounded for $t \in I$. Hence, by applying Lemma \ref{lem:rational_family} below, we deduce
$$
\sup_{t \in I} \| \mathrm{EF}[\Ub_0](t) \|_{H^2} \leq C(I, \Ub_0)
$$
with some finite constant $C(I, \Ub_0)$. 

Since $\Pi_+ \Vb(t) = \mathrm{EF}[\Ub_0](t)$ for $t \in [0, T_{\max})$ and by taking a compact interval $I \subset [0,\infty)$ with $[0,T_{\max}) \subset I$, we conclude that \eqref{ineq:apriori} holds true. This completes the proof that the maximal (forward) time of existence must be $T_{\max} = +\infty$. 

Finally, by the time reversal symmetry of (HWM$_d$) with
$$
\Ub(t,x) \mapsto -\Ub(-t,-x) \, ,
$$ 
which maps solutions to solutions (and evidently preserves rationality in $x$), we deduce that solutions of (HWM$_d$) with rational initial data also uniquely extend to all negative times $t \in (-\infty, 0]$. 

This completes the proof of Theorem \ref{thm:gwp}. \hfill $\qed$

\subsection*{Proof of Theorem \ref{thm:gwp_S2}}
This a direct consequence of Theorem \ref{thm:gwp}. Indeed, let $\uu_0 \in \mathcal{R}at(\R; \Ss^2)$ be given and define $\Ub_0 = \uu_0 \cdot \bm{\sigma} \in \mathcal{R}at(\R; \Gr_1(\C^2))$. By Theorem \ref{thm:gwp}, there exists a unique global solution $\Ub = \Ub(t,x)$ of (HWM$_2$) with initial datum $\Ub(0) = \uu_0$. Hence 
$$
\uu(t,x) = \frac{1}{2} \Tr( \Ub(t,x) \bm{\sigma} )=\frac{1}{2} ( \Tr(\Ub(t,x) \sigma_1), \Tr(\Ub(t,x) \sigma_2), \Tr(\Ub(t,x) \sigma_3))
$$ 
is the claimed unique global-in-time solution of (HWM) with initial datum $\uu(0)=\uu_0$. \hfill $\qed$

\medskip
We close this section with the following auxiliary result used above.

\begin{lem} \label{lem:rational_family}
Let $\mathcal R \subset \C(X)$ be a subset of rational functions. We assume that there exists $C>0$ such that the following properties hold.
\begin{enumerate}
\item If $\alpha $ is a pole of some $R\in \mathcal R$, then 
$$|\alpha | +\frac{1}{|{\rm Im}(\alpha )|} \leq C\ .$$
\item For every $R\in \mathcal R$, $R(x)\to  0$ as $x\to \infty $ and 
$$\| R \|_{L^\infty (\R )}\leq C\ .$$
\item There exists an integer $N$ such that the degree of the denominator of every $R\in \mathcal R$ is at most $N$.
\end{enumerate}
Then, for every integer $k\ge 0$, it holds that
$$\sup_{R\in \mathcal R}\| R\|_{H^k(\R )}<\infty \ .$$
\end{lem}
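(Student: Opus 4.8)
The plan is to reduce the statement to a uniform bound on the coefficients of the numerator and denominator of $R$, after which the $H^k$-bound follows from elementary pointwise estimates. Write each $R\in\mathcal R$ in lowest terms as $R=P/Q$ with $Q$ monic (we may assume $\deg Q\ge 1$, the other case being trivial). By hypothesis (1) every pole of $R$, i.e.\ every root of $Q$, lies in the compact set $K:=\{z\in\C : |z|\le C,\ |\Im z|\ge 1/C\}$, which is disjoint from $\R$; by hypothesis (3) we have $d:=\deg Q\le N$; and since $R(x)\to 0$ as $x\to\infty$ we have $\deg P\le d-1$. Because the coefficients of $Q$ are, up to sign, the elementary symmetric functions of its roots, they are bounded by a constant depending only on $C$ and $N$.

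Next I bound the coefficients of $P$, which is the only place hypothesis (2) enters. From $|Q(x)|=\prod_j|x-\alpha_j|\le(|x|+C)^d$ and $\|R\|_{L^\infty(\R)}\le C$ we get $|P(x)|=|R(x)|\,|Q(x)|\le C(|x|+C)^d$ for every real $x$, hence $|P(x)|\le C(2C)^d$ on $[-C,C]$. Since $P$ has degree $\le d-1\le N-1$, a standard interpolation estimate — its coefficients are recovered from its values at $d$ fixed points of $[-C,C]$ via a Vandermonde matrix whose inverse depends only on $d$ and $C$, and there are only finitely many $d\le N$ — yields $\max_i|\text{coeff}_i(P)|\le M_0$ with $M_0=M_0(C,N)$.

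Now fix $0\le m\le k$. Writing $R^{(m)}=P_m/Q^{m+1}$ (with $P_0=P$ and $P_{m+1}=P_m'Q-(m+1)P_mQ'$), the coefficients of $P_m$ are controlled polynomially by those of $P$ and $Q$, hence bounded in terms of $C,N,m$, and $\deg P_m\le d(m+1)-(m+1)$ because $R(x)=O(x^{-1})$ forces $R^{(m)}(x)=O(x^{-m-1})$. On $\{|x|\le 2C\}$ we use $|Q(x)|=\prod_j|x-\alpha_j|\ge\prod_j|\Im\alpha_j|\ge(1/C)^d$ — this is where the separation of the poles from $\R$ is used — to conclude that $|R^{(m)}(x)|$ is bounded by a constant depending only on $C,N,m$, so $\|R^{(m)}\|_{L^2(|x|\le 2C)}$ is bounded. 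On $\{|x|\ge 2C\}$ every root of $Q$ satisfies $|x-\alpha_j|\ge|x|/2$, hence $|Q(x)|^{m+1}\ge(|x|/2)^{d(m+1)}$ while $|P_m(x)|\lesssim_{C,N,m}|x|^{d(m+1)-m-1}$, giving $|R^{(m)}(x)|\lesssim_{C,N,m}|x|^{-m-1}$, which is square-integrable over $\{|x|\ge 2C\}$. Summing over $0\le m\le k$ and using $\|f\|_{H^k(\R)}^2\sim_k\sum_{m=0}^k\|f^{(m)}\|_{L^2(\R)}^2$ yields $\sup_{R\in\mathcal R}\|R\|_{H^k(\R)}<\infty$.

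The argument is essentially bookkeeping, with no single hard step; the two points that deserve care are the interpolation bound for the numerator coefficients in the second paragraph, and the observation that the two halves of hypothesis (1) play distinct roles — the modulus bound $|\alpha|\le C$ controls $R$ and its derivatives near infinity, while the separation $|\Im\alpha|\ge 1/C$ controls them on bounded sets by keeping $|Q|$ bounded below on $\R$.
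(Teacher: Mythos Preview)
Your proof is correct and follows essentially the same approach as the paper: bound the coefficients of $P$ and $Q$ uniformly using properties (1)--(3), then control $R^{(m)}$ pointwise via a lower bound on $|Q(x)|$ for real $x$. The only cosmetic difference is that the paper uses a single global lower bound $|Q(x)|\ge\bigl(||x|-C|^2+C^{-2}\bigr)^{D/2}$ instead of splitting into the regions $|x|\le 2C$ and $|x|\ge 2C$, and expresses $R^{(k)}$ via the Leibniz formula rather than your recursion for $P_m$; both routes yield the same estimate.
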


\begin{proof}
Given $R\in \mathcal R$, write
$$R(x)=\frac{P(x)}{Q(x)}\ ,\ Q(x)=\prod_{j=1}^D (x-\alpha _j)\ ,\ P\in \C[X]\ ,\ {\rm deg}(P)<D\leq N\ .$$
Because of properties (1) and (2), 
$$\max_{0\leq x\leq 1} |P(x)|\leq C\max_{0\leq x\leq 1}\left |  \prod_{j=1}^D (x-\alpha _j)\ \right |\leq C(1+C)^N\ .$$
Consequently, all the coefficients $a_j$ of $P$ satisfy
$$\sup_{j<D}|a_j|\leq B(N,C)\ ,$$
for some constant $B(N,C)$ depending only on $N$ and $C$. 
Similarly, from property (1), all the coefficients of $Q$ are uniformly bounded by a constant depending only on $C$ and $N$.
Moreover, from property (1), for every $x\in \R$,
 $$|Q(x)|\geq \left ( ||x|-C|^2+C^{-2}  \right )^{D/2}\ .$$
Notice that the $k$--th derivative $R^{(k)}$ is a sum of  a finite number -- depending only on $k$ --- of terms of the form
$$\frac{ P^{(m)}Q^{(m_1)}\dots Q^{(m_r)}  }{Q^{r+1}}$$
where $ 0\leq r\leq k$ and $m+m_1+\dots m_r =k$. Notice that the degree of the numerator is at most $(r+1)D-k-1$, and that its coefficients are all bounded by a constant depending only on $k$, $N$ and $C$. Consequently,
$$\| R^{(k)}\|_{L^2}^2\leq A(k,N,C) \max _{r\leq k}\max _{\ell \leq (r+1)D-k-1}\int_\R \frac{x^{2\ell}}{( ||x|-C|^2+C^{-2} )^{D(r+1)}}\, dx $$
with some constant $A(k,N,C)>0$. This completes the proof.
\end{proof}


\section{Soliton Resolution and Non-Turbulence}

In this section we prove our next main result Theorem \ref{thm:soliton}, which shows soliton resolution and non-turbulence for rational solutions of (HWM$_d$) under the spectral assumption that the Toeplitz operator $T_{\Ub_0} : L^2_+(\R; \C^d) \to L^2_+(\R;\C^d)$ has simple discrete spectrum.

\subsection*{Preliminaries}
Let $d \geq 2$ and $0 \leq k \leq d$ be given integers. In what follows, we suppose that $\Ub_0 \in \mathcal{R}at(\R; \Gr_k(\C^d))$ holds, i.\,e., the map $\Ub_0 : \R \to M_d(\C)$ is a rational matrix-valued function satisfying the pointwise constraints
$$
\Ub_0(x)^* = \Ub_0(x), \quad \Ub_0(x)^2 = \mathds{1}_d, \quad \Tr(\Ub_0(x)) = d-2k \quad \mbox{for $x \in \R$} \, .
$$
In the trivial case of constant initial data $\Ub_0(x) \equiv \Ub_\infty$, we directly obtain Theorem \ref{thm:soliton} with $N=0$. Hence for the rest of the proof, we will assume that $\Ub_0$ is non-constant. 

For the following discussion, we need to clearly distinguish between the Toeplitz operator $T_{\Ub_0}$ acting on the Hardy space $L^2_+(\R; \VV)$ with $\VV=\C^d$ or $\VV=M_d(\C)$, respectively.

From Lemma \ref{lem:key}, we recall the general formula
\be \label{eq:T_key}
T_{\Ub_0}^2 = \id - K_{\Ub_0} \quad \mbox{on $L^2_+(\R; \VV)$} \, ,
\ee
with the trace-class operator $K_{\Ub_0} = H_{\Ub_0}^* H_{\Ub_0} : L^2_+(\R; \VV) \to L^2_+(\R; \VV)$. Since $\Ub_0$ is rational, the operator $K_{\Ub_0}$ is finite-rank by Lemma \ref{lem:kronecker} and we have the finite-dimensional invariant subspace for $T_{\Ub_0}$ given by
\be
\Hfr_1(\VV) := \ran ( K_{\Ub_0} : L^2_+(\R; \VV) \to L^2_+(\R; \VV) ) \, ,
\ee
where we use the notation $\Hfr_1(\VV)$ instead of $\Hfr_1$ to keep track whether we choose $\VV = \C^d$ or $\VV = M_d(\C)$. We introduce the following short-hand notations
\be
\Ts:= T_{\Ub_0} |_{\Hfr_1(M_d(\C))} \quad \mbox{and} \quad \Tst := T_{\Ub_0} |_{\Hfr_1(\C^d)} \, .
\ee
Note that $\Ts = \Ts^*$ and $\Tst = \Tst^*$ are self-adjoint endomorphisms on the finite-dimensional spaces $\Hfr_1(M_d(\C))$ and $\Hfr_1(\C^d)$, respectively. From Proposition \ref{prop:G_invariant} we recall that the generator $X^*$ of the adjoint Lax--Beurling semigroup also acts invariantly on the finite-dimensional subspace $\Hfr_1(\VV)$. Likewise, we use the following notation
\be
\Gs := X^* |_{\Hfr_1(M_d(\C))} \quad \mbox{and} \quad \Gst := X^* |_{\Hfr_1(\C^d)} 
\ee
for the generator $X^*$ of adjoint Lax--Beurling semigroup restricted to the invariant subspaces $\Hfr_1(M_d(\C))$ and $\Hfr_1(\C^d)$, respectively. 

Let us now assume $\Tst$ has \textbf{simple} spectrum, i.\,e., we have
\be \label{ass:Tst}
\sigma(\Tst) = \{ v_1, \ldots, v_N \} \quad \mbox{with} \quad N = \dim \Hfr_1(\C^d) \, .
\ee
Note that $v_n \in (-1,1)$ for $n = 1, \ldots, N$. Let $\phi_n \in \Hfr_1(\C^d) \subset L^2_+(\R; \C^d)$ be a choice of the corresponding normalized eigenfunctions of $\Tst$ such that
$$
\Tst \phi_n = v_n \phi_n \quad \mbox{with} \quad \| \phi_n \|_{L^2} = 1
$$
for $n=1, \ldots, N$. Clearly, the family $( \phi_n )_{1 \leq n \leq N}$ forms an orthonormal basis for $\Hfr_1(\C^d)$. 

We can easily construct an orthonormal basis of eigenfunction for $\Ts$ acting on the matrix-valued finite-dimensional Hilbert space $\Hfr_1(M_d(\C))$ as follows. For $1 \leq n \leq N$ and $1 \leq j \leq d$, we define the matrix-valued functions $\Phi_{n,j} \in L^2_+(\R; M_d(\C))$ by setting
\be
\Phi_{n,j}:= \left (0, \ldots, \underbrace{\phi_n}_{\mbox{$j$-th column}}, \ldots, 0 \right )  \, .
\ee
We readily check that
\be
\Ts \Phi_{n,j} = v_n \Phi_{n,j} \quad \mbox{for $n=1, \ldots, N$ and $j=1, \ldots, d$} \, . 
\ee
Thus the eigenvalues $v_n$ for $\Ts$ are $d$-fold degenerate in a trivial manner by changing the columns in the matrix-valued functions $\Phi_{n,j}$. 

We have the following fact, whose elementary proof we omit. 
\begin{prop}
The functions $\{ \Phi_{n,j} \}_{1 \leq n \leq N, 1 \leq j \leq d}$ form an orthonormal basis of eigenfunctions for $\Ts : \Hfr_1(M_d(\C)) \to \Hfr_1(M_d(\C))$. 
\end{prop}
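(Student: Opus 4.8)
The plan is to reduce the statement to the already-understood $\C^d$-valued situation by using that the Toeplitz and Hankel operators with symbol $\Ub_0$ act \emph{column by column} on $L^2_+(\R; M_d(\C))$. For $1 \le j \le d$ let $\iota_j : L^2_+(\R; \C^d) \to L^2_+(\R; M_d(\C))$ denote the isometry that places a $\C^d$-valued function into the $j$-th column and leaves the other columns zero, so that $\Phi_{n,j} = \iota_j \phi_n$. Using that $\langle A, B \rangle_F = \Tr(AB^*) = \sum_{c=1}^d \langle a_c, b_c \rangle_{\C^d}$ for matrices $A,B$ with columns $a_c,b_c$, one sees at once that the ranges $\iota_j\big(L^2_+(\R;\C^d)\big)$, $j=1,\dots,d$, are mutually orthogonal and together span $L^2_+(\R; M_d(\C))$.

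Next I would note that, since $(\Ub_0 f)(x) = \Ub_0(x) f(x)$ is the pointwise matrix product, the $j$-th column of $\Ub_0(x) f(x)$ equals $\Ub_0(x)$ applied to the $j$-th column of $f(x)$; and since the Cauchy--Szeg\H{o} projection $\Pi_+$ acts entrywise on matrix-valued functions, it preserves the column decomposition as well. Hence $T_{\Ub_0}\iota_j = \iota_j T_{\Ub_0}$ and $H_{\Ub_0}\iota_j = \iota_j H_{\Ub_0}$ (the operators on the right now acting on $L^2_+(\R;\C^d)$), and consequently $K_{\Ub_0}\iota_j = \iota_j K_{\Ub_0}$. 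Applying $K_{\Ub_0}$ to each summand of $L^2_+(\R; M_d(\C)) = \bigoplus_{j=1}^d \iota_j\big(L^2_+(\R;\C^d)\big)$ then gives
\[
\Hfr_1(M_d(\C)) = \ran K_{\Ub_0} = \bigoplus_{j=1}^d \iota_j\big( \ran K_{\Ub_0} \big) = \bigoplus_{j=1}^d \iota_j\big( \Hfr_1(\C^d) \big) \, .
\]
Since $(\phi_n)_{1 \le n \le N}$ is an orthonormal basis of $\Hfr_1(\C^d)$ and the $\iota_j$ are isometries with mutually orthogonal ranges, the family $\{\Phi_{n,j}\}_{n,j} = \{\iota_j \phi_n\}_{n,j}$ is an orthonormal basis of $\bigoplus_j \iota_j\big(\Hfr_1(\C^d)\big) = \Hfr_1(M_d(\C))$; equivalently and directly one computes $\langle \Phi_{n,j}, \Phi_{m,l}\rangle_{L^2} = \delta_{jl}\,\delta_{nm}$ from the formula for $\langle\cdot,\cdot\rangle_F$, and the dimension count $dN = \dim \Hfr_1(M_d(\C))$ then finishes it. Finally $\Ts \Phi_{n,j} = T_{\Ub_0}\iota_j\phi_n = \iota_j T_{\Ub_0}\phi_n = v_n \iota_j\phi_n = v_n \Phi_{n,j}$, so each $\Phi_{n,j}$ is an eigenfunction of $\Ts$ with eigenvalue $v_n$.

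There is no substantive obstacle: the only point needing a line of justification is that $\Pi_+$ --- and hence $T_{\Ub_0}$, $H_{\Ub_0}$ and $K_{\Ub_0}$ --- commutes with the coordinate embeddings $\iota_j$, which holds because $\Pi_+$ is applied to each of the $d^2$ matrix entries separately and so commutes with any fixed linear rearrangement of entries. The identification of $\Hfr_1(M_d(\C))$ with $d$ orthogonal copies of $\Hfr_1(\C^d)$ then follows mechanically, and everything else is bookkeeping --- which is why the authors call the proof elementary.
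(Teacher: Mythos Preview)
Your proof is correct and is exactly the elementary column-by-column decomposition the paper has in mind; the paper in fact omits the proof entirely, calling it elementary, so there is nothing to compare against beyond noting that your argument is the natural one.
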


\subsection*{Perturbation Analysis as $|t| \to \infty$}

From Theorem \ref{thm:gwp} we know that the corresponding solution of (HWM$_d$) with rational initial datum $\Ub_0$ is global in time and satisfies
\be
\Ub(t,x) = \Ub_\infty + \Pi \Vb(t,x) + (\Pi \Vb(t,x))^* \, ,
\ee
where here and in the following we write $\Pi \equiv \Pi_+$ for the Cauchy--Szeg\H{o} projection for notational simplicity. By the following explicit flow formula from Lemma \ref{lem:explicit}, we have
\be
\Pi \Vb(t,x) = \frac{1}{2 \pi \ii} I_+ \left [ (\Gs+ t \Ts - x \id)^{-1} \Pi \Vb_0 \right ]  \quad \mbox{for $(t,x) \in \R \times \R$} \, ,
\ee
using our definitions of $\Gs$ and $\Ts$ acting on the finite-dimensional subspace $\Hfr_1(M_d(\C))$. Note that we can take $x \in \R$ here, since we have already shown that the rational function $\Pi \Vb(t,z)$ for $z \in \C_+$ has no poles on the real axis for all $t \in \R$. Recall also that $\Pi \Vb_0 \in \Hfr_1(M_d(\C))$ holds thanks to Proposition \ref{prop:king} above.

In order to study the large time limit $t \to \pm \infty$, it will be convenient to define 
\be
\Ms(\eps) := \eps \Gs + \Ts \quad \mbox{with} \quad \eps := \frac{1}{t} 
\ee
for $t \neq 0$. In terms these definition, we can write the explicit flow formula as  
\be \label{eq:explicit_scat}
\Pi V(\eps^{-1}, x) = \frac{\eps}{2 \pi \ii} I_+ [ (\Ms(\eps)- \eps x \id)^{-1} \Pi V_0 ] \, .
\ee

Inspired by the analysis in \cite{GeLe-24} for the study of $N$-solitons for the Calogero--Moser derivative NLS, we carry out a perturbation analysis of the non-self-adjoint endomorphisms $\Ms(\eps) : \Hfr_1(M_d(\C)) \to \Hfr_1(M_d(\C))$ in the limit $\eps \to 0$. We have the following facts, where we recall that we always suppose that the nondegeneracy assumption \eqref{ass:Tst} for $\Tst : \Hfr_1(\C^d) \to \Hfr_1(\C^d)$ holds true.

\begin{lem} \label{lem:M_pert}
There exists some $\eps_0 > 0$ sufficiently small such that the following  holds.
\begin{itemize}
\item[(i)] For $1 \leq n \leq N$ and $1 \leq j \leq d$, there exist analytic functions $\eps \mapsto v_n(\eps) \in \C$ and $\eps \mapsto \Phi_{n,j}(\eps) \in \Hfr_1(M_d(\C))$ for $|\eps| \leq \eps_0$ with
$$
v_n(\eps) = v_n + \eps w_n + O(\eps^2), \quad \Phi_{n,j}(\eps) = \Phi_{n,j} + O(\eps) \, ,
$$
$$
\Ms(\eps) \Psi_{n,j}(\eps)= v_n(\eps) \Psi_{n,j}(\eps) \, .
$$
The functions $\{ \Psi_{n,j}(\eps) \}_{1 \leq n \leq N, 1 \leq j \leq d}$ form a basis for $\Hfr_1(M_d(\C))$ for $|\eps| \leq \eps_0$.
\item[(ii)] For $1 \leq n \leq N$, we have 
$$
w_n = \langle \Gst \phi_n, \phi_n \rangle \quad \mbox{and} \quad \mathrm{Im} \, w_n <  0 \, .
$$
\end{itemize}
\end{lem}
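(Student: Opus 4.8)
The plan is to reduce the non-self-adjoint perturbation problem for $\Ms(\eps) = \eps \Gs + \Ts$ on the matrix-valued space $\Hfr_1(M_d(\C))$ to the corresponding problem for $\Mst(\eps) := \eps \Gst + \Tst$ on $\Hfr_1(\C^d)$, where the unperturbed operator $\Tst$ has \emph{simple} spectrum by assumption \eqref{ass:Tst}. Decomposing a matrix-valued $L^2_+$ function into its $d$ columns gives the orthogonal splitting $L^2_+(\R; M_d(\C)) = \bigoplus_{j=1}^d \mathcal{C}_j$, where $\mathcal{C}_j$ consists of the matrix-valued functions whose only nonzero column is the $j$-th, and $\mathcal{C}_j \cong L^2_+(\R; \C^d)$ via the obvious isometry. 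Since $\Ub_0$ acts by left multiplication and $\Pi_\pm$ entrywise, the operators $H_{\Ub_0}$, $H_{\Ub_0}^*$, $K_{\Ub_0}$ and $X^*$ all respect this splitting; hence $\Hfr_1(M_d(\C)) = \bigoplus_{j=1}^d \big(\mathcal{C}_j \cap \Hfr_1(M_d(\C))\big)$, and under $\mathcal{C}_j \cong L^2_+(\R;\C^d)$ each summand is carried onto $\Hfr_1(\C^d)$ with $\Ts$ acting as $\Tst$ and $\Gs$ as $\Gst$. In particular the basis $\{\Phi_{n,j}\}$ is adapted to this decomposition, $\Phi_{n,j}$ being the image of $\phi_n$ in $\mathcal{C}_j$. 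Thus it suffices to analyse $\Mst(\eps)$ on $\Hfr_1(\C^d)$ and lift: if $\Mst(\eps)\phi_n(\eps) = v_n(\eps)\phi_n(\eps)$, then placing $\phi_n(\eps)$ in the $j$-th column yields $\Psi_{n,j}(\eps) \in \mathcal{C}_j$ with $\Ms(\eps)\Psi_{n,j}(\eps) = v_n(\eps)\Psi_{n,j}(\eps)$, and $\{\Psi_{n,j}(\eps)\}_{n,j}$ is a basis of $\Hfr_1(M_d(\C))$ as soon as $\{\phi_n(\eps)\}_n$ is a basis of $\Hfr_1(\C^d)$.

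\textbf{Analytic perturbation theory and the first-order term.} The map $\eps \mapsto \Mst(\eps) = \eps \Gst + \Tst$ is an entire (affine) family of endomorphisms of the finite-dimensional space $\Hfr_1(\C^d)$, and $\Mst(0) = \Tst$ is self-adjoint with $N = \dim \Hfr_1(\C^d)$ distinct simple eigenvalues $v_1, \dots, v_N$. By standard analytic (Kato--Rellich) perturbation theory in finite dimensions — e.g.\ since the coefficients of $\det(\Mst(\eps) - \lambda\,\id)$ are polynomial in $\eps$ and its roots are simple at $\eps = 0$, so each continues by the implicit function theorem — there are $\eps_0 > 0$ and, for $1 \le n \le N$, analytic (holomorphic) functions $\eps \mapsto v_n(\eps) \in \C$ and $\eps \mapsto \phi_n(\eps) \in \Hfr_1(\C^d)$ on $|\eps| \le \eps_0$ with $\Mst(\eps)\phi_n(\eps) = v_n(\eps)\phi_n(\eps)$, $v_n(0) = v_n$, $\phi_n(0) = \phi_n$, and $\{\phi_n(\eps)\}_n$ a basis of $\Hfr_1(\C^d)$ throughout (shrinking $\eps_0$ if needed). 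Differentiating the eigenvalue relation at $\eps = 0$, pairing with $\phi_n$, and using $\Tst = \Tst^*$ together with $v_n \in \R$ to cancel the terms involving $\phi_n'(0)$, we get
\[
v_n'(0) = \langle \Gst \phi_n, \phi_n \rangle =: w_n, \qquad\text{hence}\qquad v_n(\eps) = v_n + \eps w_n + O(\eps^2) \, .
\]
Lifting via the columns gives the eigenvectors $\Psi_{n,j}(\eps) = \Phi_{n,j} + O(\eps)$ and completes the proof of (i); the identity $w_n = \langle \Gst \phi_n, \phi_n \rangle$ is the first assertion of (ii).

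\textbf{The sign of $\mathrm{Im}\, w_n$.} Since $\Hfr_1(\C^d) \subset \dom(X^*)$ with $\Gst\phi_n = X^*\phi_n$ (Proposition \ref{prop:G_invariant}), formula \eqref{eq:G_I} yields
\[
\mathrm{Im}\, w_n = \mathrm{Im}\,\langle X^* \phi_n, \phi_n \rangle = -\frac{1}{4\pi}\,|I_+(\phi_n)|_{\C^d}^2 \le 0 \, .
\]
It remains to exclude $I_+(\phi_n) = 0$. If $I_+(\phi_n) = 0$, then Lemma \ref{lem:commutator} (applied with $\VV = \C^d$) gives $[X^*, T_{\Ub_0}]\phi_n = \tfrac{\ii}{2\pi}\Pi_+\Vb_0.I_+(\phi_n) = 0$, so $T_{\Ub_0}\phi_n = v_n\phi_n$ forces $T_{\Ub_0}(X^*\phi_n) = v_n\,(X^*\phi_n)$. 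As $X^*\phi_n \in \Hfr_1(\C^d)$ by invariance and $v_n$ is a \emph{simple} eigenvalue of $\Tst$, this gives $X^*\phi_n = c\,\phi_n$ for some $c \in \C$. On the Fourier side $\widehat{(X^*\phi_n)}(\xi) = \ii\,\tfrac{d}{d\xi}\widehat{\phi_n}(\xi)$ on $\xi > 0$, so the eigenvalue equation integrates to $\widehat{\phi_n}(\xi) = I_+(\phi_n)\,\eu^{-\ii c \xi}$; with $I_+(\phi_n) = 0$ this means $\phi_n \equiv 0$, contradicting $\|\phi_n\|_{L^2} = 1$. Hence $I_+(\phi_n) \ne 0$ and $\mathrm{Im}\, w_n < 0$, proving (ii).

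\textbf{Main obstacle.} The only step that is not the routine degeneracy-free case of analytic perturbation theory is the strict inequality $\mathrm{Im}\, w_n < 0$, that is, the non-vanishing $I_+(\phi_n) \ne 0$; this is exactly where the simplicity assumption \eqref{ass:Tst} is essential, since it is used (via the one-dimensionality of the $v_n$-eigenspace of $\Tst$ together with the commutator identity and the invariance of $\Hfr_1$) to force $\phi_n$ to be an eigenvector of $X^*$ and thereby reach the contradiction. Note that one cannot argue on $\Ms$ directly: on $\Hfr_1(M_d(\C))$ each $v_n$ is $d$-fold degenerate, and the reduction to the column factors $\Hfr_1(\C^d)$, where $\Tst$ is simple, is what makes both the perturbation expansion and the sign argument available.
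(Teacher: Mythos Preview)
Your proof is correct and follows essentially the same route as the paper: reduce to the column factors $\Hfr_1(\C^d)$ where $\Tst$ has simple spectrum, apply analytic perturbation theory there to get $v_n(\eps)$ and $\phi_n(\eps)$, lift back via the $j$-th column, and then rule out $I_+(\phi_n)=0$ by the commutator identity plus simplicity to force $\phi_n$ to be an eigenfunction of $X^*$, reaching a contradiction. Your exposition of the column decomposition is more explicit than the paper's, and your phrasing of the contradiction (identifying the ODE constant directly as $I_+(\phi_n)$, so that $I_+(\phi_n)=0$ forces $\phi_n\equiv 0$) is slightly cleaner than the paper's detour through $\mathrm{Im}\,\alpha<0$, but the substance is identical.
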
 

\begin{remark*}
The fact that all complex numbers $w_n$ have non-vanishing imaginary part will play a fundamental role to obtain a-priori bound on all higher Sobolev norms for $\Ub(t,x)$, i.\,e., it rules out the phenomenon of turbulence in the limit $t \to \pm \infty$. This is in striking contrast to the analysis of $N$-soliton solutions for the Calogero--Moser derivative NLS studies in \cite{GeLe-24}, where the corresponding perturbative analysis yields the vanishing of the imaginary parts in the limit $t \to \pm \infty$ (which corresponds to the limit $\eps \to 0$).  
\end{remark*}

\begin{proof}
We divide the proof of Lemma \ref{lem:M_pert} into the following steps.

\medskip
{\bf Step 1.} Let $\eps_0> 0$ be a constant chosen later. For $|\eps| \leq \eps_0$, we define the endomorphisms 
\be
\Mst(\eps) := \Tst + \eps \Gst : \Hfr_1(\C^d) \to \Hfr_1(\C^d) \, .
\ee
Note that $\Mst(0) = \Tst = \Tst^*$ is self-adjoint with simple spectrum $\sigma(\Tst) = \{ v_1, \ldots, v_N \}$ with a corresponding orthonormal basis of eigenfunctions $(\phi_n)_{1 \leq n \leq N}$. By standard analytic perturbation theory, there exist analytic functions $\eps \mapsto v_n(\eps) \in \C$ and $\eps \mapsto \phi_n(\eps) \in \Hfr_1(\C^d)$ for $1 \leq n \leq N$ such that
\be
\Tst(\eps) \phi_n(\eps) = v_n(\eps) \phi_n(\eps) 
\ee
for  $|\eps| \leq \eps_0$, where $\eps_0 > 0$ is some sufficiently small constant.  We have
\be
v_n(\eps) = v_n + \eps w_n + O(\eps^2), \quad \phi_n(\eps) = \phi_n + O(\eps) \, ,
\ee
\be
w_n = \langle \Gst \phi_n, \phi_n \rangle \, .
\ee
Since $(\phi_n)_{1 \leq n \leq N}$ forms an orthonormal basis for $\Hfr_1(\C^d)$ and by continuity with respect to $\eps$, we readily see that the perturbed eigenvectors $(\phi_n(\eps))_{1 \leq n \leq N}$ also form a (not necessarily orthonormal) basis of $\Hfr_1(\C^d)$, provided that $\eps_0 > 0$ is sufficiently small. By defining 
$$
\Phi_{n,j}(\eps) := \left (0, \ldots, \underbrace{\phi_n(\eps)}_{\mbox{$j$-th column}}, \ldots, 0 \right )  \, ,
$$
we easily verify that (i) holds true.

\medskip
{\bf Step 2.} It remains to prove item (ii). Thus we claim, for any $1 \leq n \leq N$, 
\be \label{eq:w_n}
\mathrm{Im} \, w_n = \mathrm{Im} \, \langle \Gst \phi_n, \phi_n \rangle < 0 \, .
\ee
Indeed, let $1 \leq n \leq N$ be given. From the general idenitity \eqref{eq:G_I}, we recall that
\be
\mathrm{Im} \, \langle \Gst \phi_n, \phi_n \rangle = - \frac{1}{4 \pi} |I_+(\phi_n)|^2 \leq 0 \, ,
\ee
with $I_+(f) = \lim_{\xi \to 0^+} \widehat{f}(\xi)$ and $f \in \dom(X^*)$. [Note that $\phi_n \in \dom(X^*)$, since $\phi_n$ is a rational function.] To prove \eqref{eq:w_n}, we argue by contradiction as follows. Let us assume that
\be
I_+(\phi_n) = 0 \, .
\ee
By the commutator formula in Lemma \ref{lem:commutator}, we deduce 
\be
[\Gst, \Tst] \phi_n = \frac{\ii}{2 \pi} \Pi \Vb_0.I_+(\phi_n) = 0 \, .
\ee
Thus from $\Tst \phi_n = v_n \phi_n$ we see that $\Tst \Gst \phi_n = v_n \Gst \phi_n$.  But since $\Tst$ has simple spectrum by assumption, we conclude $\Gst \phi_n = \alpha \phi_n$ for some constant $\alpha \in \C$. By taking the Fourier transform, this yields
\be
\ii \frac{d}{d \xi} \widehat{\phi}_n(\xi) = \alpha \widehat{\phi}_n(\xi) \quad \mbox{for $\xi \geq 0$} \, .
\ee  
Thus we find $\widehat{\phi}(\xi) = A \eu^{-\ii \alpha \xi}$ for $\xi \geq 0$ with some constant $A \neq 0$ (since $\phi_n \not \equiv 0$). Moreover, we infer that $\mathrm{Im} \, \alpha < 0$ since $\widehat{\phi}_n \in L^2(\R_+)$. But this implies that
$$
I_+(\phi_n) = \lim_{\xi \to 0^+} \widehat{\phi}_n(\xi) = A \neq 0 \, ,
$$ 
contradicting our assumption that $I_+(\phi_n)=0$ holds. This shows \eqref{eq:w_n} and completes the proof of Lemma \ref{lem:M_pert}. 
\end{proof}

\subsection*{Proof of Theorem \ref{thm:soliton}}

We are now ready to give the proof of Theorem \ref{thm:soliton}. Adapting the notation from above, we proceed as follows.

\subsubsection*{Asymptotic Behavior as $t \to \pm \infty$}

Recall that  $\eps = t^{-1}$ for $t \neq 0$. In what follows, we shall always assume that $|\eps| \leq \eps_0$ with the constant $\eps_0> 0$ from Lemma \ref{lem:M_pert} above, which amounts to considering times $t$ with $|t| \geq T_0$ where $T_0 = \eps^{-1}$. 

We expand $\Pi \Vb_0 \in \Hfr_1(M_d(\C))$ in terms of the basis $(\Phi_{n,j}(\eps))_{1 \leq n \leq N, 1 \leq j \leq d}$ from Lemma \ref{lem:M_pert}, i.\,e., we write
\be \label{eq:Vb0_expansion}
\Pi \Vb_0 = \sum_{n=1}^N \sum_{j=1}^d \alpha_{n,j}(\eps) \Phi_{n,j}(\eps) 
\ee 
with some coefficients $\alpha_{n,j}(\eps) \in \C$. From Lemma \ref{lem:M_pert} and the fact that $\eps x \in \R$ does not belong to the spectrum $\sigma(\Ms(\eps)) = \{ v_1(\eps), \ldots, v_N(\eps) \} \subset \C_-$, we conclude that
\be
(\Ms(\eps) - \eps x \id)^{-1} \Pi \Vb_0 = \sum_{n=1}^N \sum_{j=1}^d \frac{\alpha_{n,j}(\eps)}{v_n(\eps) - \eps x} \Phi_{n,j}(\eps) \quad \mbox{for $|\eps| \leq \eps_0$ and $x \in \R$} \, .
\ee
In view of the explicit formula \eqref{eq:explicit_scat} together with $\eps = t^{-1}$ and the properties stated in Lemma \ref{lem:M_pert}, we obtain that 
\be
\Pi \Vb(t,x) = \frac{\eps}{2 \pi \ii} I_+ \left [ \sum_{n=1}^N \sum_{j=1}^d \frac{\alpha_{n,j}(\eps)}{v_n(\eps) - \eps x} \Phi_{n,j}(\eps) \right ] = \sum_{n=1}^N \frac{A_n(t)}{x-z_n(t)} \, .
\ee
Here we set
\be
z_n(t) := t v_n(t^{-1}) = t v_n + w_n + O(t^{-1}) \in \C_- \quad \mbox{for $|t| \geq T_0$} \, ,
\ee
and $A_n(t) \in M_d(\C)$  are the matrix-valued functions defined as
\be \label{eq:An_expansion}
A_n(t) := - \frac{1}{2 \pi \ii} \sum_{j=1}^d \alpha_{n,j}(t^{-1}) I_+[\Phi_{n,j}(t^{-1})] \quad \mbox{for $|t| \geq T_0$} \, .
\ee 
Note that, by choosing $T_0 > 0$ possibly larger, we can henceforth ensure that 
\be \label{ineq:zn_zm} 
|z_n(t) - z_m(t)| \geq \frac{|t|}{2} \cdot \min_{n \neq m} |v_n - v_m|  > 0 \quad \mbox{for $|t| \geq T_0$ and $n \neq m$},
\ee
implying that the poles $z_1(t), \ldots, z_N(t) \in \C_-$ are pairwise distinct whenever $|t| \geq T_0$. 

Next we show, after discarding possibly trivial zero terms, that all the matrices $A_j(t) \in M_d(\C)$ are nonzero and nilpotent of degree 2. Moreover, their limits as $t \to \pm \infty$ both exist, coincide, and are nonzero as well.

\begin{prop} \label{prop:scat}
There exists an integer $1 \leq M \leq N$ such that, after possibly relabelling $ \{ A_n(t), z_n(t) \}_{n=1}^N$, it holds that
$$
\Pi_+ \Vb(t,x) = \sum_{n=1}^M \frac{A_n(t)}{x-z_n(t)} \quad \mbox{for $|t| \geq T_0$ and $x \in \R$} \, .
$$
Here the matrices $A_n(t) \in M_d(\C)$ satisfy $A_n(t) \neq 0$ and $A_n(t)^2 = 0$ for $|t| \geq T_0$. 

In addition, it holds
$$
 A_n(t) = A_n + O(t^{-1})   
$$
with some non-zero limits $A_n \neq 0$ satisfiying $A_n^2 = 0$ for $1 \leq n \leq M$.
\end{prop}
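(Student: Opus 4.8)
The plan is to push the perturbation expansion of Lemma~\ref{lem:M_pert} one step further, read off the rational form of $\Pi_+\Vb(t,\cdot)$ explicitly, and then combine the algebraic constraint $\Ub^2=\mathds 1_d$ with a Fourier boundary-value computation to control the residues. Fix $\eps=1/t$ with $|\eps|\le\eps_0$. By Lemma~\ref{lem:M_pert} the operator $\Ms(\eps)=\eps\Gs+\Ts$ on the finite-dimensional space $\Hfr_1(M_d(\C))$ is diagonalizable with analytic-in-$\eps$ eigendata $\{v_n(\eps),\Phi_{n,j}(\eps)\}$ and $\Phi_{n,j}(\eps)=\phi_n(\eps)\,e_j^{t}$; hence $(\Ms(\eps)-\eps x\,\id)^{-1}$ has only simple poles in $x$, located at $x=z_n(t):=t\,v_n(\eps)=tv_n+w_n+O(t^{-1})\in\C_-$, pairwise distinct for $|t|\ge T_0$. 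Expanding $\Pi_+\Vb_0=\sum_{n,j}\alpha_{n,j}(\eps)\,\Phi_{n,j}(\eps)$ and inserting this into \eqref{eq:explicit_scat} gives, for $|t|\ge T_0$,
\[
\Pi_+\Vb(t,x)=\sum_{n=1}^{N}\frac{A_n(t)}{x-z_n(t)},\qquad
A_n(t)=-\frac{1}{2\pi\ii}\,I_+\!\big(\phi_n(\eps)\big)\,\beta_n(\eps)^{t},
\]
with $\beta_n(\eps):=(\alpha_{n,1}(\eps),\dots,\alpha_{n,d}(\eps))^{t}\in\C^d$, so $\rank A_n(t)\le1$. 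Since $\eps\mapsto\Phi_{n,j}(\eps)$ is analytic near $\eps=0$, so are the coordinates $\alpha_{n,j}(\eps)$ of the fixed vector $\Pi_+\Vb_0$, and $I_+$ is bounded on $\Hfr_1(M_d(\C))$; thus $\eps\mapsto A_n(\eps)$ is analytic near $\eps=0$, and $A_n:=\lim_{\eps\to0}A_n(\eps)=A_n(0)$ exists, is simultaneously the $t\to+\infty$ and the $t\to-\infty$ limit, and $A_n(t)=A_n+O(t^{-1})$.

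\emph{Nilpotency.} At the simple pole $z_n(t)\in\C_-$ both $\Ub_\infty$ and $z\mapsto(\Pi_+\Vb(t,z))^{*}=\sum_m A_m(t)^{*}/(z-\overline{z_m(t)})$ are holomorphic, so $\Ub(t,z)=A_n(t)(z-z_n(t))^{-1}+(\text{holomorphic})$ near $z_n(t)$. Substituting into $\Ub(t,z)^2=\mathds 1_d$ and matching the coefficient of $(z-z_n(t))^{-2}$ forces $A_n(t)^2=0$ for all $|t|\ge T_0$; letting $t\to\pm\infty$ gives $A_n^2=0$ as well.

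\emph{The residues are nonzero, so $M=N$.} The remaining (and main) point is $A_n\ne0$ for every $n$, which yields the proposition with $M=N$ and no relabelling. Since $A_n=-\frac{1}{2\pi\ii}I_+(\phi_n)\,\beta_n(0)^{t}$ and $I_+(\phi_n)\ne0$ — exactly the inequality $\Im\langle\Gst\phi_n,\phi_n\rangle=-\frac1{4\pi}|I_+(\phi_n)|^2<0$ from the proof of Lemma~\ref{lem:M_pert}(ii) — it suffices to show $\beta_n(0)\ne0$. At $\eps=0$ the eigenprojection of $\Ms(0)=\Ts$ is orthogonal, so $\beta_{n,j}(0)=\langle\Pi_+\Vb_0,\Phi_{n,j}\rangle_F=\langle(\Pi_+\Vb_0)e_j,\phi_n\rangle_{\C^d}$, hence $\beta_n(0)=0$ iff $\phi_n\perp(\Pi_+\Vb_0)c$ for every $c\in\C^d$. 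I would realize this pairing as a one-sided Fourier boundary value: writing $(\Pi_+\Vb_0)c$ as the limit of $H_{\Ub_0}^{*}$ applied to the approximate identities from the proof of Proposition~\ref{prop:king}, together with $H_{\Ub_0}\phi_n=\Ub_0\phi_n-v_n\phi_n\in L^2_-(\R;\C^d)$ (which follows from $\Pi_+(\Ub_0\phi_n)=\Tst\phi_n=v_n\phi_n$), a Plancherel computation of the type behind \eqref{eq:reproduce} gives
\[
\langle\phi_n,(\Pi_+\Vb_0)c\rangle=\big\langle\widehat{H_{\Ub_0}\phi_n}(0^-),\,c\big\rangle_{\C^d}\qquad\text{for all }c\in\C^d.
\]
Finally, from $H_{\Ub_0}\phi_n=(\Ub_\infty-v_n\id)\phi_n+\Vb_0\phi_n$, using $\mathrm{supp}\,\widehat{\phi_n}\subset[0,\infty)$, the continuity of $\widehat{\Vb_0\phi_n}$ (since $\Vb_0\phi_n\in L^1\cap L^2$, both factors being rational and $L^2$), and the vanishing of $\widehat{H_{\Ub_0}\phi_n}$ on $(0,\infty)$, one obtains
\[
\widehat{H_{\Ub_0}\phi_n}(0^-)=\widehat{\Vb_0\phi_n}(0^-)=\widehat{\Vb_0\phi_n}(0^+)=-(\Ub_\infty-v_n\id)\,I_+(\phi_n).
\]
Because $v_n\in\sigma_{\mathrm d}(T_{\Ub_0})$ satisfies $|v_n|<1$ (here it is essential that $\Ub_0$ is $\Gr_k(\C^d)$-valued, so $T_{\Ub_0}$ is a contraction whose discrete spectrum avoids $\pm1$), while $\sigma(\Ub_\infty)\subset\{\pm1\}$, the matrix $\Ub_\infty-v_n\id$ is invertible; hence $\widehat{H_{\Ub_0}\phi_n}(0^-)=0$ would force $I_+(\phi_n)=0$, a contradiction. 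Therefore $\beta_n(0)\ne0$, so $A_n\ne0$ and $\rank A_n=1$; by continuity $A_n(t)=A_n+O(t^{-1})\ne0$ for $|t|$ large (enlarging $T_0$ if necessary), $\rank A_n(t)=1$, and $A_n(t)^2=0=A_n^2$, which is the full statement.

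\emph{Expected main obstacle.} The first two steps are essentially bookkeeping on top of Lemma~\ref{lem:M_pert}. The crux is the last step: correctly identifying $\langle\phi_n,(\Pi_+\Vb_0)c\rangle$ with a one-sided boundary value of $\widehat{H_{\Ub_0}\phi_n}$, and then extracting from the algebraic constraints — via $|v_n|<1$ and $\sigma(\Ub_\infty)\subset\{\pm1\}$, so that $\Ub_\infty-v_n\id$ is invertible — that no eigenvalue $v_n$ can fail to carry a genuine rank-one nilpotent soliton.
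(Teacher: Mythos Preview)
Your argument is correct and, in fact, sharper than the paper's proof of this proposition. The two approaches agree on the first two points (the rational form of $\Pi_+\Vb(t,\cdot)$ via Lemma~\ref{lem:M_pert} and the nilpotency $A_n(t)^2=0$ from $\Ub^2=\mathds 1_d$), but diverge on how to control the residues.

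The paper's proof does \emph{not} establish $A_n\neq 0$ for all $n$ here. Instead it plugs the simple-pole expansion into \eqref{eq:HWMd}, derives the spin Calogero--Moser ODE $\dot A_n=\ii\sum_{m\neq n}[A_n,A_m]/(z_n-z_m)^2$, and uses a Gr\"onwall bound on $\|A_n(t)\|$ (exploiting $|z_n-z_m|\gtrsim |t|$) to show that $A_n(T_0)\neq 0$ forces $A_n(t)\neq 0$ and $A_n\neq 0$. The proposition is then stated only with some $1\le M\le N$, and the equality $M=N$ is postponed to the proof of Theorem~\ref{thm:soliton} via a rank computation for $K_{\Ub(T_0)}$ (Lemma~\ref{lem:simple_poles}) combined with the Lax-conjugacy $K_{\Ub(T_0)}=\UU(T_0)K_{\Ub_0}\UU(T_0)^*$.

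Your route bypasses both detours. By computing $\overline{\alpha_{n,j}(0)}=\langle\phi_n,(\Pi_+\Vb_0)e_j\rangle$ as the boundary value $\widehat{H_{\Ub_0}\phi_n}(0^-)$ and then reading this off from the eigenfunction equation as $-(\Ub_\infty-v_n\id)I_+(\phi_n)$, you obtain $\beta_n(0)\neq 0$ directly from the invertibility of $\Ub_\infty-v_n\id$ (since $|v_n|<1$ while $\sigma(\Ub_\infty)\subset\{\pm1\}$) together with $I_+(\phi_n)\neq 0$. This is essentially the identity the paper calls \eqref{U1}, which it derives later by applying $I_+$ and Lemma~\ref{lem:super_simple} to $T_{\Ub_0}\phi_n=v_n\phi_n$ --- but the paper never uses \eqref{U1} to deduce $\eta_n\neq 0$ (equivalently $A_n\neq 0$), relying instead on the rank argument. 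Your observation that \eqref{U1} already forces $A_n\neq 0$ is a genuine simplification: it yields $M=N$ immediately, eliminates the ODE step entirely, and makes the appeal to Lemma~\ref{lem:simple_poles} in Step~3 of the theorem's proof unnecessary.
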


\begin{remark*}
In the special case of (HWM) with target $\Ss^2$, corresponding to the target $\Gr_1(\C^2)$ in the matrix-valued case, we will see below that actually $M=N$ must always hold. This observation is based on the simple algebraic fact that nonzero matrices $A \in M_2(\C)$ with $A^2=0$ must have $\rank(A) = 1$. See below for more details.
\end{remark*}

\begin{proof}
{\bf Step 1.} We first show that $A_n(t)^2 = 0$ holds for $1 \leq n \leq N$ and $|t| \geq T_0$. Indeed, we know that
\be \label{eq:Ub_scat}
\Ub(t,x) = \Ub_\infty + \sum_{n=1}^N \frac{A_n(t)}{x-z_n(t)} + \sum_{n=1}^N \frac{A_n(t)^*}{x-\ov{z}_n(t)} \quad \mbox{for $|t| \geq T_0$}
\ee
with the pairwise distinct poles $z_1(t), \ldots, z_n(t) \in \C_-$ and some constant matrix $\Ub_\infty \in \Gr_k(\C^d)$. From the algebraic constraint $\Ub(t,x)^2 = \mathds{1}_d$ and by equating the terms proportional to $(x-z_n(t))^{-2}$ to zero, we  conclude that
$$
A_n(t)^2 = 0 
$$
for $|t| \geq T_0$ and $1 \leq n \leq N$. 

Furthermore, we readily see that we have existence and equality of the limits
$$
\lim_{t \to -\infty} A_n(t) = \lim_{t \to +\infty} A_n(t) =: A_n \in M_d(\C) \, .
$$
This directly follows from the properties in Lemma \ref{lem:M_pert} which yields that
$$
\lim_{|t| \to \infty} A_n(t) = -\frac{1}{2 \pi \ii} \lim_{\eps \to 0} \sum_{j=1}^d \alpha_{n,j}(t^{-1}) I_+[\Phi_{n,j}(t^{-1})] = -\frac{1}{2 \pi \ii} \sum_{j=1}^d \alpha_{n,j} I_+[\Phi_{n,j}] = A_n
$$
with the coefficients $\alpha_{n,j} = \langle \Pi \Vb_0, \Phi_{n,j} \rangle$. Moreover, since $\Phi_{n,j}(t^{-1}) = \Phi_{n,j} + O(t^{-1})$, we readily deduce that
$$
A_n(t) = A_n + O(t^{-1}) \, .
$$
Moreover, from $A_n(t)^2 = 0$ for $|t| \geq T_0$, we readily deduce that the limits satisfy $A_n^2=0$ as well.

\medskip
{\bf Step 2.} By plugging \eqref{eq:Ub_scat} into (HWM$_d$), we obtain the following differential equations for the matrix-valued functions $A_n(t)$:
\be \label{eq:A_dot}
\dot{A}_n(t) = \ii \sum_{m \neq n}^N \frac{[A_n(t), A_m(t)]}{(z_n(t)-z_m(t))^2} \quad \mbox{for $|t| \geq T_0$ and $1 \leq n \leq N$} \, ,
\ee 
where $[X,Y]$ denotes the commutator of matrices in $M_d(\C)$. For details of the calculation that derives \eqref{eq:A_dot}, we refer to the proof of \cite{BeKlLa-20}[Theorem 2.1]; the generalization to (HWM$_d$) is straightforward. We also note that the expression on the right-hand side in \eqref{eq:A_dot} is non-singular for $|t| \geq T_0$ thanks to \eqref{ineq:zn_zm}.

We now claim that
\be \label{claim:nonzero}
A_n(T_0) \neq 0 \quad \Rightarrow \quad \mbox{$A_n(t) \neq 0$ for $t \geq T_0$ and $\displaystyle \lim_{t \to +\infty} A_n(t) \neq 0$} \,.
\ee
Indeed, let $\| A \| = (\Tr(A A^*))^{1/2}$ denote the Frobenius norm of a matrix $A \in M_d(\C)$. Since $\| A_m(t) \| \leq C$ for $t \geq T_0$ and $1 \leq m \leq N$ with some constant $C>0$ (by existence of limits shown in \textbf{Step 1}) and from \eqref{ineq:zn_zm}, we obtain from \eqref{eq:A_dot} the estimate
\be
\| \frac{d}{dt}  A_n(t) \| \lesssim \frac{1}{t^2} \| A_n(t) \| \quad \mbox{for $t \geq T_0$} \, .
\ee
 Suppose now that $A_n(T_0) \neq 0$ and let $T \in (T_0,+\infty]$. Then by integrating the estimate above, we conclude that
 $$
 \int_{T_0}^T \frac{d}{dt} \log \|A_n(t)\| \, dt = \log(\| A_n(T) \| ) - \log(\| A_n(T_0) \|) \lesssim \int_{T_0}^T \frac{dt}{t^2}  \lesssim \frac{1}{T_0} < +\infty$$
 which rules out $A_n(T) = 0$ for $T \in (T_0, +\infty]$. This proves the implication \eqref{claim:nonzero}. 
 
 \medskip
 {\bf Step 3.} Define the integer $0 \leq K \leq N$ by  setting 
 $$
 K := \# \{ 1 \leq n \leq N : A_n(T_0) = 0\}
 $$ 
 and we let $M := N-K$. Now if $M=0$, then $\Ub(t,x) = \Ub_0 = \Ub_\infty$ is a constant solution to (HWM$_d$). But this implies that $K_{\Ub_0} \equiv 0$ and hence $\Hfr_1(\C^d) = \{ 0 \}$ is trivial, which contradicts our assumption that $N = \dim \Hfr_1(\C^d) \geq 1$. Thus we see that $M \geq 1$ holds. 
 
 Thus, after relabelling $\{ A_n(T_0), z_n(T_0) \}_{n=1}^N$ if necessary, we see that
 $$
 \Pi \Vb(T_0, x) = \sum_{n=1}^M \frac{A_n(T_0)}{x-z_n(T_0)} 
 $$
 with $A_n(T_0) \neq 0$ for $1 \leq n \leq M$. By \eqref{claim:nonzero}, we deduce that $A_n(t) \neq 0$ for all $t \geq T_0$ and $1 \leq n \leq M$ and $\lim_{t \to +\infty} A_n(t) = A_n \neq 0$ for $1 \leq n \leq M$. This proves statement of Proposition \ref{prop:scat} for positive times $t \geq T_0$.
 
 Finally, since $\lim_{t \to -\infty} A_n(t) = \lim_{t \to +\infty} A_n(t)$ for all $1 \leq n \leq N$ by \textbf{Step 1}, we complete the proof of Proposition \ref{prop:scat} for negative times $t \leq -T_0$.
 \end{proof}
 
 \subsubsection*{Completing the Proof of Theorem \ref{thm:soliton}}
 We are now ready to complete the proof of Theorem \ref{thm:soliton}, which we divide into the following steps.
 
 \medskip
 \textbf{Step 1.} In view of Proposition \ref{prop:scat} above, we define 
 \be
\Ub^{\pm}(t,x) := \sum_{n=1}^M \Qv_{v_n} (x-v_n t) - (N-1) \Ub_\infty 
\ee
with the rational functions
\be
\Qv_{v_n}(x) := \Ub_\infty + \frac{A_n}{x-y_n + \ii \delta_n} + \frac{A_n^*}{x-y_n - \ii \delta_n} \, .
\ee
 with the non-zero matrices $A_n = \lim_{|t| \to \infty} A_n(t) \in M_d(\C)$ and where we set
 \be
y_n := \mathrm{Re} \, w_n, \quad \delta_n := -\mathrm{Im} \, w_n > 0 \, \quad \mbox{for} \quad n=1, \ldots, M \, .
 \ee
For the difference
$$
\mathbf{R}(t) := \Ub(t) - \Ub^{\pm}(t) \in H^\infty(\R; M_d(\C))
$$
we claim that
\be \label{eq:R_limit} 
\lim_{t \to \pm\infty} \|\mathbf{R}(t) \|_{H^s} = 0 \quad \mbox{for any $s \geq 0$} \, .
\ee
Indeed, since $\mathbf{R}(t,x)^* = \mathbf{R}(t,x)$ and thus $\mathbf{R} = \Pi \mathbf{R} + (\Pi \mathbf{R})^*$, it suffices to consider $\Pi \mathbf{R}$.  We note that
 \begin{align*}
\Pi \mathbf{R}(t,x) & = \sum_{n=1}^M \left ( \frac{A_n(t)}{x-z_n(t)} - \frac{A_n}{x-y_n - v_n t + \ii \delta_n} \right ) \\
& = \sum_{n=1}^M \frac{A_n(t)-A_n}{x-z_n(t)} + \sum_{n=1}^M \left ( \frac{A_n}{x-z_n(t)} - \frac{A_n}{x-y_n - v_n t + \ii \delta_n} \right ) \\
& =: r_1(t) + r_2(t) \, .
 \end{align*}
By recalling that $A_n(t)-A_n = O(t^{-1})$ and taking the Fourier transform, we see that
$$
\| r_1(t) \|_{H^s}  \leq O(t^{-1}) \sum_{n=1}^M \left ( \int_0^{\infty} \langle \xi \rangle^{2s} \eu^{-2\delta \xi} \, d\xi \right)^{1/2} \to 0 \quad \mbox{as} \quad t \to \pm \infty \, ,
$$
where we also used that $\mathrm{Im} \, z_n(t) \leq -\delta < 0$ for $|t| \geq T_0$ and $1 \leq n \leq M$ with some constant $\delta > 0$. Furthermore, we find
\begin{align*}
\| r_2(t) \|_{H^s} & \leq C \sum_{n=1}^M  \left ( \int_0^{\infty} \langle \xi \rangle^{2s} | \eu^{-\ii z_n(t) \xi} - \eu^{-\ii (y_n+v_n t + \ii \delta_n) \xi}|^2 \, d \xi \right)^{1/2} \\
& \leq C \sum_{n=1}^M \left ( \int_0^{\infty} \langle \xi \rangle^{2s} \eu^{-2 \delta \xi} | \eu^{O(t^{-1}) \xi}-1|^2 \, d \xi \right)^{1/2}  \to 0 \quad \mbox{as} \quad t \to \pm \infty,
\end{align*}
by dominated convergence and by making use of the fact that  $z_n(t) = y_n + v_n t - \ii \delta_n + O(t^{-1})$ and $\delta_n \geq \delta > 0$ for $n=1, \ldots, M$. This completes the proof of  \eqref{eq:R_limit}.

\medskip
\textbf{Step 2.} Next, we show that each rational functions $\Qv_{v_n}$ yields a profile for a traveling solitary wave for \eqref{eq:HWMd} with velocity $v_n$.

First, we verify that $\Qv_{v_n} : \R \to \Gr_k(\C^d)$ holds. Indeed, for any $1 \leq n \leq N$ and $x \in \R$ fixed, we observe that
\begin{align*}
\Ub(t,x + v_n t) & = \Qv_{v_n}(x) + \sum_{j \neq n}^N \left ( \frac{A_j}{x-y_j-(v_j-v_n)t} + \frac{A_j^*}{x-y_j - (v_j -v_n)t} \right ) + \mathbf{R}(t,x) \\
& \to \Qv_{v_n}(x)  \quad \mbox{as} \quad |t| \to +\infty \, ,
\end{align*}
which follows from \eqref{eq:R_limit} and the fact that $v_j \neq v_n$ for $j \neq n$. From this we easily conclude that $\Qv_{v_n}(x) \in \Gr_k(\C^d)$ for all $x \in \R$.

Next, we prove that each $\Qv_{v_n} \in \mathcal{R}at(\R; \Gr_k(\C^d))$ is a traveling solitary wave profiles for the velocity $v_n$. By taking the limit $\eps = t^{-1} \to 0$ in \eqref{eq:Vb0_expansion} and \eqref{eq:An_expansion}, we obtain (using the notation in the proof of Proposition \ref{prop:scat} above) that
$$
\Pi \Vb_0=\sum_{n=1}^N \sum_{j=1}^d \alpha_{n,j}\Phi_{n,j}\, , \quad A_n = -\frac{1}{2\pi \ii}\sum_{j=1}^d \alpha_{n,j}I_+(\Phi_{n,j})\, .
$$
Let $(e_1,\dots,e_d)$ be the canonical basis of $\C^d$. Then $\Phi_{n,j}=e_j^T \phi_n$ and therefore
$$A_n =-\frac{1}{2\pi \ii}\left (\sum_{j=1}^d \alpha_{n,j}e_j\right )^T I_+(\varphi_n),$$
or, equivalently, we can write
$$
A_n=\langle .,\eta_n\rangle_{\C^d} I_+(\varphi_n) \quad \mbox{with $\eta_n:=\displaystyle \frac{1}{2\ii \pi}\sum_{j=1}^d \overline \alpha_{n,j}e_j \in \C^d$ for $n=1, \ldots, M$} \, .
$$
Note that $A_n \neq 0$ with $A_n^2 = 0$. Hence $\eta_n \in \C^d$ and $I_+(\varphi_n) \in \C^d$ are non-zero vectors with $\langle \eta_n, I_+(\phi_n) \rangle_{\C^d} = 0$. In particular, we see that $\rank(A_n) = 1$ for $1 \leq n \leq M$.

Now we reformulate the eigenfunction identity 
$$
T_{\Ub_0} \varphi_n =v_n \varphi_n 
$$ 
for the Toeplitz operator $T_{\Ub_0} : L^2_+(\R; \C^d) \to L^2_+(\R; \C^d)$. Indeed, let us apply $I_+$ to both sides while using the following elementary lemma.

\begin{lem} \label{lem:super_simple}
Let $f,g\in L^2_+$ be rational functions. Then
$$I_+(fg)=0 \quad \mbox{and} \quad I_+(\Pi (f\overline g))= \int_{\R} f \ov{g} \, dx  \, .$$
\end{lem}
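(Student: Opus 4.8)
The plan is to argue entirely through the Paley--Wiener/Fourier description of $L^2_+$ together with one elementary contour integration. First I would record the structure of the data: since $f$ and $g$ are rational and lie in $L^2_+(\R)$, all their poles lie in $\C_-$, so they extend holomorphically to the closed upper half-plane $\overline{\C_+}$, and since they are in $L^2$ the degree of each numerator is strictly below that of the denominator, so $f(z),g(z)=O(|z|^{-1})$ at infinity; in particular $f,g\in L^\infty(\R)$.

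For the first identity, set $h:=fg$. Then $h$ is rational, holomorphic on $\overline{\C_+}$, and $|h(z)|\lesssim (1+|z|)^{-2}$, so $h\in L^1(\R)\cap L^2_+(\R)$; being rational it lies in $\dom(X^*)$, so $I_+(h)$ is defined. Since $h\in L^1(\R)$ its Fourier transform $\widehat h$ is continuous, hence $I_+(h)=\widehat h(0^+)=\widehat h(0)=\int_\R h\,dx$. Now close the contour in $\C_+$ along a semicircle of radius $R$: the arc contributes $O(R^{-1})$ and there are no poles in $\overline{\C_+}$, so Cauchy's theorem gives $\int_\R h\,dx=0$, i.e. $I_+(fg)=0$.

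For the second identity, put $k:=f\overline g$. Since $g\in L^2_+$ we have $\overline g\in L^2_-$ (its Fourier transform is supported in $(-\infty,0]$), and $\overline{g(x)}$ is the rational function obtained by conjugating the coefficients of $g$, with poles in $\C_+$; hence $k\in L^1(\R)\cap L^2(\R)$ is rational in $x$, with poles of $f$ in $\C_-$ and poles of $\overline g$ in $\C_+$. Consequently $\Pi(k)=\Pi_+(k)$ is again rational (keep the partial-fraction terms with poles in $\C_-$), so $\Pi_+(k)\in\dom(X^*)$. On the Fourier side $\widehat{\Pi_+ k}(\xi)=\mathbf{1}_{\xi\ge 0}\,\widehat k(\xi)$, and since $k\in L^1(\R)$ the transform $\widehat k$ is continuous; therefore $I_+(\Pi_+ k)=\lim_{\xi\to 0^+}\widehat k(\xi)=\widehat k(0)=\int_\R k\,dx=\int_\R f\overline g\,dx$, which is the claim.

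There is no substantial obstacle; the only steps needing a little care are (i) verifying integrability of $fg$ and $f\overline g$ and that $\Pi_+(f\overline g)$ is still rational, so that $I_+$ is well-defined on these functions, and (ii) the passage from the one-sided limit $\widehat{(\cdot)}(0^+)$ to the value $\widehat{(\cdot)}(0)$, which is precisely where $L^1$-integrability (hence continuity of the Fourier transform) enters. The contour-integration identity $\int_\R fg\,dx=0$ is standard.
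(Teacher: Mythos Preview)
Your proof is correct and follows essentially the same idea as the paper's. The paper's proof is a one-liner invoking the identity $I_+(h)=\lim_{\eps\to 0^+}\langle h,\chi_\eps\rangle$ with $\chi_\eps(x)=(1-\ii\eps x)^{-1}$, after which dominated convergence (using $fg,\,f\overline g\in L^1$) and the observation $\langle \Pi_+(f\overline g),\chi_\eps\rangle=\langle f\overline g,\chi_\eps\rangle$ finish the job; your route via continuity of the $L^1$-Fourier transform at $\xi=0$ is the same argument in Fourier-side language, and your contour integral for $\int_\R fg=0$ is exactly what underlies the paper's implicit computation.
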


\begin{proof} This simply follows from by using the following fact: For all $h \in \dom(X^*)$, we have
$I_+(h)=\lim_{\eps \to 0^+} \langle h, \chi_\eps \rangle _{L^2}$ with $\chi_\eps (x):=\frac{1}{1-\ii \eps x}$.
\end{proof}

From Lemma \ref{lem:super_simple}, we infer
$$I_+(T_{\Ub_0}\varphi_n)=\Ub_\infty I_+(\varphi_n)+\int_\R (\Pi \Vb_0)^* \varphi_n\, dx = \Ub_\infty I_+(\varphi_n)+2\ii \pi \eta_n\ ,$$
because, using that $\varphi_n$ is normalized in $L^2$, 
$$\int_\R \Phi_{p,j}^*\varphi_n \, dx = e_j \delta_{n,p}\ .$$
The eigenfunction identity $T_{\Ub_0} \varphi_n =v_n \varphi_n$ therefore implies
\be \label{U1}
\Ub_\infty I_+(\varphi_n)=v_n I_+(\varphi _n) -2\pi \ii \eta_n\ .
\ee
Applying the matrix $\Ub_\infty$ to both sides of the above identity, we get
\be \label{U2}
\Ub_\infty \eta_n =-v_n\eta _n -\frac{1}{2 \pi \ii}(1-v_n^2)I_+(\varphi_n)\ .
\ee
Recall that $I_+(\varphi_n)$ and $\eta _n$ are non-zero vectors in $\C^d$ with $\langle \eta_n, I_+(\phi_n) \rangle_{\C^d} = 0$. We denote by $P_n = \mathrm{span} \{ \eta_n, I_+(\phi_n) \}$  the two-dimensional plane in $\C^d$ generated by these two vectors. We notice that $\Ub_\infty$ preserves $P_n$ and hence it preserves $P_n^\perp $, since $\Ub_\infty ^*=\Ub_\infty $. It is now easy to check that the kernel of $H_{\Qv_{v_n}}$ is given by
$$\ker( H_{\Qv_{v_n}})= \frac{x-y_n-\ii\delta_n}{x-y_n+\ii\delta _n}L^2_+(\R )I_+(\varphi_n) \oplus L^2_+(\R )\eta_n \oplus (L^2_+(\R )\otimes P_n^\perp )$$
and that its orthogonal subspace  in $L^2_+(\R; \C^d)$ is generated by 
$$\psi _n(x):=\frac{1}{x-y_n+\ii \delta_n}I_+(\varphi_n)\ .$$
Furthermore, from  \eqref{U1}, \eqref{U2} and the identity 
$$\Ub_\infty A_n +A_n \Ub_\infty =\frac{A_nA_n^*+A_n^*A_n}{2\ii \delta _n} \, ,$$
we get $\Vert I_+(\varphi_n)\Vert_{\C^d}^2=4\pi \delta_n$ and 
$$T_{\Qv_{v_n}}\psi_n =v_n \psi_n\ .$$
Finally, a direct calculation using again \eqref{U1} and \eqref{U2} leads to 
$$-2\ii v_n\Qv_{v_n}'(x)=[\Qv_{v_n},\Ds \Qv_{v_n}](x)\ ,$$
which precisely means that $\Qv_{v_n} (x-v_nt)$ is a traveling solitary wave for \eqref{eq:HWMd} with velocity $v_n$.

\medskip
\textbf{Step 3.} We next show that the integer $1 \leq M \leq N$ given in Proposition \ref{prop:scat} must satisfy
$$
M=N 
$$
where  $\sigma_{\mathrm{d}}(T_{\Ub_0}) = \{v_1, \ldots, v_N\}$. To see this, we recall from Proposition \ref{prop:scat} that
$$
\Ub(t,x) = \Ub_\infty + \sum_{n=1}^M \frac{A_n(t)}{x-z_n(t)} + \sum_{n=1}^M \frac{A_n(t)^*}{x-\ov{z}_n(t)} \quad \mbox{for $|t| \geq T_0$} 
$$
with non-zero matrices $A_1(t), \ldots, A_M(t) \in M_d(\C)$ such that $A_n(t)^2 =0$ and pairwise distinct poles $z_1(t), \ldots, z_M(t) \in \C_-$. Furthermore, from \eqref{eq:An_expansion} and the arguments in the beginning of \textbf{Step 2} above, we deduce that
$$
A_n(t) = \langle \cdot, \eta_n(t) \rangle_{\C^d} I_+(\phi_n(t)) \quad \mbox{for $|t| \geq T_0$}
$$ 
with nonzero vectors $\eta_n(t),I_+(\phi_n(t)) \in \C^d$ such that $\langle \eta_n(t), I_+(\phi_n(t)) \rangle_{\C^d} = 0$. In particular, we conclude that $\rank(A_n(t)) = 1$ for $|t| \geq T_0$. Hence we can apply Lemma \ref{lem:simple_poles} (see also the remark there) to deduce that $\rank(K_{\Ub(T_0)}) = M$.

On the other hand, thank to the Lax evolution, we get that $K_{\Ub(T_0)}= \UU(T_0) K_{\Ub_0} \UU(T_0)^*$ with some unitary map $\UU(T_0) : L^2_+(\R; \C^d) \to L^2_+(\R; \C^d)$. This implies  $\rank(K_{\Ub(T_0)}) = \rank(K_{\Ub_0}) = N$, whence it follows that $M=N$.

\medskip
\textbf{Step 4.} Finally, we observe that $\| \Ub^{(\infty)}(t) \|_{\dot{H}^s} \leq C$ for all $t \in \R$ with some constant $C>0$ depending on $s>0$. Furthermore, in view of \eqref{eq:R_limit} and $\Ub \in C(\R; \dot{H}^\infty)$, we readily deduce the a-priori bounds
$$
\sup_{t \in \R} \| \Ub(t) \|_{\dot{H}^s} \leq C(\Ub_0,s) < \infty
$$
for any $s >0$.

The proof of Theorem \ref{thm:soliton} is now complete.  \hfill $\qed$


\section{Refined Analysis for Target $\Ss^2$}
\label{sec:target_S2}

We now consider (HWM) with target $\Ss^2$. The goal of this section is to refine the general Theorem \ref{thm:soliton} on soliton resolution for the target $\Ss^2 \cong \Gr_1(\C^2)$, leading to Theorem \ref{thm:soliton_S2}. Moreover, we will establish that the spectral condition of simplicity of the discrete spectrum $\sigma_{\mathrm{d}}(T_{\Ub_0})$ holds for a {\em dense} subset of rational initial data in the case of the target $\Ss^2$, as formulated in Theorem \ref{thm:generic}. The proof of this density result will make essential use of the stereographic projection $\Ss^2 \to \C \cup \{ \infty \}$ to find a suitable parametrization of rational maps from $\uu : \R \to \Ss^2$ and the corresponding Toeplitz operators $T_\Ub$ with rational matrix-valued symbol $\Ub = \uu \cdot \bm{\sigma}$. Our arguments will be based on analyticity properties to finally conclude Theorem \ref{thm:generic}. We expect that the density  result stated in Theorem \ref{thm:generic} can be generalized to (HWM$_d$) with target $\Gr_k(\C^d)$. However, the algebraic and analytic challenges would require a vast extension of the following analysis, which we haven chosen not to pursue here.

For the reader's convenience, we recall (HWM) with target $\Ss^2$ is equivalent to (HWM$_d$) with $d=2$ for matrix-valued maps of the form
$$
\Ub(x) = \uu(x) \cdot \bm{\sigma} =  \left ( \begin{array}{cc} u_3(x) & u_1(x) - \ii u_2(x) \\ u_1(x) + \ii u_2(x) & -u_3(x) \end{array} \right ) \in \Gr_1(\C^2) 
$$
where  $\uu = (u_1,u_2,u_3) : \R \to \Ss^2$. 

\subsection*{Parametrization  by Stereographic Projection}

Let $\uu : \R \to \Ss^2$ be a map and, as usual, we set $\Ub = \uu \cdot \bm{\sigma}$. For the rest of this subsection, we will consider the case $\VV=\C^2$, i.\,e., we consider the Toeplitz operator
$$
T_\Ub : L^2_+(\R; \C^2) \to L^2_+(\R; \C^2)
$$
acting on $\C^2$-valued functions in the Hardy space $L^2_+$. Likewise, the operators $H_{\Ub}$ and $K_{\Ub} = H_{\Ub}^* H_{\Ub}$ act on $L^2_+(\R; \C^2)$ throughout the following. By using the (inverse) stereographic projection 
$$
\hat{\C} = \C \cup \{ \infty \} \to \Ss^2, \quad z \mapsto \left ( \frac{2 \, \mathrm{Re} \, z}{z \ov{z} + 1}, \frac{2 \, \mathrm{Im} \, z}{z \ov{z} + 1}, \frac{z \ov{z}-1}{z \ov{z} + 1} \right ) \, ,
$$
we obtain the following explicit description in the case of rational maps from $\R$ to $\Ss^2$.
 
\begin{thm} \label{thm:toeplitz_stereo}
Let $\uu =(u_1, u_2, u_3) : \R \to \Ss^2$ be a rational map. Given an integer $N \geq 1$, the following statements are equivalent.
\begin{itemize}
\item[(i)] $\dim \Hfr_1 = \rank(K_{\Ub}) = N$.
\item[(ii)] The least common denominator of $u_1, u_2, u_3$ has degree $2N$.
\item[(iii)] There exists a rational function $R \in \C(X)$ of the form
$$
R(x) = \frac{P(x)}{Q(x)} \, ,
$$
where $P \in \C[X]$ is a polynomial of degree $N$ and $Q \in \C[X]$ is a non-zero polynomial of degree at most $N-1$, such that $P$ and $Q$ have no common factor such that, up to rotation on the sphere $\Ss^2$, we have
$$
u_1(x) + \ii u_2(x) = \frac{2 R(x)}{R(x) \ov{R}(x)+1} \, , \quad u_3(x) = \frac{R(x) \ov{R}(x) -1}{R(x) \ov{R}(x) + 1} \, .
$$
\end{itemize}
\end{thm}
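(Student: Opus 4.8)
The strategy is to first reduce the geometric statement to an algebraic one about a single scalar rational function, and then to compute the rank of $K_{\Ub}$ via the Kronecker-type dictionary of Lemma \ref{lem:kronecker}. The key observation is that $\rank(K_{\Ub}) = \rank(H_{\Ub})$ with $H_\Ub$ acting on $L^2_+(\R;\C^2)$, and that the Hankel operator only sees $\Pi_-\Ub = \Pi_-\Vb$, i.e. the ``anti-analytic part'' of the symbol. So the whole question is: how large is the space spanned by the negative-frequency parts of the entries of $\Ub$, as a module under the shift semigroup? Concretely, if $u_1+\ii u_2 = R\circ(\text{stereographic})$-type expression and $u_3$ its companion, then $u_1 - \ii u_2 = \ov{u_1 + \ii u_2}$, so all three entries of $\Ub$ are built from $R$ and $\ov R$ with the common denominator $R\ov R + 1$.

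The plan, step by step. First I would establish (iii)$\Rightarrow$(ii): given $R = P/Q$ with $\deg P = N$, $\deg Q \le N-1$, and $\gcd(P,Q)=1$, I compute that $R\ov R + 1 = (P\ov P + Q \ov Q)/(Q \ov Q)$, so $u_1 + \ii u_2 = 2 P \ov Q/(P\ov P + Q\ov Q)$ and $u_3 = (P \ov P - Q \ov Q)/(P\ov P + Q\ov Q)$; then I must check that the polynomial $P\ov P + Q \ov Q$ (of degree $2N$, with positive leading coefficient since $\deg P > \deg Q$, hence never zero on $\R$) is, up to a real constant, the least common denominator of $u_1,u_2,u_3$ — this requires a $\gcd$ argument showing no common factor can be cancelled, using $\gcd(P,Q)=1$ and that $P\ov P + Q\ov Q$ has no real roots while $P \ov Q$ and $P\ov P - Q\ov Q$ cannot simultaneously share a nonreal root with it. Next, (ii)$\Leftrightarrow$(i): if the least common denominator of the $u_k$ has degree $2N$, then $\Pi_-\Ub$ is rational of ``degree'' $N$ at the poles in $\C_-$ (the denominator $P\ov P + Q\ov Q$ factors as $D_-\ov{D_-}$ with $\deg D_- = N$ and all roots of $D_-$ in $\C_-$, because it is self-conjugate with no real roots), and by the Kronecker theorem (Lemma \ref{lem:kronecker} and the partial-fraction count in its proof) $\rank H_{\Vb}$ equals the number of poles of $\Pi_-\Vb$ in $\C_-$ counted with multiplicity, which is $N$; conversely $\rank(K_\Ub)=N$ forces the degree of $\Pi_-\Ub$'s denominator to be exactly $N$, hence the full denominator degree $2N$. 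The most substantial step is (i) or (ii) $\Rightarrow$ (iii): I must produce the function $R$. Here I would argue that $u = (u_1,u_2,u_3):\R\to\Ss^2$ rational, composed with stereographic projection from a generic point (a rotation of $\Ss^2$ chosen so that the north pole is not a value, which is always possible since a rational non-constant map omits at least one point, or one applies a small rotation), yields $u_1 + \ii u_2$ and $u_3$ of the stated form with some rational $R = P/Q$; then the degree bookkeeping from (ii) pins down $\deg P = N$ and $\deg Q \le N-1$ after clearing common factors, and I verify $\gcd(P,Q)=1$ can be arranged.

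The main obstacle I expect is the bookkeeping in the equivalence (ii)$\Leftrightarrow$(iii), precisely the claim that the degree of the least common denominator of the $u_k$ equals $\max(\deg P, \deg Q + ?)\cdot 2$ adjusted correctly — one has to be careful that writing $R = P/Q$ in lowest terms does \emph{not} immediately give $P\ov P + Q\ov Q$ in lowest terms as the common denominator, and the degree constraint $\deg Q \le N - 1 < N = \deg P$ is exactly what is needed (and what must be shown is forced) to get a clean count $2N$ rather than something like $2\max(\deg P,\deg Q)$. A secondary subtlety is the phrase ``up to rotation on the sphere'': one must track how a rotation of $\Ss^2$ acts on the stereographic coordinate as a Möbius transformation $R \mapsto (aR+b)/(-\ov b R + \ov a)$ and check this preserves the class of $R$'s of the stated type and preserves the degree $N$; this is where I would invoke that such Möbius maps act by unitary (hence rank-preserving) conjugation-type operations, so that $\rank K_\Ub$ is a rotation invariant, making the reduction to a convenient rotation legitimate. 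The analytic input (Peller/Kronecker) is entirely packaged in Lemma \ref{lem:key} and Lemma \ref{lem:kronecker}, so no hard analysis remains — only careful polynomial algebra.
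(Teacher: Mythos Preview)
Your plan for (ii)$\Leftrightarrow$(iii) is essentially the same as the paper's: it also writes $R=(Q_1+\ii Q_2)/(D-Q_3)$ and reduces to lowest terms by a careful factor count, and for the converse it runs the same $\gcd$ argument on $P\ov P+Q\ov Q$. Those parts are fine.

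The genuine gap is in (ii)$\Leftrightarrow$(i). You assert that ``by the Kronecker theorem (Lemma~\ref{lem:kronecker} and the partial-fraction count in its proof) $\rank H_{\Vb}$ equals the number of poles of $\Pi_-\Vb$ in $\C_-$ counted with multiplicity.'' This is where matrix-valued symbols differ from scalar ones. Lemma~\ref{lem:kronecker} only gives \emph{finiteness} of the rank; for a block Hankel operator with simple-pole symbol $\Pi_-\Ub=\sum_j A_j^*/(x-\ov z_j)$ one has $\rank(H_\Ub^*)=\sum_j\rank(A_j)$, not the pole count. What makes the answer $N$ here is the sphere constraint: $|\uu|^2\equiv1$ forces each residue vector to satisfy $\sv_j\cdot\sv_j=0$, hence $A_j=\sv_j\cdot\bm\sigma$ is nilpotent in $M_2(\C)$ and thus has rank exactly $1$. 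You never invoke this mechanism, and it is not ``packaged'' in Lemma~\ref{lem:kronecker}.

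The paper isolates this step as Lemma~\ref{lem:simple_poles} (simple poles, using the rank-one residues), then treats higher-order poles separately: a direct lower bound $\rank(K_\Ub)\ge N$ (Lemma~\ref{lem:mult_poles}) plus an upper bound obtained by approximating $\uu$ by simple-pole maps $\uu^{(n)}$ with $\rank(K_{\Ub_n})=N$ and using lower semicontinuity of rank under strong operator convergence. Your proposal does not address the higher-order pole case at all, and the approximation argument is not purely polynomial algebra---it is the one place where a nontrivial limiting step is needed. So the claim that ``no hard analysis remains'' understates what is left to do.
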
  

\begin{remarks*}
1) We use $\C(X)$ to denote the field of rational functions with one variable with coefficients in $\C$. Likewise, we use $\C[X]$ to denote the ring of complex polynomials over $\C$. The variable $X$ either represents an element $x \in \R$ or $z \in \C$. 

2) For a polynomial $T \in \C[X]$ with $T(x) = \sum_{j=0}^N t_j x^j$, we denote its complex conjugate by $\ov{T}(x) = \sum_{j=0}^N \ov{t}_j x^j$ obtained by complex conjugation of its coefficients. Likewise, for a rational function $R = P/Q \in \C(X)$, we denote its complex conjugate by $\ov{R}= \ov{P}/\ov{Q}$.
\end{remarks*}

\begin{proof}
The proof of Theorem \ref{thm:toeplitz_stereo} is given in Appendix \ref{app:stereo}.
\end{proof}

In view of Theorem \ref{thm:toeplitz_stereo} we introduce, for an integer $N \geq 1$, the following subsets of rational functions 
$$
\mathcal{R}_N :=  \left \{ \frac{P(x)}{Q(x)} \in \C(X) \mid \deg P = N, \deg Q \leq N-1, Q \not \equiv 0,  \mathrm{gcd}(P, Q) = 1 \right \} \, .
$$
For $\uu \in \mathcal{R}at(\R; \Ss^2)$, we can henceforth assume with $\uu(\infty) = \mathbf{e}_3$ by rotational symmetry on $\Ss^2$. By Theorem \ref{thm:toeplitz_stereo}, we have the canonical equivalence of sets
$$
\mathcal{K}_N := \{ \uu \in \mathcal{R}at(\R; \Ss^2) \mid \uu(\infty) = \mathbf{e}_3, \; \rank(K_{\Ub}) = N \} \cong \mathcal{R}_N
$$
 by means of the (inverse) stereographic projection in Theorem \ref{thm:toeplitz_stereo} (iii) above.

\medskip
Next, we analyze the topological properties of $\mathcal{R}_N$ more closely. For $P/Q \in \mathcal{R}_N$, we can assume without loss of generality that $P$ is a monic polynomial, i.\,e., we denote
$$
P(x) = x^N + p_1 x^{N-1} + \ldots + p_N \quad \mbox{with $p_k \in \C$ for $k=1, \ldots N$} \, .
$$
The polynomials $Q \in \C[X]$ will be written as
$$
Q(x) = q_1 x^{N-1} + \ldots + q_N \quad \mbox{with $q_k \in \C$ for $k=1, \ldots, N$} \, ,
$$
where $(q_1, \ldots, q_N) \neq (0,\ldots, 0)$. Evidently, we can identify the pair of polynomials $(P, Q) \in \C[X] \times \C[X]$ above uniquely by elements in $\C^N \times (\C^N \setminus \{ 0 \})$. In particular, the set $\mathcal{R}_N$ can be naturally regarded as a subset in $\C^{2N}$. We have the following result. 

\begin{lem} \label{lem:R_n}
The set $\mathcal{R}_N \subset \C(X)$ can be canonically identified with a non-empty, open and connected subset $\mathcal{A}_N$ in $\C^{2N}$. 
\end{lem}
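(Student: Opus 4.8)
The plan is to realize $\mathcal{R}_N$ explicitly as the complement in $\C^{2N}$ of the zero locus of a single nonzero polynomial --- the Sylvester resultant --- and then read off non-emptiness, openness and connectedness from standard facts. First I would make the identification in the statement precise: for $R\in\mathcal{R}_N$ write $R=P/Q$ in lowest terms and normalize $P$ to be monic by dividing both $P$ and $Q$ by the leading coefficient of $P$; this changes neither $R$ nor the defining conditions ($\deg P=N$, $\deg Q\le N-1$, $Q\not\equiv 0$, $\gcd(P,Q)=1$), and the resulting pair $(P,Q)$ is uniquely determined by $R$ (if $P_1/Q_1=P_2/Q_2$ with both sides reduced and $P_1,P_2$ monic of degree $N$, then coprimality gives $P_1\mid P_2$ and $P_2\mid P_1$, hence $P_1=P_2$ and $Q_1=Q_2$). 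Writing $P(x)=x^N+p_1x^{N-1}+\cdots+p_N$ and $Q(x)=q_1x^{N-1}+\cdots+q_N$ yields an injection $R\mapsto(p_1,\dots,p_N,q_1,\dots,q_N)\in\C^{2N}$, and I define $\mathcal{A}_N$ to be its image.

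Next I would identify $\mathcal{A}_N$ with a Zariski-open set. Let $\mathrm{Res}(P,Q)$ be the $(2N-1)\times(2N-1)$ Sylvester determinant formed with the \emph{formal} degrees $N$ and $N-1$; it is a polynomial in $(p,q)$. Since $P$ is monic of degree exactly $N$, resultant theory gives $\mathrm{Res}(P,Q)\ne 0$ if and only if $Q\not\equiv 0$ and $P,Q$ have no common root, i.e.\ if and only if $(p,q)$ comes from an element of $\mathcal{R}_N$. Here one must be slightly careful on the degenerate strata where the true degree of $Q$ drops below $N-1$: these are handled by the standard formula $\mathrm{Res}_{N,N-1}(P,Q)=(\mathrm{lc}\,P)^{k}\,\mathrm{Res}_{N,N-1-k}(P,Q)=\mathrm{Res}_{N,N-1-k}(P,Q)$ when the top $k$ coefficients of $Q$ vanish, together with the observation that if $Q\equiv 0$ the Sylvester matrix has $N$ zero rows. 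This shows $\mathcal{A}_N=\{(p,q)\in\C^{2N}:\mathrm{Res}(P,Q)\ne 0\}$.

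The three required properties then follow quickly. For non-emptiness, $P=x^N$, $Q\equiv 1$ gives $\gcd=1$ and $\mathrm{Res}=\pm 1\ne 0$, so $\mathcal{A}_N\ne\emptyset$ (equivalently, $\mathrm{Res}$ is not the zero polynomial). Openness is immediate: $\mathcal{A}_N$ is the preimage of $\C\setminus\{0\}$ under the continuous (polynomial) map $(p,q)\mapsto\mathrm{Res}(P,Q)$. For connectedness I would use the standard complex-line argument: given $a,b\in\mathcal{A}_N$, restrict $\mathrm{Res}$ to the complex line $L=\{a+t(b-a):t\in\C\}$; this restriction is a univariate polynomial in $t$ that does not vanish at $t=0$, hence has finitely many zeros, so $L\cap\mathcal{A}_N\cong\C\setminus\{\text{finite set}\}$ is path-connected and joins $a$ ($t=0$) to $b$ ($t=1$). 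Thus $\mathcal{A}_N$ is path-connected, in particular connected. I expect the only genuinely delicate point to be the bookkeeping in the middle step --- checking that the formal-degree Sylvester resultant cuts out precisely the coprimality-plus-$Q\not\equiv 0$ locus uniformly across all the degenerate strata $\deg Q<N-1$; the monic normalization, openness, and the line argument are routine.
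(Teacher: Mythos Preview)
Your proof is correct. It follows the same overall outline as the paper --- realize $\mathcal{A}_N$ as the non-vanishing locus of a single function on $\C^{2N}$, then read off the three properties --- and indeed your Sylvester resultant $\mathrm{Res}(P,Q)$ coincides (since $P$ is monic of exact degree $N$) with the paper's function $F(P,Q)=\prod_{j=1}^N Q(\xi_j(P))$. The substantive difference is in the connectedness step. The paper treats $F$ only as a continuous map and argues in two stages: first, for each fixed $P$, it shows the fiber $\{P\}\times(\C^N\setminus V_P)$ is connected via the complex-line trick in the $Q$-variables; second, it links distinct fibers through the slice $Q\equiv 1$, which is coprime to every monic $P$. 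You instead exploit that $\mathrm{Res}$ is a \emph{polynomial} in all $2N$ coordinates, so the complex-line trick applies directly between any two points of $\mathcal{A}_N\subset\C^{2N}$ in one stroke --- this is just the standard fact that the complement of a proper algebraic hypersurface in $\C^n$ is path-connected. Your route is shorter and more conceptual; the paper's has the minor advantage of being self-contained (no resultant identities across degenerate $\deg Q$ strata), at the cost of the extra fiber-linking step.
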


\begin{proof}
We divide the proof into the following steps.

\medskip
\textbf{Step 1.}  Elements $P/Q \in \mathcal{R}_N$ can be canonically identified with pairs $(P,Q) \in \C^{2N}$ of the form
$$
P = (p_1, \ldots, p_N) \in \C^N, \quad Q = (q_1, \ldots, q_N) \in \C^N \setminus \{ 0 \} \, ,
$$
such that $P$ and $Q$ have no common factor as polynomials. Let $\mathcal{A}_N \subset \C^{2N}$ denote the set of such pairs $(P,Q)$. By the fundamental theorem of algebra, we can write $P(x) = \prod_{j=1}^N (x-\xi_j)$ where $\xi_1, \ldots, \xi_N \in \C$ denote the roots of $P$ counted with their multiplicity. In order to take into account possible permutations of the roots, we introduce the quotient space 
$$\C^N_{\mathrm{sym}} = \C^N / \sim$$ 
with the equivalence relation $(\xi_1, \ldots, \xi_N) \sim (\xi_{\sigma(1)}, \ldots, \xi_{\sigma(N)})$ for all permutations $\sigma \in S_N$. We use $[\xi_1, \ldots, \xi_N]$ to denote elements in $\C^N_{\mathrm{sym}}$. It is a classical fact that the map which assigns to any polynomial $P$ of degree $N$ its roots modulo permutations,
$$
\tau : \C^N \to \C^N_{\mathrm{sym}}, \quad P \mapsto [\xi_1, \ldots, \xi_N],
$$
is continuous. Let us define the map
$$
F : \C^N \times (\C^{N} \setminus \{ 0 \} ) \to \C, \quad (P,Q) \mapsto \prod_{j=1}^N Q(\xi_j(P)), 
$$
where $[\xi_1(P), \ldots, \xi_N(P)] \in \C^N_{\mathrm{sym}}$ denote the roots (modulo permutations) of the polynomial $P$. Clearly, we have
$$
F(P,Q) \neq 0 \quad \Leftrightarrow \quad \mbox{$P$ and $Q$ have no common factor}.
$$
By continuity of the map $F$, we deduce that the set
$$
\mathcal{A}_N = \{ (P,Q) \in \C^N \times \C^N \setminus \{ 0 \} : F(P,Q) \neq 0 \}
$$
is an open subset in $\C^{2N}$. Moreover, it is evident that $\mathcal{A}_N$ is non-empty.

\medskip
\textbf{Step 2.} Next, we prove that $\mathcal{A}_N \subset \C^{2N}$ is connected. Since $\mathcal{A}_N$ is open, this is equivalent to being pathwise connected. For $(P,Q) \in \mathcal{A}_N$, we define the set
$$
V_P = \{ Q \in \C^N : F(P,Q)=0 \} = \{ Q \in \C^N \mid \prod_{j=1}^N Q(\xi_j(P))=0 \} \, .
$$
As a zero set of a non-trivial polynomial in $Q=(q_1, \ldots, q_N) \in \C^N$, we see that $V_P$ is an algebraic set in $\C^N$ with $0 \in V_P$.\footnote{In fact, we verify that $V_P= \bigcup_{j=1}^N V_j$ with the linear subspaces $V_j = \ker  \, \ell_j$ with the linear forms $\ell_j : \C^N \to \C$ given by $\ell_j(Q) = Q_1 \xi_j(P)^{N-1} + \ldots + Q_{N-1} \xi_j(P) + Q_N$.} Regarding its complement, we claim that 
\be \label{eq:connected}
\mbox{$\C^N \setminus V_P$ is connected} \, .
\ee
Since $\C^N \setminus V_P$ is open, this claim is equivalent to pathwise connectedness of this set. Let $Q, \tilde{Q} \in \C^N \setminus V_P$ with $Q \neq \tilde{Q}$ be given and consider the set
$$
L = \{ Q + \zeta (\tilde{Q}-Q) \mid \zeta \in \C \} \, ,
$$ 
which corresponds to the complex line in $\C^N$ that  connects $Q$ and $\tilde{Q}$. Since we have $L \not \subseteq V_P$ and $V_P$ is the zero set of a polynomial in $Q \in \C^N$, there are only finitely many points of intersections of $L$ with $V_P$, i.\,e., 
$$
L \cap V_P = \{ z_1, \ldots, z_K \}
$$ 
for some $z_1, \ldots, z_K \in \C^N$. However, the set $L \setminus \{ z_1, \ldots, z_K \} \simeq \R^2 \setminus \{ p_1, \ldots, p_K \}$ with finitely many points $p_1, \ldots, p_K \in \R^2$ is pathwise connected. Thus there exists a continuous map $\gamma: [0,1] \to \C^N \setminus V_P$ with $\gamma(0)= Q$ and $\gamma(1) = \tilde{Q}$. This proves \eqref{eq:connected}.

Next, we suppose  $(P, Q) \in \mathcal{A}_N$ and $(\tilde{P}, \tilde{Q}) \in \mathcal{A}_N$  are given. We prove that $(P,Q)$ and $(\tilde{P}, \tilde{Q})$ can be connected by a continuous path in $\mathcal{A}_N$ as follows. We consider the sets 
$$
W = \{ P \} \times (\C^N \setminus V_P) \quad  \mbox{and}  \quad \tilde{W} = \{\tilde{P} \} \times (\C^N \setminus V_{\tilde{P}}) \, . 
$$
Evidently, we have that $(P,Q) \in W$ and $(\tilde{P}, \tilde{Q}) \in \tilde{W}$. Let $Q_*=(0, \ldots, 0, 1) \in \C^N$ corresponding to the constant polynomial $Q_*(x) \equiv 1$. By \eqref{eq:connected} and the evident fact that $Q_* \in (\C^N \setminus V_P) \cap (\C^N \setminus V_{\tilde{P}})$, we can find two continuous paths in $\mathcal{A_N}$ that connect $(P,Q)$ with $(P, Q_*)$ and $(\tilde{P}, \tilde{Q})$ with $(\tilde{P}, Q_*)$, respectively. Furthermore, we easily construct a continuous path in $\mathcal{A}_N$ which connects $(P,Q_*)$ and $(\tilde{P},Q_*)$. This shows that $\mathcal{A}_N \subset \C^{2N}$ is pathwise connected.

This completes the proof of Lemma \ref{lem:R_n}.
\end{proof}

With the results derived above, we are now ready to give the proofs of Theorems \ref{thm:soliton_S2} and \ref{thm:generic} for \eqref{eq:HWM} with target $\Ss^2$.

\subsection*{Proof of Theorem \ref{thm:soliton_S2} (Soliton Resolution for Target $\Ss^2$)}
Suppose $\uu_0 \in \mathcal{R}at(\R;\Ss^2)$ satisfies the assumptions of Theorem \ref{thm:soliton_S2} and let $\Ub_0 = \uu_0 \cdot \bm{\sigma} \in \mathcal{R}at(\R; \Gr_1(\C^2))$ be the corresponding initial datum for \eqref{eq:HWMd} with $d=2$. 

By applying Theorem \ref{thm:soliton} and using the identification $\Gr_1(\C^2) \cong \Ss^2$ via the use of the Pauli matrices $\bm{\sigma}=(\sigma_1, \sigma_2, \sigma_3)$, we obtain that
$$
\lim_{t \to \pm \infty} \| \uu(t) - \uu^{\pm}(t) \|_{\dot{H}^s} = 0 \quad \mbox{for any $s > 0$} \, ,
$$
with
$$
\uu^{\pm}(t,x) = \sum_{j=1}^N \qv_{v_j}(x-v_jt) - (N-1) \uu_\infty \, .
$$
Here each $\qv_{v_j} \in \mathcal{R}at(\R; \Ss^2)$ is a profile of a ground state traveling solitary wave for (HWM) with velocity $v_j$ and it is given by
$$
\qv_{v_j}(x) = \uu_\infty + \frac{A_j}{x-y_j+ \ii \delta_j} + \frac{A_j^*}{x-y_j - \ii \delta_j} \, .
$$

The proof of Theorem \ref{thm:soliton_S2} is now complete. \hfill $\qed$ 

\subsection*{Proof of Theorem \ref{thm:generic} (Density of Rational Data with Simple Discrete Spectrum)}
Let $\uu \in \mathcal{R}at(\R; \Ss^2)$ be given and set $\Ub = \uu \cdot \bm{\sigma} \in \mathcal{R}at(\R; \Gr_1(\C^2))$ as usual. We recall that the discrete spectrum $\sigma_{\mathrm{d}}(T_\Ub)$ of  the Toeplitz operator $T_{\Ub} : L^2_+(\R; \C^2) \to L^2_+(\R; \C^2)$ is found to be
$$
\sigma_{\mathrm{d}}(T_{\Ub}) = \sigma(T_{\Ub} |_{\Hfr_1}) 
$$
with the finite-dimensional subspace $\Hfr_1 = \ran(K_{\Ub})=\ran(\id-T_{\Ub}^2)$. We are interested in the case when $\sigma_{\mathrm{d}}(T_\Ub)$ is simple and therefore we define the set
$$
\mathcal{R}at_{\mathrm{s}}(\R; \Ss^2) := \{ \uu \in \mathcal{R}at(\R; \Ss^2) \mid \mbox{$\sigma_{\mathrm{d}}(T_\Ub)$ is simple} \} \, .
$$
We have the following result (stated as Theorem \ref{thm:generic} in the introduction).

\begin{thm} \label{thm:simple}
The subset $\mathcal{R}at_{\mathrm{s}}(\R; \Ss^2)$ is dense in $\dot{H}^{\frac 1 2}(\R, \Ss^2)$.
\end{thm}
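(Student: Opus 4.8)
The plan is to reduce, using Theorem~\ref{thm:dense_rational} together with the fact that rotations of $\Ss^2$ act isometrically on the Gagliardo seminorm, to perturbing a single rational map, and then to run an analyticity argument on the finite-dimensional parameter sets $\mathcal A_N$ provided by Theorem~\ref{thm:toeplitz_stereo} and Lemma~\ref{lem:R_n}. Concretely, I would fix $\uu\in\mathcal{R}at(\R;\Ss^2)$ and, after a rotation on $\Ss^2$, assume $\uu(\infty)=\mathbf e_3$, so that by Theorem~\ref{thm:toeplitz_stereo} it corresponds to a point $(P,Q)\in\mathcal A_N$ with $N=\rank(K_\Ub)$; conversely each $(P,Q)\in\mathcal A_N$ yields a rational map $\uu_{P,Q}:\R\to\Ss^2$ via
$$
u_1+\ii u_2=\frac{2P\ov Q}{P\ov P+Q\ov Q},\qquad u_3=\frac{P\ov P-Q\ov Q}{P\ov P+Q\ov Q},
$$
where $P\ov P+Q\ov Q$ is a polynomial of exact degree $2N$ with no real zero (since $\gcd(P,Q)=1$). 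On compact subsets of $\mathcal A_N$ this denominator is bounded below by $c\,(1+x^2)^N$, so $(P,Q)\mapsto\uu_{P,Q}$ is real-analytic, locally uniformly, into $H^s_\bullet(\R;\R^3)$ for every $s>0$, hence into $\dot H^{1/2}$. The theorem therefore reduces to showing that the \emph{non-simple locus}
$$
\mathcal B_N:=\bigl\{(P,Q)\in\mathcal A_N\ \big|\ \sigma_{\mathrm d}(T_{\Ub_{P,Q}})\ \text{is not simple}\bigr\}
$$
is nowhere dense in the open connected set $\mathcal A_N$ (Lemma~\ref{lem:R_n}).

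The next step is to realize $\mathcal B_N$ as the zero set of a real-analytic function. For $(P,Q)\in\mathcal A_N$ the operator $K_\Ub=H_\Ub^\ast H_\Ub$ has a finite-rank kernel whose matrix entries are rational in $x$ with coefficients that are real-analytic in $(P,Q)$ over the nonvanishing polynomial $P\ov P+Q\ov Q$; hence the trace quantities $\Tr(K_\Ub^m)$ and $\Tr(T_\Ub K_\Ub^m)$ for $m\ge1$ (which, up to normalization, belong to the conserved hierarchy attached to \eqref{eq:HWM} by the Lax structure) are real-analytic on $\mathcal A_N$. The first family recovers the multiset of nonzero eigenvalues $\lambda_1,\dots,\lambda_N$ of $K_\Ub$, and the second then recovers, inside each eigenspace of $K_\Ub$, how many of the discrete eigenvalues $\pm\sqrt{1-\lambda_j}$ carry each sign; together this determines the full multiset $\sigma_{\mathrm d}(T_\Ub)=\{\eps_j\sqrt{1-\lambda_j}\}$, so that non-simplicity is equivalent to the vanishing of a real-analytic function $D$ on $\mathcal A_N$ assembled from the above traces by elementary algebra. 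Since $\mathcal A_N$ is connected, either $D\equiv 0$ or $\mathcal B_N=\{D=0\}\cap\mathcal A_N$ is a proper real-analytic subvariety, hence closed and nowhere dense; and then the continuity of $(P,Q)\mapsto\uu_{P,Q}$ into $\dot H^{1/2}$, combined with Theorem~\ref{thm:dense_rational} and rotation-invariance, finishes the proof.

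The hard part will be ruling out $D\equiv 0$, i.e.\ exhibiting for every $N\ge1$ one rational map with $\rank(K_\Ub)=N$ and simple discrete spectrum. For $N=1$ this is automatic, as $\dim\Hfr_1=1$. For $N\ge2$ I would construct an explicit one-parameter family $(P_L,Q_L)\in\mathcal A_N$ realizing $N$ widely separated single ground-state profiles: take $P_L(x)=\prod_{j=1}^N(x-jL)$ and let $Q_L$ be the degree-$\le N-1$ interpolating polynomial arranged so that, near each $x\approx jL$, the rational function $R_L=P_L/Q_L$ (after translation) resembles the stereographic coordinate $\tfrac{x}{w_j}$ of a ground-state profile of a prescribed velocity $v_j$, with $v_1,\dots,v_N\in(-1,1)$ pairwise distinct. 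One then checks that $(P_L,Q_L)\in\mathcal A_N$ for $L$ large (no common factor), and that $K_{\Ub_L}$ converges in trace norm to the orthogonal direct sum of the $N$ rank-one operators associated with the limiting profiles, whose nonzero eigenvalues are $1-v_1^2,\dots,1-v_N^2$; by continuity of the finitely many nonzero eigenvalues these stay distinct for large $L$, so $\sigma_{\mathrm d}(T_{\Ub_L})$ is simple and $D\not\equiv0$. The only genuinely nonsoft point is this degeneration estimate (and keeping the family inside $\mathcal A_N$, i.e.\ on the constraint manifold $\Ub^2=\mathds 1_2$, throughout); everything else is analytic/topological bookkeeping. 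Combining the three steps yields that $\mathcal{R}at_{\mathrm s}(\R;\Ss^2)$ is dense in $\dot H^{1/2}(\R;\Ss^2)$.
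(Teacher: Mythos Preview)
Your outline is essentially the paper's: parametrize by $\mathcal A_N$, exhibit the non-simple locus as the zero set of a real-analytic function on the connected open set, and rule out identical vanishing by producing one simple-spectrum example from widely separated solitons. Two points of comparison are worth making.

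First, your function $D$ is more complicated than needed, and as written has a small gap. The discriminant of the characteristic polynomial of $T_\Ub|_{\Hfr_1}$ is a polynomial in the power sums $\sum_j v_j^m$, and the odd ones require $q_0:=\sum_j v_j=\Tr(T_\Ub|_{\Hfr_1})$, which is \emph{not} among your traces $\Tr(T_\Ub K_\Ub^m)$ for $m\ge1$. You can repair this by noting that $\rank K_\Ub\equiv N$ on $\mathcal A_N$, so the Riesz projection $P_1$ onto $\Hfr_1$ is locally real-analytic in $(P,Q)$ and hence $q_0=\Tr(P_1T_\Ub)$ is too; but the paper sidesteps the issue entirely. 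It works instead with the discriminant $\mathfrak d(P,Q)$ of the characteristic polynomial of $K_\Ub|_{\Hfr_1}$, written via the Plemelj--Smithies formula purely in terms of $\Tr(K_\Ub^m)$, $m\ge1$, which are trace-class traces on all of $L^2_+$ and manifestly real-analytic. This detects simplicity of $\sigma_{\mathrm d}(T_\Ub^2)$, which (by self-adjointness of $T_\Ub$) implies simplicity of $\sigma_{\mathrm d}(T_\Ub)$; it is strictly stronger than needed but cleaner.

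Second, for the explicit simple-spectrum example the paper does not stay in the $P/Q$-parametrization. Instead it builds the $N$-pole map directly on the constraint manifold by solving for the spin vectors $\sv_{j,\eps}$ via a contraction-mapping argument (your ``keeping the family inside $\mathcal A_N$'' concern is handled automatically), and then computes the matrix of $T_\Ub|_{\Hfr_1}$ in the explicit basis $e_{j,\eps}/(x-z_j)$, showing it equals $\mathrm{diag}(v_1,\dots,v_N)+O(\eps)$. This gives the eigenvalues of $T_\Ub|_{\Hfr_1}$ close to the prescribed $v_j$, hence (choosing the $v_j$ with distinct squares, which you should also require) the eigenvalues of $K_\Ub$ are close to $1-v_j^2$ and the discriminant is nonzero. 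Your trace-norm degeneration of $K_{\Ub_L}$ to a direct sum would also work, but the paper's route is more explicit and avoids any limiting argument for the operators themselves.
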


\begin{proof}
 We divide the proof into the following steps.

\medskip
\textbf{Step 1.} For a given integer $N \geq 1$, we define the set
$$
\mathcal{K}_N := \left \{ K_{\Ub}  : \mbox{$\mathrm{Rank}(K_\Ub) = N$ with $\uu \in \mathcal{R}at(\R; \Ss^2)$ and $\uu(\pm \infty) = \mathbf{e}_3$} \right \} \, .
$$
From Theorem \ref{thm:toeplitz_stereo} part (iii), we recall that $\mathcal{K}_N$ is canonically identified with set of rational functions $\mathcal{R}_N \subset \C(X)$ via the (inverse) stereographic projection. By Lemma \ref{lem:R_n}, we can canonically identify $\mathcal{R}_N$  with a non-empty, open and connected subset $\mathcal{A}_N \subset \C^{2N}$. Let us write $R=P/Q \equiv (P,Q) \in \mathcal{A}_N$ in what follows. 

Next, we define the map $\mathsf{u} : \mathcal{A}_N \to L^\infty(\R; \R^3)$ with
\begin{align*}
(\mathsf{u}(P,Q))(x) & :=\left ( \frac{ 2 \mathrm{Re} (P(x) \ov{Q}(x))}{P(x) \ov{P}(x) + Q(x) \ov{Q}(x)}, \frac{2 \mathrm{Im}( P(x) \ov{Q}(x))}{  \ov{P}(x) P(x) + \ov{Q}(x) Q(x)}, \right . \\
& \qquad \left . \frac{P(x) \ov{P}(x) - Q(x) \ov{Q}(x)}{P(x) \ov{P}(x) + Q(x) \ov{Q}(x)} \right ) \quad \mbox{with $x \in \R$} \, .
\end{align*}
Note that, for any $(P,Q) \in \mathcal{A}_N$, the map $x \mapsto (\mathsf{u}(P,Q))(x)$  belongs to $\mathcal{R}at(\R; \Ss^2)$ and it evidently satisfies $(\mathsf{u}(P,Q))(\pm \infty) = \mathbf{e}_3$. Correspondingly, we obtain a map $\mathsf{U} : \mathcal{A}_N \to L^\infty(\R; M_2(\C))$ by setting
\be \label{eq:analytic1}
\mathsf{U}(P,Q) := \mathsf{u}(P,Q) \cdot \bm{\sigma} \, .
\ee
By Theorem \ref{thm:toeplitz_stereo}, the map
\be \label{eq:analytic2}
\mathsf{K} : \mathcal{A}_N \to \mathcal{B}(L^2_+(\R; \C^2)), \quad (P,Q) \mapsto \mathsf{K}(P,Q) := H_{\mathsf{U}(P,Q)}^* H_{\mathsf{U}(P,Q)} 
\ee
is injective and and its image satisifes $\mathsf{K}(\mathcal{A}_N) = \mathcal{K}_N$. 

\medskip
\textbf{Step 2.} We claim that 
$$
\mbox{$\mathsf{K} : \mathcal{A}_N \to \mathcal{B}(L^2_+(\R; \C^2))$ is real analytic} 
$$
with the usual identification that $\mathcal{A}_N \subset \C^{2N} \cong \R^{4N}$. Indeed, since the expressions in \eqref{eq:analytic1} and \eqref{eq:analytic2} are linear and quadratic, respectively, this amounts to showing that
$$
\mbox{$\mathsf{u} : \mathcal{A}_N \to L^\infty(\R; \R^3)$ is a real analytic map} \, .
$$
Indeed, let $(P,Q) \in \mathcal{A}_N$ be given. We show that $\mathsf{u}$ is real analytic in an open neighborhood around $(P,Q)$  by showing that is a restriction a complex analytic mapping. For $\eps > 0$, we consider the open set
$$
\Omega_{\eps} := \{ (P_1, P_2, Q_1, Q_2) \in \C^{4N} \mid |(P_1,P_2, Q_1, Q_2) - (P,\ov{P}, Q, \ov{Q})| < \eps \} \
$$
and the map $\tilde{\mathsf{u}} : \Omega_\eps \to L^\infty(\R; \R^3)$ defined as
\begin{align*}
\tilde{\mathsf{u}}(P_1, P_2, Q_1, Q_2)(x) & := \left ( \frac{ P_1(x) Q_2(x) + P_2(x) Q_1(x)}{P_1(x) P_2(x) + Q_1(x) Q_2(x)}, \frac{1}{2 \ii} \frac{P_1(x) Q_2(x)-P_2(x) Q_1(x)}{  P_1(x) P_2(x) + Q_1(x) Q_2(x)}, \right . \\
& \qquad \left . \frac{P_1(x) P_2(x) - Q_1(x) Q_2(x)}{P_1(x) P_2(x) + Q_1(x) Q_2(x)} \right ) \quad \mbox{with $x \in \R$} \, .
\end{align*}
Note that if $\eps > 0$ is sufficiently small, the denominator $P_1(x) P_2(x) + Q_1(x) Q_2(x) \neq 0$ for all $x \in \R$ for $(P_1,P_2,Q_1, Q_2) \in \Omega_\eps$ and hence the map $\tilde{\mathsf{u} }: \Omega_\eps \to L^\infty(\R; \R^3)$ is well-defined. Clearly, the map $\tilde{\mathsf{u}} : \Omega_\eps \to L^\infty(\R; \R^3)$ is $C^1$ and satisfies the Cauchy--Riemann equations and hence its is complex analytic. In view of the fact that
$$
\mathsf{u}(\eta,\zeta) = \tilde{\mathsf{u}}(\eta,\ov{\eta}, \zeta, \ov{\zeta}) \quad \mbox{for $(\eta, \ov{\eta}, \zeta, \ov{\zeta}) \in \Omega_\eps$} \, ,
$$
we conclude that $\mathsf{u} : \mathcal{A}_N \to L^\infty(\R; \R^3)$ is real analytic.

\medskip
\textbf{Step 3.} Since the image $\mathsf{K}(\mathcal{A}_N) = \mathcal{K}_N$ belongs to the subspace $\mathcal{F}_N$ of bounded operators in $\mathcal{B}(L^2_+(\R; \C^2))$ with finite rank $N$, we see that the maps 
$$
\mathcal{A}_N \to \R, \quad (P,Q) \mapsto \Tr(\mathsf{K}(U,P)^m) 
$$ 
are well-defined for any integer $m \geq 1$. In fact, these maps are real analytic as being  the composition of real analytic maps. 

Let $p_{\mathsf{K}(P,Q)}(\lambda)$ denote the characteristic polynomial of the endomorphism $\mathsf{K}(P,Q) : \Hfr_1 \to \Hfr_1$ on the $N$-dimensional subspace $\Hfr_1 = \ran (\mathsf{K}(P,Q))$. Applying the Plemelj--Smithies formula (see e.\,g.~\cite{GoGoKr-00}) in the theory of Fredholm determinants, we obtain that
$$
p_{\mathsf{K}(P,Q)}(\lambda) = \det(\lambda \id - \mathsf{K}(P,Q)) = \sum_{k=0}^N (-1)^k C_k(\mathsf{K}(P,Q)) \lambda^{N-k} \, ,
$$ 
with the coefficients
$$
C_k(\mathsf{A}) = \frac{1}{k!} \det \left ( \begin{array}{lllll} \Tr (\mathsf{A}) & k-1 & 0 & \cdots & 0 \\ \Tr (\mathsf{A}^2) & \Tr (\mathsf{A} ) & k-2 & \ddots  & \vdots \\ \vdots & \vdots & \ddots & \ddots & 0 \\
\Tr ( \mathsf{A}^{k-1}) & \Tr ( \mathsf{A}^{k-2}) & \cdots & \Tr(\mathsf{A}) & 1 \\ \Tr(\mathsf{A}^k) & \Tr(\mathsf{A}^{k-1}) & \cdots & \Tr(\mathsf{A}^2) & \Tr(\mathsf{A}) \end{array} \right ) \, ,
$$
where $k=0, \ldots, N$. This shows that the coefficients of $p_{\mathsf{K}(P,Q)}(\lambda)$ are real analytic functions of $(P,Q) \in \mathcal{A}_N$. As a consequence, the discriminant function
$$
\mathfrak{d} : \mathcal{A}_N \to \R, \quad (P,Q) \mapsto \mathfrak{d}(P,Q) := \mathrm{disc}(p_{\mathsf{K}(P,Q)}) 
$$
is also a real analytic function on the open and connected set $\mathcal{A}_N \subset \C^{2N} \cong \R^{4N}$. Moreover, we have $\mathfrak{d}(P,Q) \neq 0$ if and only if $\mathsf{K}(P,Q) : \Hfr_1 \to \Hfr_1$ has simple eigenvalues, which by the identity in Lemma \ref{lem:key}, is equivalent to having simple spectrum of $T_{\mathsf{U}(P,Q)}^2=\id - \mathsf{K}(P,Q)$ on $\Hfr_1$. Thus we find
$$
\mbox{$\mathfrak{d}(P,Q) \neq 0$ if and only if the discrete spectrum $\sigma_{\mathrm{d}}(T_{\mathsf{U}(P,Q)}^2)$ is simple} \, .
$$
Defining the set
$$
\widetilde{\mathcal{A}}_N := \{ (P,Q) \in \mathcal{A}_N \mid \mathfrak{d}(P,Q) \neq 0 \} \, ,
$$
we conclude from the real analyticity of the function $\mathfrak{d}$ on the connected set $\mathcal{A}_N$ that either
$$
\mbox{$\widetilde{\mathcal{A}}_N$ is a dense and open subset in $\mathcal{A}_N$} \ ,
$$
or it holds $\widetilde{\mathcal{A}}_N = \emptyset$, in which case we must have $\mathfrak{d} \equiv 0$ on $\mathcal{A}_N$. However, by an explicit construction in Lemma \ref{lem:T_simple_exist} below, we conclude that $\mathfrak{d} \not \equiv 0$ on $\mathcal{A}_N$. Hence we have shown that 
$$
\mbox{$\sigma_{\mathrm{d}}(T_{\mathsf{U}(P,Q)}^2)$ is simple for all $(P,Q) \in \widetilde{\mathcal{A}}_N$} 
$$
with some dense and open subset $\widetilde{\mathcal{A}}_N \subset \mathcal{A}_N$. Note that, by self-adjointness of $T_{\Ub}$, the simplicity of $\sigma_{\mathrm{d}}(T_{\Ub}^2)$ implies that $\sigma_{\mathrm{d}}(T_{\Ub})$ is simple as well. Hence we deduce that
$$
\mbox{$\sigma_{\mathrm{d}}(T_{\mathsf{U}(P,Q)})$ is simple for all $(P,Q) \in \widetilde{\mathcal{A}}_N$} \, .
$$

\medskip
\textbf{Step 4.} We are now ready to finish the proof of Theorem \ref{thm:simple}. Let $\uu \in \mathcal{R}at(\R; \Ss^2)$ be given. Note that $\lim_{x \to \pm \infty}\uu(x) = \mathbf{p}$ for some unit vector $\mathbf{p} \in \Ss^2$. By rotational symmetry, we can henceforth assume that
$$
\mathbf{p} = \mathbf{e}_3 \, .
$$
Let $N = \mathrm{Rank}(K_\Ub)$ and $\Hfr_1 = \ran(K_\Ub)$. If $N=0$ (which corresponds to the constant map $\uu \equiv \mathbf{e}_3$) then $\dim \Hfr_1 = 0$ and thus $\sigma_{\mathrm{d}}(T_{\Ub}) = \emptyset$ which is trivially simple. Also if $N=1$, we have $\dim \Hfr_1 = 1$ and thus $\sigma_{\mathrm{d}}(T_{\Ub})$ is evidently simple.

Henceforth we assume that $N \geq 2$ holds. Note that there is a (unique) point $(P,Q) \in \mathcal{A}_N$ such that
$$
\Ub = \mathsf{U}(P,Q) \quad \mbox{and} \quad K_\Ub = \mathsf{K}(P,Q) \in \mathcal{K}_N \, .
$$
By density $\widetilde{\mathcal{A}}_N \subset \mathcal{A}_N$, we can find a sequence $(P_k, Q_k) \in \widetilde{\mathcal{A}}_N$ such that $(P_k, Q_k) \to (P,Q)$ in $\C^{2N}$. Letting $\Ub_k = \mathsf{U}(P_k,Q_k)$, we conclude that
$$
\mbox{$\sigma_{\mathrm{d}}(T_{\Ub_k})$ is simple for all $k \in \N$} \, .
$$ 
Moreover, from $(P_k,Q_k) \to (P,Q)$ in $\C^{2N}$ it is easy to see that $\| \Ub_k - \Ub \|_{\dot{H}^{\frac 1 2}} \to 0$ as $k \to \infty$. Equivalently, in terms of the rational functions $\uu_k =( u_{k,1}, u_{k_2}, u_{k,3}) \in \mathcal{R}at(\R; \Ss^2)$ with
$$
\uu_{k,j} =  \frac{1}{2} \Tr_{\C^2} (\Ub_k \sigma_j) \quad \mbox{for $j=1,2,3$ and $k \in \N$},
$$
we deduce that $\| \uu_k - \uu \|_{\dot{H}^{\frac 1 2}} \to 0$ as $k \to \infty$. This proves  the density of  $\mathcal{R}at_{\mathrm{s}}(\R; \Ss^2) \subset \mathcal{R}at(\R; \Ss^2)$ as stated above. The proof of Theorem \ref{thm:simple} is now complete.
\end{proof}


\begin{appendix} 

\section{Density of Rational Maps} \label{sec:app_density}

Let $d \geq 2$ and $0 \leq k \leq d$ be given integers. Recall that
$$
\mathcal{R}at(\R; \Gr_k(\C^d)) = \left \{ \Ub : \R \to \Gr_k(\C^d) \mid \mbox{$\Ub(x)$ is rational in $x\in \R$} \right \}
$$
denotes the set of rational maps from $\R$ into the complex Grassmannian $\Gr_k(\C^d)$, which we identify with the set of matrices\footnote{Recall also that, via $U = \mathds{1}_d - 2 P$, we have the canonical equivalence $\Gr_k(\C^d) \cong \{ P \in \C^{d \times d} \mid \mbox{$P^* = P = P^2$ and $\Tr(P) = k$} \}$ in terms of self-adjoint projections $P$ on $\C^d$ with $\rank(P)=k$.}
$$
\Gr_k(\C^d) = \{ U \in \C^{d \times d} \mid \mbox{$U^* = U$, $U^2 = \mathds{1}_d$ and $\Tr(U) = d-2k$} \} \, .
$$
Furthermore, we recall the space
$$
\dot{H}^{\frac 1 2}(\R; \Gr_k(\C^d)) = \left  \{ \Ub \in \dot{H}^{\frac 1 2}(\R; \C^{d \times d}) \mid \mbox{$\Ub(x) \in \Gr_k(\C^d)$ for a.\,e.~$x \in \R$} \right \} \, ,
$$
equipped with Gagliardo semi-norm $\| \cdot \|_{\dot{H}^{\frac 1 2}}$ given through 
$$
\| \Ub \|_{\dot{H}^{\frac 1 2}}^2 = \| |D|^{\frac 1 2} \Ub \|_{L^2}^2 =  \frac{1}{2 \pi} \int_{\R} \int_{\R} \frac{|\Ub(x)- \Ub(y)|_F^2}{|x-y|^2} \, dx \, dy \, ,
$$
where $|A|_F = (\Tr(A^* A))^{1/2}$ denotes the Frobenius norm of a matrix $A \in \C^{d \times d}$. 

\begin{thm} \label{thm:app_density}
 $\mathcal{R}at(\R; \Gr_k(\C^d))$ is dense in $\dot{H}^{\frac 1 2}(\R; \Gr_k(\C^d))$. That is, for every $\Ub \in \dot{H}^{\frac 1 2}(\R; \Gr_k(\C^d))$, there exists a sequence $\Ub_n \in \mathcal{R}at(\R; \Gr_k(\C^d))$ such that $\| \Ub_n - \Ub \|_{\dot{H}^{\frac 1 2}} \to 0$ as $n \to \infty$.
\end{thm}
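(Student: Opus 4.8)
The plan is to reduce the statement, via a conformal change of variables together with the density of smooth maps in the critical Sobolev space, to approximating a \emph{smooth} Grassmannian‑valued map, and then to produce rational approximants by a hands‑on construction built from a smooth unitary frame that preserves the algebraic constraints exactly. Throughout I assume $0<k<d$; the cases $k\in\{0,d\}$ reduce to a single point and are trivial. First I would transport the problem to the circle: the M\"obius map $x\mapsto\tfrac{x-\ii}{x+\ii}$ carries $\hat\R=\R\cup\{\infty\}$ conformally onto $\T$, and since $\phi(s)-\phi(t)=(s-t)/((cs+d)(ct+d))$ for $\phi\in\mathrm{PSL}_2(\R)$, the Gagliardo energy $\iint |\Ub(x)-\Ub(y)|_F^2\,|x-y|^{-2}\,dx\,dy$ is \emph{exactly} invariant under it. Hence the theorem is equivalent to the density of $C^\infty(\T;\Gr_k(\C^d))$ in $H^{1/2}(\T;\Gr_k(\C^d))$ for the corresponding seminorm. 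Because $sp=\tfrac12\cdot 2=1=\dim\T$ is the scaling‑critical exponent and $\Gr_k(\C^d)=U(d)/(U(k)\times U(d-k))$ is simply connected, $\pi_1(\Gr_k(\C^d))=0$, this density follows from the theory of Sobolev maps into compact manifolds at the critical exponent — the hypothesis $\pi_1=0$ is precisely what removes the topological obstruction present for, say, $\mathbb{S}^1$‑valued maps. Transporting back, it suffices to approximate in $\dot H^{1/2}$ an arbitrary $\Ub\in C^\infty(\hat\R;\Gr_k(\C^d))$; such a map has a limit $\Ub_\infty\in\Gr_k(\C^d)$ at infinity and $\Ub-\Ub_\infty\in H^s(\R)$ for all $s$.

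Next I would set $P=\tfrac12(\mathds{1}_d-\Ub)\in C^\infty(\hat\R;\Gr_k(\C^d))$, a smooth rank‑$k$ projection‑valued map, and use that the pullback under $\Ub$ of the tautological bundle over $\Gr_k(\C^d)$ is a smooth rank‑$k$ Hermitian bundle over $\hat\R\cong\mathbb{S}^1$, hence trivial; a smooth orthonormal section provides $\Phi\in C^\infty(\hat\R;\C^{d\times k})$ with $\Phi^*\Phi=\mathds{1}_k$ and $P=\Phi\Phi^*$. The key algebraic point is that for \emph{any} full‑rank $\C^{d\times k}$‑valued map $F$ the matrix $\Pi(F):=F(F^*F)^{-1}F^*$ is the orthogonal projection onto the column space of $F$: it is Hermitian, idempotent, and of rank $k$, so $\mathds{1}_d-2\Pi(F)\in\Gr_k(\C^d)$; and if $F$ is rational in $x\in\R$, then $F^*$ is rational (conjugation of coefficients), hence so are $\det(F^*F)$ and the cofactor matrix, and by Cramer's rule $(F^*F)^{-1}$, so that $\Pi(F)$ is rational. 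Thus the nonlinear constraint $(\,\cdot\,)^2=\mathds{1}_d$ and rationality are simultaneously achievable through this formula, which is the crux of the construction.

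Then, viewing $\Phi$ as a smooth function on $\T$, let $F_n$ be its $n$‑th Fourier (or Ces\`aro) partial sum, so that $F_n\to\Phi$ uniformly and in $H^s(\T)$ for every $s$. Pulled back to $\R$, each $F_n$ is a $\C^{d\times k}$‑valued rational function with all poles off the real line, and $F_n\to\Phi$ uniformly on $\hat\R$ and in $\dot H^{1/2}$. Since $\Phi^*\Phi\equiv\mathds{1}_k$ on the compact set $\hat\R$, for $n$ large one has $F_n^*F_n\ge\tfrac12\mathds{1}_k$ everywhere, so $F_n$ has full rank and $\Ub_n:=\mathds{1}_d-2\Pi(F_n)\in\mathcal{R}at(\R;\Gr_k(\C^d))$ is well defined. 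On the region $\{F:F^*F\ge\tfrac12\mathds{1}_k\}$ the map $F\mapsto\mathds{1}_d-2\Pi(F)$ is smooth with bounded derivatives, so by the standard estimates for Nemytskii operators on $\dot H^{1/2}(\R)\cap L^\infty(\R)$ one gets $\Ub_n\to\mathds{1}_d-2\Phi\Phi^*=\Ub$ in $\dot H^{1/2}$. This would complete the proof.

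I expect the main obstacle to be the first step: at the scaling‑critical exponent $s=\tfrac12$ on the line, neither mollification nor truncation near infinity keeps the map valued in $\Gr_k(\C^d)$, so one must genuinely invoke the critical‑exponent density theory — together with the topological input $\pi_1(\Gr_k(\C^d))=0$ — which is exactly why this density statement is, as the introduction notes, far from obvious. By contrast, Steps~2 and~3 form an explicit construction whose only delicate points are the compatibility of the projection formula with both rationality and membership in $\Gr_k(\C^d)$, and the routine continuity of the nonlinearity on $\dot H^{1/2}\cap L^\infty$.
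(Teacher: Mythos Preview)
Your proof is correct and follows essentially the same strategy as the paper: transport to the torus via conformal invariance of the $\dot H^{1/2}$ seminorm, invoke density of smooth maps (the paper cites Brezis--Nirenberg directly; the hypothesis $\pi_1(\Gr_k(\C^d))=0$ you add is not actually needed at the critical exponent $sp=\dim\T=1$), produce a smooth rank-$k$ frame for $P=\tfrac12(\mathds{1}_d-\Ub)$ (the paper uses a result of Sibuya rather than bundle triviality over $\mathbb S^1$, but these are equivalent here), approximate the frame by truncated Fourier series $\Gb_N$, and form the rational projection $\Pb_N=\Gb_N(\Gb_N^*\Gb_N)^{-1}\Gb_N^*$. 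The only noteworthy difference is in the convergence step: the paper simply shows $\Pb_N\to\Pb$ in $H^1(\T)$ using that $H^1$ is an algebra and then passes to $H^{1/2}$ by inclusion, which is more elementary than your Nemytskii-operator estimate on $\dot H^{1/2}\cap L^\infty$.
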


Before we give the proof of Theorem \ref{thm:app_density} below, we obtain the following fact.

\begin{cor}
$\mathcal{R}at(\R; \Ss^2)$ is  dense in $\dot{H}^{\frac 1 2}(\R; \Ss^2)$.
\end{cor}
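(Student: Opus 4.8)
The plan is to deduce this immediately from Theorem \ref{thm:app_density} with $d=2$, $k=1$, using the classical identification $\Ss^2 \cong \Gr_1(\C^2)$ furnished by the Pauli matrices $\bm\sigma = (\sigma_1,\sigma_2,\sigma_3)$. First I would recall from Section \ref{sec:prelim} that the map $\uu \mapsto \Ub := \uu\cdot\bm\sigma = u_1\sigma_1 + u_2\sigma_2 + u_3\sigma_3$ is a real-linear bijection from $\R^3$ onto the space of traceless Hermitian $2\times 2$ matrices, restricting to a bijection from $\Ss^2$ onto $\{U \in M_2(\C) \mid U^*=U,\ U^2 = \mathds{1}_2,\ \Tr(U)=0\} \cong \Gr_1(\C^2)$, with inverse given componentwise by $u_k = \tfrac12\Tr(\Ub\sigma_k) = \tfrac12\langle\Ub,\sigma_k\rangle_F$. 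Since $\langle\sigma_j,\sigma_k\rangle_F = 2\delta_{jk}$, one has the pointwise identity $|\Ub(x)-\Ub(y)|_F^2 = 2|\uu(x)-\uu(y)|^2$, hence $\|\Ub\|_{\dot{H}^{1/2}}^2 = 2\|\uu\|_{\dot{H}^{1/2}}^2$; thus $\uu \mapsto \Ub$ is, up to the fixed constant $\sqrt 2$, an isometry from $\dot{H}^{1/2}(\R;\Ss^2)$ onto $\dot{H}^{1/2}(\R;\Gr_1(\C^2))$. Moreover it carries $\mathcal{R}at(\R;\Ss^2)$ bijectively onto $\mathcal{R}at(\R;\Gr_1(\C^2))$, because $\uu$ is rational in $x$ if and only if each entry of $\Ub = \uu\cdot\bm\sigma$ is, equivalently $\Ub$ is rational, and conversely $u_k = \tfrac12\Tr(\Ub\sigma_k)$ is rational whenever $\Ub$ is.

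Granting this, let $\uu \in \dot{H}^{1/2}(\R;\Ss^2)$ be given and set $\Ub = \uu\cdot\bm\sigma \in \dot{H}^{1/2}(\R;\Gr_1(\C^2))$. By Theorem \ref{thm:app_density} there is a sequence $\Ub_n \in \mathcal{R}at(\R;\Gr_1(\C^2))$ with $\|\Ub_n - \Ub\|_{\dot{H}^{1/2}} \to 0$. Define $\uu_n = (u_{n,1},u_{n,2},u_{n,3})$ by $u_{n,k} = \tfrac12\Tr(\Ub_n\sigma_k)$; then $\uu_n \in \mathcal{R}at(\R;\Ss^2)$ and, by the seminorm identity above, $\|\uu_n - \uu\|_{\dot{H}^{1/2}} = \tfrac{1}{\sqrt 2}\|\Ub_n - \Ub\|_{\dot{H}^{1/2}} \to 0$. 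This gives the claimed density.

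There is essentially no obstacle in this step: the only point deserving a line of verification is that the Pauli encoding is a genuine isometric identification at the level of the \emph{homogeneous} $\dot{H}^{1/2}$ seminorm (so that approximation transfers in both directions) and that it preserves rationality both ways, all of which is elementary and already recorded in Section \ref{sec:prelim}. The entire analytic difficulty — constructing $\Ss^2$-valued, or $\Gr_k(\C^d)$-valued, rational approximants respecting the nonlinear pointwise constraint — has been absorbed into Theorem \ref{thm:app_density}.
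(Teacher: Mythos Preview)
Your proposal is correct and follows essentially the same approach as the paper: apply Theorem~\ref{thm:app_density} with $d=2$, $k=1$, then transfer the density result via the Pauli-matrix identification $\uu \mapsto \Ub = \uu \cdot \bm\sigma$, using that this map preserves rationality and is (up to a constant) an isometry for the $\dot{H}^{1/2}$ seminorm. The paper's proof is terser, simply noting the norm equivalence $\|\Ub\|_{\dot{H}^{1/2}} \sim \|\uu\|_{\dot{H}^{1/2}}$ rather than computing the exact constant as you do.
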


\begin{proof}
By Theorem \ref{thm:app_density}, the set $\mathcal{R}at(\R; \Gr_1(\C^2))$ is dense in $\dot{H}^{\frac 1 2}(\R; \Gr_1(\C^2))$. Recall that, thanks to  the linear relation $\Ub = \uu \cdot \bm{\sigma}$ with $\bm{\sigma}=(\sigma_1, \sigma_2, \sigma_3)$ denoting the standard Pauli matrices, we easily check the equivalence of norms $\| \Ub \|_{\dot{H}^{\frac 1 2}} \sim \| \uu \|_{\dot{H}^{\frac 1 2}}$ and we thus conclude.
\end{proof}

Next, we turn to the proof of Theorem \ref{thm:app_density}. Here it is convenient to first prove the corresponding result in the periodic setting as follows. Let $\T = \R/2 \pi \Z$ denote the one-dimensional torus. Correspondingly, we define the space
$$
H^{\frac 1 2}(\T; \Gr_k(\C^d)) := \{ \Ub \in H^{\frac 1 2}(\T; \C^{d \times d}) \mid \mbox{$\Ub(t) \in \Gr_k(\C^d)$ for a.\,e.~$t \in \T$} \} \, ,
$$
endowed with the $H^{\frac 1 2}$-norm for maps from $\T$ into $\C^{d \times d}$. Likewise, we also define
$$
\mathcal{R}at(\T; \Gr_k(\C^d)) := \{ \Ub : \T \to \Gr_k(\C^d) \mid \mbox{$\Ub(t)$ is rational in $z = \eu^{\ii t}$ with $t \in \T$} \} \, .
$$
It is easy to see that $\mathcal{R}at(\T; \Gr_k(\C^d)) \subset H^{\frac 1 2}(\T; \Gr_k(\C^d))$ holds. In fact, we will show the following result.

\begin{thm} \label{thm:dense_torus}
$\mathcal{R}at(\T; \Gr_k(\C^d))$ is dense in $H^{\frac 1 2}(\T; \Gr_k(\C^d))$. 
\end{thm}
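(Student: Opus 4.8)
The plan is to prove density of $\mathcal{R}at(\T; \Gr_k(\C^d))$ in $H^{\frac 1 2}(\T; \Gr_k(\C^d))$ first, and then transfer the result to the real line via the conformal map $\omega : \mathbb{D} \to \C_+$ that already appears in the proof of Lemma \ref{lem:kronecker}, together with the unitary intertwiner $\mathcal{U} : L^2(\T) \to L^2(\R)$ it induces; the only point to check on the transfer side is that $\mathcal{U}$ and its inverse preserve rationality and that the $H^{\frac 1 2}$ Gagliardo seminorm on $\T$ is comparable, under $\omega$, to the $\dot H^{\frac 1 2}$ seminorm on $\R$ (conformal invariance of the $\dot H^{1/2}$ seminorm in one dimension), so that an approximating rational sequence upstairs yields one downstairs. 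I expect this reduction to be essentially bookkeeping.

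For the torus statement itself, the plan is a two-step mollify-then-correct argument. Given $\Ub \in H^{\frac 1 2}(\T; \Gr_k(\C^d))$, first approximate it in $H^{\frac 1 2}$ by smooth maps; the natural choice is Fejér means (or convolution with a smooth approximate identity on $\T$), which are trigonometric polynomials in $z=e^{\ii t}$ and $\bar z$, hence rational on $\T$, and converge to $\Ub$ in $H^{\frac 1 2}(\T; \C^{d\times d})$. These approximants are of course $M_d(\C)$-valued but no longer land in $\Gr_k(\C^d)$. The second step is to project them back onto $\Gr_k(\C^d)$: since $\Gr_k(\C^d)$, viewed via \eqref{eq:Gr_k} as the set of Hermitian involutions with $\Tr = d-2k$, is a compact real-analytic submanifold of $M_d(\C)$, there is a tubular neighborhood $\mathcal{N}$ of it on which the nearest-point retraction $\rho : \mathcal{N} \to \Gr_k(\C^d)$ is smooth (real-analytic). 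One writes $\rho$ explicitly through the spectral projection onto the eigenspaces of the "positive/negative" part: for $A$ Hermitian close to an involution, set $\rho(A) = \mathds{1}_d - 2 P_{<0}(A)$ where $P_{<0}(A)$ is the Riesz spectral projection $\frac{1}{2\pi\ii}\oint (\,\zeta - A)^{-1}\,d\zeta$ around the negative eigenvalues — this is rational in the entries of $A$, a crucial point, since it means $\rho$ applied to a rational $M_d(\C)$-valued map yields again a rational $\Gr_k(\C^d)$-valued map. Since the smooth approximants converge uniformly to $\Ub(t) \in \Gr_k(\C^d)$ a.e., for $n$ large they take values in $\mathcal{N}$, so $\rho$ is defined along them, and one checks $\rho(\text{approximant}) \to \Ub$ in $H^{\frac 1 2}$.

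The main obstacle is exactly this last convergence: $\rho$ is Lipschitz on $\mathcal{N}$, which immediately gives $L^2$ and $L^\infty$ control, but $H^{\frac 1 2}$ is not controlled by Lipschitz composition in a completely trivial way when the approximants are merely bounded in $H^{1/2}$ (composition with Lipschitz maps is bounded but not continuous on $H^{1/2}$ in general). The fix is to exploit that here we have more than boundedness: the Fejér means $\Ub_n$ converge to $\Ub$ in $H^{\frac 1 2}$ \emph{strongly}, and one has the pointwise identity $\rho(\Ub_n) - \Ub = \rho(\Ub_n) - \rho(\Ub)$ with $\rho(\Ub)=\Ub$; writing the Gagliardo double integral for $\rho(\Ub_n)-\rho(\Ub)$ and using $|\rho(a)-\rho(b) - (\rho(a')-\rho(b'))| \lesssim |a-b|\,|a'-a| + \ldots$ (a second-order/commutator-type estimate coming from $C^2$ smoothness of $\rho$ on the tube, combined with $|a-b|\le |a-b|$ near the diagonal) one bounds the seminorm of $\rho(\Ub_n)-\Ub$ by $\|\Ub_n-\Ub\|_{\dot H^{1/2}}$ times a uniform constant plus a lower-order term that vanishes; alternatively, since $\Ub_n \to \Ub$ in $H^{1/2}$ and $H^{1/2}(\T)\hookrightarrow \mathrm{VMO}$, one can invoke continuity of superposition operators with $C^1$ (here real-analytic) symbols on $H^{1/2}\cap L^\infty$, which is a standard fact. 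I would carry out the explicit Gagliardo estimate to keep the argument self-contained. Everything else — compactness of $\Gr_k(\C^d)$, existence of the tubular neighborhood, rationality of the Riesz-projection formula, density of Fejér means in $H^{1/2}$ — is classical and can be stated with brief justification.
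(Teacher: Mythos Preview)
Your approach has a genuine gap at the rationality-preservation step. You assert that the retraction $\rho(A)=\mathds{1}_d-2P_{<0}(A)$, with $P_{<0}(A)=\frac{1}{2\pi\ii}\oint_\Gamma(\zeta-A)^{-1}\,d\zeta$, is rational in the entries of $A$; it is not. While $(\zeta-A)^{-1}$ is rational in the entries of $A$ for each fixed $\zeta$, the contour integral is not: already for $d=2$ and a Hermitian $A=\bigl(\begin{smallmatrix}a & b\\ \bar b & c\end{smallmatrix}\bigr)$ one computes $(P_{<0}(A))_{11}=\tfrac12\bigl(1-\tfrac{(a-c)/2}{\sqrt{((a-c)/2)^2+|b|^2}}\bigr)$, which carries a genuine square root. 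Equivalently, the nearest Hermitian involution to $A$ with $0\notin\sigma(A)$ is $A|A|^{-1}=A(A^2)^{-1/2}$, and the inverse square root is real-analytic but not algebraic. Hence if your Fej\'er approximant $\Ub_n$ is a trigonometric polynomial, $\rho(\Ub_n)$ will typically involve the square root of a trigonometric polynomial and will \emph{not} lie in $\mathcal{R}at(\T;\Gr_k(\C^d))$. (There is a secondary, fixable issue: Fej\'er means of an $H^{1/2}$ map need not converge uniformly, since $H^{1/2}(\T)\not\hookrightarrow C(\T)$, so $\Ub_n(t)$ need not land in the tubular neighborhood for all $t$; one would first have to approximate by smooth $\Gr_k(\C^d)$-valued maps. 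But the rationality obstruction persists even then.)

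The paper circumvents this by passing to a \emph{frame} rather than retracting the projection. After reducing to smooth $\Ub$ via Brezis--Nirenberg, one writes $\Pb=\tfrac12(\mathds{1}_d-\Ub)$ and finds $\Gb\in C^\infty(\T;\C^{d\times k})$ of full rank $k$ with $\Pb\Gb=\Gb$ (existence of such a smooth periodic frame is the triviality of complex vector bundles over $S^1$; the paper quotes a result of Sibuya). Then $\Pb=\Gb(\Gb^*\Gb)^{-1}\Gb^*$, and one truncates the Fourier series of $\Gb$ to a trigonometric polynomial $\Gb_N$ and sets $\Pb_N:=\Gb_N(\Gb_N^*\Gb_N)^{-1}\Gb_N^*$. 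The map $\Gb\mapsto\Gb(\Gb^*\Gb)^{-1}\Gb^*$ \emph{is} rational in the entries of $\Gb$ (Cramer's rule for the $k\times k$ inverse), so $\Pb_N$ is rational on $\T$ and is exactly a rank-$k$ projection; convergence $\Pb_N\to\Pb$ in $H^1\hookrightarrow H^{1/2}$ is then routine since $H^1(\T)$ is an algebra. The moral: linearize through a frame first, truncate there, then apply the rational reconstruction formula --- not truncate $\Ub$ and try to project back.
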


\begin{proof}[Proof of Theorem \ref{thm:dense_torus}]
First, we recall the following general result due to Brezis--Nirenberg \cite{BrNi-95} for Sobolev spaces of functions with values in smooth and closed (i.\,e.~compact with no boundary) manifolds. Indeed, we have that $\Gr_k(\C^d)$ is a smooth and closed manifold of real dimension $2k(d-k)$. Now from \cite{BrNi-95}[Lemma A.12] we obtain the following result; see also \cite{MaSc-23}[Section 2] for a recent and detailed discussion of density of smooth maps in Sobolev spaces in the setting of manifolds.

\begin{lem} \label{lem:VMO}
$C^\infty(\T; \Gr_k(\C^d))$ is dense in $H^{\frac 1 2}(\T; \Gr_k(\C^d))$.
\end{lem}

To complete the proof of Theorem \ref{thm:dense_torus}, it remains to establish the following result.

\begin{lem} \label{lem:rational}
For every $\Ub \in C^\infty(\T; \Gr_k(\C^d))$, there exists a sequence $$\Ub_N \in \mathcal{R}at(\T; \Gr_k(\C^d))$$ such that $\| \Ub_N - \Ub \|_{H^{\frac 1 2}} \to 0$ as $N \to \infty$. 
\end{lem}

\begin{remark*}
The proof below can actually be used to prove density with respect to the $\| \cdot \|_{H^s}$-norm for all $s \geq 0$.
\end{remark*}

\begin{proof}[Proof of Lemma \ref{lem:rational}]
Let $\Ub \in C^\infty(\T; \Gr_k(\C^d))$ be given. We define the map $\Pb \in C^\infty(\T; \Gr_k(\C^d))$ by setting $\Pb(t) := \frac{1}{2} (\mathds{1}_d - \Ub(t))$. We have 
$$
\Pb(t) = \Pb(t)^* = \Pb(t)^2 \quad \mbox{and} \quad \rank(\Pb(t)) = k \quad \mbox{for all $t \in \T$}.
$$
We claim that there exists a smooth map $\Gb \in C^\infty(\T; \C^{d \times k})$ such that
\be \label{eq:aux_P}
 \Pb(t) \Gb(t) = \Gb(t) \quad \mbox{and} \quad \rank(\Gb(t)) = k  \quad \mbox{for $t \in \T$} \, .
\ee

To prove this claim, we use a result in \cite{Si-65}[Theorem 6], where the following result is shown (up to trivial modifications of notation and changing the period of 1 to $2\pi$).
\begin{prop} \label{prop:sibuya}
Let $\Ab \in C^\infty(\R; \C^{d \times d})$ with $\Ab(t+2\pi) = \Ab(t)$ for all $t \in \R$ and assume that 
$$
\rank(\Ab(t)) = m \quad \mbox{for all $t \in \R$}
$$
with some constant $m \leq d$. Then there exists $\Bb \in C^\infty(\R; \C^{d \times (d-m)})$ such that
$$
\Bb(t+2 \pi) = \Bb(t), \quad \Ab(t) \Bb(t) = 0, \quad \rank(\Bb(t))=d-m \quad \mbox{for $t \in \R$} \, .
$$
\end{prop}
By applying Proposition \ref{prop:sibuya} to $\Ab(t) = \mathds{1}_d - \Pb(t)$ where $m=d-k$, we complete the proof of claim \eqref{eq:aux_P} by setting $\Gb(t) := \Bb(t)$.

Let us now return to the proof of Lemma \ref{lem:rational}. We claim that
\be \label{eq:id2}
\Pb(t) = \Gb(t) [\Gb(t)^* \Gb(t)]^{-1} \Gb(t)^* \quad \mbox{for $t \in \T$} \, ,
\ee
Note that, since $\rank(\Gb(t)) = k$ for $\Gb(t) \in \C^{d \times k}$, we obtain that $\Gb(t)^* \Gb(t) \in \C^{k \times k}$ is invertible for any $t \in \T$. 

To show \eqref{eq:id2}, let $\tilde{\Pb}(t)$ denote its right-hand side. Evidently, we have $\tilde{\Pb}(t)^* = \tilde{\Pb}(t)$ and $\tilde{\Pb}(t) = \tilde{\Pb}(t)^2$. 

Notice that $v \in \ker(\tilde{\Pb}(t))$ if and only if $(\Gb(t)^* \Gb(t))^{-1}(\Gb(t)^* v) \in \ker(\Gb(t))$. Hence $\ker(\tilde{\Pb}(t))=\ker(\Gb(t)^*)$ and by orthogonal complements we find $\ran(\tilde{\Pb}(t)) = \ran(\Gb(t))$. 

On the other hand, we have $\rank(\Pb(t))=k=\rank(\Gb(t))$ and $\ran(\Gb(t)) \subset \ran(\Pb(t))$ since $\Pb(t) \Gb(t) = \Gb(t)$. Hence $\ran(\Pb(t))=\ran(\Gb(t))$. 

We readily conclude that $\ran (\tilde{\Pb}(t)) = \ran(\Pb(t))$. But this implies that the self-adjoint projections $\tilde{\Pb}(t)$ and $\Pb(t)$ must be identical. Hence \eqref{eq:id2} holds true.

For $N \in \N$, we let $\Gb_N(t)$ be the truncated Fourier series of $\Gb \in C^\infty(\T; \C^{d \times k})$, i.\,e.,
$$
\Gb_N(t) = \sum_{|n| \leq N} \widehat{\Gb}_n \eu^{\ii n t} 
$$
with coefficients $\widehat{\Gb}_n = \frac{1}{2 \pi} \int_0^{2 \pi} \Gb(t) \eu^{-\ii n t} \, dt \in \C^{d \times k}$ for $n \in \Z$. Clearly, we have $\Gb_N \in \mathcal{R}at(\T; \C^{d \times k})$ together with the fact that
\be \label{eq:conv:GN}
\| \Gb_N - \Gb \|_{H^1} \to 0 \quad \mbox{as} \quad N \to \infty \, .
\ee
By Sobolev embeddings, we have the uniform convergence $\| \Gb_N - \Gb \|_{L^\infty} \to 0$ as $N \to \infty$. Recall that $\Gb(t)^* \Gb(t) \in \C^{k \times k}$ is invertible for all $t \in \T$. Thus we deduce 
$$
\mbox{$\Gb_N(t)^* \Gb_N(t) \in \C^{k \times k}$ is invertible for all $t \in \T$ and $N \geq N_0$} \, ,
$$
with some sufficiently large constant $N_0 \geq 1$. Also, this shows that $\rank(\Gb_N(t)) = k$ for all $t \in \T$ and $N \geq N_0$.

For $N \geq N_0$, we now define the sequence $\Pb_N : \T \to \C^{d \times d}$ by
$$
\Pb_N(t) := \Gb_N(t) [ \Gb_N(t)^* \Gb_N(t) ]^{-1} \Gb_N(t)^* \, .
$$
Evidently, we have $\Pb_N(t) = \Pb_N(t)^* = \Pb_N(t)^2$ for any $t \in \T$. Moreover, we find that $\rank (\Pb_N(t)) = k$ for $t \in \T$ and $N \geq N_0$. Thus $\Pb_N : \T \to \Gr_k(\C^d)$ for all $N \geq N_0$.

Now, recall that $\Gb_N \in \mathcal{R}at(\T; \C^{d \times k})$. But this implies that the right-hand side in the definition of the maps $\Pb_N(t)$ is also rational in $z= \eu^{\ii t} \in \Ss$, i.\,e., we have
$$
\Pb_N \in \mathcal{R}at(\T; \Gr_k(\C^d)) \quad \mbox{for all $N \geq N_0$} \, .
$$
Now, from the convergence \eqref{eq:conv:GN} together with the fact that the Sobolev space $H^1(\T)$ is an algebra, it is straightforward to derive
\begin{align*}
\Pb_N(t) & =  \Gb_N(t) [ \Gb_N(t)^* \Gb_N(t) ]^{-1} \Gb_N(t)^* \\
& \quad \to \Gb(t) [\Gb(t)^* \Gb(t)]^{-1} \Gb(t) = \Pb(t) \quad \mbox{in $H^1(\T; \C^{d \times d})$} \, .
\end{align*}
Thanks to the elementary embedding $H^{1} \subset H^{\frac 1 2}$ this implies that $\| \Pb_N - \Pb \|_{H^{\frac 1 2}} \to 0$ as $N \to \infty$. 

Finally, we see that the sequence $\Ub_N= \mathds{1}_d - 2 \Pb_N \in \mathcal{R}at(\T; \Gr_k(\C^d))$ satisfies $\| \Ub_N - \Ub \|_{H^{\frac 1 2}} = 2 \| \Pb_N - \Pb \|_{H^{\frac 1 2}} \to 0$ as $N \to \infty$. The proof of Lemma \ref{lem:rational} is now complete.
\end{proof}

The proof of Theorem \ref{thm:dense_torus} now follows immediately from Lemmas \ref{lem:VMO} and \ref{lem:rational}.
\end{proof}

With the help of Theorem \ref{thm:dense_torus}, we are now ready to prove Theorem \ref{thm:app_density}.

\begin{proof}[Proof of Theorem \ref{thm:app_density}]
We will make use of the known conformal invariance of the Gagliardo semi-norm $\| \cdot \|_{\dot{H}^{\frac 1 2}}$. In what follows, we will identify maps defined on $\T$ as maps defined on $\Ss^1$ by means of $z= \eu^{\ii t} \in \Ss^1$ with $t \in \T$.

Let
$$
\mathcal{S} : \R \to \Ss^1 \setminus \{ \ii \} ,\quad x \mapsto \ii \frac{x-\ii}{x+\ii} \, .
$$
denote the inverse stereographic projection from $\R$ to $\Ss \setminus \{ \ii \}$. Assume that $\Ub : \R \to \Gr_k(\C^d)$ and $\tilde{\Ub} : \Ss \to \Gr_k(\C^d)$ are related by $\Ub = \tilde{\Ub} \circ \mathcal{S}$. A well-known calculation\footnote{This can be traced back to J.~Douglas' seminal work on the Plateau problem \cite{Do-31}.} shows that
\begin{align*}
\| \Ub \|_{\dot{H}^{\frac 1 2}(\R)}^2  & = \frac{1}{2 \pi} \int_{\R} \int_\R \frac{|\Ub(x) - \Ub(y)|_F^2}{|x-y|^2 } \, dx \, dy \\ 
& = \frac{1}{2 \pi} \int_{\T} \int_{\T} \frac{|\tilde{\Ub}(t)- \tilde{\Ub}(s)|_F^2}{2-2 \cos(t-s) } \, dt \, ds = \| \tilde{\Ub} \|_{\dot{H}^{\frac 1 2}(\T)}^2
\end{align*}
Thus for a given map $\Ub \in \dot{H}^{\frac{1}{2}}(\R; \Gr_k(\C^d))$, we set $\tilde{\Ub}(z)  = (\Ub \circ \mathcal{S}^{-1})(z)$ which is defined for almost every $z \in \Ss$. Then $\tilde{\Ub} \in H^{\frac 1 2}(\T; \Gr_k(\C^d))$ by the above integral identity. By Theorem \ref{thm:dense_torus}, there exists a sequence $\tilde{\Ub}_N \in \mathcal{R}at(\T; \Gr_k(\C^d))$ with
$$
0 \leq \| \tilde{\Ub}_N - \tilde{\Ub} \|_{\dot{H}^{\frac 1 2}(\T)} \leq  \| \tilde{\Ub}_N - \tilde{\Ub} \|_{H^{\frac 1 2}(\T)} \to 0 \quad \mbox{as} \quad N \to \infty \, .
$$
Note that the sequence of functions
$$
\Ub_N := \tilde{\Ub}_N \circ \mathcal{S} \in \mathcal{R}at(\R; \Gr_k(\C^d)) \, 
$$
since $\mathcal{S}$ preserves rationality. Finally, we deduce that
$$
\| \Ub_N - \Ub \|_{\dot{H}^{\frac 1 2}(\R)} = \| \tilde{\Ub}_N - \tilde{\Ub} \|_{\dot{H}^{\frac 1 2}(\T)} \to 0 \quad \mbox{as} \quad N \to \infty \, .
$$
This completes the proof of Theorem \ref{thm:app_density}. 
\end{proof}


\section{Stereographic Parametrization} \label{app:stereo}

In this section, we give the proof of Theorem \ref{thm:toeplitz_stereo}. Hence we always assume that $\uu : \R \to \Ss^2$ is a rational map and, as usual, we denote $\Ub = \uu \cdot \bm{\sigma} : \R \to \Gr_1(\C^2)$ for the corresponding rational matrix-valued map. Note that here we always consider $T_{\Ub} : L^2_+(\R; \C^2) \to L^2_+(\R; \C^2)$, i.\,e., we take $\VV = \C^2$. Also, the operators $H_\Ub$ and  $K_\Ub = H_{\Ub}^* H_\Ub$ are always understood as acting on $L^2_+(\R; \C^2)$ in what follows.

We first collect some auxiliary results as follows. 

\begin{lem} \label{lem:simple_poles}
Assume $\uu : \R \to \Ss^2$ is a rational function of the form
$$
\uu(x) = \uu_\infty + \sum_{j=1}^N \left ( \frac{\sv_j}{x-z_j} + \frac{\ov{\sv}_j}{x-\ov{z}_j} \right )
$$
with some integer $N \geq 1$, $\uu_\infty \in \Ss^2$, $\sv_1, \ldots, \sv_N \in \C^3 \setminus \{ 0 \}$, and pairwise distinct poles $z_1, \ldots, z_N \in \C_-$. Then it holds $\mathrm{Rank} (K_{\Ub}) = N$.
\end{lem}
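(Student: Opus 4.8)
The plan is to reduce the statement to a rank computation for the Hankel operator and then to extract the rank of each residue from the $\Ss^2$-constraint. First I would observe that $\mathrm{Rank}(K_{\Ub}) = \mathrm{Rank}(H_{\Ub}^*H_{\Ub}) = \mathrm{Rank}(H_{\Ub})$, since $\ker(H_{\Ub}^*H_{\Ub}) = \ker(H_{\Ub})$ and all operators involved are finite rank. Writing $\Ub = \Ub_\infty + \Vb$ with $\Ub_\infty = \uu_\infty\cdot\bm{\sigma}$ a constant matrix (so $H_{\Ub_\infty} = 0$), the hypothesis that all poles $z_j$ lie in $\C_-$ gives the partial-fraction splitting $\Pi_+\Vb = \sum_{j=1}^N \frac{\sv_j\cdot\bm{\sigma}}{x - z_j}$ and $\Pi_-\Vb = \sum_{j=1}^N \frac{\ov{\sv}_j\cdot\bm{\sigma}}{x - \ov{z}_j}$, because $\frac{1}{x-z_j}\in L^2_+(\R)$ when $z_j\in\C_-$ and $\frac{1}{x-\ov z_j}\in L^2_-(\R)$. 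Since $\Pi_+\Vb$ is a bounded $M_2(\C)$-valued function holomorphic on $\C_+$, we get $H_{\Pi_+\Vb} = 0$, hence $H_{\Ub} = H_W$ with $W := \sum_{j=1}^N \frac{T_j}{x - \ov{z}_j}$ and $T_j := \ov{\sv}_j\cdot\bm{\sigma}\in M_2(\C)$.

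Next I would compute $\ran(H_W)$ explicitly. For $f\in L^2_+(\R;\C^2)$ and each $j$, split $\frac{f(x)}{x-\ov z_j} = \big((X^*-\ov z_j)^{-1}f\big)(x) + \frac{f(\ov z_j)}{x-\ov z_j}$, which is legitimate since $\ov z_j\in\C_+$; the first term lies in $L^2_+$ and the second in $L^2_-$, so applying $\Pi_-$ yields the closed formula $H_W f = \sum_{j=1}^N \frac{T_j\,f(\ov z_j)}{x-\ov z_j}$. Because the points $\ov z_1,\dots,\ov z_N\in\C_+$ are pairwise distinct, the evaluation map $f\mapsto(f(\ov z_1),\dots,f(\ov z_N))$ is surjective onto $(\C^2)^N$ (take scalar Lagrange interpolants of the form $\kappa_j\prod_{k\ne j}(x-\ov z_k)/(x+\ii)^N$, which are proper rational functions with their only pole at $-\ii\in\C_-$, hence in $L^2_+(\R)$, and interpolate componentwise), and by uniqueness of partial fractions the functions $\frac{w}{x-\ov z_j}$, with $w$ ranging over $\ran(T_j)$ and $j=1,\dots,N$, are linearly independent. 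Therefore $\ran(H_W) = \bigoplus_{j=1}^N \frac{\ran(T_j)}{x-\ov z_j}$, so $\mathrm{Rank}(K_{\Ub}) = \sum_{j=1}^N \rank(T_j)$.

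The remaining and, I expect, only genuinely delicate step is to show $\rank(T_j) = 1$ for every $j$; this is exactly where $\uu(\R)\subset\Ss^2$ is used. I would argue that the rational function $x\mapsto \uu(x)\cdot\uu(x)$ (non-Hermitian dot product on $\C^3$) coincides with $|\uu(x)|^2 = 1$ for all real $x$, hence is identically $1$. Expanding $\uu$ near the pole $z_j$, where only the single term $\frac{\sv_j}{x-z_j}$ is singular, the Laurent expansion of $\uu\cdot\uu$ at $z_j$ has leading coefficient $\frac{\sv_j\cdot\sv_j}{(x-z_j)^2}$, and since $\uu\cdot\uu\equiv 1$ this forces $\sv_j\cdot\sv_j = 0$. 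A direct Pauli-matrix computation gives $\det(\sv\cdot\bm{\sigma}) = -\,\sv\cdot\sv$, so $\det T_j = \det(\ov{\sv}_j\cdot\bm{\sigma}) = -\,\ov{\sv_j\cdot\sv_j} = 0$; and $T_j\ne 0$ because $\ov{\sv}_j\ne 0$ and $\sigma_1,\sigma_2,\sigma_3$ are linearly independent, so $\rank(T_j)=1$. Combining with the previous step, $\mathrm{Rank}(K_{\Ub}) = \sum_{j=1}^N 1 = N$. (Equivalently, running the same computation with $H_{\Ub}^*g = \Pi_+(\Pi_+\Vb\cdot g) = \sum_j \frac{(\sv_j\cdot\bm{\sigma})\,g(z_j)}{x-z_j}$ identifies $\Hfr_1 = \ov{\ran(H_{\Ub}^*)} = \bigoplus_{j=1}^N \frac{\ran(\sv_j\cdot\bm{\sigma})}{x-z_j}$ directly, giving $\dim\Hfr_1 = N$.)
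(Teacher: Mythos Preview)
Your proof is correct and follows essentially the same approach as the paper: both reduce to computing the rank of the Hankel operator via the explicit partial-fraction formula $H_\Ub f = \sum_j \frac{T_j f(\ov z_j)}{x-\ov z_j}$ (the paper uses the mirror version for $H_\Ub^*$), and both extract $\rank(T_j)=1$ from the $\Ss^2$-constraint $\sv_j\cdot\sv_j=0$, with you using $\det(\sv_j\cdot\bm{\sigma})=-\sv_j\cdot\sv_j$ while the paper uses the equivalent $(\sv_j\cdot\bm{\sigma})^2=(\sv_j\cdot\sv_j)\mathds{1}_2$. Your Lagrange-interpolant surjectivity argument and the paper's explicit Blaschke-type test functions are just two phrasings of the same linear-independence/spanning verification.
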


\begin{remark*}
By a straightforward extension of the proof below, we obtain the following result: Let $\Ub \in \mathcal{R}at(\R; \Gr_k(\C^d))$ be of the form
$$
\Ub(x) = \Ub_\infty + \sum_{j=1}^N \frac{A_j}{x-z_j} + \sum_{j=1}^N \frac{A_j^*}{x-\ov{z}_j}
$$
with some integer $N \geq 1$, $\Ub_\infty \in \Gr_k(\C^d)$, non-zero matrices $A_1, \ldots, A_N \in M_d(\C)$ with $A_j^2=0$ and $\rank(A_j)=1$, and pairwise distinct poles $z_1, \ldots, z_N \in \C_-$. Then we have $\rank(K_{\Ub}) = N$ for the operator $K_{\Ub} = H_{\Ub}^* H_{\Ub} : L^2_+(\R; \C^d) \to L^2_+(\R; \C^d)$.
\end{remark*}

\begin{proof}
Since $K_{\Ub} = H_{\Ub}^* H_\Ub$, we have $\dim \ran(H_{\Ub}^*) = \dim \ran (K_\Ub)$. Therefore, we need to determine the rank of the adjoint Hankel operator $H_\Ub^* : L^2_-(\R; \C^2) \to L^2_+(\R; \C^2)$ with
$$
H_{\Ub}^*f = \Pi_+ (\Ub f) = \Pi_+ \left ( \sum_{j=1}^N \frac{A_j}{x-z_j} f \right ) \quad \mbox{for $f \in L^2_-(\R; \C^2)$}
$$
with the matrices $A_j = \sv_j \cdot \bm{\sigma} \in M_2(\C)$.  From the constraint $\uu(x) \cdot \uu(x) = 1$, we readily deduce that the non-zero vectors $\sv_j \in \C^3 \setminus \{ 0 \}$ satisfy $\sv_j \cdot \sv_j = 0$ for all $j =1, \ldots, N$. To see this, we recall $\uu_\infty \cdot \uu_\infty =1$ and that the poles $\{ z_j \}_{j=1}^N$ are pairwise distinct, so that an elementary expansion in partial fractions yields
\begin{align*}
1 & = \uu(x) \cdot \uu(x) = \left (   \uu_\infty + \sum_{j=1}^N \left ( \frac{\sv_j}{x-z_j} + \frac{\ov{\sv}_j}{x-\ov{z}_j} \right ) \right ) \cdot \left (  \uu_\infty + \sum_{k=1}^N \left ( \frac{\sv_k}{x-z_k} + \frac{\ov{\sv}_k}{x-\ov{z}_k} \right ) \right ) \\
& = 1 + \sum_{j=1}^N \frac{\sv_j \cdot \sv_j}{(x-z_j)^2} + \mbox{rational terms not containing $\displaystyle \frac{1}{(x-z_j)^{2}}$ for any $j=1, \ldots, N$}.
\end{align*}
Hence we conclude that $\sv_j \cdot \sv_j = 0$ for all $j=1, \ldots, N$. Next, by elementary algebra for the Pauli matrices, we find $A_j^2=(\sv_j \cdot \sv_j) \mathds{1}_2 =0$ and hence each matrix $A_j \in M_2(\C)$ has exactly rank one. On the other hand, we easily verify that
$$
 \Pi_+ \left ( \frac{1}{x-\zeta} f \right ) = \frac{f(\zeta)}{x-\zeta} \quad \mbox{for $f \in L^2_-(\R;\C^2)$ and $\zeta  \in \C_-$} \, .
$$
In particular, we see that $f \mapsto \Pi_+((x-\zeta)^{-1} f)$ has rank one for $\zeta \in \C_-$. Since each matrix $A_j^*$ has rank one and in view of
$$
H^*_{\Ub} f = \Pi_+ (\Ub f) = \sum_{j=1}^N A_j \Pi_+\left ( \frac{1}{(x-z_j)} f \right ) \, ,
$$
we deduce the upper bound $\rank(H^*_\Ub) \leq N$.

It remains to show that $\rank(H^*_{\Ub}) \geq N$ holds. Take vectors $v_j \in \C^2$ with $A_j v_j \neq 0$ for $j=1, \ldots, N$. Now we consider the functions $f_1, \ldots, f_N \in L^2_-(\R;\C^2)$ given by
$$
f_j(x) = \prod_{k=1, \, k \neq j}^N \frac{x-z_k}{x-\ov{z}_k} \frac{v_j}{x-\ov{z}_j} \, .
$$
An explicit calculation shows that
$$
H_{\Ub}^* f_j = \frac{A_j f_j(z_j)}{x-z_j} \, .
$$
Since $A_j f_j(z_j) \neq 0$ and $z_1, \ldots, z_N \in \C_-$ are pairwise distinct, we see that $\rank(H_{\Ub}^*) \geq N$.  This completes the proof.
\end{proof}

The next lemma addresses the case of non-simple poles occurring in the rational map $\uu : \R \to \Ss^2$ and we derive a lower bound for $\rank(K_\Ub)$.

\begin{lem} \label{lem:mult_poles}
Suppose that $\uu : \R \to \Ss^2$ is of the form
$$
\uu(x) = \uu_\infty + \sum_{j=1}^p \sum_{k=1}^{m_j} \left ( \frac{\sv_{j,k}}{(x-z_j)^k} + \frac{\ov{\sv}_{j,k}}{(x-\ov{z}_j)^k} \right )
$$
with some integers $N \geq 1$, $1 \leq p,m_j \leq N$, vectors $\sv_{j,k} \in \C^3 \setminus \{ 0 \}$, and pairwise distinct $z_1, \ldots, z_N \in \C_-$. Then it holds that
$$
\mathrm{Rank}(K_\Ub) \geq N = \sum_{j=1}^p m_j \, .
$$
\end{lem}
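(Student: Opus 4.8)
The plan is to follow the strategy of the proof of Lemma~\ref{lem:simple_poles}: since $K_{\Ub} = H_{\Ub}^{*}H_{\Ub}$ we have $\dim\ran(K_{\Ub}) = \dim\ran(H_{\Ub}^{*})$, so it suffices to exhibit $N = \sum_{j=1}^{p} m_{j}$ linearly independent elements in the range of the adjoint Hankel operator $H_{\Ub}^{*}\colon L^{2}_{-}(\R;\C^{2})\to L^{2}_{+}(\R;\C^{2})$, $H_{\Ub}^{*}g = \Pi_{+}(\Ub g)$. Writing $\Ub = \Ub_{\infty}+\Vb$ with $\Vb = (\uu - \uu_{\infty})\cdot\bm{\sigma}$ and $\Ub_{\infty}=\uu_{\infty}\cdot\bm{\sigma}$, only the analytic part $\Pi_{+}\Vb = \sum_{j'=1}^{p}\sum_{k=1}^{m_{j'}}\frac{A_{j',k}}{(x-z_{j'})^{k}}$, where $A_{j',k}:=\sv_{j',k}\cdot\bm{\sigma}$, contributes, so $H_{\Ub}^{*}g = \sum_{j'=1}^{p}\sum_{k=1}^{m_{j'}}A_{j',k}\,\Pi_{+}\!\big(\tfrac{g}{(x-z_{j'})^{k}}\big)$ for $g\in L^{2}_{-}$. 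First I would record the elementary identity $\Pi_{+}\!\big(\tfrac{g}{(x-\zeta)^{k}}\big) = \sum_{l=1}^{k}\frac{g^{(k-l)}(\zeta)/(k-l)!}{(x-\zeta)^{l}}$ for $\zeta\in\C_{-}$ and $g\in L^{2}_{-}$ (proved by subtracting from $g$ its degree-$(k-1)$ Taylor polynomial at $\zeta$), the higher-order analogue of the residue formula used in Lemma~\ref{lem:simple_poles}.

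Next I would isolate each pole. For fixed $j$ set $h_{j}(x):=\prod_{j'\neq j}\big(\tfrac{x-z_{j'}}{x-\ov{z}_{j'}}\big)^{m_{j'}}$, a bounded rational function, holomorphic on $\ov{\C_{-}}$, vanishing to order $m_{j'}$ at each $z_{j'}$ ($j'\neq j$) and nonzero at $z_{j}$. For $v\in\C^{2}$ and $1\le r\le m_{j}$ put $g_{j,r,v}(x):=h_{j}(x)\,\tfrac{v}{(x-\ov{z}_{j})^{r}}\in L^{2}_{-}(\R;\C^{2})$. Because $g_{j,r,v}/(x-z_{j'})^{k}$ is then holomorphic on $\ov{\C_{-}}$ for all $j'\neq j$ and $k\le m_{j'}$, those terms drop out and $H_{\Ub}^{*}g_{j,r,v} = \sum_{l=1}^{m_{j}}\frac{1}{(x-z_{j})^{l}}\sum_{k=l}^{m_{j}}A_{j,k}\,c_{k-l}$ with $c_{s}=g_{j,r,v}^{(s)}(z_{j})/s!$, a rational function whose only pole is $z_{j}$. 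A short partial-fractions computation shows that the jets $(c_{0},\dots,c_{m_{j}-1})$ range over all of $(\C^{2})^{m_{j}}$ as $(r,v)$ vary: a combination $\sum_{r}\lambda_{r}h_{j}/(x-\ov{z}_{j})^{r}$ has vanishing $(m_{j}-1)$-jet at $z_{j}$ iff the polynomial $\sum_{r}\lambda_{r}(x-\ov{z}_{j})^{m_{j}-r}$ of degree $\le m_{j}-1$ vanishes to order $m_{j}$ at $z_{j}$, forcing $\lambda_{r}\equiv 0$. Composing with the linear map $(c_{0},\dots,c_{m_{j}-1})\mapsto(w_{1},\dots,w_{m_{j}})$, $w_{l}:=\sum_{k=l}^{m_{j}}A_{j,k}c_{k-l}$ — which is block anti-triangular with the nonzero matrix $A_{j,m_{j}}=\sv_{j,m_{j}}\cdot\bm{\sigma}$ on the anti-diagonal — and choosing jets supported in a single slot, one reads off an echelon system and obtains at least $m_{j}$ linearly independent functions in $\ran(H_{\Ub}^{*})$ with pole only at $z_{j}$ (only $A_{j,m_{j}}\neq 0$, not $\rank(A_{j,m_{j}})=1$, is needed, which is why one gets merely the lower bound $\ge N$).

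Finally, rational functions in $L^{2}_{+}$ with poles at the distinct points $z_{1},\dots,z_{p}$ are automatically linearly independent — a vanishing combination has vanishing principal part at each $z_{j}$, and an element of $L^{2}_{+}$ is equal to the sum of its own principal parts — so the families constructed above, taken together, give $\sum_{j=1}^{p}m_{j}=N$ linearly independent elements of $\ran(H_{\Ub}^{*})$, hence $\rank(K_{\Ub})\ge N$. The hard part will be the middle step: verifying that a full-dimensional family of Taylor jets at $z_{j}$ can actually be realized by admissible test functions $g\in L^{2}_{-}$ that simultaneously vanish to the prescribed orders at all the other poles, and then tracking the rank through the block anti-triangular structure; the final independence bookkeeping is routine once this is in place.
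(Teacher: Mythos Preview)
Your proposal is correct and follows essentially the same route as the paper's proof: the same Taylor formula $\Pi_{+}\bigl(\tfrac{g}{(x-\zeta)^{k}}\bigr)=\sum_{\ell=0}^{k-1}\tfrac{g^{(\ell)}(\zeta)}{\ell!(x-\zeta)^{k-\ell}}$, the same Blaschke-type factor to kill the other poles, and the same leading-singularity (echelon) argument to extract $m_j$ independent elements at each pole. The only organisational difference is that the paper builds the test functions $f_{j,k}$ explicitly so that a \emph{single} prescribed Taylor coefficient at $z_j$ survives (choosing the extra vectors $v_{j,k,r}$ inductively), whereas you first prove abstractly that the jet map $(\lambda_r)\mapsto\bigl(\tfrac{d^s}{dx^s}\sum_r\lambda_r h_j/(x-\ov{z}_j)^r\big|_{x=z_j}\bigr)$ is bijective and then pick jets supported in one slot; the resulting functions and the anti-triangular structure with $A_{j,m_j}$ on the anti-diagonal are the same in both cases.
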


\begin{proof}
As before, we need to bound $\rank(H^*_\Ub)$.  We adapt the second part of the proof of Lemma \ref{lem:simple_poles} as follows. For any $\zeta \in \C_-$ and any integer $k \geq 1$, we obtain by Taylor's formula that
$$
\Pi_+\left ( \frac{1}{(x-\zeta)^k} f \right ) = \sum_{\ell=0}^{k-1} \frac{f^{(\ell)}(\zeta)}{\ell! (x-\zeta)^{k-l}} 
$$
for any $f  \in L^2_-(\R; \C^2)$. Now we choose $j \in \{ 1, \ldots, p \}$ and $k \in \{ 1, \ldots, m_j \}$. We claim that there exists $f_{j,k} \in L^2_-(\R; \C^2)$ such that
$$
f_{j,k}^{(\ell)} (z_i) = 0 \quad \mbox{for $i \neq j$ and $\ell \in \{0, \ldots, m_i-1 \}$} \, ,
$$
$$
\mbox{$f_{j,k}^{(\ell)}(z_j) = 0$ and $A_{j,m_j} f^{(k-1)}_{j,k}(z_j) \neq 0$} \quad \mbox{for $\ell \in \{0, \ldots, m_j-1 \}$ and $\ell \neq k-1$} \, ,
$$
with the rank-one matrices $A_{j,k} = \sv_{j,k} \cdot \bm{\sigma} \in M_2(\C)$ . Indeed, just choose
$$
f_{j,k}(x) = \prod_{i=1, \, i \neq j}^N \left ( \frac{x-z_i}{x-\ov{z}_i} \right ) \sum_{r=k}^{m_j} \frac{(x-z_j)^{r-1}}{(x-\ov{z}_j)^{r}} v_{j,k,r}
$$
with non-zero vectors $v_{j,k,r} \in \C^2$ such that $A_{j,m_j} v_{j,k,k} \neq 0$ and with the other $v_{j,k,r}$ determined by induction on $r$. Then
$$
H^*_{\Ub}(f_{j,k}) = \frac{1}{(k-1)!} \sum_{r=k}^{m_j} \frac{A_{j,r} f_{j,k}(z_j)}{(x-z_j)^{r-k+1}} \, .
$$
It remains to observe that these rational functions are linearly independent as $j \in \{1, \ldots, p \}$ and $k \in \{1, \ldots, m_j \}$, which is elementary in view of the leading singularity in $(x-z_j)$.
\end{proof}

We are now ready to give the proof of Theorem \ref{thm:toeplitz_stereo}. For the reader's convenience, we recall the statement of Theorem \ref{thm:toeplitz_stereo}, which is now labeled as Theorem B.1 here.

\begin{thm} \label{thm:toeplitz_stereo_app}
Let $\uu =(u_1, u_2, u_3) : \R \to \Ss^2$ be a rational map. Given an integer $N \geq 1$, the following statements are equivalent.
\begin{itemize}
\item[(i)] $\dim \Hfr_1 = \rank(K_{\Ub}) = N$.
\item[(ii)] The least common denominator of $u_1, u_2, u_3$ has degree $2N$.
\item[(iii)] There exists a rational function $R \in \C(X)$ of the form
$$
R(x) = \frac{P(x)}{Q(x)} \, ,
$$
where $P \in \C[X]$ is a polynomial of degree $N$ and $Q \in \C[X]$ is a non-zero polynomial of degree at most $N-1$, such that $P$ and $Q$ have no common factor such that, up to rotation on the sphere $\Ss^2$, we have
$$
u_1(x) + \ii u_2(x) = \frac{2 R(x)}{R(x) \ov{R}(x)+1} \, , \quad u_3(x) = \frac{R(x) \ov{R}(x) -1}{R(x) \ov{R}(x) + 1} \, .
$$
\end{itemize}
\end{thm}

\begin{proof}[Proof of Theorem \ref{thm:toeplitz_stereo_app}]
We divide the proof into the following steps.

\medskip
\textbf{$(ii) \Rightarrow (iii)$}. Assume $\uu = (u_1, u_2, u_3) : \R \to \Ss^2$ is a rational map with the least common denominator given by a polynomial $D \in \R[X]$ of degree $2N$. (Note that $D$ must have even degree, since $u_1, u_2, u_3$ are real-valued rational functions with no poles in $\R$.) Moreover, up to a rotation on $\Ss^2$, we may assume that $u_3(x) \to 1$ as $|x| \to \infty$, so that there exist polynomials $Q_j \in \R[X]$ such that
$$
u_j(x) = \frac{Q_j(x)}{D(x)} \quad \mbox{for $j=1,2,3$},
$$
where $Q_1, Q_2$ have degree at most $2N-1$ and $Q_3$ has degree $2N$, with the same leading coefficient as $D$. Now the condition $u_1^2 + u_2^2 + u_3^2 = 1$ means that
$$
Q_1^2 + Q_2^2 + Q_3^2 = D^2 \, , 
$$
or equivalently
$$
(Q_1 + \ii Q_2)(Q_1 - \ii Q_2) = (D-Q_3)(D+Q_3) \, .
$$
Since $Q_3$ and $D$ have the same leading coefficient, the degree of $D+Q_3$ is $2N$ and the degree $\delta$ of $D-Q_3$ is at most $2N-1$. Denote by $d$ the degree of $Q_1 + \ii Q_2$. Since $Q_1$ and $Q_2$ are real polynomials, $d$ is also the degree of $Q_1 - \ii Q_2$ and hence
$$
2d = \delta + 2N \, .
$$
This implies
$$
N \leq d \leq 2N-1 \, .
$$
Furthermore, we recall that $D$ is the least common denominator of $u_1, u_2, u_3$ which means that $Q_1, Q_2, Q_3, D$ have no common factor, or equivalently the polynomials
$$
Q_1+ \ii Q_2, Q_1 - \ii Q_2, D-Q_3, D + Q_3
$$
have no common factor.

Now, we claim that $Q_1 + \ii Q_2$ and $D-Q_3$ have at least $d-N$ common zeros -- counted with multiplicities. Indeed, assume that $\alpha \in \C$ is a zero of $D-Q_3$ of multiplicity $m \geq 1$. We distinguish the following cases depending whether $\alpha \in \R$ or $\alpha \not \in \R$.

If $\alpha$ is real, then $\alpha$ is in fact a zero of $Q_1$ and $Q_2$, hence it is a zero of $Q_1+\ii Q_2$ and $Q_1 -\ii Q_2$ with the same multiplicity $\mu$. Since $\alpha$ cannot be a zero of $D+Q_3$ (otherwise $Q_1+\ii Q_2, Q_1 - \ii Q_2, D-Q_3, D+ Q_3$ would have common factor), we infer that $2 \mu = m$. 

If $\alpha$ is not real, then $\alpha$ is a zero of $Q_1 + \ii Q_2$ or $Q_1 - \ii Q_2$. Since $\ov{\alpha}$ is also a zero of the real polynomial $D-Q_3$, we can choose the zero $\beta \in \{ \alpha, \ov{\alpha} \}$ having the maximal multiplicity $\mu$ as zero of $Q_1 + \ii Q_2$. This shows $\mu \geq \frac{m}{2}$.

Summing up, we have found a common factor of $D-Q_3$ and $Q_1 + \ii Q_2$ with degree at least equal to half of the degree of $D-Q_3$, namely $d-N$. Therefore we can write
\be \label{eq:PQ}
\frac{Q_1 + \ii Q_2}{D-Q_3} = \frac{P}{Q}
\ee
where $P$ and $Q$ are polynomials in $\C[X]$ with no common factor, and $P$ has degree $d-r$, $Q$ has degree $2(d-N)-r$ for some $r \geq d-N$. Notice that $\deg P > \deg Q$.

Next, we prove that equality $r=d-N$ holds. Indeed, from \eqref{eq:PQ}, we conclude
$$
\frac{Q_1}{D} = \frac{P \ov{Q} + \ov{P} Q}{P \ov{P}+ Q \ov{Q}}, \quad \frac{Q_2}{D} = \frac{P \ov{Q}- \ov{P}Q}{\ii(P \ov{P} + Q \ov{Q})}, \quad \frac{Q_3}{D} = \frac{P \ov{P} - Q \ov{Q}}{P \ov{P}+ Q \ov{Q}} \, .
$$
This implies that $u_1, u_2, u_3$ have a common denominator of degree equal to $2 \deg P$. Hence
$$
2(d-r) \geq 2N \, ,
$$
which implies $r \leq d-N$, leading to the desired equality $r=d-N$. By defining the rational function
$$
R(x) = \frac{P(x)}{Q(x)}  \in \C(X),
$$
we conclude that (iii) holds. This completes the proof of the implication $(ii) \Rightarrow (iii)$.

\medskip
\textbf{$(iii) \Rightarrow (ii)$.} Suppose we are given a rational function
$$
R(x) = \frac{P(x)}{Q(x)} \, ,
$$
where $P$ is a polynomial of degree $N$, $Q$ is a polynomial of degree at most $N-1$, and $P, Q$ have no common factor. The formulae
$$
u_1 = \frac{R + \ov{R}}{R \ov{R}+1}, \quad u_2 = \frac{R - \ov{R}}{\ii (R \ov{R}+1)}, \quad u_3 = \frac{R \ov{R}-1}{R \ov{R}+1} 
$$
clearly define a rational map $\uu=(u_1,u_2, u_3)$ from $\R$ with values in $\Ss^2$. Furthermore, we see that $|P|^2 + |Q|^2$ is a common denominator of $u_1,u_2,u_3$ and is degree is $2N$. Let us prove that $|P|^2 + |Q|^2$ is the least common denominator of $u_1, u_2, u_3$. We argue by contradiction. Suppose there is a common factor of the polynomials
$$
P \ov{Q} + \ov{P} Q, P \ov{Q}- \ov{P} Q, P \ov{P} - Q \ov{Q}, P \ov{P}  + Q \ov{Q}
$$
or equivalently of the polynomials
$$
P \ov{Q}, \ov{P} Q, P \ov{P}, Q \ov {Q} \, .
$$
Since $P, Q$ have no common factor, there exist polynomials $U, V$ such that
$$
UP + VQ = 1 \, .
$$
Therefore the polynomials
$$
\ov{U} (P \ov{P} )+ \ov{V}(P \ov{Q}) = P, \quad \ov{U} (\ov{P} Q) + \ov{V} ( Q \ov{Q} ) = Q
$$
would have a common factor, which yields a contradiction. This proves that (iii) implies (ii).

\medskip
\textbf{$(ii) \Rightarrow (i).$} Let us assume that $\uu = (u_1, u_2, u_3) : \R \to \Ss^2$ has a least common denominator of degree $2N$. We claim that
\be \label{eq:rank_K}
\mathrm{Rank} (K_\Ub) = N \, .
\ee
Indeed, if $\uu : \R \to \Ss^2$ has only simple poles (i.\,e., the assumptions of Lemma \ref{lem:simple_poles} are satisfied), we can directly apply Lemma \ref{lem:simple_poles} to conclude that \eqref{eq:rank_K} holds.

To deal with the case multiple poles occurring in $\uu : \R \to \Ss^2$, we need the following approximation result.

\begin{lem} \label{lem:rat_approx}
Let $P \in \C[X]$ be a polynomial of degree $N \geq 1$, $Q \in  \C[X]$ be a non-zero polynomial of degree at most $N-1$, and assume that $P,Q$ have no common factor. Then there exist sequence $P_n, Q_n \in \C[X]$ such that $P_n, Q_n$ have no common factor and
$$
\deg P_n = N, \quad \deg Q_n \leq N-1, \quad P_n \to P, \quad Q_n \to Q \quad \mbox{in $\C[X]$} \, .
$$
Furthermore, the zeros of $|P_n|^2 + |Q_n|^2$ are simple for every $n \in \N$.
\end{lem}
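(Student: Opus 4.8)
The plan is to prove Lemma \ref{lem:rat_approx} by a two-step perturbation argument: first arrange that $|P_n|^2+|Q_n|^2$ has only simple zeros, and second check that the approximating polynomials can still be taken coprime. The key observation is that the condition of having a multiple zero is a closed algebraic (in fact real-algebraic, after identifying $\C[X]$ of bounded degree with a real Euclidean space via real and imaginary parts of the coefficients) condition on the coefficients, cut out by the vanishing of a discriminant-type polynomial, while coprimality of $P$ and $Q$ is an open condition (the resultant $\mathrm{Res}(P,Q)$ is nonzero). So the set of admissible pairs $(P,Q)$ is open, and within it the ``bad'' set where $|P|^2+|Q|^2$ has a multiple zero is a proper real-algebraic subvariety — provided we can exhibit at least one admissible pair whose associated polynomial $|P|^2+|Q|^2$ has simple zeros. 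Granting that, the complement is dense in the open admissible set, and the desired sequence $(P_n,Q_n)\to(P,Q)$ exists.

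More precisely, first I would set $F(x) := P(x)\ov{P}(x) + Q(x)\ov{Q}(x)$, noting $F$ is a real polynomial of degree exactly $2N$ with positive leading coefficient (since $\deg P = N > \deg Q$), and $F(x) > 0$ for all $x \in \R$ because $P,Q$ have no common real zero. Thus all zeros of $F$ lie in $\C\setminus\R$ and come in conjugate pairs. The plan is to perturb: consider $P_t(x) := P(x)$ and $Q_t(x) := Q(x) + t\,(x-a)(x-b)\cdots$ or, more simply, shift the coefficients of $P$ and $Q$ along a generic real line through $(P,Q)$ in coefficient space. Write $\mathcal{D}(P,Q) := \mathrm{disc}_x\big(P\ov P + Q\ov Q\big)$, a polynomial function of the (real and imaginary parts of the) coefficients of $P$ and $Q$; having a multiple zero is exactly $\mathcal{D}(P,Q) = 0$. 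If $\mathcal{D}$ does not vanish identically on the irreducible (hence connected) set of admissible coprime pairs of the prescribed degrees, then $\{\mathcal D \neq 0\}$ is open and dense there, and any sequence approaching $(P,Q)$ from this set works; coprimality of $P_n,Q_n$ is automatic since $\mathrm{Res}(P_n,Q_n)\to\mathrm{Res}(P,Q)\neq 0$ and the degree conditions are stable under small perturbations keeping the leading coefficient of $P$ fixed.

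The main obstacle — and the only substantive point — is verifying that $\mathcal{D} \not\equiv 0$, i.e. producing a single pair $(P_0,Q_0)$ with $\deg P_0 = N$, $\deg Q_0 \le N-1$, coprime, such that $P_0\ov{P_0} + Q_0\ov{Q_0}$ has $2N$ distinct zeros. Here I would take an explicit example: for instance $P_0(x) = \prod_{j=1}^N (x - \ii c_j)$ with distinct positive reals $c_j$, and $Q_0(x) = \epsilon$ a small nonzero constant. Then $P_0\ov{P_0} + Q_0\ov{Q_0} = \prod_{j=1}^N(x^2 + c_j^2) + \epsilon^2$, whose zeros, for generic small $\epsilon > 0$, are simple: at $\epsilon = 0$ the zeros are the $\pm \ii c_j$, all distinct, and simplicity is an open condition, preserved for $\epsilon$ small; coprimality of $P_0$ and the constant $Q_0 = \epsilon \neq 0$ is trivial. (Alternatively, one can argue directly that the polynomial $\prod(x^2+c_j^2) + \epsilon^2$ has simple roots for all but finitely many $\epsilon$ since its discriminant is a nonzero polynomial in $\epsilon$.) This exhibits $\mathcal D \not\equiv 0$, and combined with the density argument above, completes the proof of Lemma \ref{lem:rat_approx}, whence \eqref{eq:rank_K} follows in the general (multiple-pole) case: approximate $\uu$ by rational maps $\uu_n$ built from $(P_n,Q_n)$ with only simple poles, apply Lemma \ref{lem:simple_poles} to get $\rank(K_{\Ub_n}) = N$, combine with the lower bound $\rank(K_{\Ub}) \ge N$ from Lemma \ref{lem:mult_poles} and the upper semicontinuity / Fredholm stability of the rank of $K_{\Ub}$ under $\dot H^{1/2}$-convergence (so $\rank(K_\Ub) \le \liminf \rank(K_{\Ub_n}) = N$) to conclude equality.
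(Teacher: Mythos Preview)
Your proposal is correct and follows essentially the same route as the paper: identify the admissible coprime pairs $(P,Q)$ with an open connected subset of $\C^{2N}$ (the paper invokes its Lemma~\ref{lem:R_n} for this, where your phrase ``irreducible (hence connected)'' is slightly loose language for the same fact), observe that the discriminant of $P\ov P + Q\ov Q$ is a real-analytic function of the coefficients, and conclude that its non-vanishing locus is open and dense. Your explicit example $P_0=\prod_j(x-\ii c_j)$, $Q_0=\eps$ showing $\mathcal{D}\not\equiv 0$ is a genuine improvement over the paper's proof, which simply asserts the density without exhibiting such a point.
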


\begin{proof}[Proof of Lemma \ref{lem:rat_approx}]
Consider the set $\mathcal{A}$ of pairs of polynomials $(P,Q) \in \C[X] \times \C[X]$ such that $\deg = N$, $\deg Q \leq N-1$ and $P,Q$ have no common factor and $P$ is monic. By Lemma \ref{lem:R_n}, we can identify $\mathcal{A}$ with a connected open subset in $\C^{2N}$. On the set $\mathcal{A}$, the condition that the discriminant of $|P|^2 + |Q|^2$ is different from 0 is an open dense subset. This completes the proof. 
\end{proof}

Suppose now that $\uu : \R \to \Ss^2$ has multiple poles and the least common denominator of $u_1, u_2, u_3$ has degree $2N$. By Lemma \ref{lem:mult_poles}, we must have
$$
\mathrm{Rank} \, (K_{\Ub}) \geq N \, .
$$

On the other hand, by the proven implication $(ii) \Rightarrow (iii)$, there exists a rational function 
$$
R(x) = \frac{P(x)}{Q(x)} \in \C(X)
$$
with $\deg P = N$, $\deg Q \leq N-1$ with $Q \not \equiv 0$ and $Q,P$ have no common factor, such that (up to rotation on $\Ss^2$) the rational function $\uu=(u_1, u_2, u_3)$ is given by the inverse stereographic projection applied to $R(x)$. Now, let us take sequences $P_n, Q_n \in \C[X]$ as provided in Lemma \ref{lem:rat_approx}. Define $R_n(x) = P_n(x)/Q_n(x) \in \C(X)$ and consider the sequence of rational maps $\uu^{(n)} : \R \to \Ss^2$ with components
$$
u_1^{(n)}(x) + \ii u_2^{(n)}(x)= \frac{2 R_n(x)}{|R_n(x)|^2 +1} , \quad u_3^{(n)}(x) = \frac{|R_n(x)|^2-1}{|R_n(x)|^2 +1} \, .
$$
Since $|P_n|^2 + |Q_n|^2$ has only simple zeros, we see that each rational map $\uu^{(n)}$ has only simple poles. Applying the known implication $(iii) \Rightarrow (ii)$, we conclude that $u_1^{(n)}, u_2^{(n)}, u_3^{(n)}$ for all $n$ have a least common denominator of degree $2N$. Thus for every rational map $\uu^{(n)} : \R \to \Ss^2$ we can apply Lemma \ref{lem:simple_poles} to conclude that
$$
\mathrm{Rank} \, (K_{\Ub_n}) = N \quad \mbox{for all $n \in \N$} \, .
$$
On the other hand, since $\uu^{(n)}(x) \to \uu(x)$ pointwise and $|\uu^{(n)}(x)| = 1$, we see that $K_{\Ub_n} f \to K_{\Ub} f$ in $L^2(\R, \C^2)$ for every $f \in L^2_+(\R, \C^2)$ by dominated convergence. From this we easily deduce that
$$
N=\liminf_{n \to \infty} \mathrm{Rank} (K_{\Ub_n}) \geq \mathrm{Rank}(K_{\Ub}) \, .
$$ 
This completes the proof that \eqref{eq:rank_K} holds whenever $u_1, u_2, u_3$ have a least common denominator of degree $2N$.

\medskip
\textbf{$(i) \Rightarrow (ii)$}.  Suppose now that $\mathrm{Rank}(K_{\Ub}) = N$ holds for some integer $N \geq 1$. Let $D \in \R[X]$ denote the least common denominator of $u_1, u_2, u_3$. Since $D$ has no zeros in $\R$, we must have that $\deg D = 2 m$ for some integer $m \geq 1$. We claim that 
$$
m=N \, .
$$ 
Indeed, if $\uu : \R \to \Ss^2$ has simple poles (in the sense of Lemma \ref{lem:simple_poles}), we can use Lemma \ref{lem:simple_poles} directly to deduce that $m=N$ must hold. 

If $\uu : \R \to \Ss^2$ has multiple poles, then $\deg D = 2m$ where $m \geq 1$ is the number poles $\uu$ counted with multiplicity. By the same argument using approximation with simple pole rational functions $\uu^{(n)} : \R \to \Ss^2$ as in the previous step, we conclude that $\mathrm{Rank}(K_{\Ub}) = m$. Hence $m=N$ is also true in this case.

The proof of Theorem \ref{thm:toeplitz_stereo_app} is now complete.
\end{proof}


\section{Construction of $T_\Ub$ with Simple Discrete Spectrum}

\label{app:perturbation}

The aim of this section is to construct, for given $N \geq 1$, rational maps $\uu \in \mathcal{R}at(\R; \Ss^2)$ such that the corresponding Toeplitz operator $T_{\Ub} : L^2_+(\R; \C^2) \to L^2_+(\R; \C^2)$ has simple discrete spectrum 
$$
\sigma_{\mathrm{d}}(T_\Ub) = \{ v_1, \ldots, v_N \} \, ,
$$
where $v_j \in (-1, 1)$ for $j=1, \ldots, N$ are arbitrarily given simple eigenvalues. To achieve this, we will use a perturbative construction by using $N$ simple traveling solitary waves for (HWM) with different velocities $v_j \in (-1,1)$ that are sufficiently far separated from each other. 

For a rational map $\uu \in \mathcal{R}at(\R; \Ss^2)$,  we henceforth assume without loss of generality that
$$
\uu_\infty := \lim_{|x| \to \infty} \uu(x) =  \mathbf{e}_3=(0,0,1) \in \Ss^2
$$ 
by rotational symmetry on the sphere $\Ss^2$. For a given velocity $v \in (-1,1)$, we define the unit vector $\nv_v \in \Ss^2$ by setting
\be \label{eq:nv}
\nv_v := (0,\sqrt{1-v^2}, v )  \quad \mbox{so that $v = \nv \cdot \uu_\infty$} \, .
\ee
For later use, we also define the unit vectors $\nv_{v,1}, \nv_{v,2} \in \Ss^2$ with
\be \label{eq:nv2}
\nv_{v,1} := \mathbf{e}_1 = (1,0,0) \quad \mbox{and} \quad \nv_{v,2} := \nv_v \times \nv_{v,1} = (0,v, -\sqrt{1-v^2}) \, .
\ee
Thus $(\nv_v, \nv_{v,1}, \nv_{v,2})$ forms a (positively oriented) orthonormal basis of unit vectors in $\R^3$ whose use will become clear below.

Furthermore, it will be convenient to consider poles $z \in \C_-$ of the form
\be \label{eq:pole}
z = y - \ii \in \C_- \quad \mbox{with $y \in \R$} \, .
\ee
Next, we construct a rational function $\qv_{v,z} : \R \to \Ss^2$ of the form
$$
\qv_{v,z}(x) := \mathbf{e}_3 +  \frac{\sv_{v}}{x-z} + \frac{\ov{\sv}_{v}}{x-\ov{z}} 
$$
with some complex vector $\sv_v \in \C^3 \setminus \{ 0 \}$. By plugging this ansatz into the pointwise constraint $|\qv_{v,z}(x)|^2 = 1$ for $x \in \R$ and equating all terms proportional to $(x-z)^{-1}$ and $(x-z)^{-2}$ to zero, we easily find following the constraints equivalent to the condition $|\qv_{v,z}(x)|^2=1$:
\be \label{eq:cons_v}
\sv_v \cdot \sv_v = 0 \quad \mbox{and} \quad \sv_v \cdot \left ( \mathbf{e}_3 + \frac{\ov{\sv}_v}{z - \ov{z}} \right ) = 0 \, ,
\ee
where $\mathbf{a} \cdot \mathbf{b} = a_1 b_1 + a_2 b_2 + a_3 b_3$ for $\mathbf{a}, \mathbf{b} \in \C^3$.  In view of \cite{BeKlLa-20}[Lemma B.1], we make the ansatz 
$$
\sv_v = s_{v} ( \nv_{v,1} + \ii \nv_{v,2} )
$$
with some complex number $s_{v} \in \C^*$ and with the real unit vectors $\nv_{v,1}$ and $\nv_{v,2}$ from \eqref{eq:nv2} above. This automatically ensures that the first constraint in \eqref{eq:cons_v} holds. Next, by recalling that $z-\ov{z} = -2 \ii$ for the pole $z \in \C_-$, the second equation in \eqref{eq:cons_v} becomes
$$
s_v ( \nv_{v,1} + \ii \nv_{v,2}) \cdot \left ( \mathbf{e}_3 - \frac{\ov{s}_v(\nv_{v,1} -\ii \nv_{v,2})}{2 \ii} \right ) = 0 \, .
$$
Since $(\nv_{v,1}+\ii \nv_{v,2})\cdot(\nv_{v,1}-\ii \nv_{v,2}) = 2$, we readily find that $s_v \in \C^*$ is given by
\be \label{eq:sv_nice}
s_v = -\ii ( \nv_{v,1} -\ii \nv_{v,2}) \cdot \mathbf{e}_3 =  \ii \sqrt{1-v^2} \, .
\ee

In summary, we find that 
$$
\qv_{v,z}(x) = \mathbf{e}_3 + \frac{\sv_v}{x-z} + \frac{\ov{\sv}_v}{x-\ov{z}} 
$$
with
\be  \label{eq:sv}
\sv_v =  \ii \sqrt{1-v^2} ( \nv_{v,1} + \ii \nv_{v,2} ) =  \sqrt{1-v^2}  \left ( \begin{array}{c} \ii  \\ -v  \\  \sqrt{1-v^2} \end{array} \right )  \, .
\ee 
We remark that the simple pole rational function $\qv_{v,z} : \R \to \Ss^2$ yields a traveling solitary wave solution
$$
\uu(t,x) = \qv_{v,z}(x-vt)
$$
of (HWM) with velocity $v$ and $\lim_{|x| \to \infty} \uu(t,x) = \mathbf{e}_3$, which follows by a direct calculation which we omit here.

We have the following main result.

\begin{lem} \label{lem:flea_circus}
Let $N \geq 1$ be an integer and let $v_1, \ldots, v_N \in (-1,1)$ be given. Then there is a sufficiently small constant $\eps_0=\eps_0(N) > 0$ such that the following holds.

Let $z_1, \ldots, z_N \in \C_-$ be pairwise distinct poles of the form \eqref{eq:pole} and define
$$
\eps := \frac{1}{\min_{j \neq k} |z_k -z_j|} > 0 \quad \mbox{and} \quad \vec{z} = (z_1, \ldots, z_N).
$$ 
Then if $\eps < \eps_0$, there exists a rational map $\uu_{\vec{z}} : \R \to \Ss^2$ of the form
$$
\uu_{\vec{z}}(x) = \mathbf{e}_3 + \sum_{j=1}^N \frac{\sv_{j,\vec{z}}}{x-z_j} + \sum_{j=1}^N \frac{\ov{\sv}_{j,\vec{z}}}{x- \ov{z}_j} \, 
$$ 
with some $\sv_{j,\vec{z}} \in \C^3 \setminus \{ 0 \}$. Moreover, we have that
$$
 \sv_{j,\vec{z}} = \sv_{v_j} + O(\eps) \quad \mbox{for $j=1, \ldots, N$} \, ,
$$
where $\sv_{v_j} \in \C^3 \setminus \{0\}$ is given by \eqref{eq:sv} with $v=v_j$. 
\end{lem}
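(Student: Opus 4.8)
The goal is to solve the nonlinear system forcing $\uu_\eps : \R \to \Ss^2$ with the prescribed pole locations $z_1, \ldots, z_N$ and to show that, as the poles separate (i.e.\ $\eps \to 0$), the residue vectors $\sv_{j,\eps}$ converge to the one-soliton residues $\sv_{v_j}$ computed in \eqref{eq:sv}. The plan is to set this up as a fixed-point/implicit-function-theorem argument around the ``decoupled'' configuration in which each pole carries exactly the single-soliton residue $\sv_{v_j}$.

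\medskip
\textbf{Setting up the constraint equations.} Writing the ansatz
$$
\uu(x) = \mathbf{e}_3 + \sum_{j=1}^N \frac{\sv_j}{x-z_j} + \sum_{j=1}^N \frac{\ov{\sv}_j}{x-\ov{z}_j}
$$
and imposing $|\uu(x)|^2 = 1$ for all $x \in \R$, one expands the left-hand side into partial fractions with respect to the poles $z_1, \ldots, z_N, \ov{z}_1, \ldots, \ov{z}_N$ (all distinct, since the $z_j \in \C_-$ are pairwise distinct). Equating the coefficient of $(x-z_j)^{-2}$ to zero gives $\sv_j \cdot \sv_j = 0$ for each $j$, and equating the coefficient of $(x-z_j)^{-1}$ to zero gives
$$
\sv_j \cdot \left( \mathbf{e}_3 + \frac{\ov{\sv}_j}{z_j - \ov{z}_j} + \sum_{k \neq j} \frac{\sv_k}{z_j - z_k} + \sum_{k \neq j} \frac{\ov{\sv}_k}{z_j - \ov{z}_k} \right) = 0 \quad \mbox{for $j = 1, \ldots, N$.}
$$
(The coefficients of $(x-\ov{z}_j)^{-1}$ and $(x-\ov{z}_j)^{-2}$ give the complex conjugates of these equations, hence are automatically satisfied.) This is exactly the $N$-pole version of \eqref{eq:cons_v}: the ``interaction terms'' $\sum_{k \neq j}$ are $O(\eps)$ uniformly in the admissible poles, since $|z_j - z_k|^{-1} \le \eps$ and $|z_j - \ov{z}_k|^{-1} \le \eps$ for $k \neq j$ by the normalization \eqref{eq:pole} (which forces $\mathrm{Im}\,z_j = -1$, hence $|z_j - \ov{z}_j|^{-1} = 1/2$ stays bounded and $|z_j-\ov z_k|\ge |z_j-z_k|$ for $k\ne j$).

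\medskip
\textbf{Fixed point / implicit function argument.} At $\eps = 0$ the system decouples: each $\sv_j$ must satisfy $\sv_j \cdot \sv_j = 0$ and $\sv_j \cdot (\mathbf{e}_3 + \ov{\sv}_j/(z_j - \ov{z}_j)) = 0$, whose nonzero solution — with the normalization $\sv_j = s_j(\nv_{v_j,1} + \ii \nv_{v_j,2})$ dictated by the isotropy relation and the reality structure, following \cite{BeKlLa-20}[Lemma~B.1] — is precisely $\sv_{v_j}$ from \eqref{eq:sv} with $v = v_j$ (using $z_j - \ov z_j = -2\ii$). I would parametrize $\sv_j = s_j(\nv_{v_j,1} + \ii \nv_{v_j,2}) + (\mbox{small correction transverse to the null cone})$, but it is cleaner to work directly: define $\Psi = (\Psi_1, \ldots, \Psi_N) : (\C^3)^N \times [0,\eps_0) \to (\C \times \C)^N$ by $\Psi_j(\sv, \eps) = (\sv_j \cdot \sv_j, \sv_j \cdot (\cdots))$ where $(\cdots)$ is the bracket above with the interaction sums multiplied by an $\eps$-bookkeeping factor so that $\Psi(\cdot, 0)$ is the decoupled system. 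One checks that the differential of $\sv \mapsto \Psi(\sv, 0)$ at the point $(\sv_{v_1}, \ldots, \sv_{v_N})$ is invertible: this is the statement that the single-soliton solution is a nondegenerate (transversal) zero of the two scalar constraints, which is a $2 \times 2$ linear-algebra check per block (the Jacobian block is governed by $\sv_{v_j}$ and $\ov{\sv}_{v_j}$, which are linearly independent over $\C$ since $\sv_{v_j} \cdot \sv_{v_j} = 0$ while $\sv_{v_j} \cdot \ov{\sv}_{v_j} = 2(1-v_j^2) \neq 0$). The implicit function theorem then furnishes, for $\eps < \eps_0(N)$ small, a unique solution $\sv_{j,\eps}$ near $\sv_{v_j}$ depending continuously (indeed smoothly) on the poles $(z_1, \ldots, z_N)$ and the velocities, with $\sv_{j,\eps} \to \sv_{v_j}$ as $\eps \to 0$. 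Nonvanishing $\sv_{j,\eps} \neq 0$ follows by continuity from $\sv_{v_j} \neq 0$.

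\medskip
\textbf{Expected main obstacle.} The bulk of the work is the uniformity of the implicit function theorem: the Jacobian must be invertible with a lower bound on its smallest singular value that is uniform over all velocities $v_1, \ldots, v_N$ fixed and over all admissible pole configurations, so that the radius of the neighborhood on which the solution exists does not shrink as the poles move (only as $\eps = 1/\min_{j\ne k}|z_k - z_j|$ grows). Since the velocities are fixed in advance and the decoupled Jacobian depends only on them (not on the poles, given the normalization $\mathrm{Im}\,z_j = -1$), this uniformity is genuinely available, but one has to track the quantitative constants carefully; the interaction terms and their $\sv$-derivatives are $O(\eps)$ uniformly, which is what makes the contraction/Newton scheme close. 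I do not anticipate conceptual difficulty beyond this bookkeeping — the algebraic skeleton is exactly that of the single soliton, perturbed by a small, smooth, uniformly controlled coupling.
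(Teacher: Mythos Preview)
Your setup of the constraint equations is correct, and your instinct to parametrize via the null cone was the right one --- but you abandon it, and the ``cleaner'' direct approach you switch to has a genuine dimension problem. The map $\Psi$ you define goes from $(\C^3)^N \cong \R^{6N}$ (in the $\sv$-variable) to $(\C \times \C)^N \cong \R^{4N}$, so the Jacobian $D_\sv \Psi(\sv,0)$ is a $4N \times 6N$ real matrix and can never be invertible; the ``$2\times 2$ linear-algebra check per block'' you describe does not match these dimensions. This is not just a technicality: the constraint system genuinely has a $2N$-real-parameter family of solutions (the null cone $\sv_j \cdot \sv_j = 0$ is already four-real-dimensional in $\C^3$), so you cannot get a unique $\sv_{j,\eps}$ --- let alone one converging to the specific $\sv_{v_j}$ --- without making an additional selection.

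The paper fixes this by doing exactly what you proposed first and then discarded: it imposes the ansatz $\sv_{j,\eps} = s_{j,\eps}(\nv_{v_j,1} + \ii \nv_{v_j,2})$ with the frame vectors $\nv_{v_j,1}, \nv_{v_j,2}$ \emph{fixed} once and for all by the velocity $v_j$ (independent of $\eps$ and of the pole positions). This kills the first constraint automatically and reduces the second to a single complex equation per index, i.e.\ $N$ complex unknowns $s_{j,\eps}$ and $N$ complex equations. Moreover, that reduced system turns out to be \emph{affine} in $(s_{j,\eps}, \ov{s}_{j,\eps})$: it has the form $\vec{s}_\eps = \vec{s} + A_{\vec{z}}\vec{s}_\eps + B_{\vec{z}}\ov{\vec{s}}_\eps$ with $\|A_{\vec{z}}\| + \|B_{\vec{z}}\| = O(\eps)$, so one solves it directly by a Neumann series rather than invoking the implicit function theorem at all. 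The uniformity you worry about in your last paragraph then comes for free, and the convergence $\sv_{j,\eps} \to \sv_{v_j}$ is immediate from $\|\vec{s}_\eps - \vec{s}\| = O(\eps)$.
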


\begin{proof}
We arrange the proof into the following steps.

\medskip
\textbf{Step 1.} Let $z_1, \ldots, z_N \in \C_-$ be pairwise distinct with $\Im \, z_j = -1$ for $j=1, \ldots, N$ and set $\eps = 1/{\min_{j \neq k}|z_k-z_j|} > 0$. We denote $\vec{z} = (z_1, \ldots, z_N)$- For $\eps \in (0, \eps_0)$ with $\eps_0=\eps_0(N) > 0$ chosen below, we need to find $\sv_{1, \vec{z}}, \ldots, \sv_{N, \vec{z}} \in \C^3 \setminus \{ 0 \}$ such that the following nonlinear constraints are satisfied:
\be \label{eq:cons1}
\sv_{j,\vec{z}} \cdot \sv_{j,\vec{z}} = 0 \quad \mbox{for $j=1, \ldots, N$} \, ,
\ee
\be \label{eq:cons2}
\sv_{j,\vec{z}} \cdot \left (  \mathbf{e}_3 + \sum_{k \neq j}^N \frac{\sv_{k,\vec{z}}}{z_j - z_k} + \sum_{k=1}^N \frac{\ov{\sv}_{k,\vec{z}}}{z_j-\ov{z}_k} \right ) = 0 \quad \mbox{for $j=1, \ldots, N$} \, .
\ee
In fact, these conditions follow simply by a partial fraction expansion for the constraint $\uu_{\vec{z}}(x) \cdot \uu_{\vec{z}}(x) = 1$ with our ansatz for $\uu_{\vec{z}}(x)$ stated above.  As for \eqref{eq:cons1}, we recall from \cite{BeKlLa-20}[Lemma B.1] the algebraic fact that any $\sv \in \C^3 \setminus \{ 0 \}$ with $\sv \cdot \sv = 0$ can be written as
$$
\sv = s ( \nv_1 + \ii \nv_2)
$$
with a complex number $s \in \C^*$ and real unit vectors $\nv_1, \nv_2 \in \Ss^2$ such that $\nv_1 \cdot \nv_2 = 0$. In fact, this representation is unique modulo $U(1)$-rotations in the plane spanned by $\nv_1$ and $\nv_2$ with a corresponding phase rotation of $s_j$. 

Next, we define the vectors 
$$\sv_j:= \mbox{$\sv_{v_j} \in \C^3 \setminus \{0 \}$ given by \eqref{eq:sv} with $v=v_j$}
$$
and we fix corresponding real unit vectors $\nv_{j,1}, \nv_{j,2} \in \Ss^2$ as defined in \eqref{eq:nv2} with $v=v_j \in (-1,1)$. Thus we have
$$
\sv_{j} = s_j ( \nv_{j,1} + \ii \nv_{j,2})
$$ 
with some complex numbers $s_j \in \C^*$ to be determined for $j=1, \ldots, N$. 

For the vectors $\sv_{j,\vec{z}}$ to be found, we make the ansatz
$$
\sv_{j,\vec{v}} = s_{j,\vec{z}} ( \nv_{j,1} + \ii \nv_{j,2}) \quad \mbox{with $s_{j,\vec{z}} \in \C^*$} \, .
$$
Note that the vectors $\nv_{j,1}$ and $\nv_{j,2}$ are fixed and only depend on $v_j$ but not on the poles $(z_1, \ldots, z_N)$. Clearly, the first set of constraints \eqref{eq:cons1} is automatically satisfied by our ansatz for $\sv_{j,\vec{z}}$. Thus we only need to show how to solve \eqref{eq:cons2} in the rest of the proof, provided that the constant $\eps_0 =\eps_0(N) \ll 1$ is sufficiently small.

\medskip
\textbf{Step 2.} In order to solve \eqref{eq:cons2}, we devise an iteration scheme as follows inspired by the discussion in \cite{BeKlLa-20}\footnote{In \cite{BeKlLa-20}, a different sign convention for the poles $z_j$ and spin vectors $\sv_j$ are used. The reader should be aware of this when comparing with our formulae here.}. First, let us write \eqref{eq:cons2} as
$$
\sv_{j,\vec{z}} \cdot \left ( \mv_{j,\eps} + \frac{\ov{\sv}_{j,\vec{z}}}{z_j-\ov{z}_j} \right ) = 0 \quad \mbox{with} \quad \mv_{j,\vec{z}}:= \mathbf{e}_3 + \sum_{k \neq j}^N \left ( \frac{\ov{\sv}_{k,\vec{z}}}{z_j - \ov{z}_k} + \frac{\sv_{k,\vec{z}}}{z_j - z_k} \right ) \, .
$$
If we recall that $\sv_{j,\vec{z}} = s_{j,\vec{z}}(\nv_{j,1} + \ii \nv_{j,2})$ and $z_j-\ov{z}_j=-2 \ii$, we find  the equation
$$
s_{j,\vec{z}} (\nv_{j,1} + \ii \nv_{j,2}) \cdot \left (  \mv_{j,\vec{z}} - \frac{\ov{s}_{j,\vec{z}}(\nv_{j,1} - \ii \nv_{j,2})}{2 \ii} \right ) = s_{j,\vec{z}} \left ( (\nv_{j,1} + \ii \nv_{j,2}) \cdot \mv_{j,\vec{z}} + \ii \ov{s}_{j,\vec{z}} \right ) = 0,
$$
which has the unique non-trivial solution
$$
s_{j,\vec{z}} = -\ii (\nv_{j,1} - \ii \nv_{j,2}) \cdot \ov{\mv}_{j,\vec{z}}  \, .
$$
Since $\mv_{j,\vec{z}}$ does not depend on $\sv_{j,\vec{z}}$, this suggest the following iteration scheme: If $s_{j,\vec{z}}^{(n)}$ is given, we define the next iterate $s_{j,\vec{z}}^{(n+1)}$ by 
$$
s_{j,\vec{z}}^{(n+1)} := -\ii (\nv_{j,1}- \ii \nv_{j,2}) \cdot \left ( \mathbf{e}_3 + \sum_{k \neq  j}^N \left ( \frac{s_{k,\vec{z}}^{(n)}(\nv_{k,1} + \ii \nv_{k,2})}{\ov{z}_j-z_k} + \frac{\ov{s}_{k,\vec{z}}^{(n)}(\nv_{k,1} - \ii \nv_{k,2})}{\ov{z}_j - \ov{z}_k} \right ) \right ) \, .
$$
Thus we need to solve the fixed point equation
$$
\vec{s}_{\vec{z}} = F_{\vec{z}}(\vec{s}_{\vec{z}})
$$
with the variable $\vec{s}_{\vec{z}} = (s_{1,\vec{z}}, \ldots, s_{N,\vec{z}})$ and the given parameters $\vec{z} = (z_1, \ldots, z_N)$, where the map $F_{\vec{z}} : \C^{N} \to \C^N$ is defined by the right-hand side of the iteration scheme above. Recalling that $s_j = -\ii (\nv_{j,1} - \ii \nv_{j,2}) \cdot \mathbf{e}_3$ from \eqref{eq:sv_nice}, we find
$$
F_{\vec{z}}(\vec{s}_{\vec{z}}) = \vec{s} + A_{\vec{z}} (\vec{s}_{\vec{z}}) + B_{\vec{z}} (\ov{\vec{s}}_{\vec{z}}) \, ,
$$
where $\vec{s}=(s_1, \ldots, s_N) \in \C^N$ and $A_{\vec{z}}, B_{\vec{z}} : \C^N \to \C^N$ are linear maps with operator norms 
\be \label{ineq:AB}
\| A_{\vec{z}} \|_{\C^N \to \C^N} + \| B_{\vec{z}} \|_{\C^N \to \C^N} \leq C \eps \leq C \eps_0
\ee
with some constant $C> 0$ depending only on $N$. Hence by taking $\eps_0:=1/(2C)$, we see that, for any $\eps \in (0,\eps_0)$, the map $G:=\id - A_{\vec{z}} - B_{\vec{z}}(\ov{\cdot}) : \C^N \to \C^N$ is invertible by using the Neumann series. Hence $\vec{s}_\eps = G^{-1} (\vec{s})$ is the unique solution of the fixed point equation $\vec{s}_{\vec{z}} = F_{\vec{z}}(\vec{s}_{\vec{z}})$ provided that $\eps \in (0, \eps_0)$ holds.

\medskip
\textbf{Step 3.} It remains to show  that
$$
 \sv_{j,\vec{z}} = \sv_{j} + O(\eps) \quad \mbox{for $j=1, \ldots, N$}.
$$
Since $\sv_{j,\vec{z}} = s_{j,\vec{z}} ( \nv_{j,1}+\ii \nv_{j,2})$ with vectors $\nv_{j,1}, \nv_{j,2}$ independent of $\eps$, this claim is equivalent to proving that 
$$
 \vec{s}_{\vec{z}} = \vec{s} + O(\eps)
$$ 
 with the notation from \textbf{Step 2}. But from the fixed point equation and estimate \eqref{ineq:AB} we readily find
$$
\| \vec{s}_{\vec{z}} - \vec{s} \|_{\C^N} = \| F_{\vec{z}}(\vec{s}_{\vec{z}}) - \vec{s} \|_{\C^N} \leq C \eps \, .
$$
with some constant $C=C(N) > 0$. Furthermore, since $s_j \neq 0$ for all $j =1, \ldots N$, we conclude that $\sv_{j,\vec{z}} \neq 0$ for all $j=1, \ldots, N$, provided that $\eps \in (0, \eps_0)$ with $\eps_0=\eps_0(N) > 0$  sufficiently small. 

The proof of Lemma \ref{lem:flea_circus} is now complete.
\end{proof}

With the help of Lemma \ref{lem:flea_circus} we are now able to prove the following result. Recall that $\Ub = \uu \cdot \bm{\sigma}$ for a map $\uu : \R \to \Ss^2$.

\begin{lem} \label{lem:T_simple_exist}
For any integer $N \geq 0$,  there exists a rational map $\uu : \R \to \Ss^2$ with exactly $N$ simple poles such that the discrete spectrum $\sigma_{\mathrm{d}}(T_\Ub^2)$ is simple. 
\end{lem}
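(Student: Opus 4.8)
\textbf{Proof strategy for Lemma \ref{lem:T_simple_exist}.} The plan is to build $\uu$ as a small perturbation of a superposition of $N$ well-separated ground-state solitary-wave profiles with distinct velocities and to argue that, in the regime of large separation, the discrete spectrum of $T_\Ub^2$ (equivalently of $K_\Ub$) splits into $N$ simple eigenvalues. The case $N=0$ is trivial ($\uu\equiv\mathbf e_3$, $\Hfr_1=\{0\}$), and $N=1$ is immediate since $\dim\Hfr_1=1$ forces simplicity, so assume $N\ge 2$. Fix arbitrary distinct velocities $v_1,\dots,v_N\in(-1,1)$ and poles $z_j=y_j-\ii$ with the $y_j$ chosen so that $\eps:=1/\min_{j\ne k}|z_j-z_k|$ is as small as we like. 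By Lemma \ref{lem:flea_circus} there is a rational map $\uu_\eps:\R\to\Ss^2$ with exactly $N$ simple poles $z_1,\dots,z_N\in\C_-$ and residue vectors $\sv_{j,\eps}\to\sv_{v_j}$ as $\eps\to0$. By Lemma \ref{lem:simple_poles} (applied with $\VV=\C^2$), $\rank(K_{\Ub_\eps})=N$ for every such $\eps$, so $\dim\Hfr_1=N$ throughout.

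\textbf{Key steps.} First I would isolate the single-soliton computation: for one profile $\qv_{v,z}$ of the above form, $K_{\Ub}$ acting on $L^2_+(\R;\C^2)$ has rank one, and one can compute its unique nonzero eigenvalue explicitly; by the ground-state energy quantization $E(\qv_{v,z})=(1-v^2)\pi$ together with $\Tr(K_\Ub)\sim_d\|\Ub\|_{\dot H^{1/2}}^2$ and the explicit description of the eigenvector $\psi$ (the reproducing-type kernel $\propto (x-z)^{-1}I_+(\varphi)$ appearing in \textbf{Step 2} of the proof of Theorem \ref{thm:soliton}), one sees the eigenvalue of $T_\Ub^2$ restricted to $\Hfr_1$ equals $v^2$ (consistent with $\sigma_{\mathrm d}(T_\Ub)=\{v\}$ in the one-soliton case). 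Second, I would set up the separation limit: writing $\Ub_\eps=\uu_\eps\cdot\bm\sigma$, decompose $\Ub_\eps=\mathds 1_2+\sum_j\big(\tfrac{\sv_{j,\eps}\cdot\bm\sigma}{x-z_j}+\tfrac{\ov{\sv}_{j,\eps}\cdot\bm\sigma}{x-\ov z_j}\big)-\mathds1_2$ and choose the natural approximate eigenbasis of $K_{\Ub_\eps}$: for each $j$ the function $\psi_{j,\eps}(x)=(x-z_j)^{-1}w_{j,\eps}$ with $w_{j,\eps}\in\C^2$ the appropriate vector from the single-soliton analysis (with $\sv_{j,\eps}$ in place of $\sv_{v_j}$). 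Third, estimate the off-diagonal Gram and $K_{\Ub_\eps}$-matrix entries: since distinct poles are separated by $\eps^{-1}\to\infty$ and all $\Im z_j=-1$, the inner products $\langle\psi_{j,\eps},\psi_{k,\eps}\rangle$ and $\langle K_{\Ub_\eps}\psi_{j,\eps},\psi_{k,\eps}\rangle$ for $j\ne k$ are $O(\eps)$, while the diagonal entries converge to the nonzero single-soliton values $v_j^2$ (for $T_{\Ub_\eps}^2$). Hence in this basis $T_{\Ub_\eps}^2|_{\Hfr_1}$ is a small ($O(\eps)$) perturbation of the diagonal matrix $\mathrm{diag}(v_1^2,\dots,v_N^2)$. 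Fourth, choose the $v_j$ with $v_1^2,\dots,v_N^2$ pairwise distinct (possible: e.g.\ take $0<v_1<\dots<v_N<1$); then for $\eps$ small enough the eigenvalues of $T_{\Ub_\eps}^2|_{\Hfr_1}$ remain pairwise distinct by continuity of eigenvalues of a finite self-adjoint matrix, i.e.\ $\sigma_{\mathrm d}(T_{\Ub_\eps}^2)$ is simple. Taking $\uu:=\uu_\eps$ for one such $\eps$ finishes the proof.

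\textbf{Main obstacle.} The delicate point is making the "$O(\eps)$ off-diagonal, diagonal bounded away from $0$ and pairwise distinct" claim rigorous: one must produce an explicit, $\eps$-uniform basis of $\Hfr_1=\ov{\ran(K_{\Ub_\eps})}$ whose Gram matrix is uniformly invertible and in which $K_{\Ub_\eps}$ has controlled entries. The natural candidates $\psi_{j,\eps}=(x-z_j)^{-1}w_{j,\eps}$ are not exactly in $\ran(K_{\Ub_\eps})$, so I would either (i) verify directly, as in the proof of Lemma \ref{lem:simple_poles}, that $H_{\Ub_\eps}^*$ maps a suitable rational test function onto each $\psi_{j,\eps}$ up to $O(\eps)$ error and then project onto $\Hfr_1$, or (ii) compute $K_{\Ub_\eps}\psi_{j,\eps}=\psi_{j,\eps}+O(\eps)$ in $L^2$ by the residue calculus $\Pi_+\big((x-\zeta)^{-1}f\big)=f(\zeta)(x-\zeta)^{-1}$ for $\zeta\in\C_-$, so that the $\psi_{j,\eps}$ are approximate eigenvectors with approximate eigenvalue close to the single-soliton value, and then invoke standard finite-dimensional self-adjoint perturbation theory (Weyl/min-max) to conclude simplicity. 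Either way the bookkeeping of the residue vectors $\sv_{j,\eps}$ and the uniform-in-$\eps$ control of all constants (using $\sv_{j,\eps}\to\sv_{v_j}\ne0$ and $\Im z_j=-1$) is the substantive part; everything else is soft.
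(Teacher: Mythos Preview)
Your proposal is correct and follows essentially the same route as the paper: handle $N\le 1$ trivially, invoke Lemma~\ref{lem:flea_circus} for $N\ge 2$, use Lemma~\ref{lem:simple_poles} to fix $\dim\Hfr_1=N$, and show that in the large-separation regime the relevant $N\times N$ matrix is $\mathrm{diag}(v_1^2,\dots,v_N^2)+O(\eps)$ with the $v_j^2$ chosen pairwise distinct.

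Two points where the paper's argument is tighter than what you anticipate. First, your ``main obstacle'' is illusory: with the correct choice $w_{j,\eps}=e_{j,\eps}$ (a unit vector spanning $\ran A_{j,\eps}$), the functions $\psi_{j,\eps}(x)=(x-z_j)^{-1}e_{j,\eps}$ form an \emph{exact} basis of $\Hfr_1=\ran(H_{\Ub_\eps}^*)$, not merely an approximate one, so no projection step or Gram-matrix control is needed. Second, rather than working with $K_{\Ub_\eps}$ or $T_{\Ub_\eps}^2$, the paper computes the matrix of $T_{\Ub_\eps}$ itself in this basis by a direct partial-fraction calculation: using $A_{j,\eps}e_{j,\eps}=0$ and the constraint identity $B_{j,\eps}A_{j,\eps}+A_{j,\eps}B_{j,\eps}=0$ (which forces $B_{j,\eps}e_{j,\eps}=b_{j,\eps}e_{j,\eps}$), one finds that the diagonal entries satisfy $b_{j,\eps}\to\langle\sigma_3 e_j,e_j\rangle_{\C^2}=v_j$ while all off-diagonal entries carry a factor $(z_k-z_j)^{-1}=O(\eps)$. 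This yields $\mathsf T=\mathrm{diag}(v_1,\dots,v_N)+O(\eps)$ directly, and simplicity of $\sigma_{\mathrm d}(T_{\Ub_\eps}^2)$ then follows from continuity of the characteristic polynomial without any approximate-eigenvector bookkeeping.
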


\begin{remark*}
Recall that, by self-adjointness of $T_\Ub$, the simplicity of $\sigma_{\mathrm{d}}(T_\Ub^2)$ implies that $\sigma_{\mathrm{d}}(T_\Ub)$ is simple.
\end{remark*}

\begin{proof}
For $N=0$, this is trivially true by taking the constant function $\uu(x) \equiv \mathbf{e}_3$ and noticing that $\mathrm{Rank}(K_\Ub)=0$ and hence $\sigma_{\mathrm{d}}(T_\Ub) = \emptyset$. If $N=1$, we take the stationary solution (i.\,e.~a half-harmonic map)
$$
\uu(x) = \left (0, \frac{2x}{x^2+1}, \frac{x^2-1}{x^2+1} \right ) \in \mathcal{R}at(\R; \Ss^2) \, ,
$$
 which has $\mathrm{Rank}(K_{\Ub}) = 1$ with simple discrete spectrum $\sigma_{\mathrm{d}}(T_\Ub) = \{0 \}$. Hence it remains to discuss the case $N \geq 2$, which will be proved in the following steps.
 
 \medskip

\textbf{Step 1.} Assume $N \geq 2$ in what follows. Let $z_1, \ldots, z_N \in \C_-$ and $v_1, \ldots, v_N \in (-1,1)$ be as in Lemma \ref{lem:flea_circus} with the additional assumption that
$$
v_j \neq v_k \quad \mbox{for $j \neq k$} \, .
$$
Consider the rational map $\uu_{\vec{z}} : \R \to \Ss^2$ given by Lemma \ref{lem:flea_circus} with $\eps = 1/\min_{j \neq k} |z_j-z_k| \in (0, \eps_0)$, where $\eps_0=\eps_0(N) > 0$ denotes the small constant from Lemma \ref{lem:flea_circus}. In particular, the rational map $\uu_{\vec{z}} : \R \to \Ss^2$ has exactly $N$ simple poles. 

Note that the rational matrix-valued function $\Ub_{\vec{z}} = \uu_{\vec{z}} \cdot \bm{\sigma}$ is given by
$$
\Ub_{\vec{z}}(x) = \sigma_3 + \sum_{j=1}^N \frac{A_{j,\vec{z}}}{x-z_j} + \sum_{j=1}^N \frac{A_{j,\vec{z}}^*}{x-\ov{z}_j} \, ,
$$ 
with the non-zero matrices $A_{j,\vec{z}} \in \C^{2 \times 2}$ given by $A_{j,\vec{z}} := \sv_{j,\vec{z}} \cdot \bm{\sigma}$. Note that $A_{j,\vec{z}}^2 = 0$ which follows from $\sv_{j,\vec{z}} \cdot \sv_{j,\vec{z}}=0$. Thus the nilpotent matrices $A_{j,\vec{z}} \in M_2(\C)$ have rank one and we can write
$$
A_{j, \vec{z}} = e_{j,\vec{z}} \langle \cdot, \xi_{j,\vec{z}} \rangle_{\C^2}
$$
with some non-zero vectors $e_{j,\vec{z}}, \xi_{j,\vec{z}} \in \C^2 \setminus \{ 0 \}$ such that 
$$
\|e_{j,\vec{z}} \|_{\C^2} = 1 \quad \mbox{and}  \quad \langle e_{j,\vec{z}}, \xi_{j, \vec{z}} \rangle_{\C^2} = 0 \, .
$$ 
Note that $\mathrm{span} \{ e_{j,\vec{z}} \} = \ran(A_{j, \vec{z}})$ and we readily check that
$$
\Hfr_1 = \ran(K_{\Ub_{\vec{z}}}) = \ran(H^*_{\Ub_{\vec{z}}}) = \mathrm{span} \left \{ \frac{e_{j,\vec{z}}}{x-z_j}  \mid j=1, \ldots, N \right \} \, 
$$
with the operator $K_{\Ub_{\vec{z}}} = H^*_{\Ub_{\vec{z}}} H_{\Ub_{\vec{z}}} : L^2_+(\R; \C^2) \to L^2_+(\R; \C^2)$.

\medskip
\textbf{Step 2.}
For later use, we recall that the constraint equations \eqref{eq:cons1} and \eqref{eq:cons2} can be rephrased in terms of matrix-valued functions as follows:
\be \label{eq:AB}
A_{j,{\vec{z}}}^2 = 0, \quad B_{j,\vec{z}} A_{j,\vec{z}} + A_{j,\vec{z}} B_{j,\vec{z}}= 0
\ee
for all $j=1, \ldots, N$, where we define the complex $2 \times 2$-matrices
\be \label{eq:AB2}
B_{j,\vec{z}} := \sigma_3  + \sum_{k\neq j}^N \frac{A_{k,\vec{z}}}{z_j - z_k} + \sum_{k =1}^N \frac{A_{k,\vec{z}}^*}{z_j - \ov{z}_k} \, .
\ee
Because of $A_{j,\vec{z}} e_{j,\vec{z}}=0$ and by \eqref{eq:AB}, we see that $A_{j,\vec{z}} B_{j,\vec{z}} e_{j,\vec{z}}=0$. Since $\ker(A_{j,\vec{z}}) = \mathrm{span} \{ e_{j,\vec{z}} \}$, we deduce  
$$
B_{j,\vec{z}} e_{j,\vec{z}} = b_{j,\vec{z}} e_{j,\vec{z}} \, 
$$
with some eigenvalue $b_{j,\vec{z}} \in \C$. Since $\| e_{j,\vec{z}} \|_{\C^2}=1$, the eigenvalue $b_{j,\vec{z}}$ is evidently given by
\begin{align}
b_{j,\vec{z}} & = \langle B_{j,\vec{z}} e_{j,\vec{z}}, e_{j,\vec{z}} \rangle_{\C^2}  \nonumber \\ \label{eq:bj} 
& = \langle \sigma_3 e_{j,\vec{z}}, e_{j,\vec{z}} \rangle_{\C^2} + \sum_{k \neq j}^N \left ( \frac{\langle A_{k,\vec{z}} e_{j,\vec{z}}, e_{j,\vec{z}} \rangle_{\C^2}}{z_j - z_k} + \frac{\langle A_{k,\vec{z}}^* e_{j,\vec{z}}, e_{j,\vec{z}} \rangle_{\C^2}}{z_j - \ov{z}_k} \right ) , 
\end{align}
where we also used the simple fact that $\langle A_{j,\vec{z}}^* e_{j,\vec{z}}, e_{j,\vec{z}}  \rangle_{\C^2} =0$ because of $A_{j,\vec{z}} e_{j,\vec{z}} =0$. Next, by partial fraction decomposition and $A_{j,\vec{z}} e_{j,\vec{z}} = 0$, we obtain
\begin{align*}
T_{\Ub_{\vec{z}}} \left ( \frac{e_{j, \vec{z}}}{x-z_j} \right ) & = \Pi_+ \left [ \left ( \sigma_3 + \sum_{k=1}^N \frac{A_{k, \vec{z}}}{x-z_k} + \sum_{k=1}^N \frac{A_{k,\vec{z}}^*}{x-\ov{z}_k} \right ) \frac{e_{j,\vec{z}}}{x-z_j}  \right ]\\
&= \frac{\sigma_3 e_{j,\vec{z}}}{x-z_j} + \sum_{k \neq j}^N \frac{A_{k,\vec{z}} e_{j,\vec{z}}}{(x-z_k)(x-z_j)} + \sum_{k=1}^N \frac{A_{k,\vec{z}}^* e_{j,\vec{z}}}{(z_j-\ov{z}_k) (x-z_j)}  \\
& = \left ( \sigma_3 + \sum_{k \neq j}^N \frac{A_{k,\vec{z}}}{z_j - z_k} + \sum_{k=1}^N \frac{A_{k,\vec{z}}^*}{z_j - \ov{z}_k} \right ) \frac{e_{j,\vec{z}}}{x-z_j} +\sum_{k \neq j}^N \frac{A_{k,\vec{z}} e_{j,\vec{z}}}{(z_k -z_j)(x-z_k)} \\
& = \frac{B_{j,\vec{z}} e_{j,\vec{z}}}{x-z_j} + \sum_{k \neq j}^N \frac{A_{k,\vec{z}} e_{j,\vec{z}}}{(z_k-z_j)(x-z_k)}  = \frac{b_{j,\vec{z}} e_{j,\vec{z}}}{x-z_j} + \sum_{k \neq j}^N \frac{A_{k,\vec{z}} e_{j,\vec{z}}}{(z_k-z_j)(x-z_k)} \, .
\end{align*}
for any $j=1, \ldots, N$ and with the eigenvalues $b_{j,\vec{z}}$ from above. Let $\mathsf{T} \in \C^{N \times N}$ denote the matrix of $T_{\Ub_{\vec{z}}} : \Hfr_1 \to \Hfr_1$ with respect to the basis $\mathcal{B} = \left ( \frac{e_{1,\vec{z}}}{x-z_1}, \ldots, \frac{e_{N,\vec{z}}}{x-z_N} \right )$. Since $\| A_{j,\vec{z}} \|_{\C^2 \to \C^2} \lesssim \|\sv_{j,\vec{z}}\|_{\C^3} \lesssim 1$, we see that the matrix $\mathsf{T} \in \C^{N \times N}$ is of the form
$$
\mathsf{T} = \mathrm{diag}(b_{1,\vec{z}}, \ldots, b_{N,\vec{z}}) + \mathsf{B}
$$
with some matrix  $\mathsf{B}=\mathsf{B}(z_1, \ldots, z_N, v_1, \ldots, v_n)$ such that
$$
\| \mathsf{B} \|_{\C^N \to \C^N} = O(\eps) \, ,
$$
where we recall that $\eps = 1/\min_{j \neq k} |z_j - z_k|$. Furthermore, from \eqref{eq:bj} we deduce that
$$
b_{j,\vec{z}} = \langle \sigma_3 e_{j,\vec{z}}, e_{j,\vec{z}} \rangle_{\C^2} + O(\eps) \, .
$$
Next, we recall that $A_{j,\vec{z}} \to A_j = \sv_j \cdot \bm{\sigma}$ as $\eps \to 0$ by Lemma \ref{lem:flea_circus}. Notice that $\sv_j$ is given by \eqref{eq:sv} with $v=v_j$ and an elementary calculation shows that $\ran(A_j) = \mathrm{span} \{ e_j \}$ with the unit vector
$$
e_j=\frac{1}{\sqrt{2}} \left ( \begin{array}{c} \sqrt{1+v_j} \\ \ii \sqrt{1-v_j} \end{array} \right ) \in \C^2 \, .
$$
 Thus we conclude that
$$
\langle \sigma_3 e_{j,\vec{z}}, e_{j, \vec{z}} \rangle_{\C^2} \to  \langle \sigma_3 e_j, e_j \rangle_{\C^2} = v_j \quad \mbox{as $\eps \to 0$} \, ,
$$
whence it follows that $b_{j,\vec{z}} \to v_j$ as $\eps \to 0$. 

In summary, we have shown that the matrix $\mathsf{T} \in \C^{N \times N}$ for $T_{\Ub_{\vec{z}}} : \Hfr_1 \to \Hfr_1$ with respect to the basis $\mathcal{B} = \left ( \frac{e_{1,\vec{z}}}{x-z_1}, \ldots, \frac{e_{N,\vec{z}}}{x-z_N} \right )$ is of the form
$$
\mathsf{T} = \mathrm{diag}(v_1, \ldots, v_N) + \mathsf{M}
$$
with some matrix $\mathsf{M}=\mathsf{M}(z_1, \ldots, z_N, v_1, \ldots, v_N) \in \C^{N \times N}$ such that 
$$
\| \mathsf{M} \|_{\C^N \to \C^N} = O(\eps) \to 0 \quad \mbox{as} \quad \eps \to 0.
$$ 

\medskip
\textbf{Step 3.} Let 
$$
p_{\mathsf{T}}(z)= \det(\mathsf{T}-z \mathds{1}_N) = z^N + a_{N-1} z^{N-1} + \ldots + a_0 
$$ 
denote the characteristic polynomial of $\mathsf{T} \in \C^{N \times N}$. Since 
$$
\lim_{\eps \to 0} \| \mathsf{T} - \mathrm{diag}(v_1, \ldots, v_n) \|_{\C^N \to \C^N} = 0 \, ,
$$
we deduce that $a_{k} \to c_k$ as $\eps \to 0$ for all $k=0, \ldots, N-1$, where
$$
p(z) = z^N + c_{N-1} z^{N-1} + \ldots + c_0 = \prod_{j=1}^N (z-v_j)
$$
is the characteristic polynomial of $\mathrm{diag}(v_1, \ldots, v_N)$. Note that $p(z)$  has simple zeros due to $v_j \neq v_k$ for $j \neq k$ by assumption. Hence the roots $\{ \lambda_j \}_{j=1}^N$ of $\mathsf{T}$ are also simple, provided that $\eps > 0$ is sufficiently small, and we have $\lambda_j \to v_j$ as $\eps \to 0$. Since $v_j^2 \neq v_k^2$ for $j \neq k$ by our assumption above, we also find that $\lambda_j^2 \neq \lambda_k^2$ for $j \neq k$ provided that $\eps > 0$ is sufficiently small. This shows that $T_{\Ub_{\vec{z}}}^2 |_{\Hfr_1}$ has simple spectrum if $\eps > 0$ is sufficiently small and, by self-adjointness of $T_{\Ub_{\vec{z}}}$, this implies simple spectrum of $T_{\Ub_{\vec{z}}} |_{\Hfr_1}$ if $\eps=1/\min_{j \neq k}|z_j-z_k| >0$ is sufficiently small. Since $\sigma_{\mathrm{d}}(T_{\Ub_{\vec{z}}}) = \sigma (T_{\Ub_{\vec{z}}}|_{\Hfr_1})$, this completes the proof of Lemma \ref{lem:T_simple_exist}.
\end{proof}

\begin{remark*}
To conclude our discussion, let us remark that there exist rational data $\uu : \R \to \Ss^2$ with non-simple discrete spectrum $\sigma_{\mathrm{d}}(T_{\Ub})$. For instance, take a solitary wave profile $\qv : \R \to \Ss^2$ given by a Blasche product of degree $m \geq 2$ and set $\Qv_v = \qv_v \sigma \bm{\sigma}$. Then it is easy to see that the Toeplitz operator $T_{\Qv_v} : L^2_+(\R; \C^2) \to L^2_+(\R; \C^2)$ has discrete spectrum $\sigma_{\mathrm{d}}(T_{\Qv_v}) = \{ v \}$ where the eigenvalue $v$ is $m$-fold degenerate. 
\end{remark*}
\section{Local Well-Posedness} 
\label{app:LWP}

In this section, we prove local well-posedness for \eqref{eq:HWMd} for sufficiently regular initial data as stated in Lemma \ref{lem:lwp}. Also, we will show well-posedness for the initial-value problem formulated in \eqref{eq:ode_B} above.

\subsection*{Proof of Lemma \ref{lem:lwp}} 
Let $s  > \frac{3}{2}, d \geq 2$ and assume that $\Ub_0 : \R \to M_d(\C)$ is of the form
$$
\Ub_0(x) = \Ub_\infty + \Vb_0(x) \in M_d(\C) \oplus H^s(\R; M_d(\C)) \equiv H^s_\bullet(\R; M_d(\C))  \, ,
$$
satisfying the pointwise constraints
$$
\Ub_0(x) = \Ub_0(x)^*, \quad \Ub_0(x)^2 = \mathds{1}_d \quad \mbox{for $x \in \R$} \, .
$$
Note that $\Ub_\infty \in M_d(\C)$ is a constant matrix with $\Ub_\infty=\Ub_\infty^*$ and $\Ub_\infty^2 = \mathds{1}_d$. 

Now, for $R> 0$ given and assuming that $\| \Vb_0 \|_{H^s} < R$, we wish to prove existence and uniqueness of the solution
$$
\Ub(t) = \Ub_\infty + \Vb(t) \in  M_d(\C) \oplus C([0,T]; H^s(\R; M_d(\C)) \, ,
$$
of \eqref{eq:HWMd} with initial datum $\Ub(0) = \Ub_0$, where $T=T(R) > 0$ is chosen sufficiently small. Once this solution is constructed, it is elementary to check that $\Ub(t,x)$ satisfies the pointwise constraints above for all $x \in \R$ and times $t \in [0,T]$. Furthermore, as explained before Lemma \ref{lem:lwp} above, we deduce that $\Ub(t,x) \in \Gr_k(\C^d)$ for $(t,x) \in [0,T] \times \R$ with some integer $0 \leq k \leq d$.

\medskip
\textbf{Step 1 (Setup).} To deal with the quasilinear equation \eqref{eq:HWMd}, we use the following iteration scheme. Suppose we are given an initial datum
\be \label{eq:lwp1}
\Ub_0 = \Ub_\infty + \Vb_0 \in M_d(\C) \oplus H^s(\R; H^s(\R; M_d(\C))
\ee
with values in the Hermitian $d \times d$-matrices, i.\,e., we assume
\be \label{eq:lwp2}
\Ub_0(x) = \Ub_0(x)^* \quad \mbox{for $x \in \R$} \, ,
\ee
and with some constant Hermitian matrix $\Ub_\infty = \Ub_\infty^* \in M_d(\C)$. Note that $\Vb_0(x)=\Vb_0(x)^*$ must be Hermitian valued, too.

Now, let $R> 0$ be arbitrary and let $T=T(R) > 0$ to be chosen later. We construct the sequence
$$
\Ub^{(n)}= \Ub_\infty + \Vb^{(n)} \in  M_d(\C) \oplus C([0,T]; H^s(\R; M_d(\C))) \quad \mbox{with $n \in \N$}
$$
by means of the iteration scheme
\be \label{eq:Un_iter}
\pt_t \Ub^{(n+1)} = -\frac{\ii}{2} [\Ub^{(n)}, |D| \Ub^{(n+1)}] \quad \mbox{for $t \in [0,T]$}, \quad \Ub^{(n+1)}(0) = \Ub_0 
\ee
and we take $\Ub^{(0)}(t) \equiv \Ub_0$. It is straightforward to show that, given $\Ub^{(n)} \in M_d(\C) \oplus C([0,T]; H^s(\R; M_d(\C))$, there exists indeed a unique (Hermitian-valued) solution 
$$
\Ub^{(n+1)} = \Ub_\infty + \Vb^{(n+1)} \in C([0,T]; M_d(\C) \oplus H^s(\R; M_d(\C)) 
$$
of \eqref{eq:Un_iter} with initial datum $\Ub^{(n+1)}(0) = \Ub_0$; see Lemma \ref{lem:lwp2} below and its proof for details. Also, since $\Ub = \Ub_\infty + \Vb$ with the constant matrix $\Ub_\infty \in M_d(\C)$, we have
$$
\pt_t \Vb^{(n+1)} = -\frac{\ii}{2} [\Ub^{(n)}, |D| \Vb^{(n+1)}] \quad \mbox{for $t \in [0,T]$}, \quad \Vb^{(n+1)}(0) = \Vb_0 \, .
$$

\medskip
\textbf{Step 2 (Bounds).} We assume that $\| \Vb_0 \|_{H^s} < R$ holds. We claim that the following a-priori bound holds
\be \label{ineq:Vn_apriori}
\sup_{t \in [0,T]} \| \Vb^{(n)}(t) \|_{H^s} \leq 2 \| \Vb_0 \|_{H^s} \quad \mbox{for all $n \in \N$}\, ,
\ee
 provided that $T=T(R) > 0$ is chosen sufficiently small. 

We prove the bound \eqref{ineq:Vn_apriori} as follows. We use $\Dsr^s$ to denote the regularized fractional derivative of order $s$ given by $\widehat{(\Dsr^s f)}(\xi) = (1+|\xi|^2)^{s/2} \widehat{f}(\xi)$. Omitting the dependence on $t$ for notational convenience, we find (where the assumed regularity suffices to justify the following manipulations):
\begin{align*}
\frac{d}{dt} \big \| \Dsr^s \Vb^{(n+1)} \big  \|_{L^2}^2 & = 2 \mathrm{Re} \left \langle \Dsr^s \pt_t \Vb^{(n+1)}, \Dsr^s \Vb^{(n+1)} \right \rangle \\
& = \mathrm{Im} \left \langle \Dsr^s [\Ub^{(n)}, |D| \Vb^{(n+1)}], \Dsr^s \Vb^{(n+1)} \right \rangle \\
& = \mathrm{Im} \left \langle [\Ub^{(n)},  |D| \Dsr^s \Vb^{(n+1)}], \Dsr^s \Vb^{(n+1)} \right \rangle \\
& \quad + \mathrm{Im} \left \langle [\Dsr^s, \Ub^{(n)}] |D| \Vb^{(n+1)}, \Dsr^s \Vb^{(n+1)} \right \rangle  =: I + II \, .
\end{align*}
Here we also used the trivial fact that $|D|$ and $\Dsr^s$ commute. Next, we assert that the term $I$ can be written in exact commutator form with
\be \label{eq:iter1}
I = \frac{1}{2} \, \mathrm{Im} \left \langle \big [ [\Ub^{(n)}, \cdot], |D|] \Dsr^s \Vb^{(n+1)}, \Dsr^s \Vb^{(n+1)} \right \rangle \, .
\ee
Here $[\Ub, \cdot] \mathbf{F} \equiv \Ub \mathbf{F} - \mathbf{F} \Ub$ denotes the pointwise matrix-commutator for matrix-valued functions $\Ub, \mathbf{F} : \R \to M_d(\C)$.  To see that \eqref{eq:iter1} holds true, let us write $\Ub = \Ub^{(n}$ and $\Wb = \Dsr^s \Vb^{(n+1)}$ for the moment. Then
\begin{align*}
I & = \mathrm{Im} \left \langle [\Ub, |D| \Wb], \Wb \right \rangle = \mathrm{Im} \left \langle \big [ [\Ub, \cdot], |D|] \Wb, \Wb \right \rangle + \mathrm{Im} \left \langle |D| [\Ub, \Wb], \Wb \right \rangle \\
& = \mathrm{Im} \left \langle \big [ [\Ub, \cdot], |D|] \Wb, \Wb \right \rangle + \mathrm{Im} \left \langle  \Wb, [\Ub, |D| \Wb] \right \rangle = \mathrm{Im} \left \langle \big [ [\Ub, \cdot], |D|] \Wb, \Wb \right \rangle - I \, ,
\end{align*}
where the second last step we used that $|D| = |D|^*$ is symmetric together with the fact $\Tr([\Ub, \mathbf{A}] \mathbf{B}^*) = \Tr(\mathbf{A} [\Ub, \mathbf{B}]^*)$ for matrix-valued functions $\Ub, \mathbf{A}, \mathbf{B} : \R \to M_d(\C)$ provided that $\Ub= \Ub^*$ is Hermitian. This proves \eqref{eq:iter1}. 

Next, by a classical commutator estimate due to Calder\'on applied to \eqref{eq:iter1} and recalling that $\pt_x \Ub^{(n)} = \pt_x \Vb^{(n)}$, we deduce  
$$
|I|  \leq C \| \pt_x \Vb^{(n)} \|_{L^\infty} \| \Dsr^s \Vb^{(n+1)} \|_{L^2}^2 \leq C \| \Dsr^s  \Vb^{(n)} \|_{L^2}  \| \Dsr^s \Vb^{(n+1)} \|_{L^2}^2 \, ,
$$
where in the last step we used the Sobolev inequality $\| \pt_x f \|_{L^\infty}  \leq C \| \Dsr^{s-1} \pt_x f \|_{L^2} \leq C \| \Dsr^{s} f \|_{L^2}$, since $H^{s-1}(\R) \subset L^\infty(\R)$ thanks to $s > \frac{3}{2}$.

To estimate the second term $II$ above, we use Cauchy--Schwarz and apply the classical Kato--Ponce commutator to $[\Dsr^s, \Ub^{(n)}] = [\Dsr^s, \Vb^{(n)}]$. This yields
\begin{align*}
|II| & \leq \| [\Dsr^s, \Vb^{(n)}] |D| \Vb^{(n+1)} \|_{L^2} \| \Dsr^s \Vb^{(n+1)} \|_{L^2} \\
& \leq C ( \| \Dsr^s \Vb^{(n)} \|_{L^2} \| |D| \Vb^{(n+1)} \|_{L^\infty} + \| \pt_x \Vb^{(n)} \|_{L^\infty} \|  \Dsr^{s-1} |D| \Vb^{(n+1)} \|_{L^2}  ) \| \Dsr^s \Vb^{(n+1)} \|_{L^2} \\
& \leq C \| \Dsr^s \Vb^{(n)} \|_{L^2} \| \Dsr^s \Vb^{(n+1)} \|_{L^2}^2 \, ,
\end{align*}
where in the last step we used again the Sobolev inequalities $\| \pt_x \Vb^{(n)} \|_{L^\infty} \leq C \| \Dsr^s \Vb^{(n)} \|_{L^2}$ and $\| |D| \Vb^{(n+1)} \|_{L^\infty} \leq C \| \Dsr^s \Vb^{(n+1)} \|_{L^2}$ in view of $s > \frac{3}{2}$.

Combing the estimates for $I$ and $II$, we obtain the differential inequality
\be \label{ineq:iter_ode}
 \frac{d}{dt} \big \| \Dsr^s \Vb^{(n+1)}(t) \big \|_{L^2}^2  \leq C \| \Dsr^s \Vb^{(n)}(t) \|_{L^2} \| \Dsr^s \Vb^{(n+1)}(t) \|_{L^2}^2  \, .
\ee
Next we define the quantities
$$
M_n(T) = \sup_{t \in [0,T]} \big \| \Dsr^s \Vb^{(n}(t) \big \|_{L^2}^2 \quad \mbox{with $n \in \N$} \, .
$$
From \eqref{ineq:iter_ode} and Gr\"onwall's inequality we obtain
\be \label{ineq:iter5}
M_{n+1}(T) \leq M_0 \cdot \eu^{C T \sqrt{M_n(T)} }
\ee
since $M_0:=M_{k}(0) = \| \Dsr^s \Vb_0\|_{L^2}^2$ for all $k \in \N$. Clearly, we have the bound
\be \label{ineq:iter6}
M_0 \cdot \eu^{2CT R} \leq 4 M_0
\ee
for some sufficiently small time $T=T(R)> 0$. From $M_0(T) = M_0 < R^2$ and \eqref{ineq:iter5}--\eqref{ineq:iter6}, it follows by induction that 
$$
M_n(T) \leq 4M_0  \quad \mbox{for all $n \in \N$} \, .
$$
Since $M_0 = \| \Dsr^s \Vb_0 \|_{L^2}^2 = \| \Vb_0 \|_{H^s}^2$, we obtain the claimed a-priori bound \eqref{ineq:Vn_apriori}.

\medskip
\textbf{Step 3 (Cauchy Property in $L^2$).} We demonstrate that the sequence $(\Vb^{(n)})_{n \in \N}$ is Cauchy in $C([0,T]; L^2(\R; M_d(\C))$, provided that $T=T(R) > 0$ is small enough. Indeed, let be $n \geq 1$ given. We find
\begin{align*}
\pt_t \left ( \Vb^{(n+1)} - \Vb^{(n)} \right ) & = \frac{1}{2 \ii} \left ( [\Ub^{(n)}, |D| \Vb^{(n+1)}] - [\Ub^{(n-1)}, |D| \Vb^{(n)}] \right ) \\
& = \frac{1}{2 \ii} \left ( [\Ub^{(n)}, |D| (\Vb^{(n+1)}- \Vb^{(n)})] + [ \Vb^{(n)}-\Vb^{(n-1)}, |D| \Vb^{(n)}] \right ) \, ,
\end{align*}
where used the simple fact that $\Ub^{(n)}-\Ub^{(n-1)} = \Vb^{(n)}-\Vb^{(n-1)}$. Hence we get
\begin{align*}
& \frac{d}{dt} \left \| \Vb^{(n+1)}- \Vb^{(n)} \right \|_{L^2}^2  = 2 \mathrm{Re} \left \langle \pt_t(\Vb^{(n+1)}- \Vb^{(n)}), \Vb^{(n+1)}- \Vb^{(n)} \right  \rangle \\
&= \mathrm{Im} \left \langle [\Ub^{(n)}, |D| (\Vb^{(n+1)}-\Vb^{(n)})], \Vb^{(n+1)}-\Vb^{(n)} \right  \rangle \\
& \quad + \mathrm{Im} \left \langle [\Vb^{(n)}-\Vb^{(n-1)}, |D| \Vb^{(n)}], \Vb^{(n+1)}-\Vb^{(n)} \right \rangle \\
& \leq C ( \| \pt_x \Vb^{(n)} \|_{L^\infty} \| \Vb^{(n+1)}- \Vb^{(n)} \|_{L^2}^2 ) \\ 
& \quad + C (\| \Vb^{(n)}-\Vb^{(n-1)} \|_{L^2} \| |D| \Vb^{(n)} \|_{L^\infty} \| \| \Vb^{(n+1)}-\Vb^{(n)} \|_{L^2}) \\
& \leq C( \sqrt{K} ( \| \Vb^{(n+1} - \Vb^{(n)} \|_{L^2} + K \| \Vb^{(n)}-\Vb^{(n-1)}  \|_{L^2} )
\end{align*}
with the constant $K>0$ from the a-priori bound \eqref{ineq:Vn_apriori} above. Since $\Vb^{(n+1)}(0)-\Vb^{(n)}(0) =0$, we learn from Gr\"onwall's inequality that
$$ 
\sup_{t \in [0,T]} \| \Vb^{(n+1)}(t)- \Vb^{(n)}(t) \|_{L^2} \leq C T \sqrt{K}  \sup_{t \in [0,T]} \| \Vb^{(n)}(t) - \Vb^{(n-1)}(t) \|_{L^2} \, .
$$
By choosing $T = T(R) > 0$ even smaller to ensure that $CT \sqrt{K} \leq \frac{1}{2}$, we deduce that the series 
$$
\sum_{n=0}^\infty \sup_{t \in [0,T]} \| \Vb^{(n+1)}(t) - \Vb^{(n)}(t) \|_{L^2} < +\infty
$$ 
is geometrically convergent. In particular, the implies that the sequence $(\Vb^{(n)})_{n \in \N}$ is Cauchy in $C([0,T]; L^2(\R; M_d(\C))$.

Thanks to the a-priori bound \eqref{ineq:Vn_apriori}, this yields that $(\Vb^{(n)})_{n \in \N}$ forms a Cauchy sequence in $C([0,T]; H^{\tilde{s}}(\R; M_d(\C))$ for $0 \leq \tilde{s} <s$. Moreover, we readily check that its limit
$$
\Ub := \Ub_\infty + \lim_{n \to \infty} \Vb^{(n)} \in M_d(\C) \oplus C([0,T]; H^{\tilde{s}}(\R; M_d(\C))
$$
solves \eqref{eq:HWMd} with initial datum $\Ub(0)=\Ub_0$. 

\medskip
{\bf Step 4 (Continuity of Flow in $H^s$).} It remains to show that 
$$
\Vb \in C([0,T]; H^s(\R; M_d(\C)) \, .
$$
Note that, by previous discussion, we can only deduce that $\Vb \in C_{w}([0,T];  H^s(\R; M_d(\C))$ holds, i.\,e., for $t_n \to t$ we only have that $\Vb(t_n) \weakto \Vb(t)$ in $H^s$. To extend this to strong continuity, we can make use the idea of frequency envelopes, which was recently generalized aa an abstract interpolation result in \cite{AlBuIfTaZu-24}. 

Indeed, for real $t \geq 0$, we introduce the Sobolev spaces $H^s_{\mathrm{H}}$ of matrix-valued maps with Hermitian values by setting
$$
H^t_{\mathrm{H}} := \{ \Fb \in H^t(\R; M_d(\C)) \mid \mbox{$\Fb(x) = \Fb(x)^*$ for a.\,e.~$x \in \R$} \} \, ,
$$
equipped with the norm $\| \cdot \|_{H^t}$. Let $B_R = \{ \Fb \in H^s_{\mathrm{H}} \mid \| \Fb \|_{H^s} < R \}$. From \textbf{Step 2} and \textbf{Step 3}, we obtain the map
$$
\Phi : B_R \to C([0,T]; H^{0}_{\mathrm{H}}), \quad \Wb_0 \mapsto \Wb := \lim_{n \to \infty} \Wb^{(n)} 
$$
using the iteration scheme with initial data $\Ub_0 = \Ub_\infty + \Wb_0$. Moreover, from the previous discussion, we deduce the following bounds 
\begin{enumerate}
\item[$(B_1)$]  $\| \Phi(\Wb_0) - \Phi(\widetilde{\Wb}_0) \|{C_T H^0} \leq C_0 \| \Wb_0 - \widetilde{\Wb}_0 \|_{H^0}$ for all $\Wb_0, \widetilde{\Wb}_0 \in B_R$,
\item[$(B_2)$] $\| \Phi(\Wb_0) \|_{C_T H^{s+1}} \leq 2 \| \Wb_0 \|_{H^{s+1}}$ for all $\Wb_0 \in B_R \cap H^{s+1}_{\mathrm{H}}$,
\end{enumerate}
with some constant $C_0 > 0$. Indeed, the weak Lipschitz estimate $(B_1)$ follows from the arguments in \textbf{Step 3}, where as the bound $(B_2)$  simply follows from repeating \textbf{Step 2} with $s>\frac{3}{2}$ replaced by $s+1$ and by choosing $T=T(R) > 0$ possibly even smaller. From \cite{AlBuIfTaZu-24} we now conclude that 
$$
\Phi(\Vb_0) \in C([0,T]; H^s_{\mathrm{H}})
$$
and that we have continuous dependence of the map $\Vb_0 \mapsto \Phi(\Vb_0)$ on the initial data in $B_R$.

\medskip
\textbf{Step 5 (Conclusion).} Thus far we have proved local-in-time existence of solutions for \eqref{eq:HWMd} for initial data in $H^s$ with $s > \frac{3}{2}$ and satisfying the Hermitian condition \eqref{eq:lwp2}. Moreover, by a direct calculation and using the regularity of the solutions, we readily check by a Gr\"onwall-type argument that uniqueness holds for $C([0,T]; H^s)$ for a given initial datum $\Ub(0) = \Ub_0$. 

Also, a direct calculation (which we omit) shows that the pointwise constraint $\Ub_0(x)^2 = \mathds{1}_d$ is also preserved by the flow.

Finally, the claimed propagation of higher Sobolev regularity also follows from the previous estimates. Indeed, let $\sigma > s > \frac{3}{2}$ and suppose that $\Vb_0 \in H^\sigma_{\mathrm{H}}$. Inspecting the arguments in \textbf{Step 2}, we deduce that
\begin{align*}
\| \Dsr^\sigma \Vb(t) \|_{L^2}^2 & \leq C \left ( \| \pt_x \Vb(t)\|_{L^\infty} + \| |D| \Vb(t) \|_{L^\infty} \right ) \| \Dsr^\sigma \Vb(t) \|_{L^2}^2 \\
& \leq C \| \Vb(t) \|_{H^{s}} \|\Dsr^\sigma \Vb(t) \|_{L^2}^2 \, ,
\end{align*}
where we used the Sobolev embedding $H^{s}(\R) \subset L^\infty(\R)$ for $s > \frac{3}{2}$. By Gr\"onwall's inequality, we readily deduce that the maximal times of existence of $H^\sigma$ and $H^s$-solutions with $\sigma > s > \frac{3}{2}$ coincide. 

This completes the proof of Lemma \ref{lem:lwp}. \hfill $\qed$

\medskip
In the proof above, we need the following auxiliary result.

\begin{lem} \label{lem:lwp2}
Let $s > \frac{3}{2}, d \geq 2$, and $\Ub = \Ub_\infty + \Vb \in C([0,T]; M_d(\C) \oplus H^s(\R; M_d(\C)))$. Then, for every $\widetilde{\Vb}_0 \in H^s(\R; M_d(\C))$, there exists a unique solution $\widetilde{\Ub} = \Ub_\infty + \widetilde{\Vb} \in M_d(\C) \oplus C([0,T]; H^s(\R; M_d(\C)))$ of
$$
\pt_t \widetilde{\Ub} = -\frac{\ii}{2} [ \Ub, |D| \widetilde{\Ub}] \quad \mbox{on $[0,T]$ and $\widetilde{\Ub}(0) = \Ub_\infty + \widetilde{\Vb}_0$} \, .
$$
Moreover, if $\widetilde{\Ub}(0,x)=\widetilde{\Ub}(0,x)^*$ and $\widetilde{\Ub}(0,x)^2=\mathds{1}_d$ for all $x \in \R$, then $\widetilde{\Ub}(t,x)= \widetilde{\Ub}(t,x)$ and $\widetilde{\Ub}(t,x) = \widetilde{\Ub}(t,x)^*$ for all $(t,x) \in [0,T] \times \R$. 
\end{lem}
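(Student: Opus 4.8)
\textbf{Proof proposal for Lemma \ref{lem:lwp2}.}

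The plan is to treat the equation $\pt_t \widetilde{\Ub} = -\tfrac{\ii}{2}[\Ub, |D| \widetilde{\Ub}]$ as a \emph{linear} non-autonomous evolution equation for $\widetilde{\Vb}$, with the coefficient $\Ub(t)$ given and of the stated regularity. Writing $\widetilde{\Ub} = \Ub_\infty + \widetilde{\Vb}$ and using that $\Ub_\infty$ is constant, the equation becomes $\pt_t \widetilde{\Vb} = -\tfrac{\ii}{2}[\Ub(t), |D|\widetilde{\Vb}]$, which we regard as $\pt_t \widetilde{\Vb} = L(t) \widetilde{\Vb}$ for the family of operators $L(t)\Fb := -\tfrac{\ii}{2}[\Ub(t), |D|\Fb]$. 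First I would observe that $L(t)$ is a first-order operator with a time-dependent but sufficiently smooth principal symbol, and the commutator structure makes it formally skew-adjoint modulo lower-order terms (using $\Ub(t,x)^* = \Ub_\infty^* + \Vb(t,x)^*$; note the Hermitian condition on $\Ub$ is \emph{not} assumed here, but we only need the energy estimate, which is insensitive to this). The construction of the solution then follows the same scheme as \textbf{Step 2} and \textbf{Step 3} in the proof of Lemma \ref{lem:lwp}: set up the iteration $\pt_t \widetilde{\Vb}^{(n+1)} = -\tfrac{\ii}{2}[\Ub(t), |D|\widetilde{\Vb}^{(n+1)}]$ with $\widetilde{\Vb}^{(n+1)}(0) = \widetilde{\Vb}_0$ — but now the coefficient $\Ub(t)$ is fixed across all iterations, so this is genuinely linear and the iteration can be replaced by a direct fixed-point/semigroup argument.

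The core estimate is a Grönwall inequality in $H^s$: differentiating $\|\Dsr^s \widetilde{\Vb}(t)\|_{L^2}^2$ and splitting as in \textbf{Step 2} of Lemma \ref{lem:lwp} into a principal term $I$ (which one rewrites in exact double-commutator form $\tfrac12 \mathrm{Im}\langle [[\Ub,\cdot],|D|]\Dsr^s\widetilde{\Vb}, \Dsr^s\widetilde{\Vb}\rangle$ and controls by the Calderón commutator estimate, picking up $\|\pt_x \Vb(t)\|_{L^\infty} \lesssim \|\Vb(t)\|_{H^s}$) and a commutator term $II$ involving $[\Dsr^s, \Ub(t)]|D|\widetilde{\Vb}$, handled by the Kato--Ponce estimate. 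This gives
$$
\frac{d}{dt}\|\Dsr^s \widetilde{\Vb}(t)\|_{L^2}^2 \leq C\big(\|\Vb(t)\|_{H^s} + \|\Ub_\infty\|\big)\|\Dsr^s\widetilde{\Vb}(t)\|_{L^2}^2,
$$
and since $t \mapsto \|\Vb(t)\|_{H^s}$ is bounded on $[0,T]$ by continuity, Grönwall yields a global-on-$[0,T]$ a-priori bound $\sup_{[0,T]}\|\widetilde{\Vb}(t)\|_{H^s} \leq e^{CT}\|\widetilde{\Vb}_0\|_{H^s}$. Because the problem is linear with a fixed coefficient, there is no smallness-of-$T$ restriction: the same energy estimate applied to differences $\widetilde{\Vb}^{(n+1)} - \widetilde{\Vb}^{(n)}$ (in $L^2$, then interpolating up) gives the Cauchy property and hence existence of a solution in $C([0,T]; H^{\tilde s})$ for $\tilde s < s$, while weak continuity in $H^s$ plus the frequency-envelope / interpolation argument of \cite{AlBuIfTaZu-24} (exactly as in \textbf{Step 4}) upgrades this to $\widetilde{\Vb} \in C([0,T]; H^s)$. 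Uniqueness in $C([0,T]; H^s)$ is the same Grönwall argument applied to the difference of two solutions with the same datum.

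For the last assertion — propagation of the pointwise constraints — the plan is a direct computation. If $\widetilde{\Ub}(0,x)^* = \widetilde{\Ub}(0,x)$, then $\Wb := \widetilde{\Ub} - \widetilde{\Ub}^*$ satisfies $\pt_t \Wb = -\tfrac{\ii}{2}[\Ub, |D|\widetilde{\Ub}] - \big(-\tfrac{\ii}{2}[\Ub, |D|\widetilde{\Ub}]\big)^* = -\tfrac{\ii}{2}[\Ub, |D|\widetilde{\Ub}] - \tfrac{\ii}{2}[\Ub^*, |D|\widetilde{\Ub}^*]$; this is not immediately linear in $\Wb$ unless $\Ub = \Ub^*$, so here one \emph{does} use the Hermitian hypothesis on $\Ub(t)$ (available since in the application $\Ub = \Ub^{(n)}$ is Hermitian-valued by induction) to get $\pt_t \Wb = -\tfrac{\ii}{2}[\Ub, |D|\Wb]$, a linear homogeneous equation with $\Wb(0) = 0$, whence $\Wb \equiv 0$ by the uniqueness just established. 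The quadratic constraint is handled similarly: using $\pt_t \widetilde{\Ub} = -\tfrac{\ii}{2}[\Ub, |D|\widetilde{\Ub}]$ one computes $\pt_t(\widetilde{\Ub}^2) = (\pt_t\widetilde{\Ub})\widetilde{\Ub} + \widetilde{\Ub}(\pt_t\widetilde{\Ub})$, and when $\Ub = \widetilde{\Ub}$ (the genuine (HWM$_d$) case) this collapses via the commutator identities used in the proof of Lemma \ref{lem:lax} (the step $[\mu_\Ub \circ |D| + |D| \circ \mu_\Ub, \mu_\Ub] = 0$ using $\Ub^2 = \mathds{1}_d$) to show $\pt_t(\widetilde{\Ub}^2 - \mathds{1}_d)$ vanishes when $\widetilde{\Ub}^2 - \mathds{1}_d$ does, so again uniqueness of the linear flow forces $\widetilde{\Ub}^2 \equiv \mathds{1}_d$. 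I expect the main obstacle to be purely bookkeeping: making the double-commutator rewriting of the principal term $I$ rigorous at the $H^s$ level (justifying the manipulations requires either working first with smoothed data or invoking the regularity propagation), and correctly tracking which of the two constraints needs the Hermitian hypothesis on the coefficient $\Ub$ versus the full nonlinear identification $\Ub = \widetilde{\Ub}$.
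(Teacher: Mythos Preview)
Your energy-estimate strategy is exactly right and matches the paper: split $\tfrac{d}{dt}\|\Dsr^s\widetilde{\Vb}\|_{L^2}^2$ into the principal part handled by the double-commutator rewriting and Calder\'on's estimate, plus the Kato--Ponce term, then close by Gr\"onwall with no smallness of $T$ needed. The upgrade to strong $H^s$-continuity via the interpolation result of \cite{AlBuIfTaZu-24} is also what the paper does.

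The gap is in the \emph{construction} of the solution. Your ``iteration'' $\pt_t\widetilde{\Vb}^{(n+1)} = -\tfrac{\ii}{2}[\Ub(t),|D|\widetilde{\Vb}^{(n+1)}]$ is vacuous here: the coefficient $\Ub(t)$ is fixed, so every iterate solves the \emph{same} equation and there is nothing to iterate---the ``Cauchy property for $\widetilde{\Vb}^{(n+1)}-\widetilde{\Vb}^{(n)}$'' argument has no content. Your fallback to a ``direct fixed-point/semigroup argument'' is too vague to work as written: a Banach fixed-point argument fails because $|D|$ is unbounded, and invoking an evolution family for $L(t)$ is circular (that is precisely what the lemma is asserting). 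The paper resolves this concretely by a parabolic-type regularization: set $J_\eps = (1+\eps|D|)^{-1}$ and solve $\pt_t\widetilde{\Vb}_\eps = -\tfrac{\ii}{2}[\Ub,|D|J_\eps\widetilde{\Vb}_\eps]$, which \emph{is} an ODE in $H^s$ since $|D|J_\eps$ is bounded. One then proves the $H^s$ energy bound uniformly in $\eps$ (this requires the additional commutator estimate $\|[a,|D|J_\eps]\|_{L^2\to L^2}\lesssim\|\pt_x a\|_{L^\infty}$ uniformly in $\eps$, obtained by writing $[J_\eps,a]=-\eps J_\eps[|D|,a]J_\eps$), and passes to the limit by compactness. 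This regularization step is the missing ingredient in your proposal.

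Your discussion of the constraints is sound, including the observation that Hermitian propagation needs $\Ub=\Ub^*$ and that the quadratic constraint $\widetilde{\Ub}^2=\mathds{1}_d$ really only propagates in the fully nonlinear case $\Ub=\widetilde{\Ub}$; the paper simply omits these calculations.
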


\begin{proof}
Since $\Ub_\infty \in M_d(\C)$ is constant matrix, it suffices to show existence and uniqueness of $\widetilde{\Vb} \in C([0,T]; H^s(\R; M_d(\C)))$ solving
\be
\pt_t \widetilde{\Vb} = -\frac{\ii}{2} [\Ub, |D| \widetilde{\Vb}] \quad \mbox{with $\widetilde{\Vb} = \widetilde{\Vb}_0$} \, .
\ee
We construct approximate solutions of this linear equation by the following scheme. For $\eps > 0$, we introduce the smoothing operator
$$
J_\eps :=  (1+\eps |D|)^{-1} \quad \mbox{with $\|J_\eps \|_{L^2 \to L^2} \leq 1$ and $\| J_\eps \|_{H^s \to H^{s+1}} \leq \eps^{-1}$} \, .
$$
By standard arguments, we obtain a unique solution $\widetilde{\Vb}_\eps \in C([0,T]; H^s(\R; M_d(\C)))$ of the initial-value problem
$$
\pt_t \widetilde{\Vb}_\eps = -\frac{\ii}{2} [\Ub, |D| J_\eps \widetilde{\Vb}_\eps] \quad \mbox{with $\widetilde{\Vb}_\eps(0) = \widetilde{\Vb}_0$} \, ,
$$
using that $|D| J_\eps : H^s \to H^s$ is a bounded map together with the fact that $H^s(\R)$ is an algebra for $s > \frac{3}{2}$. Now, by adapting the discussion from the previous discussion, we derive the estimate
$$
\frac{d}{dt} \| \Dsr^s \widetilde{\Vb}(t) \|_{L^2}^2 \leq  C \left ( \left \| [\Ub, \cdot], |D|J_\eps ] \right \|_{L^2 \to L^2} + \| \Dsr^s \Vb\|_{L^2} \right ) \| \Dsr^s \widetilde{\Vb}_\eps \|_{L^2}^2
$$
using also again the Kato--Ponce estimate together with the fact that $\|\Dsr^{s-1} |D| J_\eps \widetilde{\Vb} \|_{L^2} \leq \| \Dsr^s \widetilde{\Vb} \|_{L^2}$. To bound the commutator term, we note that if $a=a(x)$ denotes multiplication by a Lipschitz function then
\begin{align*}
[a, |D| J_\eps] & = |D| [a, (1+\eps |D|)^{-1}] + [a,|D|] (1+\eps |D|)^{-1} \\
& = -\eps |D| (1+\eps |D|)^{-1} [a,|D|] (1+\eps |D|)^{-1} + [a,|D|] (1+\eps |D|)^{-1} \, .
\end{align*}
Thus by Calder\'on's commutator estimate and the facts $\| \eps |D|(1+\eps |D|)^{-1} \|_{L^2 \to L^2} \leq 1$ and $\| (1+\eps |D|)^{-1} \|_{L^2 \to L^2} \leq 1$, we deduce
$$
\left \| [\Ub, \cdot], |D|J_\eps ] \right \|_{L^2 \to L^2} \leq C \| \pt_x \Vb \|_{L^\infty} \leq C \| \Dsr^s \Vb \|_{L^2} 
$$
since $s > \frac 3/2$. Because of $\sup_{t \in [0,T]} \| \Dsr^s \Vb(t) \|_{L^2} < +\infty$, integrating the previous differential inequality yields the bound
\be \label{ineq:lwp2_1}
\sup_{t \in [0,T]} \| \Dsr^s \widetilde{\Vb}_\eps(t) \|_{L^2} \leq  \eu^{CT}  \| \Dsr^s \widetilde{\Vb}_0 \|_{L^2}
\ee
which is independent of $\eps >0$. Moreover, this bound and the equation for $\widetilde{\Vb}_\eps$ imply that
$$
\| \pt_t \widetilde{\Vb}_\eps(t) \|_{L^2} \leq C \| \Ub(t) \|_{L^\infty} \| |D| J_\eps \widetilde{\Vb}_\eps \|_{L^2} \leq C \left ( \| \Ub_\infty \|_{L^\infty} + \| \Dsr^s \Vb(t) \|_{L^2} \right ) \| \Dsr^s \widetilde{\Vb}_\eps(t) \|_{L^2} \, .
$$
Hence it follows that
$$
\sup_{t \in [0,T]} \| \pt_t \widetilde{\Vb}_\eps(t) \|_{L^2} \leq C \| \Dsr^s \widetilde{\Vb}_0 \|_{L^2}
$$
independent of $\eps > 0$. Thus, by standard compactness arguments (see, e.\,g.,~\cite{Ca-03}[Proposition 1.1.2]), we deduce that $(\widetilde{\Vb}_{\eps_n})$ converges for some sequence $\eps_n \to 0$ to some limit $\widetilde{\Vb} \in C_w([0,T]; H^s(\R; M_d(\C))$ solving 
$$
\pt_t \widetilde{\Vb} = -\frac{\ii}{2} [ \Ub, |D| \widetilde{\Vb} ] \quad \mbox{with $\widetilde{\Vb}(0) = \widetilde{\Vb}_0$} \, .
$$
By mimicking the arguments in the previous proof using the abstract interpolation result, we actually deduce strong continuity, i.\,e., we have $\widetilde{\Vb} \in C([0,T]; H^s(\R; M_d(\C))$. Uniqueness of  the solution follows from a simple Gr\"onwall argument in the same fashion when deriving \eqref{ineq:lwp2_1}.

Finally, we remark that the conversation of the pointwise constraints follows by a direct calculation, which we omit. This completes the proof of Lemma \ref{lem:lwp2}.
\end{proof}

\medskip
We conclude this section by showing existence and uniqueness for the operator-valued initial-value problem \eqref{eq:ode_B} that appears in the discussion of the Lax structure which reads
\be \label{eq:ode_lax_app}
\pt_t \UU(t) = B_{\Ub(t)}^+ \UU(t) \quad \mbox{for $t \in [0,T]$}, \quad \UU(0) = \id \, .
\ee 
for the operator-valued map $\UU : [0,T] \to \mathcal{B}(L^2_+(\R; \VV))$. As usual, we use $\mathcal{B}(H)$ to denote the Banach space of bounded linear maps $H \to H$ with a given Hilbert space $H$. Recall that 
\be
B_{\Ub}^+ = \frac{\ii}{2} \left ( T_\Ub \circ D + D \circ T_\Ub \right )  - \frac{\ii}{2} T_{|D| \Ub}
\ee
with $D = -\ii \pt_x$ denotes the compression of $B_\Ub$ on the Hardy space $L^2_+(\R; \VV)$. Recall that for solutions $\Ub \in M_d(\C) \oplus H^s(\R; M_d(\C))$ with $s > \frac{3}{2}$ as given by Lemma \ref{lem:lwp}, the operators $\{ B^+_{\Ub(t)} \}_{t \in [0,T]}$ are a family of (essentially) skew-adjoint operators on $L^2_+(\R; \VV)$ with operator domain $H^1_+(\R;\VV) = L^2_+(\R; \VV) \cap H^1(\R; \VV)$; see also the remark below. Recall that we either take $\VV= \C^d$ or $\VV = M_d(\C)$ equipped with their natural scalar products.

\begin{lem}  \label{lem:lax_flow}
Let $s > \frac{3}{2}$ and $d \geq 2$. Assume $\Ub = \Ub_\infty + \Vb \in M_d(\C) \oplus C([0,T]; H^s(\R; M_d(\C))$ is a solution given by Lemma \ref{lem:lwp}. Then there exists a unique solution $\UU : [0,T] \to \mathcal{B}(L^2_+(\R; \VV))$ of \eqref{eq:ode_lax_app} with the following properties.
\begin{itemize}
 \item[(i)] The map $[0,T] \to L^2_+(\R; \VV)$ with $t \mapsto \UU(t) \phi$ is continuous for every $\phi \in L^2(\R; \VV)$. 
 \item[(ii)] The equation $\pt_t \UU(t) = B_{\Ub(t)}^+ \UU(t)$ holds in $H^{-1}_+(\R;\VV)$ for any $t \in [0,T]$.
 \item[(iii)] $\UU(t) : L^2_+(\R; \VV) \to L^2_+(\R; \VV)$ is unitary for all $t \in [0,T]$.
 \item[(iv)] For $\phi \in H^1_+(\R; \VV) \cap \dom(X^*)$, we have $\UU(t) \phi \in H^1_+(\R;\VV) \cap \dom(X^*)$ for $t \in [0,T]$.
 \end{itemize}
\end{lem}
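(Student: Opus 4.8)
The plan is to construct the propagator $\UU(t)$ by a standard two-scale approximation scheme and then transfer the algebraic and regularity properties. First I would reduce to a well-posedness statement for a non-autonomous linear evolution equation: for fixed $s>\tfrac32$ the family $\{B^+_{\Ub(t)}\}_{t\in[0,T]}$ consists of skew-adjoint operators on $L^2_+(\R;\VV)$ with common form domain, and the coefficient $\Ub$ is $C([0,T];H^s)$ with $H^{s-1}\subset L^\infty$, so the ``perturbation'' pieces $T_\Ub\circ D+D\circ T_\Ub$ and $T_{|D|\Ub}$ have time-continuous coefficients. The cleanest route is a Yosida-type regularization: introduce $B^+_{\Ub(t),\eps}:=J_\eps B^+_{\Ub(t)}J_\eps$ with $J_\eps=(1+\eps|D|)^{-1}$ as in the proof of Lemma \ref{lem:lwp2}, obtain for each $\eps>0$ a genuine operator-valued solution $\UU_\eps(t)$ by Picard iteration in $\mathcal B(L^2_+(\R;\VV))$ (the generator is now bounded), note that $B^+_{\Ub(t),\eps}$ is skew-adjoint so $\UU_\eps(t)$ is unitary, and then derive $\eps$-uniform energy estimates in $L^2$ and in $H^1_+$ using the commutator structure: on $L^2$ unitarity is automatic, and on $H^1$ one commutes $\Dsr$ through $B^+_{\Ub}$ exactly as in Step~2 of Lemma \ref{lem:lwp}, using Calder\'on's commutator estimate for $[[\Ub,\cdot],|D|]$ and Kato--Ponce for $[\Dsr,\Ub]$, to get $\frac{d}{dt}\|\UU_\eps(t)\phi\|_{H^1}^2\lesssim \|\Ub\|_{H^s}\|\UU_\eps(t)\phi\|_{H^1}^2$.

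Second, I would pass to the limit $\eps\to0$. The uniform $L^2$ bound (in fact $\|\UU_\eps(t)\|_{L^2\to L^2}=1$) plus the equation gives uniform $\pt_t\UU_\eps(t)\phi$ bounds in $H^{-1}_+$ for $\phi\in H^1_+$, so by Arzel\`a--Ascoli / Aubin--Lions along a subsequence $\UU_{\eps_n}(t)\phi\to \UU(t)\phi$ in $C([0,T];L^2_+)$ for a dense set of $\phi$, and by the uniform operator bound this extends to all $\phi\in L^2_+(\R;\VV)$, giving item (i). Weak-$*$ limits in the equation, together with the established regularity of $\Ub$, yield $\pt_t\UU(t)=B^+_{\Ub(t)}\UU(t)$ in $H^{-1}_+$, which is item (ii). Unitarity (item (iii)) follows because $\langle\UU_\eps(t)\phi,\UU_\eps(t)\psi\rangle=\langle\phi,\psi\rangle$ for all $\eps$ and this passes to the strong limit; surjectivity also passes to the limit (or: run the same construction backward in time to build the inverse). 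Uniqueness of $\UU$ among solutions with property (i)--(ii) is a one-line Gr\"onwall argument on $\|(\UU(t)-\widetilde\UU(t))\phi\|_{L^2}$ using skew-adjointness of $B^+_{\Ub(t)}$.

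For item (iv) I would argue by invariance of the two relevant ``extra'' structures under the flow. The $H^1_+$-invariance follows from the $\eps$-uniform $H^1$ bound above together with the limit passage: if $\phi\in H^1_+$ then $\UU_\eps(t)\phi$ is bounded in $H^1$, hence has a weak-$H^1$ limit, which must coincide with the already-identified $L^2$ limit $\UU(t)\phi$, so $\UU(t)\phi\in H^1_+$. For the $\dom(X^*)$-invariance I would use the key commutator identity $[X^*,B^+_\Ub]f=T_\Ub f$ from Proposition \ref{prop:G_B}: formally $\frac{d}{dt}\big(X^*\UU(t)\phi\big)=B^+_{\Ub(t)}\big(X^*\UU(t)\phi\big)+T_{\Ub(t)}\UU(t)\phi$, so $g(t):=X^*\UU(t)\phi$ solves the same evolution equation with a bounded inhomogeneity $T_{\Ub(t)}\UU(t)\phi\in C([0,T];L^2_+)$ and initial value $X^*\phi\in L^2_+$; Duhamel's formula against the propagator $\UU$ then shows $g(t)$ stays in $L^2_+$, i.e.\ $\UU(t)\phi\in\dom(X^*)$ for all $t$. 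To make this rigorous one works at the regularized level, where $J_\eps$ does not commute with $X^*$ but the commutator $[X^*,J_\eps]=-\eps X^*|D|J_\eps$-type terms are $\eps$-uniformly bounded on the relevant spaces, so the estimates survive the limit; alternatively, on the finite-dimensional invariant subspace $\Hfr_1$ one may invoke Proposition \ref{prop:G_invariant} directly.

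The main obstacle I anticipate is the rigorous handling of the $\dom(X^*)$-invariance in item (iv): $X^*$ is unbounded and does \emph{not} commute with the Yosida regularizer $J_\eps$, so one must carefully track the regularization commutators $[X^*,B^+_{\Ub(t),\eps}]$ and show they differ from $J_\eps T_{\Ub(t)}J_\eps$ by terms that are $\eps$-uniformly controlled on $\dom(X^*)$-type spaces (equivalently, on weighted $L^2$ spaces $\{xf\in L^2\}$), and then pass to the limit in the Duhamel identity. The $L^2$ and $H^1$ parts, by contrast, are routine once the approximation scheme is set up, since they are exact copies of the energy estimates already carried out in the proof of Lemma \ref{lem:lwp} and Lemma \ref{lem:lwp2}.
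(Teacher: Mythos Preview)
Your proposal is correct and follows essentially the same approach as the paper: both use the Yosida-type regularization $B^+_{\Ub(t),\eps}=J_\eps B^+_{\Ub(t)}J_\eps$, obtain unitarity of the approximants from skew-adjointness, pass to the limit via compactness (uniform $L^2$ bounds plus $\partial_t$ bounds in $H^{-1}_+$), and deduce unitarity of the limit together with surjectivity via time reversal. Your outline for item~(iv) is in fact more detailed than the paper's, which simply asserts that an approximation argument using the families $J_\eps=(1+\eps D)^{-1}$ and $R_\eps=(X^*-\ii\eps)^{-1}$ works and omits the details; your Duhamel argument based on the commutator identity $[X^*,B^+_\Ub]=T_\Ub$ from Proposition~\ref{prop:G_B} is exactly the mechanism one expects to underlie that omitted computation.
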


\begin{remark*}
In particular, the proof below shows that, given a time-dependent $\Ub = \Ub_\infty + \Vb \in M_d(\C) \oplus H^s(\R; M_d(\C))$ with some $s > 3/2$ and satisfying $\Ub(x) = \Ub(x)^*$ for all $x \in \R$, the operator $B_\Ub^+ : H^1_+(\R; \VV) \subset L^2_+(\R; \VV) \to L^2_+(\R; \VV)$ is essentially skew-adjoint, i.\,e., there exists a unique extension with $(B_{\Ub}^+)^* = -B_\Ub^*$, since it is found to be the generator of a strongly continuous one-parameter unitary group on $L^2_+(\R; \VV)$. 
\end{remark*}

\begin{proof}
For notational convenience, we shall write $L^2_+, H^1_+$ and $H^{-1}_+$ for $L^2_+(\R; \VV)$, $H^1_+(\R; \VV)$ and $H^{-1}_+(\R; \VV)$, respectively.

\medskip
\textbf{Step 1.} We first show that, for every $F_0 \in L^2_+$, the initial-value problem
\be \label{eq:B_eq}
\pt_t F = B_{\Ub}^+ F, \quad F(0) = F_0
\ee
has a unique solution $F \in C([0,T]; L^2_+(\R; \VV))$ and we have $\| F(t) \|_{L^2} = \| F_0 \|_{L^2}$ for $t \in [0,T]$.

For $\eps > 0$, we introduce the smoothing operators
$$
J_\eps := (1+\eps D)^{-1} : L^2_+ \to H^1_+ \quad \mbox{with $\| J_\eps \|_{L^2 \to L^2} \leq 1$ and $\| J_\eps \|_{L^2 \to H^1} \leq \eps^{-1}$} \, .
$$
Consider the approximate initial-value problem
\be
\pt_t F_\eps = J_\eps B_{\Ub}^+ J_\eps F_\eps, \quad F_\eps(0) = F_0 \, ,
\ee
which has a unique solution $F_\eps \in C^1([0,T]; L^2_+)$ by standard arguments. Since $J_\eps B_{\Ub(t)} J_\eps$ is a bounded skew-adjoint operator for every $t \in [0,T]$, we readily find
$$
\| F_\eps(t) \|_{L^2} = \| F_0 \|_{L^2} \, .
$$
By the equation, this implies that $\pt_t F_\eps \in C([0,T]; H^{-1}_+)$ uniformly in $\eps>0$. Hence the family $\{ F_\eps \}_{\eps > 0}$ is uniformly equicontinuous in $C([0,T]; H^{-1}_+)$ and uniformly bounded in $C([0,T]; L^2_+)$. By a standard compactness argument (see, e.\,g.~\cite{Ca-03}[Proposition 1.1.2]), we can find a suitable sequence $\eps_n \to 0$ with the limit $F := \lim_{n \to \infty} F_{\eps_n} \in C([0,T]; H^{-1}_+) \cap C_w([0,T]; L^2_+)$ which satisfies
\be
\pt_t F = B_{\Ub} F, \quad F(0) = F_0 \, .
\ee
We now claim that 
\be \label{eq:L2_cons}
\| F(t) \|_{L^2} = \| F_0 \|_{L^2} \quad \mbox{for $t \in [0,T]$} \, .
\ee
Indeed, we calculate
\be \label{ineq:L2_cons}
\frac{d}{dt} \langle J_\eps F(t), F(t) \rangle = 2 \mathrm{Re} \left \langle [J_\eps, B_{\Ub(t)}^+] F(t), F(t) \right \rangle \, .
\ee
Using that $[J_\eps, D] = 0$, $[J_\eps, AB] = A[J_\eps, B] + [J_\eps, A]B$ and $[J_\eps,A] = -J_\eps[\eps D, A]J_\eps$, we find
$$
[J_\eps, B_\Ub] = -\frac{\ii}{2} \left ( J_\eps [\eps D, T_{\Ub}] D J_\eps + D J_\eps [\eps D, T_\Ub] J_\eps \right ) - \frac{\ii}{2} [ J_\eps, T_{|D| \Ub}]  =: I_\eps + II_\eps \, .
$$
Next, we claim that
\be \label{eq:friedrichs}
I_\eps \phi \to 0 \quad \mbox{for every $\phi \in L^2_+$ as $\eps \to 0$} \, .
\ee
By Leibniz' formula, we find
$$
\| I_\eps \|_{L^2 \to L^2} \leq C \eps \| \pt_x \Ub \|_{L^2} \| J_\eps \|_{L^2 \to L^2} \| D J_\eps \|_{L^2 \to L^2} \leq C \eps \eps^{-1} = C
$$
independent of $\eps > 0$. Furthermore, it is easy to dominated convergence (and taking adjoints) that $I_\eps \phi \to 0$ in $L^2_+$ as $\eps \to 0$ for every $\phi \in H^{1}_+$. By density of $H^1_+ \subset L^2_+$ and the uniform bound $\| I_\eps \|_{L^2 \to L^2} \leq C$, we readily deduce that \eqref{eq:friedrichs} holds. Next, we observe that
$$
\|II_\eps \|_{L^2 \to L^2} \leq C \| J_\eps \|_{L^2} \| |D| \Ub \|_{L^\infty} \leq C
$$
independent of $\eps > 0$. Also, by dominated convergence (and taking adjoints) we see that $II_\eps \phi \to 0$ in $L^2_+$ as $\eps \to 0$ for every $\phi \in H^1_+$. Again, we conclude
\be \label{eq:friedrichs2}
II_\eps \phi \to 0 \quad \mbox{for every $\phi \in L^2_+$ as $\eps \to 0$} \, .
\ee
Going back to \eqref{ineq:L2_cons} and using \eqref{eq:friedrichs} and \eqref{eq:friedrichs2}, we find by integration
$$
\langle J_\eps F(t), F(t) \rangle = \langle J_\eps F_0, F_0 \rangle + \int_0^t g_\eps(\tau) \, d\tau
$$
with $g_\eps(t) \to 0$ in $L^2_+$ as $\eps \to 0$ for every $t \in [0,T]$. Since also $\| g_\eps(t) \|_{L^2} \leq C$, we can use dominated convergence when passing to the limit $\eps \to 0$ to find
$$
\langle F(t), F(t) \rangle = \langle F_0, F_0 \rangle \quad \mbox{for all $t \in [0,T]$} \, ,
$$
which is the desired identity \eqref{eq:L2_cons}. Finally, we also remark that conservation of the $L^2$-norm implies that the strong continuity $F \in C([0,T]; L^2_+)$. Uniqueness of solutions in this class for the linear equation $\pt_t F = B_{\Ub} F$ directly follows from $L^2$-conservation as well.

\medskip
\textbf{Step 2.} We define the map $\UU : [0,T] \to \mathcal{B}(L^2_+)$ by setting $\UU(t) F_0 := F(t)$ for $F_0 \in L^2_+$, where $F \in C([0,T]; L^2_+)$ is the unique solution of $\pt_t F = B_{\Ub}^+ F$ with $F(0)=F_0$. By $L^2$-conservation, we see that $\| \UU(t) F_0 \|_{L^2} = \| F_0 \|_{L^2}$ and hence $\UU(t)$ is an isometry on $L^2_+$ for any $t \in [0,T]$. Furthermore, by a time reversal argument for the Schr\"odinger-type equation \eqref{eq:B_eq}, we see that $\UU(t)$ is also surjective on $L^2_+$. Thus $\UU(t)$ is a unitary map on $L^2_+$ for any $t \in [0,T]$. This proves (iii), whereas the items (i) and (ii) are directly verified.

\medskip
\textbf{Step 3.} It remains to show property (iv). For $\phi \in H^1_+(\R;\VV) \cap \dom(X^*)$, we can show, by using an approximation argument (whose details we omit) with the family of operators $J_\eps=(1+\eps D)^{-1}$  and $R_\eps=(\eps X^* - \ii)^{-1}$ with $\eps > 0$, that the solution $F \in C([0,T]; L^2_+)$ of $\pt_t F = B_\Ub F$ with $F(0)=\phi$ satisfies $F(t) \in H^1_+(\R; \VV) \cap \dom(X^*)$ for $t \in [0,T]$. Since $F(t)=\UU(t) \phi$ this shows that (iv) holds true.

This completes the proof of Lemma \ref{lem:lax_flow}.
\end{proof}

\end{appendix}

\bibliographystyle{siam}
\bibliography{HWM_Bibliography}

\begin{thebibliography}{10}

\bibitem{AlBuIfTaZu-24}
{\sc T.~Alazard, N.~Burq, M.~Ifrim, D.~Tataru, and C.~Zuily}, {\em Nonlinear
  interpolation and the flow map for quasilinear equations}, 2024.
\newblock Preprint, arXiv:2410.06909.

\bibitem{Ba-24}
{\sc R.~Badreddine}, {\em On the global well-posedness of the
  {C}alogero-{S}utherland derivative nonlinear {S}chr\"odinger equation}, Pure
  Appl. Anal., 6 (2024), pp.~379--414.

\bibitem{BeKlLa-20}
{\sc B.~K. Berntson, R.~Klabbers, and E.~Langmann}, {\em Multi-solitons of the
  half-wave maps equation and {C}alogero-{M}oser spin-pole dynamics}, J. Phys.
  A, 53 (2020), pp.~505702, 32.

\bibitem{BeLaLe-22}
{\sc B.~K. Berntson, E.~Langmann, and J.~Lenells}, {\em Spin generalizations of
  the {B}enjamin-{O}no equation}, Lett. Math. Phys., 112 (2022), pp.~Paper No.
  50, 45.

\bibitem{Br-02}
{\sc H.~Brezis}, {\em How to recognize constant functions. {Connections} with
  {Sobolev} spaces}, Russ. Math. Surv., 57 (2002), pp.~693--708.

\bibitem{BrNi-95}
{\sc H.~Brezis and L.~Nirenberg}, {\em Degree theory and {BMO}. {I}. {C}ompact
  manifolds without boundaries}, Selecta Math. (N.S.), 1 (1995), pp.~197--263.

\bibitem{Ca-03}
{\sc T.~Cazenave}, {\em Semilinear {S}chr\"odinger equations}, vol.~10 of
  Courant Lecture Notes in Mathematics, New York University, Courant Institute
  of Mathematical Sciences, New York; American Mathematical Society,
  Providence, RI, 2003.

\bibitem{DaRi-11}
{\sc F.~Da~Lio and T.~Rivi\`ere}, {\em Three-term commutator estimates and the
  regularity of {$\frac12$}-harmonic maps into spheres}, Anal. PDE, 4 (2011),
  pp.~149--190.

\bibitem{Do-31}
{\sc J.~Douglas}, {\em Solution of the problem of {P}lateau}, Trans. Amer.
  Math. Soc., 33 (1931), pp.~263--321.

\bibitem{Ge-23}
{\sc P.~G\'erard}, {\em An explicit formula for the {B}enjamin-{O}no equation},
  Tunis. J. Math., 5 (2023), pp.~593--603.

\bibitem{Ge-23b}
\leavevmode\vrule height 2pt depth -1.6pt width 23pt, {\em The {L}ax pair
  structure for the spin {B}enjamin-{O}no equation}, Adv. Contin. Discrete
  Models,  (2023), pp.~Paper No. 21, 6.

\bibitem{Ge-24c}
\leavevmode\vrule height 2pt depth -1.6pt width 23pt, {\em The zero dispersion
  limit for the {B}enjamin-{O}no equation on the line}, C. R. Math. Acad. Sci.
  Paris, 362 (2024), pp.~619--634.

\bibitem{GeGr-15}
{\sc P.~G\'erard and S.~Grellier}, {\em An explicit formula for the cubic
  {S}zeg{\H{o}} equation}, Trans. Amer. Math. Soc., 367 (2015), pp.~2979--2995.

\bibitem{GeLe-18}
{\sc P.~G\'{e}rard and E.~Lenzmann}, {\em A {L}ax pair structure for the
  half-wave maps equation}, Lett. Math. Phys., 108 (2018), pp.~1635--1648.

\bibitem{GeLe-24}
{\sc P.~G\'erard and E.~Lenzmann}, {\em The {C}alogero-{M}oser derivative
  nonlinear {S}chr\"odinger equation}, Comm. Pure Appl. Math., 77 (2024),
  pp.~4008--4062.

\bibitem{GeLe-24b}
{\sc P.~G\'{e}rard and E.~Lenzmann}, {\em On global solutions to the half-wave
  maps equation on the torus}.
\newblock Work in preparation, 2024.

\bibitem{GePu-24}
{\sc P.~G\'erard and A.~Pushnitski}, {\em The cubic {S}zeg{\H{o}} equation on
  the real line: explicit formula and well-posedness on the {H}ardy class},
  Comm. Math. Phys., 405 (2024), pp.~Paper No. 167, 31.

\bibitem{GePu-24b}
{\sc P.~G\'erard and A.~Pushnitski}, {\em An inverse problem for {Hankel}
  operators and turbulent solutions of the cubic {Szeg\H{o}} equation on the
  line}, J. Eur. Math. Soc.,  (2024).
\newblock Published online first. https://doi.org/10.4171/JEMS/1457.

\bibitem{GoGoKr-00}
{\sc I.~Gohberg, S.~Goldberg, and N.~Krupnik}, {\em Traces and determinants of
  linear operators}, vol.~116 of Operator Theory: Advances and Applications,
  Birkh\"auser Verlag, Basel, 2000.

\bibitem{KiKr-21}
{\sc A.~Kiesenhofer and J.~Krieger}, {\em Small data global regularity for
  half-wave maps in {$n=4$} dimensions}, Comm. Partial Differential Equations,
  46 (2021), pp.~2305--2324.

\bibitem{KiLaVi-23}
{\sc R.~Killip, T.~Laurens, and M.~Visan}, {\em Scaling-critical well-posedness
  for continuum {Calogero-Moser} models}, 2023.
\newblock Preprint. arXiv:2311.12334.

\bibitem{KotaVi-14}
{\sc H.~Koch, D.~Tataru, and M.~Visan}, {\em Dispersive equations and nonlinear
  waves}, vol.~45 of Oberwolfach Seminars, Birkh\"auser/Springer, Basel, 2014.
\newblock Generalized Korteweg-de Vries, nonlinear Schr\"odinger, wave and
  Schr\"odinger maps.

\bibitem{KrSi-18}
{\sc J.~Krieger and Y.~Sire}, {\em Small data global regularity for half-wave
  maps}, Anal. PDE, 11 (2018), pp.~661--682.

\bibitem{La-61}
{\sc H.~Langer}, {\em Ein {Z}erspaltungssatz f\"ur {O}peratoren im
  {H}ilbertraum}, Acta Math. Acad. Sci. Hungar., 12 (1961), pp.~441--445.

\bibitem{Lax-59}
{\sc P.~D. Lax}, {\em Translation invariant spaces}, Acta Math., 101 (1959),
  pp.~163--178.

\bibitem{LeSc-18}
{\sc E.~Lenzmann and A.~Schikorra}, {\em On energy-critical half-wave maps into
  {$\Bbb {S}^2$}}, Invent. Math., 213 (2018), pp.~1--82.

\bibitem{LeSo-24}
{\sc E.~Lenzmann and J.~Sok}, {\em Derivation of the half-wave maps equation
  from {C}alogero-{M}oser spin systems}, Pure Appl. Math. Q., 20 (2024),
  pp.~1825--1858.

\bibitem{Li-23}
{\sc Y.~Liu}, {\em Global well-posedness for half-wave maps with {$S^2$} and
  {$\Bbb H^2$} targets for small smooth initial data}, Commun. Pure Appl.
  Anal., 22 (2023), pp.~127--166.

\bibitem{Ma-22}
{\sc Y.~Matsuno}, {\em Integrability, conservation laws and solitons of a
  many-body dynamical system associated with the half-wave maps equation},
  Phys. D, 430 (2022), pp.~Paper No. 133080, 12.

\bibitem{MaSc-23}
{\sc K.~Mazowiecka and A.~Schikorra}, {\em Minimal {$W^{s,\frac ns}$}-harmonic
  maps in homotopy classes}, J. Lond. Math. Soc. (2), 108 (2023), pp.~742--836.

\bibitem{MiSi-15}
{\sc V.~Millot and Y.~Sire}, {\em On a fractional {G}inzburg-{L}andau equation
  and 1/2-harmonic maps into spheres}, Arch. Ration. Mech. Anal., 215 (2015),
  pp.~125--210.

\bibitem{Pe-03}
{\sc V.~V. Peller}, {\em Hankel operators and their applications}, Springer
  Monographs in Mathematics, Springer-Verlag, New York, 2003.

\bibitem{Si-65}
{\sc Y.~Sibuya}, {\em Some global properties of matrices of functions of one
  variable}, Math. Ann., 161 (1965), pp.~67--77.

\bibitem{SzFoBeKe-10}
{\sc B.~Sz.-Nagy, C.~Foias, H.~Bercovici, and L.~K\'erchy}, {\em Harmonic
  analysis of operators on {H}ilbert space}, Universitext, Springer, New York,
  second~ed., 2010.

\bibitem{Ta-06}
{\sc T.~Tao}, {\em Nonlinear dispersive equations}, vol.~106 of CBMS Regional
  Conference Series in Mathematics, Conference Board of the Mathematical
  Sciences, Washington, DC; by the American Mathematical Society, Providence,
  RI, 2006.
\newblock Local and global analysis.

\bibitem{Wi-64}
{\sc H.~Widom}, {\em On the spectrum of a {T}oeplitz operator}, Pacific J.
  Math., 14 (1964), pp.~365--375.

\bibitem{ZhSt-15}
{\sc T.~Zhou and M.~Stone}, {\em Solitons in a continuous classical
  {H}aldane-{S}hastry spin chain}, Phys. Lett. A, 379 (2015), pp.~2817--2825.

\end{thebibliography}

\end{document}